\pdfoutput=1

\newif\ifpersonal
\personaltrue 

\documentclass[10pt,reqno,a4paper,dvipsnames,x11names]{amsart}

\usepackage[utf8]{inputenc}
\usepackage[T1]{fontenc}
\usepackage[english]{babel}
\usepackage[a4paper,margin=1.12in]{geometry}

\usepackage{microtype}
\usepackage{mathpazo}
\usepackage{euler}

\usepackage{amsmath,amssymb,amsthm,mathtools,mathrsfs,bm,eucal,tensor}
\usepackage{stmaryrd}
\usepackage{dsfont}

\usepackage[all,cmtip]{xy}
\usepackage{tikz}
\usetikzlibrary{
  arrows.meta,
  calc,
  positioning,
  fit,
  backgrounds,
  shapes.geometric,
  decorations.markings
}
\usepackage{tikz-cd}

\tikzset{
  >=Stealth,
  box/.style={draw, rounded corners, inner sep=6pt},
  v/.style={draw, circle, inner sep=1.4pt},
  arr/.style={->, thick},
  darr/.style={->, thick, dashed},
  midarrow/.style={
    postaction={decorate},
    decoration={markings,mark=at position 0.58 with {\arrow{Stealth}}}
  }
}

\usepackage{enumitem}
\usepackage{comment,braket,xspace,csquotes}

\usepackage{xcolor}
\usepackage{hyperref}
\hypersetup{
  colorlinks=true,
  linkcolor=blue!60!black,
  citecolor=blue!60!black,
  urlcolor=blue!60!black
}
\usepackage[capitalize,nameinlink]{cleveref}

\numberwithin{equation}{section}

\theoremstyle{plain}
\newtheorem{theorem}{Theorem}[section]
\newtheorem{proposition}[theorem]{Proposition}
\newtheorem{corollary}[theorem]{Corollary}
\newtheorem{lemma}[theorem]{Lemma}

\theoremstyle{definition}
\newtheorem{definition}[theorem]{Definition}

\theoremstyle{remark}
\newtheorem{remark}[theorem]{Remark}

\newcommand{\C}{\mathbb{C}}
\newcommand{\R}{\mathbb{R}}
\newcommand{\Z}{\mathbb{Z}}

\newcommand{\Q}{\mathbb{Q}}
\newcommand{\PP}{\mathbb{P}}

\newcommand{\cO}{\mathcal{O}}

\newcommand{\fol}{\mathcal{F}}
\newcommand{\cB}{\mathcal{B}}
\newcommand{\Aff}{\operatorname{Aff}}
\newcommand{\im}{\mathrm{im}}
\newcommand{\End}{\mathrm{End}}
\newcommand{\Aut}{\mathrm{Aut}}
\newcommand{\Hom}{\mathrm{Hom}}
\newcommand{\rk}{\mathrm{rk}}

\newcommand{\PGL}{\mathrm{PGL}}
\newcommand{\Div}{\mathrm{Div}}
\newcommand{\Res}{\mathrm{Res}}
\newcommand{\pr}{\operatorname{pr}}

\newcommand{\Sone}{S^1}
\title{Stacky geometry and logarithmic topology of transversely affine foliations}

\author[P. Barbassa]{Pedro Barbassa}
\address[P. Barbassa]{IMECC - UNICAMP \\ Departamento de Matem\'atica \\
Rua S\'ergio Buarque de Holanda, 651\\ 13083-970 Campinas-SP, Brazil}
\email{p185839@dac.unicamp.br}

\author[G. Barbosa]{Gabriel Barbosa}
\address[G. Barbosa]{Dipartimento di Matematica, Universit\`a degli Studi di Bari, Via E. Orabona 4, I-70125, Bari, Italy}
\email{214gsb@gmail.com}

\author[M. Corr\^ea]{Maur\'icio Corr\^ea}
\address[M. Corr\^ea]{Dipartimento di Matematica, Universit\`a degli Studi di Bari, Via E. Orabona 4, I-70125, Bari, Italy}
\email{mauricio.correa.mat@gmail.com, mauricio.barros@uniba.it}

\begin{document}

\begin{abstract}
We study the stacky and logarithmic-topological structures associated with  transversely affine  affine foliations. To such a foliation we attach its holonomy group and the corresponding quotient stack, which provides the natural geometric base for the multiplicative developing coordinate. Holomorphic and meromorphic reparametrisations are then identified with endomorphisms of this stack and of its compactification, yielding a geometric Singer-type theorem. We then classify these reparametrisations according to the geometry of the holonomy group. On the logarithmic-topological side, we pass to the Kato--Nakayama space, where the residues define canonical boundary characters, govern the induced linear dynamics on the boundary tori, and give rise to a canonical logarithmic lift of the developing map. In this way, the quotient stack controls the linear part of the theory, while the Kato--Nakayama space captures its logarithmic-topological and dynamical content.
\end{abstract}

\maketitle

\maketitle
 
\section{Introduction}\label{sec:intro}

Singer's classical Theorem on Liouvillian first integrals points to a strong rigidity phenomenon in the transversely affine setting: the relevant transcendental operations are generated by quadratures, exponentials of primitives, and algebraic extensions \cite{Singer92}. In the present geometric setting, it is natural to recast this pattern in terms of closed logarithmic $1$-forms, monodromy representations, and developing maps \cite{CousinPereira,LPT14}. From this perspective, a first integral, or more generally a transverse developing object, may be viewed as defining a fibration over an appropriate target, and Singer-type operations may then be interpreted as base changes or endomorphisms of that target. The purpose of the present article is to develop such a geometric counterpart for transversely affine foliations, combining a stack-theoretic description of the multiplicative developing coordinate and its reparametrisation theory with a logarithmic-topological description on the Kato--Nakayama space. The latter is not merely an auxiliary topological construction: it is the natural setting in which the logarithmic boundary acquires a concrete geometric and dynamical form. There the residues define canonical boundary characters, the pull-back of the logarithmic form induces a real foliation of codimension two, and the boundary torus fibres carry linear dynamics whose topology, closure, and asymptotic behaviour are governed by the residue data. In this way, the Kato--Nakayama space provides the boundary-dynamical side of the theory and makes visible features that do not appear at the level of the interior developing map alone.

More precisely, let $(\omega,\eta)$ be transversely affine data on $U$, so that
$
d\omega=\eta\wedge\omega
$
and
$
d\eta=0
$
\cite{CousinPereira,Scardua97}. On the universal cover $p:\widetilde U\to U$, the closed form $p^*\eta$ admits a primitive $\Phi$, and hence a multiplicative developing coordinate
$
z=e^\Phi
$
satisfying
$
dz/z=p^*\eta.
$
At the same time, the form $e^{-\Phi}p^*\omega$ is closed, so that its primitives define the affine developing data. Their analytic continuation is governed by the affine monodromy representation
$
\xi:\pi_1(U)\to \Aff(\C)=\C^*\ltimes\C,
$
whose linear part is the character
$
\rho:\pi_1(U)\to\C^*.
$
Writing
$
\Gamma=\im(\rho),
$
the quotient stack
$
[\C^*/\Gamma]
$
is the natural target for the multiplicative developing coordinate: it records the linear monodromy and organises the residual reparametrisation freedom. By contrast, the translational part of $\xi$ survives only as an extension class, so that the full affine theory enters here chiefly as a comparison structure; see \Cref{prop:translational-class-and-affine-torsor-bridge,cor:affine-linear-bridge}.

The second point of view begins once one passes from the interior to the logarithmic boundary. The Kato--Nakayama space $X^{\log}$ is locally a real oriented blow-up of the divisor, with torus fibres over the boundary strata; see \Cref{def:KN,eq:KN-local-model}. On these fibres, the residues of $\eta$ define canonical boundary characters, while the pull-back $\pi^*\eta$ induces a real codimension-two foliation on $X^{\log}$ itself. The restriction of this foliation to the boundary tori is linear in the angular variables, and its behaviour is controlled by the arithmetic of the residue vector: according to the rational relations among the residues, one obtains closed leaves, Kronecker-type dense leaves, or leaves dense in a proper subtorus. At the same time, the radial part of the same logarithmic data distinguishes attracting, repelling, and purely angular boundary components. In this way, the Kato--Nakayama space furnishes the logarithmic-topological counterpart to the stacky picture: Kummer refinements remove torsion in the meridional monodromy, whereas $X^{\log}$ retains the angular and dynamical content of the same residue data.

We prove in \Cref{prop:period-residue} that the holonomy group $\Gamma\subset\C^*$ is generated by the exponentials of the logarithmic residues together with the exponentials of a finite generating system of periods. Starting from the multiplicative developing coordinate on the universal cover, we construct the canonical stacky developing morphism
$
F:U\to[\C^*/\Gamma]
$
in \Cref{prop:canonicalF}. After introducing the quotient-stack formalism and the strictification of endomorphisms in \Cref{prop:strictification}, we prove in \Cref{proposition:stacky-singer} that every holomorphic stacky first integral for the same transversely affine foliation is obtained, up to $2$-isomorphism, by post-composition of $F$ with an endomorphism of the stacky base. This picture is enlarged in \Cref{thm:geo-singer-ops}, where Kummer, or root-stack, refinements are interpreted as the geometric counterpart of Singer's algebraic-extension step.

The classification of the resulting reparametrisations is then carried out regime by regime. The structure of finitely generated subgroups of $\C^*$ and the corresponding proper or non-proper dichotomy are analysed in \Cref{prop:discrete-subgroups,prop:Gamma-closed}. In the non-discrete regimes, semi-equivariance is rigid: holomorphic and meromorphic endomorphisms are necessarily monomial, as shown in \Cref{proposition:hol-rigidity,proposition:mero-rigidity}. In the torsion and elliptic regimes, the corresponding descriptions are obtained in \Cref{proposition:End-muN,proposition:End-elliptic,thm:EndP1-regimes}. Finally, \Cref{thm:AutB,thm:2group} compute the automorphism group of the open stacky base and the compactified automorphism $2$-group.

The logarithmic-topological side is then developed in detail on the Kato--Nakayama space. After establishing the local topology of $X^{\log}$ and the boundary characters determined by the residues, we analyse the induced real codimension-two foliation, its local logarithmic normal form, and the linear dynamics it induces on the boundary tori; see \Cref{prop:boundary-fiber-restriction,prop:real-codim2,prop:affine-leaf,prop:explicit-real-leaf-equations}. In particular, \Cref{prop:T2-classification,prop:closure-subtorus,thm:topology-angular-leaf} determine the topology and closures of the angular leaves, including the possibility of leaves dense in a proper subtorus.

These two sides meet in the logarithmic lift of the developing map. Using the explicit local formula of \Cref{lem:zlog-local}, we construct in \Cref{thm:Flog} a canonical morphism
$
F^{\log}:X^{\log}\to[\C^*/\Gamma]
$
extending the interior developing map. Its compactified radial behaviour is described in \Cref{cor:landingP1}. More generally, the boundary analysis carried out in \Cref{prop:radial-asymptotics-component,prop:leaf-accumulation-component,prop:monodromy-obstruction-extension} and \Cref{thm:kummer-vs-xlog-boundary} shows that the same residue data admit two complementary boundary realisations: a Kummer one, which removes torsion in the meridional monodromy, and a Kato--Nakayama one, which retains the angular boundary data and the induced boundary dynamics. The logarithmic Betti meaning of this picture is made explicit in \Cref{cor:residue-monodromy,prop:rational,prop:kummer-trivializes,cor:kummer-trivialization}, and is summarised in \Cref{thm:KN-boundary-realisation}.

Taken together, \Cref{proposition:stacky-singer,thm:KN-boundary-realisation} show that the same holonomy group $\Gamma$ governs both the transverse stacky quotient and the logarithmic-topological boundary model. The quotient stack $[\C^*/\Gamma]$ controls the linear and reparametrisation-theoretic aspect of the theory, while the Kato--Nakayama space provides the natural topological and dynamical setting in which the boundary geometry of transversely affine foliations becomes visible.

\medskip\paragraph{\bf Acknowledgments.} PB is supported by the
Coordenação de Aperfeiçoamento de Pessoal de Nível Superior - Brasil (CAPES) - Finance Code 001 and by the PDSE program nº 88881.220396/2025-01 and GB acknowledges the financial support provided by CNPq, both are grateful to the University of Bari for its hospitality. MC is  grateful to Pablo Perrella and Sebasti\'an Velazquez
for numerous insightful conversations in  logarithmic geometry; he is e partially supported by the Università degli Studi di Bari and is member of INdAM-GNSAGA.

\section{Transversely affine foliations, holonomy, residues, and the matrix model}\label{sec:holonomy}

In this section, we introduce the basic notions of foliations with transversely affine structures, together with their monodromy representation.
Then we study the holonomy group generated by the linear part of the representation $\Gamma \subset \C^*$ and show that it is generated by periods and the exponential of residues.

\subsection{Transversely affine structures as flat meromorphic connections on the normal bundle}\label{subsec:connection-normal}

We recall the intrinsic (McQuillan--style) definition of a transverse affine structure, following Cousin--Pereira \cite{CousinPereira}.
Let $X$ be a complex manifold and let $\fol$ be a codimension one holomorphic foliation on $X$ with normal bundle $N_{\fol}$.
Equivalently, $\fol$ is given by a section
$
\omega\in H^0\bigl(X,\,N_{\fol}\otimes \Omega_X^1\bigr)
$
whose zero locus has codimension at least $2$ and satisfying $\omega\wedge d\omega=0$.

\begin{definition}[Singular transverse affine structure as a connection]\label[definition]{def:TA-connection}
A \emph{singular transverse affine structure} for $\fol$ is a flat meromorphic connection
$$
\nabla: N_{\fol}\longrightarrow N_{\fol}\otimes \Omega_X^1(*D)
$$
along a reduced divisor $D\subset X$ such that $\nabla(\omega)=0$.
We say that $D$ is \emph{minimal} if the connection form of $\nabla$ is not holomorphic at any point of $D$.
\end{definition}

On an open set $U\subset X$ where $N_{\fol}$ is trivial, choosing a trivializing section identifies $\nabla$ with a meromorphic $1$-form
$\eta_0\in \Omega_X^1(*D)(U)$ through
$$
\nabla|_U(f)=df+f\,\eta_0.
$$
Writing $\omega$ in the same trivialization as a meromorphic $1$-form $\omega_0$ on $U$, the equation $\nabla(\omega)=0$ becomes
$$
d\omega_0=\omega_0\wedge \eta_0,\qquad d\eta_0=0,
$$
and changing trivialization by a nowhere vanishing function $g\in \cO_X(U)^*$ sends
$$
(\omega_0,\eta_0)\longmapsto (g\,\omega_0,\ \eta_0-d\log g).
$$
This is exactly the gauge relation used throughout the paper.

\subsection{Affine monodromy and its linear part}\label{subsec:affine-monodromy}

Let $U=X\setminus D$ and assume that $(\fol,\nabla)$ is transversely affine on $U$ in the sense above. Fix a base point $q\in U$. By \cite[\S2]{CousinPereira}, the transverse affine structure determines a monodromy representation
$
\xi:\pi_1(U,q)\longrightarrow \Aff(\C)=\C^*\ltimes\C,
$
whose linear part is the multiplicative character
$
\rho:\pi_1(U,q)\longrightarrow \C^*$, $
\rho(\gamma)=\exp\!\left(\int_\gamma \eta_0\right).
$
This coincides with the monodromy of the rank-one local system defined by $\nabla$.

On any path $\gamma$ contained in the regular locus of $(\omega_0,\eta_0)$ one has
$
\rho(\gamma)=\bigl\{z\mapsto z\,\exp(\int_\gamma \eta_0)\bigr\},
$
and
$
\xi(\gamma)=\bigl\{z\mapsto z\,\exp(\int_\gamma \eta_0)+\int_\gamma \exp(\int \eta_0)\,\omega_0\bigr\}.
$
Thus $\xi$ fits into the natural projection
$
\Aff(\C)=\C^*\ltimes\C\to\C^*,
$
with $\rho$ as its linear part:
\begin{center}
\begin{tikzcd}[column sep=large,row sep=large]
\pi_1(U,q) \arrow[r, "\xi"] \arrow[dr, "\rho"'] & \Aff(\C)=\C^*\ltimes \C \arrow[d] \\
& \C^*
\end{tikzcd}
\end{center}

In the present article, the stacky base is attached only to this linear part. Writing
$
\Gamma:=\im(\rho)\subset\C^*,
$
we consider
$
[\C^*/\Gamma]
$
and its compactification
$[\PP^1/\Gamma].
$
The translational part of $\xi$ defines a $1$-cocycle with values in the $\rho$-twisted additive local system, and therefore belongs to the full affine theory. By contrast, the residual Singer reparametrisation freedom is governed by post-composition on the multiplicative base, hence by endomorphisms of $[\C^*/\Gamma]$ and $[\PP^1/\Gamma]$.
\begin{remark}
    If the transversely affine structure is trivial, in the sense that $\eta = 0$, then all residues are zero and $\Gamma = \{ 1 \}$. So the stacky bases are
  $
         \C^*$ and  $\PP^1.$
    The Singer reparameterizations are translations induced by deck transformations via $\xi$. In general, reparameterizations are not arbitrary automorphisms of $\C$ or $\C^*$, but     those coming from the image of $\pi_1(U)$ under the monodromy.
\end{remark}

\subsection{Transversely projective structures and their monodromy}\label{subsec:projective-structure}

We recall the standard projective formalism (see \cite[\S2]{LPT14} and the references therein) in a form compatible with the stack-theoretic viewpoint developed later in this article.

\begin{definition} \label[definition]{def:TP}
Let $X$ be a complex manifold and let $\fol$ be a codimension one foliation.
A (singular) \emph{transversely projective structure} on $(X,\fol)$ along a reduced divisor $D$ is a rank two vector bundle $E$ on $X$
together with a flat meromorphic connection
$$
\nabla_E:E\longrightarrow E\otimes \Omega_X^1(*D),
$$
and a (meromorphic) line subbundle $L\subset E$ on $X\setminus Z$ (for some analytic subset $Z$ of codimension at least $2$)
such that the induced Riccati foliation on the $\PP^1$-bundle $\PP(E)$ is tangent to the section $\PP(L)$ and pulls back to $\fol$
on $X\setminus Z$.
Equivalently (in a local trivialization), one may encode the projective structure by a triple of meromorphic $1$-forms
$(\omega_0,\omega_1,\omega_2)$ satisfying the structure equations
$$
d\omega_0=\omega_0\wedge \omega_1,\qquad
d\omega_1=2\,\omega_0\wedge \omega_2,\qquad
d\omega_2=\omega_1\wedge \omega_2,
$$
with $\fol=\ker(\omega_0)$ on the regular locus.
\end{definition}

A transverse projective structure yields a projective monodromy representation (defined up to conjugacy)
$$
\xi_{\mathrm{proj}}:\pi_1(U,q)\longrightarrow \mathrm{PGL}_2(\C),
\qquad U=X\setminus D,
$$
obtained by parallel transport of the flat connection on $E$ and projectivization.

\subsection{Affine structures as reductions }\label{subsec:affine-reduction}

The transverse affine structures of \Cref{def:TA-connection} are      the transverse projective structures whose projective monodromy
reduces to the affine subgroup
$$
\mathrm{Aff}(\C)=\C^*\ltimes \C\ \subset\ \mathrm{PGL}_2(\C),
\qquad z\longmapsto a z + b.
$$
Geometrically, this is the case where the induced Riccati foliation on $\PP(E)$ admits an invariant section at infinity, so that the
natural $\PP^1$-coordinate can be chosen on the affine chart $\C\subset \PP^1$, and the monodromy acts by affine transformations.
In this situation, the \emph{linear part} of the affine monodromy is      the character $\rho$ of \Cref{subsec:affine-monodromy}, and the stack-theoretic base $[\C^*/\Gamma]$ records the multiplicative developing coordinate, that is, the linear part, rather than the full affine cocycle.

\begin{center}
\begin{tikzcd}[column sep=large,row sep=large]
\pi_1(U,q)
  \arrow[r, "\xi_{\mathrm{proj}}"]
  \arrow[dr, bend right=18, "\xi"']
&
\mathrm{PGL}_2(\C)
\\
&
\mathrm{Aff}(\C)=\C^*\ltimes \C \arrow[u, hook']
\end{tikzcd}
\end{center}

We will return to this inclusion in \Cref{sec:log-RH}: Ogus' logarithmic Riemann--Hilbert correspondence applies to the flat connection
$\nabla_E$ in \Cref{def:TP}, producing a log-Betti local system on $X^{\log}$ whose projectivization encodes $\xi_{\mathrm{proj}}$,
and whose rank-one determinant piece recovers the linear character $\rho$ and the group $\Gamma$.

\subsection{Gauge invariance}

Accordingly, the multiplicative character
\[
\rho:\pi_1(U,q)\longrightarrow \C^*,
\qquad
\rho(\gamma)=\exp\!\left(\int_\gamma \eta\right),
\]
is invariant under the usual gauge transformation
\[
\eta \longmapsto \eta+\frac{df}{f},
\qquad f\in \mathcal O_U^*.
\]
Indeed, for every loop \(\gamma\) in \(U\), one has
\[
\int_\gamma \frac{df}{f}\in 2\pi i\,\Z,
\]
since this integral computes the winding number of \(f\circ \gamma\) around \(0\). Therefore
\[
\exp\!\left(\int_\gamma \left(\eta+\frac{df}{f}\right)\right)
=
\exp\!\left(\int_\gamma \eta\right)\cdot
\exp\!\left(\int_\gamma \frac{df}{f}\right)
=
\exp\!\left(\int_\gamma \eta\right),
\]
and hence \(\rho\), and therefore also \(\Gamma=\operatorname{im}(\rho)\), is unchanged.

The next proposition shows that $\Gamma$ is generated by two types of
constants: the local residue exponentials contributed by the irreducible
components of $D$, and the global period exponentials arising from a
chosen generating family of $H_1(U,\Z)$.

\begin{proposition}\label[proposition]{prop:period-residue}
Assume that $\eta$ extends logarithmically across $D$. For each irreducible component
$D_i\subset D$, choose a smooth point of $D_i$ away from the other components of $D$,
and let $\gamma_i$ be a positively oriented meridian around $D_i$, viewed as a based loop
in $U$ after choosing a path from the base point $q$ to the corresponding transversal disk.
Choose further loops $\delta_1,\dots,\delta_m$ such that the homology classes
\[
[\gamma_i],\ [\delta_j]\in H_1(U,\Z)
\]
generate $H_1(U,\Z)$ as an abelian group. Set
$
g_i:=\rho(\gamma_i)$,
 and $
h_j:=\rho(\delta_j).
$
Then $\Gamma=\operatorname{im}(\rho)$ is generated by the elements $g_i$ and $h_j$. Equivalently,
\[
\Gamma=
\left\langle
\exp\!\bigl(2\pi i\,\Res_{D_i}(\eta)\bigr),\
\exp\!\left(\int_{\delta_j}\eta\right)
\right\rangle.
\]
\end{proposition}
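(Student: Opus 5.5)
The plan is to exploit that $\rho$ takes values in the abelian group $\C^*$ and therefore factors through abelianization. Concretely, since $\rho(\gamma)=\exp(\int_\gamma\eta)$ and $\eta$ is closed on $U$, the integral $\int_\gamma\eta$ depends only on the homology class $[\gamma]\in H_1(U,\Z)$ (Stokes' theorem for closed forms). Hence $\rho$ factors as
\[
\pi_1(U,q)\xrightarrow{\ \mathrm{ab}\ } H_1(U,\Z)\xrightarrow{\ \bar\rho\ }\C^*,
\qquad \bar\rho([\gamma])=\exp\!\Bigl(\int_\gamma\eta\Bigr),
\]
with $\bar\rho$ a group homomorphism. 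The Hurewicz map $\mathrm{ab}$ is surjective ($U$ is path connected), so $\Gamma=\im(\rho)=\im(\bar\rho)$. Because $\bar\rho$ is a homomorphism, the image of any generating set of $H_1(U,\Z)$ generates $\im(\bar\rho)$. By hypothesis, the classes $[\gamma_i],[\delta_j]$ generate $H_1(U,\Z)$, so $\Gamma=\langle g_i,h_j\rangle$. I will also note that the choice of connecting paths from $q$ to the transversal disks changes $\gamma_i$ only by conjugation in $\pi_1$, which does not affect $\rho(\gamma_i)$ since $\C^*$ is abelian; so the $g_i$ are well defined.

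The remaining step is the identification $g_i=\exp(2\pi i\,\Res_{D_i}(\eta))$, which gives the displayed form. Near the chosen smooth point $p\in D_i$, off the other components, pick coordinates $(x,y)$ with $D_i=\{x=0\}$ locally and $D=D_i$ there. Since $\eta$ is logarithmic and closed, it has the local form
\[
\eta=\lambda\,\frac{dx}{x}+\alpha,
\]
with $\alpha$ holomorphic and closed, and $\lambda$ a holomorphic function. Closedness forces $d\lambda\wedge dx/x$ to be holomorphic, so $\lambda|_{D_i}$ is locally constant. It is therefore constant along the connected smooth locus of the irreducible $D_i$, equal to $\Res_{D_i}(\eta)$; alternatively, I will use the paper's definition of residue directly.

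Integrating over the small meridian circle $|x|=\epsilon$ in the transversal disk then gives $2\pi i\lambda(0,y)+O(\epsilon)$. The integral is independent of $\epsilon$ by closedness, so letting $\epsilon\to 0$ gives exactly $2\pi i\,\Res_{D_i}(\eta)$. Independence of the choice of point on $D_i$ follows again from homotopy invariance together with the connectedness of $D_i^{\mathrm{reg}}\setminus\bigcup_{j\ne i}D_j$.

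I expect this residue-constancy step, namely justifying the logarithmic normal form and the constancy of $\lambda|_{D_i}$, to be the only point needing care. The rest is the formal factorization through $H_1$.
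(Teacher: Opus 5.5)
Your proposal is correct and follows the same route as the paper: you factor $\rho$ through $H_1(U,\Z)$ so that the images of the generating classes generate $\Gamma$, and then identify $g_i$ via $\int_{\gamma_i}\eta=2\pi i\,\Res_{D_i}(\eta)$. Your extra local justification of the residue formula is essentially the argument of \Cref{lem:residue-constant-component}; note that the polar-part step only needs $\alpha$ holomorphic, not closed.
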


\begin{proof}
Since the target $\C^*$ is abelian, the character $\rho$ factors through the abelianization
of $\pi_1(U,q)$, hence through $H_1(U,\Z)$. Equivalently, because $d\eta=0$ on $U$, the quantity
\[
\exp\!\left(\int_\gamma\eta\right)
\]
depends only on the homology class $[\gamma]\in H_1(U,\Z)$.
By assumption, the classes $[\gamma_i]$ and $[\delta_j]$ generate $H_1(U,\Z)$. Therefore,
for every class $[\alpha]\in H_1(U,\Z)$ one may write
\[
[\alpha]=\sum_i a_i[\gamma_i]+\sum_j b_j[\delta_j]
\qquad
(a_i,b_j\in\Z).
\]
Since $\rho$ induces a homomorphism on $H_1(U,\Z)$, it follows that
\[
\rho([\alpha])
=
\prod_i \rho(\gamma_i)^{a_i}\prod_j \rho(\delta_j)^{b_j}
=
\prod_i g_i^{a_i}\prod_j h_j^{b_j}.
\]
Hence $\Gamma=\operatorname{im}(\rho)$ is generated by the elements $g_i$ and $h_j$.
For a positively oriented meridian $\gamma_i$ around $D_i$, the residue formula for logarithmic
$1$-forms gives
\[
\int_{\gamma_i}\eta=2\pi i\,\Res_{D_i}(\eta),
\]
and therefore
$
g_i=\rho(\gamma_i)=\exp\!\bigl(2\pi i\,\Res_{D_i}(\eta)\bigr).
$
Likewise,
\[
h_j=\rho(\delta_j)=\exp\!\left(\int_{\delta_j}\eta\right).
\]
This proves the final assertion.
\end{proof}

\subsection{Realizable endomorphisms of \(\Gamma\)}

Choose the period--residue generating system
\[
(g_1,\dots,g_k,h_1,\dots,h_m)
\]
of \(\Gamma\) furnished by \Cref{prop:period-residue}. It determines a surjective homomorphism
\[
\Phi:\Z^{k+m}\longrightarrow \Gamma,
\qquad
\Phi(n_1,\dots,n_k,t_1,\dots,t_m)
=
\prod_{i=1}^k g_i^{n_i}\prod_{j=1}^m h_j^{t_j},
\]
and hence an exact sequence of abelian groups
\begin{equation}\label{eq:Gamma-presentation}
0 \longrightarrow L \longrightarrow \Z^{k+m} \xrightarrow{\ \Phi\ } \Gamma \longrightarrow 0,
\end{equation}
where \(L=\ker(\Phi)\) is the subgroup of relations among the chosen generators.

We first describe $\End(\Gamma)$ in terms of the presentation \eqref{eq:Gamma-presentation}. Later we determine which of these abstract endomorphisms are realised by holomorphic semi-equivariant pairs
$(\widetilde\psi,\sigma),$ with $
\widetilde\psi(\gamma z)=\sigma(\gamma)\widetilde\psi(z).$
The first step is given by the following standard description.

\begin{proposition}\label[proposition]{prop:EndGamma-matrix}
There is a natural identification
\[
\End(\Gamma)\cong \End(\Z^{k+m},L)\big/\Hom(\Z^{k+m},L),
\]
where
$
\End(\Z^{k+m},L):=\{A\in \End(\Z^{k+m})\mid A(L)\subset L\}.
$
Equivalently, after choosing a basis of \(\Z^{k+m}\), one may describe \(\End(\Gamma)\) by integer matrices preserving the relation subgroup \(L\).
Likewise, \(\Aut(\Gamma)\) is identified with the subgroup of classes represented by elements of \(\End(\Z^{k+m},L)\) whose induced endomorphism of \(\Z^{k+m}/L\) is invertible.
\end{proposition}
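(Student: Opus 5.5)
The plan is to build a homomorphism $\Theta:\End(\Z^{k+m},L)\to\End(\Gamma)$, show it is surjective, and identify its kernel with $\Hom(\Z^{k+m},L)$. Throughout, $\Gamma$ is identified with $\Z^{k+m}/L$ via the presentation \eqref{eq:Gamma-presentation}. Write $F:=\Z^{k+m}$.

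\emph{Construction of $\Theta$.} For $A\in\End(F,L)$ we have $A(L)\subset L$, so $\Phi\circ A$ vanishes on $L$. It therefore descends to a unique $\bar A\in\End(\Gamma)$ with $\bar A\circ\Phi=\Phi\circ A$. Uniqueness, which holds because $\Phi$ is surjective, gives $\overline{AB}=\bar A\bar B$ and $\overline{A+B}=\bar A+\bar B$. So $\Theta(A)=\bar A$ is a ring homomorphism.

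\emph{Surjectivity.} Given $\varphi\in\End(\Gamma)$, use that $F$ is free. For each basis vector $e_\ell$ pick a preimage under $\Phi$ of $\varphi(\Phi(e_\ell))$, and let $A$ be the matrix with these preimages as columns. Then $\Phi\circ A=\varphi\circ\Phi$. For $x\in L$ this gives $\Phi(Ax)=\varphi(\Phi(x))=\varphi(1)=1$, so $Ax\in L$. Hence $A\in\End(F,L)$ and $\Theta(A)=\varphi$.

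\emph{Kernel.} We have $\Theta(A)=\mathrm{id}$ exactly when $\Phi\circ A=\Phi$, that is, when $A-\mathrm{id}$ has image in $L$; likewise $\Theta(A)=0$ exactly when $A(F)\subset L$. So the additive kernel of $\Theta$ is $\{A\in\End(F): A(F)\subset L\}$. This set is canonically $\Hom(F,L)$, composed with the inclusion $L\hookrightarrow F$, and every such $A$ automatically preserves $L$. It is a two-sided ideal in $\End(F,L)$: for such $A$ and any $B\in\End(F,L)$, $AB(F)\subset L$ and $BA(F)\subset B(L)\subset L$. The quotient $\End(F,L)/\Hom(F,L)$ is therefore a ring, and $\Theta$ induces the claimed ring isomorphism. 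Naturality means independence of the choice of basis: changing basis conjugates everything by an element of $\mathrm{GL}(F)$, and both sides are compatible with this conjugation. The matrix description is then just a choice of basis of $F$.

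\emph{Automorphisms.} $\Aut(\Gamma)$ is the unit group of $\End(\Gamma)$. Under the isomorphism it corresponds to the classes $[A]$ whose induced map $\bar A$ on $F/L$ is invertible, which is the stated description.

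The only step needing any care is the kernel identification. One must phrase it additively, $A(F)\subset L$, rather than multiplicatively, and check that these maps form an ideal so that the quotient is a ring and not merely a group. The rest is routine diagram chasing.
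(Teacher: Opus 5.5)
Your proposal is correct and follows the paper's proof essentially step for step. You lift endomorphisms of $\Gamma\cong\Z^{k+m}/L$ using freeness of $\Z^{k+m}$, note that any lift preserves $L$, identify the kernel of the resulting surjection with the maps satisfying $A(\Z^{k+m})\subset L$, i.e.\ $\Hom(\Z^{k+m},L)$, and read off $\Aut(\Gamma)$ as the invertible classes. Your explicit check that $\Hom(\Z^{k+m},L)$ is a two-sided ideal, so that the quotient is a ring and the ``subgroup of classes'' in the automorphism statement makes sense, is a small point the paper leaves implicit.
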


\begin{proof}
Since \(\Gamma\cong \Z^{k+m}/L\) and \(\Z^{k+m}\) is a free abelian group, every endomorphism of \(\Gamma\) lifts to an endomorphism of \(\Z^{k+m}\). Any such lift must preserve \(L\), and therefore yields an element of \(\End(\Z^{k+m},L)\). In this way one obtains a surjective homomorphism
\[
\End(\Z^{k+m},L)\longrightarrow \End(\Gamma).
\]
Its kernel consists of those endomorphisms \(A\) of \(\Z^{k+m}\) whose induced map on \(\Z^{k+m}/L\) is zero. Equivalently, these are the endomorphisms satisfying
$
A(\Z^{k+m})\subset L.
$
Such maps are exactly the elements of \(\Hom(\Z^{k+m},L)\). This proves the stated description of \(\End(\Gamma)\).
The statement for automorphisms is immediate: a class in
\[
\End(\Z^{k+m},L)\big/\Hom(\Z^{k+m},L)
\]
defines an automorphism of \(\Gamma\) if and only if the induced endomorphism of \(\Z^{k+m}/L\cong \Gamma\) is invertible.
\end{proof}

After describing $\End(\Gamma)$ abstractly, it remains to determine which endomorphisms are induced by holomorphic semi-equivariant maps on the atlas $\C^*$. This is carried out in \Cref{sec:classification}.

\section{The stacky base, developing maps, and the stacky Singer theorem}\label{sec:stacky-base}

This section collects the stack-theoretic material needed for the interaction with the foliation data.
We first introduce the quotient-stack language and the strictification of endomorphisms, then construct the canonical morphism $F:U \to [\C^*/\Gamma]$ from the developing map, and finally prove the stacky Singer theorem.
Throughout we follow the Morita/bibundle viewpoint for quotient stacks and orbifolds as in \cite{BehrendXu,Lerman,MoerdijkPronk,Olsson16}.


We work on the site \textbf{An} of complex manifolds with the analytic \'{e}tale topology.
All groups are regarded with the discrete topology.

\subsection{Quotient stacks, action groupoids, bibundles and strict pairs}

\begin{definition} \label[definition]{def:quotient}
Let a discrete group $\Gamma$ act holomorphically on a complex manifold $Y$ on the left.
The quotient stack $[Y/\Gamma]$ has objects over $T$ given by a principal right $\Gamma$-bundle $P\to T$ and a holomorphic map $\phi:P\to Y$
satisfying $\phi(p\cdot\gamma)=\gamma^{-1}\phi(p)$.
\end{definition}

\begin{definition}\label[definition]{def:action-groupoid}
The action groupoid $Y\rtimes\Gamma\rightrightarrows Y$ has arrows $Y\times\Gamma$ with
$
s(y,\gamma)=y, $ and $ t(y,\gamma)=\gamma\cdot y.
$
Its classifying stack is $[Y/\Gamma]$.
\end{definition}

For quotient stacks by discrete groups, one may describe morphisms via bibundles (Morita morphisms). In our one-dimensional setting we can also use strict pairs. We follow the Hilsum--Skandalis/bibundle formalism and its Morita invariance; in the one-dimensional free-action case every bibundle morphism is locally represented on the atlas by a semi-equivariant pair $(\widetilde\psi,\sigma)$, while the bibundle viewpoint remains the invariant global language. 

\begin{definition} \label[definition]{def:bibundle}
Let $\Gamma$ act on $Y$ and $\Gamma'$ on $Y'$.
A $1$-morphism $[Y/\Gamma]\to [Y'/\Gamma']$ is represented by:
\begin{enumerate}
\item a manifold $P$ with a principal right $\Gamma$-bundle map $\pi:P\to Y$;
\item a commuting left $\Gamma'$-action on $P$;
\item a $\Gamma'$-equivariant holomorphic map $\Phi:P\to Y'$.
\end{enumerate}
A $2$-morphism between two such bibundles is a $\Gamma'\times\Gamma$-equivariant isomorphism $P\to P'$ compatible with the structure maps.
\end{definition}

\begin{definition} \label[definition]{def:strict}
Let $\Gamma\subset \C^*$ be a discrete group acting on $\C^*$ by multiplication and consider $[\C^*/\Gamma]$.
A \emph{strict endomorphism} of $[\C^*/\Gamma]$ is a pair $(\widetilde\psi,\sigma)$ consisting of a holomorphic map
$\widetilde\psi:\C^*\to\C^*$ and a group homomorphism $\sigma:\Gamma\to\Gamma$ such that
$$
\widetilde\psi(\gamma z)=\sigma(\gamma)\,\widetilde\psi(z)\qquad \forall\,\gamma\in\Gamma,\ z\in\C^*.
$$
Two strict pairs $(\widetilde\psi,\sigma)$ and $(\widetilde\psi',\sigma)$ are \emph{$2$-isomorphic} if there exists $a\in\Gamma$ with
$\widetilde\psi'(z)=a\,\widetilde\psi(z)$ for all $z$.
\end{definition}

\begin{proposition}\label[proposition]{prop:strictification}
Every endomorphism $\psi:[\C^*/\Gamma]\to [\C^*/\Gamma]$ is $2$-isomorphic to a strict endomorphism $(\widetilde\psi,\sigma)$.
Two strict pairs $(\widetilde\psi,\sigma)$ and $(\widetilde\psi',\sigma')$ represent $2$-isomorphic endomorphisms if and only if
$\sigma=\sigma'$ and $\widetilde\psi' = a\,\widetilde\psi$ for some constant $a\in\Gamma$.
\end{proposition}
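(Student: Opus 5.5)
The plan is to work in the bibundle model of \Cref{def:bibundle} with $Y=Y'=\C^*$ and $\Gamma'=\Gamma$, and to exploit simple connectivity of the universal cover. Let $\psi$ be represented by a bibundle $(P,\pi,\Phi)$, where $\pi:P\to\C^*$ is a principal right $\Gamma$-bundle and $\Gamma$ also acts on $P$ on the left. Since $\Gamma$ is discrete, $\pi$ is a covering map. The first reduction is a trivialisation: because $\Gamma\subset\C^*$ acts freely on $\C^*$, one may pass to the Morita-equivalent atlas. More concretely, I would observe that a principal $\Gamma$-bundle over $\C^*$ is classified by a homomorphism $\pi_1(\C^*)=\Z\to\Gamma$. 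I would then use the left $\Gamma$-action, together with the fact that $\Phi$ lands in $\C^*$ where $\Gamma$ acts freely, to reduce to the trivial bundle $P=\C^*\times\Gamma$.

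If this global triviality argument is awkward, I would instead argue directly with a section $s:\C^*\to P$ of $\pi$, possibly after pulling back along $\exp:\C\to\C^*$, where every $\Gamma$-bundle is trivial because $\C$ is simply connected. The left and right actions commute, and the left action preserves the fibres of $\pi$. So for each $\gamma$ one can write $\gamma\cdot s(z)=s(\gamma z)\cdot c_\gamma(z)$, where the left action on $P$ covers multiplication by $\gamma$ on $\C^*$ in the quotient-stack sense. By discreteness, $c_\gamma$ is locally constant, hence constant. Setting $\widetilde\psi:=\Phi\circ s$ and using $\Gamma$-equivariance of $\Phi$ then gives $\widetilde\psi(\gamma z)=\sigma(\gamma)\widetilde\psi(z)$ for some $\sigma:\Gamma\to\Gamma$. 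The cocycle identity for the two commuting actions shows that $\sigma$ is a homomorphism. Conversely, a strict pair defines the bibundle $P=\C^*\times\Gamma$ with $\Phi(z,g)=\sigma$-twisted $\widetilde\psi$, and one checks it recovers the pair. This gives the first assertion.

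For the second assertion, a $2$-isomorphism between the bibundles of $(\widetilde\psi,\sigma)$ and $(\widetilde\psi',\sigma')$ is a biequivariant isomorphism $\C^*\times\Gamma\to\C^*\times\Gamma$ over $\C^*$. By right equivariance it has the form $(z,g)\mapsto(z,a(z)g)$, with $a$ locally constant, hence a constant $a\in\Gamma$. Compatibility with $\Phi$ gives $\widetilde\psi'=a\widetilde\psi$. Left equivariance then forces $\sigma'(\gamma)a=a\sigma(\gamma)$, and since $\Gamma$ is abelian this gives $\sigma=\sigma'$. Alternatively, one can read off $\sigma=\sigma'$ from $\widetilde\psi'(\gamma z)/\widetilde\psi'(z)=\widetilde\psi(\gamma z)/\widetilde\psi(z)$. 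The converse direction is immediate, since multiplication by $a$ defines such an isomorphism.

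The main obstacle I expect is the first step: producing a global section, that is, showing that the bibundle is trivial over the atlas $\C^*$, which is not simply connected. To handle a nontrivial class in $\Hom(\Z,\Gamma)$, I would absorb it into $\widetilde\psi$. If monodromy around the generator of $\pi_1(\C^*)$ is multiplication by $c\in\Gamma$, the section over $\C$ can be corrected by the freely acting left $\Gamma$. Since $\widetilde\psi$ takes values in $\C^*$ and multiplicative translates by $\Gamma$ are allowed up to $2$-isomorphism, the correction only rescales by a constant of $\Gamma$. This is exactly the one-dimensional free-action situation the paper flags.
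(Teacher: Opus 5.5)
\textbf{The gap.} It sits exactly at the step you flag, and your proposed repair does not work. In fact it cannot be repaired, because the first assertion is false as stated.

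The underlying right $\Gamma$-torsor $P\to\C^*$ of the bibundle has a monodromy $c\in\Gamma$ around the generator of $\pi_1(\C^*)$. Each of your proposed fixes leaves $c$ untouched:
\begin{itemize}
\item A $2$-isomorphism of bibundles is in particular an isomorphism of $\Gamma$-torsors over $\C^*$, so it cannot change $c$.
\item The left action covers $z\mapsto\gamma z$, so it cannot correct a section fibrewise. (It does not preserve the fibres of $\pi$, contrary to what you write at one point.)
\item If $s$ is a section over $\C$ with $s(w+2\pi i)=s(w)\cdot c$, then $f=\Phi\circ s$ satisfies $f(w+2\pi i)=c^{\pm1}f(w)$. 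No constant rescaling by an element of $\Gamma$ alters this.
\end{itemize}
So a strict representative on the atlas $\C^*$ exists if and only if $c=1$, and $c\neq 1$ does occur.

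\textbf{A counterexample.} Take $q=e^{-2\pi}$ and $\Gamma=q^{\Z}$, so that $[\C^*/\Gamma]\simeq E_q=\C/\Lambda$ with $\Lambda=2\pi(\Z+i\Z)$. Since $i\Lambda=\Lambda$, the map $[w]\mapsto[iw]$ is an automorphism of $E_q$. The loop $w\mapsto w+2\pi i$ around $0$ in $\C^*$ is sent to translation by $-2\pi$, so $c=e^{-2\pi}=q\neq 1$. By \Cref{prop:qZ-rigidity}, every strict pair in this regime is $z\mapsto az^n$, which induces $[w]\mapsto[nw+\log a]$. Hence this automorphism is $2$-isomorphic to no strict pair. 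As a further consequence, $\Aut_1([\C^*/\Gamma])\cong E_q\rtimes\mu_4$ is strictly larger than $(\C^*/\Gamma)\rtimes(\Z/2)$.

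A non-discrete example is $\Gamma=\langle e^{2\pi i\sqrt2}\rangle$ with the multivalued map $z\mapsto z^{\sqrt2}$. Its branches differ by elements of $\Gamma$, and $\sqrt2\,\exp^{-1}(\Gamma)\subset\exp^{-1}(\Gamma)=2\pi i(\Z+\sqrt2\,\Z)$. So it is an endomorphism of the stack, with $c=e^{2\pi i\sqrt2}\neq 1$.

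\textbf{Comparison with the paper.} The paper's own proof has the same hole. It ``chooses a local section'' over the connected atlas and reads off a strict map. But a local section only gives a local strict map, and connectedness of $\C^*$ does not trivialise the torsor. The paper itself concedes, in the discussion after \Cref{proposition:End-elliptic}, that isogenies with $\alpha\notin\Z$ are not strict on the atlas.

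\textbf{What your argument does prove.} Your treatment of the second assertion is correct. It is slightly more careful than the paper's, since you derive $\sigma=\sigma'$ from left-equivariance and commutativity of $\Gamma$. Together with your first part, it proves the corrected statement: an endomorphism is $2$-isomorphic to a strict pair if and only if the monodromy $c$ of its underlying torsor over $\C^*$ is trivial.

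Your idea of pulling back along $\exp$ does give an unconditional strictification, but on a different atlas. It works on the simply connected atlas $\C\to[\C/\exp^{-1}(\Gamma)]\simeq[\C^*/\Gamma]$, with semi-equivariance for the additive group $\exp^{-1}(\Gamma)$, not for $\Gamma$ acting on $\C^*$.
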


\begin{proof}
Pull back $\psi$ along the atlas $\C^*\to[\C^*/\Gamma]$.
A morphism of quotient stacks corresponds to a bibundle between action groupoids; over a connected atlas one can choose a local section and read off a strict
map on objects $\widetilde\psi:\C^*\to\C^*$ and an induced homomorphism on arrows $\sigma:\Gamma\to\Gamma$ satisfying semi-equivariance.
A $2$-morphism corresponds to a natural transformation between the corresponding functors of groupoids, i.e.\ an assignment to each object $z\in\C^*$ of an arrow
$z\to z$, hence an element of the stabilizer at $z$. Since the action on $\C^*$ is free, stabilizers are trivial, and the only freedom comes from changing the strict lift
by a right $\Gamma$-equivariant automorphism of the pulled-back torsor, i.e.\ a holomorphic map $\C^*\to\Gamma$.
Because $\Gamma$ is discrete and $\C^*$ is connected, such a map is constant, giving the claimed form of $2$-isomorphisms.
\end{proof}

Throughout the paper, whenever we write
$
\End([\C^*/\Gamma]),\ \Aut([\C^*/\Gamma]),\ \End([\PP^1/\Gamma]),
$
or analogous notation for quotient stacks, we mean $1$-endomorphisms in the analytic
$2$-category of quotient stacks, considered up to $2$-isomorphism. By
\Cref{prop:strictification}, such morphisms may be represented on the atlas by strict
semi-equivariant pairs $(\widetilde\psi,\sigma)$, and all later classifications of
endomorphisms are to be understood in this sense; compare also
\cite{BehrendXu,Noohi05}
 


\subsection{Developing maps and canonical stack morphism}
 
First, we recall here the following well-known fact.
Let $(\omega,\eta)$ satisfy $d\omega=\eta\wedge\omega$ and $d\eta=0$ on $U$.
On any simply connected open set $V\subset U$ there exists a meromorphic function $\varphi$ on $V$ with $\eta|_V=d\varphi$.
Setting $m:=e^{-\varphi}$, one has $d(m\omega)=0$ on $V$, hence there exists a meromorphic function $F_V$ on $V$ with
$$
dF_V = m\omega,
\qquad
\fol|_V=\ker(\omega)=\ker(dF_V).
$$
In fact, 
Since $d\eta=0$ and $V$ is simply connected, write $\eta=d\varphi$.
Then $dm=-e^{-\varphi}d\varphi=-m\eta$, hence
$$
d(m\omega)=dm\wedge\omega+m\,d\omega=(-m\eta)\wedge\omega+m(\eta\wedge\omega)=0.
$$
Thus $m\omega$ is closed and hence exact on $V$.

\begin{lemma}\label[lemma]{lem:dev}Let $p:\widetilde U\to U$ be the universal cover.
There exists a holomorphic function $\Phi$ on $\widetilde U$ with $p^*\eta=d\Phi$.
Setting $z:=e^\Phi\in \cO^*(\widetilde U)$, one has
$$
\frac{dz}{z}=p^*\eta,
\qquad
z(\gamma\cdot x)=\rho(\gamma)\,z(x)\quad(\gamma\in\pi_1(U)).
$$
\end{lemma}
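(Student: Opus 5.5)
The plan is to build $\Phi$ by integrating $p^*\eta$ along paths on $\widetilde U$, and then to read off both identities. Since $\eta$ is a closed holomorphic $1$-form on $U=X\setminus D$ (its poles lie on $D$), the pull-back $p^*\eta$ is a closed holomorphic $1$-form on $\widetilde U$. Because $\widetilde U$ is simply connected, $H^1_{\mathrm{dR}}(\widetilde U)=0$, so $p^*\eta$ is exact. Concretely, I would fix a lift $\tilde q$ of the base point $q$ and set
\[
\Phi(x):=\int_{\tilde q}^{x}p^*\eta,
\]
where the integral is taken along any path in $\widetilde U$. Path-independence follows from simple connectivity and Stokes. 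Holomorphy of $\Phi$ is local: on a small ball, $\Phi$ differs by a constant from a local holomorphic primitive, as in the discussion preceding the lemma. It follows that $z=e^\Phi$ is holomorphic and nowhere vanishing, and that $dz/z=d\Phi=p^*\eta$.

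For the equivariance, I would take a deck transformation $\gamma\in\pi_1(U,q)$, acting so that $\gamma\cdot\tilde q$ is the endpoint of the lift of $\gamma$ starting at $\tilde q$. The key observation is that $\gamma^*p^*\eta=p^*\eta$, because $p\circ\gamma=p$. Hence $d(\Phi\circ\gamma-\Phi)=0$, and since $\widetilde U$ is connected, $\Phi\circ\gamma-\Phi$ is a constant $c_\gamma$. To identify the constant, I would evaluate at $\tilde q$:
\[
c_\gamma=\int_{\tilde q}^{\gamma\cdot\tilde q}p^*\eta=\int_\gamma\eta,
\]
where the last equality holds because the lifted path projects to the loop $\gamma$. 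Exponentiating gives $z(\gamma\cdot x)=\exp\bigl(\int_\gamma\eta\bigr)z(x)=\rho(\gamma)z(x)$, which is the required identity.

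The only step needing care is this last one: the convention for the deck action. Whether $\pi_1$ acts on the left or on the right, and how a loop is identified with a deck transformation, could a priori change $\rho(\gamma)$ into $\rho(\gamma)^{-1}$. Since $\C^*$ is abelian, $\rho$ factors through $H_1(U,\Z)$ and no ordering issue arises; only the sign convention matters. I would fix the convention that $\gamma\cdot\tilde q$ is the endpoint of the lift of $\gamma$, which makes the formula hold exactly as stated. I would also note that a different choice of $\tilde q$ changes $\Phi$ by an additive constant, hence changes $z$ by a multiplicative constant, and this does not affect either identity.
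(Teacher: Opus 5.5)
Your proposal is correct and is essentially the paper's argument: $p^*\eta$ is exact on the simply connected cover $\widetilde U$, $\Phi\circ\gamma-\Phi$ is constant because $p\circ\gamma=p$, and that constant equals $\int_\gamma\eta$, as you see by integrating along the lift of a loop representing $\gamma$. Your extra care about the deck-action convention, which could at most replace $\rho(\gamma)$ by $\rho(\gamma)^{-1}$, makes explicit a point the paper leaves implicit.
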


\begin{proof}
Since $\widetilde U$ is simply connected and $d(p^*\eta)=p^*(d\eta)=0$, choose $\Phi$ with $d\Phi=p^*\eta$.
For a deck transformation $\gamma$, the difference $\Phi\circ\gamma-\Phi$ is constant, and integrating $p^*\eta$ along a loop representing $\gamma$
shows this constant equals $\int_\gamma \eta$. Exponentiating gives the formula for $z$.
\end{proof}

\begin{proposition}\label[proposition]{prop:canonicalF}Let $\ker(\rho)\subset\pi_1(U)$ be the kernel of the character $\rho$.
The developing coordinate \(z\) determines canonically a morphism of analytic stacks
$
F:U\longrightarrow [\mathbb{C}^*/\Gamma].
$
More precisely, if
$
P:=\widetilde U/\ker(\rho),
$
then \(P\to U\) is a principal right \(\Gamma\)-bundle, the map \(z\) descends to a \(\Gamma\)-equivariant holomorphic map
$
\bar z:P\longrightarrow \mathbb{C}^*,
$
and the pair \((P,\bar z)\) is the corresponding \(U\)-point of the quotient stack \([\mathbb{C}^*/\Gamma]\).
\end{proposition}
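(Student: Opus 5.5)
We will build everything from \Cref{lem:dev}. The deck group $\pi_1(U)$ acts freely and properly discontinuously on $\widetilde U$, so the normal subgroup $K:=\ker(\rho)$ also does, and $P:=\widetilde U/K\to U$ is a Galois covering with deck group $\pi_1(U)/K$. The first isomorphism theorem identifies $\pi_1(U)/K\cong\im(\rho)=\Gamma$ via $\bar\rho$. We make $P$ a principal right $\Gamma$-bundle by letting $g\in\Gamma$ act through $p\cdot g:=\bar\rho^{-1}(g)^{-1}\cdot p$. Since $\Gamma$ is abelian, the inversion is only a convention, and it is chosen to match the sign in \Cref{def:quotient}. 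Local triviality holds because $P\to U$ is a covering on which the fibre group acts simply transitively: over an evenly covered simply connected $V\subset U$ we have $P|_V\cong V\times\Gamma$.

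Next we descend $z$. By \Cref{lem:dev}, $z(\gamma x)=\rho(\gamma)z(x)$, so for $\gamma\in K$ we get $z(\gamma x)=z(x)$. Hence $z$ factors through a holomorphic map $\bar z:P\to\C^*$, which is holomorphic because $\widetilde U\to P$ is a local biholomorphism. For the class $[\gamma]$ the formula becomes $\bar z([\gamma]\cdot p)=\rho(\gamma)\bar z(p)$. Translated into the right action, this reads $\bar z(p\cdot g)=g^{-1}\bar z(p)$, which is exactly the condition in \Cref{def:quotient}. So $(P,\bar z)$ is an object of $[\C^*/\Gamma]$ over $U$, that is, a morphism $F:U\to[\C^*/\Gamma]$.

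Finally we address canonicity. The primitive $\Phi$ is unique up to an additive constant $c$, which changes $z$ to $e^{c}z$. Only when $e^c\in\Gamma$ does this produce an isomorphic object, and the isomorphism is given by the deck action. The choice of base point and lift in $\widetilde U$ changes $\rho$ only up to conjugation, which is trivial here since $\C^*$ is abelian, and it changes $z$ by the deck action, which again yields a canonically isomorphic pair $(P,\bar z)$. We will state precisely that "canonical" means: once the normalisation of $z$ is fixed, the choices of universal cover and base point affect $(P,\bar z)$ only up to unique isomorphism. The general rescaling $e^c\notin\Gamma$ gives the morphism post-composed with multiplication by $e^c$ on $[\C^*/\Gamma]$.

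The main obstacle is bookkeeping rather than substance. We must get the left/right action and inverse conventions consistent with \Cref{def:quotient}, and we must state the scope of "canonical" so that it is correct given the constant ambiguity in $\Phi$.
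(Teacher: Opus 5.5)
Your proposal is correct and follows the paper's argument: descend $z$ along $\widetilde U\to\widetilde U/\ker(\rho)$ using \Cref{lem:dev}, identify $\pi_1(U)/\ker(\rho)\cong\Gamma$ via $\rho$, check equivariance, and read off the $U$-point of $[\C^*/\Gamma]$. Your additional bookkeeping refines the paper's proof without changing its route. First, your convention $p\cdot g:=\bar\rho^{-1}(g)^{-1}\cdot p$ gives $\bar z(p\cdot g)=g^{-1}\bar z(p)$, which matches \Cref{def:quotient} exactly; the paper's displayed computation gives $g\,\bar z$, an inversion that is harmless only because $\Gamma$ is abelian. Second, your analysis of the additive constant in $\Phi$ makes precise the sense of ``canonical'' that the paper merely asserts.
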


\begin{proof}
By \Cref{lem:dev}, the developing coordinate
$
z:\widetilde U\longrightarrow \mathbb{C}^*
$
satisfies
\[
z(\gamma\cdot x)=\rho(\gamma)\,z(x)
\qquad
(\gamma\in \pi_1(U),\ x\in \widetilde U).
\]
In particular, if \(\gamma\in \ker(\rho)\), then \(\rho(\gamma)=1\), and therefore
$
z(\gamma\cdot x)=z(x).
$
Thus \(z\) is constant on \(\ker(\rho)\)-orbits and descends to a holomorphic map
\[
\bar z:P=\widetilde U/\ker(\rho)\longrightarrow \mathbb{C}^*.
\]
Since \(\ker(\rho)\) is a normal subgroup of \(\pi_1(U)\), the quotient group
$
\pi_1(U)/\ker(\rho)
$
acts on \(P\). By definition of \(\Gamma=\operatorname{im}(\rho)\), the homomorphism \(\rho\) induces an isomorphism
\[
\pi_1(U)/\ker(\rho)\xrightarrow{\sim}\Gamma.
\]
Through this identification, \(P\to U\) becomes a principal right \(\Gamma\)-bundle.
The descended map \(\bar z\) is \(\Gamma\)-equivariant. Indeed, let \(g\in \Gamma\), and choose \(\gamma\in \pi_1(U)\) such that \(\rho(\gamma)=g\). If \([x]\in P\) denotes the class of \(x\in \widetilde U\), then
\[
\bar z([x]\cdot g)
=
\bar z([\gamma\cdot x])
=
z(\gamma\cdot x)
=
\rho(\gamma)\,z(x)
=
g\,\bar z([x]).
\]
The situation is summarized by the commutative diagram
\[
\begin{tikzcd}[column sep=large,row sep=large]
\widetilde U \arrow[r,"z"] \arrow[d,two heads] & \mathbb{C}^* \\
P=\widetilde U/\ker(\rho) \arrow[ur,swap,"\bar z"]
\end{tikzcd}
\]
together with the right \(\Gamma\)-action on \(P\) and the standard multiplicative action of \(\Gamma\) on \(\mathbb{C}^*\).
By the defining description of the quotient stack \([\mathbb{C}^*/\Gamma]\), a morphism
$
U\longrightarrow [\mathbb{C}^*/\Gamma]
$
is equivalent to the datum of a principal right \(\Gamma\)-bundle over \(U\) together with a \(\Gamma\)-equivariant map from that bundle to \(\mathbb{C}^*\). The pair \((P,\bar z)\) therefore defines a morphism
$
F:U\longrightarrow [\mathbb{C}^*/\Gamma].
$
Equivalently, one may view \((P,\bar z)\) as the following cartesian presentation of \(F\):
\[
\begin{tikzcd}[column sep=large,row sep=large]
P \arrow[r,"\bar z"] \arrow[d] & \mathbb{C}^{*} \arrow[d] \\
U \arrow[r,"F"] & {[\mathbb{C}^{*}/\Gamma]}
\end{tikzcd}
\]
Since both \(P\) and \(\bar z\) are canonically determined by the equivariance relation for \(z\), the morphism \(F\) is canonical.
\end{proof}

\subsection{The linear classifying base, the translational class, and the affine torsor}

We now make explicit the geometric relation among the linear monodromy, the multiplicative developing coordinate, and the affine torsor determined by the full affine monodromy. In the following, denote $p:\widetilde U\to U$ the universal cover.

\begin{proposition}\label[proposition]{prop:linear-base-and-shadow}
Let
$
\rho:\pi_1(U,q)\longrightarrow \C^*
$
be the linear part of the affine monodromy, and set
$
\Gamma:=\operatorname{im}(\rho).
$
Then:
\begin{enumerate}[label=\textup{(\roman*)},leftmargin=2.8em]
\item The representation $\rho$ determines a classifying morphism
$
b_\rho:U\longrightarrow B\Gamma=[*/\Gamma],
$
which classifies the underlying $\Gamma$-torsor of the linear monodromy.

\item The associated rank-one local system
$$
L_\rho:=\widetilde U\times_\rho \C
$$
is the pullback along $b_\rho$ of the universal linear object
$
[\C/\Gamma]\longrightarrow B\Gamma,
$
where $\Gamma$ acts on $\C$ by multiplication.

\item The stack
$
[\C^*/\Gamma]
$
is the complement of the zero section inside $[\C/\Gamma]$. Accordingly, a morphism
$
F:U\longrightarrow [\C^*/\Gamma]
$
is equivalent to the datum of the classifying map $b_\rho$ together with a nowhere-vanishing section of the pullback local system $L_\rho$.
\end{enumerate}
\end{proposition}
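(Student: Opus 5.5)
The plan is to work throughout with the description of \Cref{def:quotient}: a morphism $T\to[Y/\Gamma]$ is the same as a right principal $\Gamma$-bundle $P\to T$ together with an equivariant map $P\to Y$. Each of the three items then reduces to an explicit statement about torsors. The single torsor used everywhere is $P=\widetilde U/\ker(\rho)$ from \Cref{prop:canonicalF}.

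For (i), I define $b_\rho$ as the morphism $U\to B\Gamma=[*/\Gamma]$ given by $P\to U$ together with the unique map $P\to *$. Its equivariance is automatic. Replacing the base point $q$, or conjugating $\rho$, changes $P$ only by an isomorphism of $\Gamma$-torsors, so $b_\rho$ is well defined up to $2$-isomorphism. This gives (i).

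For (ii), I first identify the pullback along $b_\rho$ of the projection $[\C/\Gamma]\to B\Gamma$ with the associated bundle $P\times_\Gamma\C$. The identification comes from the standard $2$-cartesian square $[Y/\Gamma]\times_{B\Gamma}U\simeq P\times_\Gamma Y$, which holds because $Y\to[Y/\Gamma]$ and $*\to B\Gamma$ are both the universal torsor. I then compare $P\times_\Gamma\C$ with $L_\rho=\widetilde U\times_\rho\C$. The map $\widetilde U\times\C\to P\times\C$ is the quotient by $\ker(\rho)$, and $\ker(\rho)$ acts trivially on $\C$ through $\rho$. Hence the two quotients coincide: first quotient by $\ker\rho$, then by $\pi_1/\ker\rho\cong\Gamma$. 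The transition functions are locally constant, so the result is a local system, and it is $L_\rho$.

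For (iii), $\C^*\subset\C$ is a $\Gamma$-stable open subset whose complement is the fixed point $0$. Therefore $[\C^*/\Gamma]\to[\C/\Gamma]$ is an open substack whose complement is the zero section $[\{0\}/\Gamma]=B\Gamma$. Now take a morphism $F:U\to[\C^*/\Gamma]$, that is, a pair $(P',\phi)$. Composing with $[\C^*/\Gamma]\to B\Gamma$ gives its classifying map, and $\phi$ becomes a nowhere-vanishing section of $P'\times_\Gamma\C$. Conversely, given $b_\rho$ and a nowhere-vanishing section $s$ of $L_\rho=P\times_\Gamma\C$, the section lifts to the equivariant map $\phi:P\to\C^*$, $\phi(p)=$ the coordinate of $s(\pi(p))$ in the frame $p$. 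These two constructions are mutually inverse up to $2$-isomorphism. Under this correspondence the canonical $F$ of \Cref{prop:canonicalF} corresponds to the section determined by $\bar z$.

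The main obstacle is interpretive: in (iii), "equivalent" has to be read with the torsor fixed to be $b_\rho$, rather than as an arbitrary morphism to $[\C^*/\Gamma]$. I would state the equivalence as one between the groupoid of lifts of $b_\rho$ along $[\C^*/\Gamma]\to B\Gamma$ and the set of nowhere-vanishing sections of $L_\rho$. The key point to check is that this groupoid is discrete. Automorphisms of a lift are constant elements $a\in\Gamma$ fixing $\phi$, since $U$ is connected. Because $\Gamma$ acts freely on $\C^*$, any such $a$ must be $1$. The remaining sign and convention issues (left versus right actions, $\gamma^{-1}$ in \Cref{def:quotient}) I would handle once, by inverting via $\gamma\mapsto\gamma^{-1}$, which is harmless since $\Gamma$ is abelian.
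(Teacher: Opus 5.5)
Your proposal is correct and follows the paper's own argument. The torsor $P=\widetilde U/\ker(\rho)$ defines $b_\rho$; the associated bundle $P\times_\Gamma\C$ is identified both with $L_\rho$ and with the pullback of $[\C/\Gamma]\to B\Gamma$; and $\Gamma$-equivariant maps $P\to\C^*$ are the same as nowhere-vanishing sections. Your explicit $2$-cartesian square, and your reading of (iii) as an equivalence between lifts of $b_\rho$ and nowhere-vanishing sections of $L_\rho$, are more careful than the paper: its proof tacitly identifies the torsor underlying an arbitrary $F$ with $P_\rho$.
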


\begin{proof}
Let
$
P_\rho:=\widetilde U/\ker(\rho)\longrightarrow U.
$
Since $\ker(\rho)$ is a normal subgroup of $\pi_1(U,q)$, the quotient group
$
\pi_1(U,q)/\ker(\rho)
$
is canonically identified with $\Gamma$, and therefore $P_\rho\to U$ is a principal right $\Gamma$-bundle. By the defining property of the classifying stack $B\Gamma$, this bundle determines the classifying morphism
$
b_\rho:U\longrightarrow B\Gamma.
$
This proves \textup{(i)}.

For \textup{(ii)}, the associated bundle of $P_\rho$ via the standard multiplicative action of $\Gamma$ on $\C$ is
$
P_\rho\times^{\Gamma}\C,
$
which is canonically identified with $\widetilde U\times_\rho\C=L_\rho$. On the universal stack $B\Gamma$, the same construction gives the universal linear object $[\C/\Gamma]\to B\Gamma$, so $L_\rho$ is its pullback along $b_\rho$.

Finally, \textup{(iii)} follows from the quotient-stack description. The open substack $[\C^*/\Gamma]$ is obtained from $[\C/\Gamma]$ by removing the zero section. Hence a morphism
$
F:U\longrightarrow [\C^*/\Gamma]
$
amounts to a principal right $\Gamma$-bundle over $U$ together with a $\Gamma$-equivariant map from that bundle to $\C^*$. Equivalently, it is the same as a section of the associated bundle $L_\rho$ which is nowhere zero. This proves the claim.
\end{proof}

\begin{proposition}\label[proposition]{prop:translational-class-and-affine-torsor-bridge}
Let
$
\xi:\pi_1(U,q)\longrightarrow \mathrm{Aff}(\C)=\C^*\ltimes \C
$
be the affine monodromy representation, and let
$
\rho:\pi_1(U,q)\longrightarrow \C^*
$
be its linear part. Writing
$
\xi(\gamma)(z)=\rho(\gamma)z+\tau(\gamma),
$
the translational part $\tau$ satisfies
$
\tau(\gamma_1\gamma_2)=\tau(\gamma_1)+\rho(\gamma_1)\tau(\gamma_2).
$
Hence $\tau$ is a $1$-cocycle with values in the locally constant sheaf $\C_\rho$ associated with $L_\rho$, and defines a class
$
[\tau]\in H^1(U,\C_\rho).
$
Moreover:
\begin{enumerate}[label=\textup{(\roman*)},leftmargin=2.8em]
\item The affine monodromy determines the affine torsor
$
\mathscr T_\xi:=\widetilde U\times_{\xi}\C
$
over $U$.

\item The torsor $\mathscr T_\xi$ is naturally a torsor under the additive local system $L_\rho$.

\item The isomorphism class of $\mathscr T_\xi$ is classified by the cohomology class $[\tau]\in H^1(U,\C_\rho)$.

\item The class $[\tau]$ is trivial if and only if, after an affine change of coordinate on $\C$, the representation $\xi$ reduces to its multiplicative part.
\end{enumerate}
\end{proposition}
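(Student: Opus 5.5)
The plan is to derive everything from the homomorphism property of $\xi$, and then to pass to the standard dictionary between twisted affine bundles and $H^1$ with twisted coefficients.

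First, the cocycle identity. Since $\xi$ is a homomorphism into $\Aff(\C)$, I expand $\xi(\gamma_1\gamma_2)=\xi(\gamma_1)\circ\xi(\gamma_2)$:
\[
\rho(\gamma_1)\bigl(\rho(\gamma_2)z+\tau(\gamma_2)\bigr)+\tau(\gamma_1).
\]
Comparing linear parts gives $\rho(\gamma_1\gamma_2)=\rho(\gamma_1)\rho(\gamma_2)$. Comparing constant terms gives $\tau(\gamma_1\gamma_2)=\tau(\gamma_1)+\rho(\gamma_1)\tau(\gamma_2)$. This is exactly the crossed-homomorphism condition for the $\pi_1(U,q)$-module $\C_\rho$.

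Next, I pass to sheaf cohomology. I will use the standard identification $H^1(U,\C_\rho)\cong H^1(\pi_1(U,q),\C_\rho)$, valid for a locally constant sheaf on a connected, locally simply connected space. The point is that the Cartan--Leray spectral sequence of $\widetilde U\to U$ has $H^0(\widetilde U,\C)=\C$ and $H^1(\widetilde U,\C)=0$, since $\widetilde U$ is simply connected. Hence the crossed homomorphism $\tau$ defines a class $[\tau]$. Changing the developing coordinate by a translation $z\mapsto z+c$ conjugates $\xi$ and replaces $\tau$ by $\tau+(\rho-1)c$, a coboundary, so $[\tau]$ is well defined.

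For (i), I define $\mathscr T_\xi=(\widetilde U\times\C)/\pi_1(U)$, with $\gamma\cdot(x,z)=(\gamma x,\xi(\gamma)z)$. The action is free and properly discontinuous because it is so on the first factor, so $\mathscr T_\xi$ is a flat $\C$-bundle over $U$ with affine transition maps.

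For (ii), $L_\rho=\widetilde U\times_\rho\C$, and I define the action fibrewise by $[x,v]+[x,z]:=[x,z+v]$. This is well defined because
\[
\xi(\gamma)(z+\rho(\gamma)^{-1}\cdot\rho(\gamma)v)=\xi(\gamma)z+\rho(\gamma)v,
\]
which is precisely the linear part of the affine map. Concretely, the action is compatible because $\xi(\gamma)(z+v)=\xi(\gamma)z+\rho(\gamma)v$. The action is simply transitive on fibres, so $\mathscr T_\xi$ is an $L_\rho$-torsor in the locally constant, flat sense.

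For (iii), I use the usual classification of flat torsors under a local system by $H^1(U,\C_\rho)$. Any flat $L_\rho$-torsor pulls back to a trivial torsor on $\widetilde U$. A choice of trivialization $\widetilde U\times\C$ turns the deck action into affine maps with linear part $\rho$, hence into a crossed homomorphism. Changing the trivialization by a constant $c$ changes this crossed homomorphism by a coboundary. For $\mathscr T_\xi$ this crossed homomorphism is $\tau$ itself. Conversely, $[\tau]$ recovers the torsor up to isomorphism. I expect the only real care needed here is in identifying Čech $H^1(U,\C_\rho)$ with group cohomology and matching conventions, which I would cite as standard rather than prove.

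For (iv), $[\tau]=0$ means there is $c\in\C$ with $\tau(\gamma)=(1-\rho(\gamma))c$ for all $\gamma$. Conjugating by $z\mapsto z-c$ then gives
\[
z\mapsto \rho(\gamma)(z+c)-c+\tau(\gamma)\cdot 0,
\]
that is, $\xi'(\gamma)(z)=\rho(\gamma)z$. Equivalently, after the affine change of coordinate, the point $c$ becomes a common fixed point placed at $0$. The converse direction reads this computation backwards.
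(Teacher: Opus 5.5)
Your proposal is correct and follows the paper's proof essentially step by step: the cocycle identity from the composition law, the quotient construction of $\mathscr T_\xi$, the fibrewise $L_\rho$-action, classification by trivialising on $\widetilde U$ with changes of origin giving coboundaries, and conjugating away a coboundary for (iv); beyond the paper, you also justify $H^1(U,\C_\rho)\cong H^1(\pi_1(U,q),\C_\rho)$, which the paper leaves implicit. The only blemish is the intermediate display in (iv), which is wrong as written and should read $\rho(\gamma)(z+c)+\tau(\gamma)-c=\rho(\gamma)z$ when $\tau(\gamma)=(1-\rho(\gamma))c$, though your conclusion there (that $c$ is a common fixed point moved to $0$) is correct.
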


\begin{proof}
The identity
$
\tau(\gamma_1\gamma_2)=\tau(\gamma_1)+\rho(\gamma_1)\tau(\gamma_2)
$
is obtained by comparing the translational parts in the composition law
$$
\xi(\gamma_1\gamma_2)=\xi(\gamma_1)\circ\xi(\gamma_2).
$$
Thus $\tau$ is a $1$-cocycle with values in the $\rho$-twisted additive local system, and therefore determines a cohomology class
$
[\tau]\in H^1(U,\C_\rho).
$
The affine monodromy acts on $\widetilde U\times \C$ by
$$
\gamma\cdot(x,z)=\bigl(\gamma x,\,\rho(\gamma)z+\tau(\gamma)\bigr),
$$
and the quotient is exactly the affine torsor
$$
\mathscr T_\xi=\widetilde U\times_{\xi}\C.
$$
This proves \textup{(i)}.
For \textup{(ii)}, if two points of the same fibre are represented by $(x,z)$ and $(x,w)$, then their difference is $z-w\in \C$. Under the deck action this difference transforms as
$$
(\rho(\gamma)z+\tau(\gamma))-(\rho(\gamma)w+\tau(\gamma))=\rho(\gamma)(z-w),
$$
which is exactly the transformation rule for the local system $L_\rho$. Hence $\mathscr T_\xi$ is naturally a torsor under $L_\rho$.
For \textup{(iii)}, the descent of the affine line along the covering $\widetilde U\to U$ is encoded by the translational cocycle $\tau$, with coefficients in the additive local system $\C_\rho$. Two such cocycles define isomorphic affine torsors if and only if they differ by a coboundary, corresponding to a change of origin in the affine coordinate. Therefore the isomorphism class of $\mathscr T_\xi$ is classified by $[\tau]$.
Finally, \textup{(iv)} is the same statement in concrete form: the class $[\tau]$ vanishes if and only if $\tau$ is a coboundary, and this is equivalent to the existence of an affine change of coordinate on $\C$ which removes the translational part of $\xi$. In that case the monodromy becomes purely multiplicative.
\end{proof}

\begin{corollary}\label[corollary]{cor:affine-linear-bridge}
Let
$
H:=\operatorname{im}(\xi)\subset \mathrm{Aff}(\C)
$  and $
\Gamma:=\operatorname{im}(\rho)\subset \C^*.
$
Assume that affine developing data have been chosen, so that they define a stacky morphism
$$
\mathrm{Dev}_\xi:U\longrightarrow \mathscr B_\xi=[\C/H].
$$
Then the affine and multiplicative developing objects fit into the diagram
\[
\begin{tikzcd}[column sep=large,row sep=large]
& \mathscr B_\xi\arrow[d] \\
U
\arrow[ur,bend left=18,"\mathrm{Dev}_\xi"]
\arrow[r,"b_\rho"']
\arrow[dr,bend right=18,"F"']
& B\Gamma \\
& \;[\C^*/\Gamma] \arrow[u]
\end{tikzcd}
\]
where:
\begin{enumerate}[label=\textup{(\roman*)},leftmargin=2.8em]
\item $b_\rho$ classifies the linear monodromy;
\item $F$ classifies the multiplicative developing coordinate, equivalently the linear local system together with a nowhere-vanishing section;
\item $\mathrm{Dev}_\xi$ classifies the full affine developing object, equivalently the linear local system together with its translational torsor class.
\end{enumerate}
Thus the affine and multiplicative theories meet over the same linear base
$
B\Gamma=[*/\Gamma].
$
Here $\mathscr B_\xi=[\C/H]$ keeps the translational affine data, whereas $[\C^*/\Gamma]$ keeps only the multiplicative linear coordinate.
\end{corollary}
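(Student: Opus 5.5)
The plan is to build all three morphisms from torsors over $U$ and then check, at the level of torsors and equivariant maps, that they become isomorphic after mapping to $B\Gamma$. The data are the universal cover $p:\widetilde U\to U$ and the quotients $\widetilde U/\ker(\xi)$ and $P=\widetilde U/\ker(\rho)$.

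\emph{The affine morphism.} Since $\operatorname{im}(\xi)=H$, the group $\pi_1(U,q)/\ker(\xi)\cong H$ acts freely on $Q:=\widetilde U/\ker(\xi)$, so $Q\to U$ is a principal right $H$-bundle. The chosen affine developing data give a primitive $G$ of $e^{-\Phi}p^*\omega$ on $\widetilde U$ that transforms by $G(\gamma x)=\xi(\gamma)(G(x))$. Hence $G$ descends to an $H$-equivariant map $\bar G:Q\to\C$, and the pair $(Q,\bar G)$ is the morphism $\mathrm{Dev}_\xi$. This repeats the argument of \Cref{prop:canonicalF} verbatim.

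\emph{The vertical arrows to $B\Gamma$.} The linear-part homomorphism $\lambda:H\to\C^*$, $az+b\mapsto a$, has image $\lambda(H)=\rho(\pi_1(U,q))=\Gamma$ because $\rho=\lambda\circ\xi$. I define $\mathscr B_\xi\to B\Gamma$ as the composite
\[
[\C/H]\to BH\xrightarrow{B\lambda}B\Gamma .
\]
Concretely, this sends an object $(T',\phi)$ to the induced torsor $T'\times^{H}\Gamma$. The arrow $[\C^*/\Gamma]\to B\Gamma$ is the structure map $(P',\phi)\mapsto P'$.

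\emph{The lower triangle.} The composite of $F$ with the structure map is, by construction, the torsor $P$. This is exactly $b_\rho$ by \Cref{prop:linear-base-and-shadow}(i).

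\emph{The upper triangle.} Here I have to produce a canonical $\Gamma$-equivariant isomorphism
\[
Q\times^{H}\Gamma\;\xrightarrow{\ \sim\ }\;P .
\]
The map is $[x,g]\mapsto [x]\cdot g$. It is well defined because $\ker(\xi)\subset\ker(\rho)$, so $\widetilde U/\ker(\xi)\to\widetilde U/\ker(\rho)$ is $\lambda$-equivariant. It is a map of principal $\Gamma$-bundles over $U$, hence an isomorphism. This isomorphism is the $2$-cell filling the upper triangle.

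\emph{Items (i)--(iii).} Item (i) is the definition of $b_\rho$. Item (ii) is \Cref{prop:linear-base-and-shadow}(iii). For (iii), write $Q\times^H\C\cong\widetilde U\times_\xi\C=\mathscr T_\xi$, using that $H$ acts on $\C$ through $\xi$. Then \Cref{prop:translational-class-and-affine-torsor-bridge} identifies $\mathrm{Dev}_\xi$ with three pieces of data: the linear local system $L_\rho$, pulled back along $b_\rho$; the $L_\rho$-torsor $\mathscr T_\xi$, with class $[\tau]$; and the section induced by $\bar G$.

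\emph{Expected obstacle.} The main difficulty is making the $2$-commutativity precise. One has to verify that the isomorphism $Q\times^H\Gamma\cong P$ is compatible with the structure of the diagram. There is also a subtlety in (iii): the section $\bar G$ is part of the data of $\mathrm{Dev}_\xi$, whereas $[\tau]$ records only the torsor class. I will handle this by stating (iii) as ``local system plus torsor, together with a section of that torsor''. I will also remark that the final sentence of the statement is a reformulation of the two triangles just established.
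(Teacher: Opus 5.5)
Your proposal is correct and follows essentially the same route as the paper. The arrow $\mathscr B_\xi\to B\Gamma$ is induced by the linear-part projection $H\to\Gamma$, the arrow $[\C^*/\Gamma]\to B\Gamma$ is the structure map, $\mathrm{Dev}_\xi$ comes from the $H$-equivariant developing map on the $H$-torsor, and (i)--(iii) are read off from \Cref{prop:linear-base-and-shadow,prop:translational-class-and-affine-torsor-bridge}. You are more explicit than the paper, which never writes down the $2$-cell $Q\times^{H}\Gamma\cong P$ filling the upper triangle, whereas the paper adds a point you omit: in general there is no canonical morphism $\mathscr B_\xi\to[\C^*/\Gamma]$, since it would require an $H$-equivariant holomorphic map $\C\to\C^*$ over $H\to\Gamma$, and this is its justification for saying the two theories meet only over $B\Gamma$.
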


\begin{proof}
The projection
$
H\longrightarrow \Gamma,
\ 
(a,b)\longmapsto a,
$
induces the morphism
$$
\mathscr B_\xi=[\C/H]\longrightarrow B\Gamma=[*/\Gamma],
$$
while the structure morphism
$
[\C^*/\Gamma]\longrightarrow B\Gamma
$
is induced by the unique $\Gamma$-equivariant map $\C^*\to *$. The morphism $b_\rho$ was constructed in \Cref{prop:linear-base-and-shadow}, and the multiplicative developing map $F$ is therefore a lift of $b_\rho$ through the open part $[\C^*/\Gamma]$ of the universal linear object.
Likewise, once affine developing data are fixed, they define an $H$-equivariant map from the corresponding principal $H$-bundle to $\C$, hence a morphism
$$
\mathrm{Dev}_\xi:U\to [\C/H]=\mathscr B_\xi.
$$
Statements \textup{(i)}--\textup{(iii)} are just the interpretations established in \Cref{prop:linear-base-and-shadow,prop:translational-class-and-affine-torsor-bridge}.
Finally, a canonical morphism $\mathscr B_\xi\to [\C^*/\Gamma]$ would require, on the level of atlases, an $H$-equivariant holomorphic map
$
\C\longrightarrow \C^*
$
compatible with the projection $H\to \Gamma$. In general no such map exists,      because the translational part of $H$ does not preserve the multiplicative structure of $\C^*$. Thus the natural comparison between the two theories is through the common base $B\Gamma$.
\end{proof}

\subsection{Leaf stacks}
Let $U^{\mathrm{sm}}=U\setminus \mathrm{Sing}(\fol)$.
Following McQuillan \cite[p.~117--118]{McQuillan}, one may encode the foliation on $U^{\mathrm{sm}}$ by its holonomy or homotopy groupoid and consider the corresponding quotient stack.
For the purposes of the present article, however, the only property we use is the standard universal property of the quotient stack: a morphism out of
\(
[U^{\mathrm{sm}}/\fol_{\mathrm{hom}}]
\)
is equivalently a morphism out of $U^{\mathrm{sm}}$ equipped with descent data along the foliation groupoid. Equivalently, leafwise-constant morphisms factor essentially uniquely through the quotient; compare \cite{BehrendXu,Lerman,MoerdijkPronk,Noohi05}.

To keep the leaf-stack discussion self-contained, we isolate the \emph{minimal} hypotheses on a foliation groupoid needed in this work.
They are standard and are satisfied by the holonomy groupoid in the smooth category; in the analytic category one may either
assume existence of an analytic holonomy/homotopy groupoid, or replace it by a Haefliger-type groupoid (which always exists) and work Morita-invariantly.

\begin{definition} \label[definition]{def:fol-groupoid}
Let $\fol$ be a nonsingular codimension one foliation on a complex manifold $M$.
A \emph{groupoid realisation} of $\fol$ is a Lie/analytic groupoid
$
\mathcal G \rightrightarrows M
$
together with:
\begin{enumerate}
\item (\emph{Orbit condition}) the orbits of $\mathcal G$ are exactly the leaves of $\fol$;
\item (\emph{Local submersion}) the source and target maps $s,t:\mathcal G\to M$ are local submersions;
\item (\emph{Local connectivity}) the $s$-fibres are locally connected (so that ``locally constant'' descent data are forced to be constant on connected domains);
\item (\emph{Morita invariance}) any two such groupoids for the same foliation are Morita equivalent.
\end{enumerate}
\end{definition}

In practice one may take $\mathcal G$ to be the \emph{holonomy groupoid}, when it exists in the analytic category under consideration, or the \emph{Haefliger groupoid} of local transverse germs.
Both presentations are Morita equivalent and therefore define the same quotient stack; see for instance \cite{BehrendXu,Lerman,MoerdijkPronk,Noohi04}.
Equivalently, one may describe the same object as the stackification of the prestack of leafwise-constant maps.
Since every statement below depends only on the universal property of the quotient, the choice of presenter is immaterial.

Let
\[
\mathcal G=
\left(
\begin{tikzcd}[ampersand replacement=\&, column sep=large]
R \arrow[r, shift left=0.7ex, "t"] \arrow[r, shift right=0.7ex, swap, "s"] \& M
\end{tikzcd}
\right),
\qquad
e:M\to R,
\qquad
m:R\times_{s,M,t}R\to R,
\]
be a foliation groupoid over the smooth locus \(M=U^{\mathrm{sm}}\).
Assume that the quotient stack \([M/\mathcal G]\) exists in the chosen analytic site, and write
$
q:M\longrightarrow [M/\mathcal G]
$
for the canonical atlas.

\begin{proposition} \label[proposition]{prop:leaf-stack-universal}
For every analytic stack \(\mathcal Y\), composition with \(q\) induces an equivalence between
\(
\Hom([M/\mathcal G],\mathcal Y)
\)
and the groupoid of pairs \((F,\alpha)\), where
$
F:M\to \mathcal Y
$
is a morphism of stacks and
$
\alpha:s^{*}F \Rightarrow t^{*}F
$
is a \(2\)-isomorphism on \(R\), subject to the unit and cocycle conditions
$
e^{*}\alpha=\mathrm{id}_{F}
$
and
$
m^{*}\alpha=\pr_{2}^{*}\alpha\circ \pr_{1}^{*}\alpha$ on 
$R\times_{s,M,t}R.
$
In particular, a morphism \(F:M\to\mathcal Y\) factors through \(q\), uniquely up to unique \(2\)-isomorphism, if and only if it is constant along the \(\mathcal G\)-orbits in the sense of such descent data.
Equivalently, one has a \(2\)-cartesian diagram
\[
\begin{tikzcd}[column sep=large,row sep=large]
M \arrow[r,"F"] \arrow[d,"q"'] &
\mathcal Y \\
{[M/\mathcal G]} \arrow[ur,dashed,swap,"\overline F"]
\end{tikzcd}
\]
together with a \(2\)-isomorphism
$
F \Rightarrow \overline F\circ q
$
if and only if \(F\) is equipped with a descent datum \(\alpha\) as above.
\end{proposition}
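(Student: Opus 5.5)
The universal property is essentially the definition of $[M/\mathcal G]$ as the stackification of the prestack of $\mathcal G$-torsors, that is, as a colimit of the simplicial object given by the nerve $R\times_{s,M,t}R \rightrightarrows R \rightrightarrows M$. I will deduce the equivalence from the fact that the atlas $q:M\to[M/\mathcal G]$ is an effective epimorphism of stacks with $M\times_{[M/\mathcal G]}M\simeq R$. Once this is in place, the claim is the standard statement that morphisms out of the target of a covering are the same as morphisms out of the source equipped with descent data along the fibre product.

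\textbf{Step 1: the \v{C}ech nerve of $q$ is the nerve of $\mathcal G$.} I will check that $q$ is a representable surjective local submersion. This follows from the local submersion hypothesis on $s,t$ in \Cref{def:fol-groupoid}. I will then identify the canonical $2$-cartesian square, giving $M\times_{[M/\mathcal G]}M\simeq R$ with projections $s,t$. Iterating, the triple fibre product is $R\times_{s,M,t}R$, with face maps $\pr_1,\pr_2,m$. The degeneracy is $e$. The canonical $2$-isomorphism $q\circ s\Rightarrow q\circ t$ is the tautological one, since an arrow $r$ defines an isomorphism of trivial torsors. It satisfies the unit and cocycle identities because of the groupoid axioms.

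\textbf{Step 2: the functor and its inverse.} Composition with $q$ sends $\overline F$ to $F=\overline F\circ q$. The descent datum $\alpha$ is obtained by whiskering $\overline F$ with the tautological $2$-isomorphism of Step 1. Conversely, given $(F,\alpha)$, I construct $\overline F$ on $T$-points. An object of $[M/\mathcal G]$ over $T$ is a principal $\mathcal G$-bundle $P\to T$ with anchor $P\to M$. Pulling $F$ back to $P$ gives an object of $\mathcal Y(P)$. The action of $\mathcal G$ on $P$, together with $\alpha$, provides a descent datum for this object along the cover $P\to T$, relative to $P\times_T P\cong P\times_{M}R$. The cocycle condition on $\alpha$ is exactly the cocycle condition required here. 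Since $\mathcal Y$ is a stack for the analytic étale topology, the object descends to $\mathcal Y(T)$, uniquely up to unique isomorphism. Functoriality in $T$ and in $(F,\alpha)$ follows from the uniqueness part of descent. Checking that the two constructions are quasi-inverse reduces, on the trivial torsor $M\to M$, to the identity $\overline F\circ q\simeq F$, and in general to effective descent again.

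\textbf{Step 3: consequences.} The factorization statement and the $2$-cartesian diagram are reformulations of essential surjectivity together with full faithfulness. Full faithfulness says that $2$-morphisms $\overline F\Rightarrow\overline F'$ correspond to $2$-morphisms $F\Rightarrow F'$ compatible with $\alpha,\alpha'$. This holds because $\Hom$-sheaves of a stack satisfy descent along the epimorphism $q$. This yields uniqueness up to unique $2$-isomorphism.

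The main obstacle is Step 1 in the analytic setting, namely showing that $q$ is a covering whose fibre product is exactly $R$. This requires the existence of $[M/\mathcal G]$ as a stack and local sections of principal $\mathcal G$-bundles. I will handle it by invoking the standing assumption that the quotient stack exists, and by using the local submersion hypothesis to produce local sections. The Morita invariance hypothesis then guarantees that the result does not depend on the chosen presenter, as in \cite{BehrendXu,Noohi05}.
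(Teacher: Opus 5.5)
Your proposal is correct and follows the same route as the paper, which simply invokes the standard descent/universal property of the quotient stack of a groupoid (citing Olsson, Ch.~5) and unpacks the unit and cocycle conditions. Your Steps 1--3 spell out the argument behind that citation: you identify the \v{C}ech nerve of $q$ with the nerve of $\mathcal G$, and then descend along the torsor $P\to T$ using that $\mathcal Y$ is a stack. Like the paper, you rely on the standing assumption that $[M/\mathcal G]$ exists and on the local submersion hypothesis to obtain local sections.
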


\begin{proof}
This is the standard universal property of the quotient stack associated with a groupoid: a morphism
\(
\overline F:[M/\mathcal G]\to \mathcal Y
\)
is equivalent to a morphism
$
F:=\overline F\circ q:M\to\mathcal Y
$
together with descent data along the groupoid \(\mathcal G\), namely a \(2\)-isomorphism
$
\alpha:s^{*}F\Rightarrow t^{*}F
$
satisfying the unit and cocycle conditions written above. In diagrammatic form, the cocycle condition is the commutativity of
\[
\begin{tikzcd}[column sep=huge,row sep=large]
\pr_1^{*}s^{*}F=s^{*}s^{*}F \arrow[r,"\pr_1^{*}\alpha"] \arrow[d,equals] &
\pr_1^{*}t^{*}F=s^{*}t^{*}F \arrow[d,equals] \\
\pr_2^{*}s^{*}F=t^{*}s^{*}F \arrow[r,"\pr_2^{*}\alpha"'] &
\pr_2^{*}t^{*}F=t^{*}t^{*}F
\end{tikzcd}
\]
after pullback to \(R\times_{s,M,t}R\), or equivalently the identity
$
m^{*}\alpha=\pr_2^{*}\alpha\circ \pr_1^{*}\alpha.
$
Since the orbits of a foliation groupoid are precisely the leaves of the foliation, this descent datum is exactly the statement that \(F\) is constant along leaves in the stack-theoretic sense relevant here. The factorization through \([M/\mathcal G]\) and its uniqueness up to unique \(2\)-isomorphism are therefore nothing but the standard descent statement for quotient stacks. See \cite[Ch.~5]{Olsson16} and, in the differentiable/topological setting, \cite{BehrendXu,Lerman,MoerdijkPronk}.
\end{proof}

\begin{corollary} \label[corollary]{cor:leaf-factor}
Assume that a holonomy or homotopy groupoid
$
\fol_{\mathrm{hom}} \rightrightarrows U^{\mathrm{sm}}
$
exists in separated analytic stacks and that the quotient stack
$
[U^{\mathrm{sm}}/\fol_{\mathrm{hom}}]
$
is defined. Then every morphism of stacks
$
G:U^{\mathrm{sm}}\to Y
$
which is constant along the leaves of the foliation factors through the quotient map
\[
q:U^{\mathrm{sm}}\to [U^{\mathrm{sm}}/\fol_{\mathrm{hom}}],
\]
and this factorization is unique up to unique \(2\)-isomorphism.
\end{corollary}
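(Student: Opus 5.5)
The plan is to deduce \Cref{cor:leaf-factor} directly from \Cref{prop:leaf-stack-universal}, applied with $M=U^{\mathrm{sm}}$, $\mathcal G=\fol_{\mathrm{hom}}$ and $\mathcal Y=Y$. By that proposition, the only thing to produce is a descent datum $\alpha:s^*G\Rightarrow t^*G$ on the arrow space $R$ of $\fol_{\mathrm{hom}}$, satisfying the unit and cocycle conditions. The existence of a factorization and its uniqueness up to unique $2$-isomorphism are then exactly the content of the equivalence stated there.

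\textbf{Step 1: interpret the hypothesis.} I would read ``constant along the leaves'' in the stack-theoretic sense. Let $\gamma\in R$ be an arrow from $x=s(\gamma)$ to $y=t(\gamma)$; it is represented by a leafwise path. Near each point of the path, $G$ factors through the local leaf space (a transversal), so along the path one obtains a chain of identifications between $G(x)$ and $G(y)$. These identifications are made by transporting the local $2$-isomorphisms witnessing the factorization.

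\textbf{Step 2: build $\alpha$ and check it is well defined.} Define $\alpha_\gamma:G(x)\Rightarrow G(y)$ as the result of this transport. Independence of the choices uses two facts:
\begin{itemize}
\item local connectivity of the $s$-fibres (\Cref{def:fol-groupoid}(3)), which makes a locally constant family of $2$-isomorphisms constant on connected domains;
\item separatedness of $Y$, so that $2$-isomorphisms between morphisms into $Y$ are determined on dense open sets and are therefore rigid.
\end{itemize}
Homotopic leafwise paths (or paths with the same holonomy) give the same $\alpha_\gamma$, by the usual monodromy argument on a simply connected parameter domain. Since $s,t$ are local submersions (\Cref{def:fol-groupoid}(2)), the pullbacks $s^*G$ and $t^*G$ are defined as morphisms on $R$, and $\alpha$ varies analytically in $\gamma$.

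\textbf{Step 3: verify the unit and cocycle conditions.} The unit condition $e^*\alpha=\mathrm{id}$ holds because a constant path transports trivially. The cocycle condition $m^*\alpha=\pr_2^*\alpha\circ\pr_1^*\alpha$ holds because transport along a concatenated path is the composite of the transports along its pieces.

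\textbf{Step 4: conclude.} \Cref{prop:leaf-stack-universal} then yields $\overline G$ with $G\Rightarrow\overline G\circ q$. For uniqueness, two factorizations correspond to pairs $(G,\alpha)$ and $(G,\alpha')$. Since both $\alpha$ and $\alpha'$ are forced to agree with the transport along paths, they coincide, and the equivalence of groupoids gives a unique $2$-isomorphism between the two factorizations.

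The main obstacle is Step 2: the phrase ``constant along leaves'' must carry enough information to define $\alpha$ canonically and independently of choices, and not merely the set-theoretic constancy of the induced map to $|Y|$. If this proves delicate, I would instead take the existence of such a coherent $\alpha$ as the definition of leafwise constancy, which is the convention in the discussion preceding \Cref{def:fol-groupoid}. With that convention the corollary is an immediate specialization of \Cref{prop:leaf-stack-universal}.
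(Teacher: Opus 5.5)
Your closing fallback is exactly the paper's proof. The paper takes ``constant along the leaves'' to mean that $G$ comes equipped with a descent datum $\alpha:s^*G\Rightarrow t^*G$ satisfying the unit and cocycle conditions. It then reads off both the factorization and its uniqueness from \Cref{prop:leaf-stack-universal} with $M=U^{\mathrm{sm}}$, $\mathcal G=\fol_{\mathrm{hom}}$, $\mathcal Y=Y$. That argument is sound. You should present it as the proof rather than as a safety net, because the route through Steps 1--4 contains steps that fail as stated.

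\textbf{Step 2, separatedness.} You invoke separatedness of $Y$, but nothing is assumed about $Y$. The separatedness hypothesis in the corollary concerns the groupoid $\fol_{\mathrm{hom}}$. It is not needed anyway, since $2$-isomorphisms form a sheaf and local transports glue.

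\textbf{Step 2, holonomy versus homotopy.} Transport of local leaf-space factorizations is invariant under leafwise homotopy, but not under ``same holonomy'', so Step 2 breaks down for the holonomy groupoid. Take $M=\C^*\times\Delta$ foliated by $\C^*\times\{t\}$. Then the holonomy is trivial, the holonomy groupoid is $M\times_\Delta M$, and its quotient stack is $\Delta$. Let $G:M\to B(\Z/2)$ classify the connected double cover $(w,t)\mapsto(w^2,t)$.
\begin{itemize}
\item Locally $G$ factors through a transversal, so it is constant along leaves in your Step 1 sense.
\item Transport around a loop with trivial holonomy (a unit arrow) is the nontrivial element of $\Z/2$, contradicting the unit condition.
\item Indeed $G$ cannot factor through $M\to\Delta$, since every $\Z/2$-torsor on $\Delta$ is trivial. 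It factors only through the homotopy quotient $\Delta\times B\Z$.
\end{itemize}

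\textbf{Step 4, uniqueness.} The descent datum is extra structure, not something forced by $G$. Take $Y=B\C^*$, the homotopy groupoid on the same $M$, and $G$ the trivial line bundle. Each character $\Z\to\C^*$, $1\mapsto\lambda$, gives a factorization $\overline G_\lambda:\Delta\times B\Z\to B\C^*$. Each satisfies $\overline G_\lambda\circ q\cong G$, because every holomorphic line bundle on $M$ is trivial. Yet the $\overline G_\lambda$ are pairwise non-isomorphic. So ``unique up to unique $2$-isomorphism'' holds only relative to a fixed $\alpha$, which is precisely what the equivalence of groupoids in \Cref{prop:leaf-stack-universal} delivers.
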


\begin{proof}
This is the specialization of \Cref{prop:leaf-stack-universal} to
\(
M=U^{\mathrm{sm}}\)
 and 
\(\mathcal G=\fol_{\mathrm{hom}}.
\)
By construction, the orbits of \(\fol_{\mathrm{hom}}\) are the leaves of the foliation on \(U^{\mathrm{sm}}\). Hence the assumption that \(G\) is constant along leaves is exactly the descent condition required in \Cref{prop:leaf-stack-universal}: the two pullbacks of \(G\) along the source and target maps of the groupoid are identified by a compatible \(2\)-isomorphism.
The universal property of the quotient stack therefore yields a morphism
\[
\overline G:[U^{\mathrm{sm}}/\fol_{\mathrm{hom}}]\to Y
\]
together with a \(2\)-isomorphism
\[
G \Rightarrow \overline G\circ q.
\]
If \(\overline G_1\) and \(\overline G_2\) are two such descended morphisms, the same universal property shows that they are related by a unique \(2\)-isomorphism. This gives the asserted essential uniqueness.
For later use, one should keep in mind the parallel between this factorization and the construction of the canonical stacky developing map:
\[
\begin{tikzcd}[column sep=large,row sep=large]
\widetilde U \arrow[r,"z"] \arrow[d,"p"'] &
\C^* \arrow[d] \\
U^{\mathrm{sm}} \arrow[r,"F|_{U^{\mathrm{sm}}}"'] \arrow[d,"q"'] &
{[\C^*/\Gamma]} \\
{[U^{\mathrm{sm}}/\fol_{\mathrm{hom}}]}
\arrow[ur,bend right=20,"\mathrm{Dev}"']
\end{tikzcd}
\]
The factorization through the leaf stack and the construction of the canonical stacky developing map may be summarized in the single diagram.
\end{proof}

\subsection{The stacky Singer theorem: lift, functional equation, and descent}\label{sec:singer}

We now prove \Cref{proposition:stacky-singer} in a form that explicitly produces the semi-equivariant functional equation.

\begin{definition}\label[definition]{def:stacky-first-integral}
A \emph{stacky first integral} for $\fol$ is a representable meromorphic morphism
$
\overline F:U^{\mathrm{sm}}\dashrightarrow [\PP^1/\Gamma]
$
such that, on some dense Zariski open subset
$
U^\circ\subset U^{\mathrm{sm}},
$
the restriction $\overline F|_{U^\circ}$ is holomorphic with image in $[\C^*/\Gamma]$, and every local lift
$
f:V\to\C^*
$
has the property that, on a dense open subset of $V$ where $f$ is submersive, its fibres are exactly the leaves of $\fol$.
\end{definition}

If
$
F:U^\circ\to [\C^*/\Gamma]
$
is representable, then locally on $U^\circ$ it is described, in an orbifold chart, by a holomorphic lift
$
f:V\to \C^*.
$
Replacing $f$ by $\gamma f$ with $\gamma\in\Gamma$ does not change its fibres, and hence does not change the induced codimension-one distribution on the locus where $f$ is submersive. Thus the foliation defined by $F$ is well defined in local orbifold charts. Accordingly, when two representable morphisms
$
F,F':U^\circ\to [\C^*/\Gamma]
$
are compared, the condition that they define the same foliation means that, after passing to local lifts
$
f,f':V\to \C^*,
$
there exists a dense open subset
$
V^\circ\subset V
$
on which both maps are submersive and
$
\ker(df|_{V^\circ})=\ker(df'|_{V^\circ}).
$
Equivalently, on $V^\circ$ the fibres of $f$ and $f'$ determine the same foliation.

\begin{proposition}\label[proposition]{proposition:stacky-singer}
Let
$
\overline F,\overline F':U^\circ\dashrightarrow [\PP^1/\Gamma]
$
be stacky first integrals for the same codimension-one foliation on a dense open subset
$
U^\circ\subset U^{\mathrm{sm}}.
$
Assume that, on a further dense open subset, both maps are holomorphic and representable.
Then, after shrinking $U^\circ$ if necessary, there exist a meromorphic endomorphism
$
\overline\psi:[\PP^1/\Gamma]\dashrightarrow [\PP^1/\Gamma]
$
and a $2$-isomorphism
$
\overline F'\Rightarrow \overline\psi\circ \overline F.
$
If moreover $\overline F$ and $\overline F'$ take values in the open substack
$
[\C^*/\Gamma]\subset [\PP^1/\Gamma],
$
then $\overline\psi$ restricts to an endomorphism
$
\psi:[\C^*/\Gamma]\to[\C^*/\Gamma],
$
and one has a $2$-isomorphism
$
F'\Rightarrow \psi\circ F.
$
In particular, every holomorphic stacky first integral for the same transversely affine foliation is obtained, up to $2$-isomorphism, by post-composition of the canonical one with an endomorphism of $[\C^*/\Gamma]$.
\end{proposition}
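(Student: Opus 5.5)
The plan is to reduce to the classical fact that two local first integrals of the same codimension-one foliation differ by a function of one variable. We then check that this one-variable function is semi-equivariant, so that it descends to the stacky base.

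\emph{Step 1: local lifts and a local transition function.} After shrinking $U^\circ$ to the locus where both $\overline F,\overline F'$ are holomorphic, representable and land in $[\C^*/\Gamma]$, we pull both back to the universal cover $\widetilde{U^\circ}$. There the two $\Gamma$-torsors trivialise, giving global lifts $f,f':\widetilde{U^\circ}\to\C^*$ with
\[
f(\gamma x)=\sigma(\gamma)f(x),\qquad f'(\gamma x)=\sigma'(\gamma)f'(x),
\]
where $\sigma,\sigma':\pi_1(U^\circ)\to\Gamma$ are the characters classifying the torsors (as in \Cref{prop:canonicalF}). On a dense open set where both are submersive and $\ker df=\ker df'$, we have $df'\wedge df=0$. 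So locally $f'=\widetilde\psi\circ f$ for a holomorphic germ $\widetilde\psi$ on an open piece of $\C^*$: take a flow box for $f$, along whose fibres $f'$ is constant.

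\emph{Step 2: globalise $\widetilde\psi$ and get semi-equivariance.} This is the main obstacle. The local germs must glue to a single-valued meromorphic $\widetilde\psi$ on $\C^*$ (resp.\ $\PP^1$), and this requires the fibres of $f$ over $\C^*$ to be connected where the leaves are. My first attempt would use analytic continuation along the image of $f$. The fibres of $f$ are unions of leaves and $f'$ is leafwise constant, so the germs agree on overlaps of the image; hence $\widetilde\psi$ is well defined on the open set $f(\widetilde{U^\circ})$. Then $f$ is open, and the image is $\Gamma$-stable; by shrinking we may assume it is all of $\C^*$ modulo a discrete set, and the meromorphic extension then comes from the hypothesis that $\overline F'$ is meromorphic into $[\PP^1/\Gamma]$ (Riemann extension). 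If connectedness of fibres fails, I would replace the argument by Stein factorisation of the leaf space, or simply restrict to the dense open set where it holds. Next, evaluate $f'(\gamma x)$ in two ways:
\[
\sigma'(\gamma)\widetilde\psi(f(x))=\widetilde\psi(\sigma(\gamma)f(x)).
\]
Since $\rho$ and $\sigma$ have the same image $\Gamma$ and the values $f(x)$ fill an open set, this gives a well-defined homomorphism $\tau:\Gamma\to\Gamma$ with $\widetilde\psi(gz)=\tau(g)\widetilde\psi(z)$. It is well defined because if $\sigma(\gamma)=1$ then $\sigma'(\gamma)=1$, which one sees by comparing values on an open set where $\widetilde\psi$ is nonconstant.

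\emph{Step 3: descend.} The pair $(\widetilde\psi,\tau)$ is a strict semi-equivariant pair, hence defines $\overline\psi:[\PP^1/\Gamma]\dashrightarrow[\PP^1/\Gamma]$, a holomorphic endomorphism of $[\C^*/\Gamma]$ when $\widetilde\psi$ omits $0,\infty$ (\Cref{def:strict}). The identity $f'=\widetilde\psi\circ f$ on the torsor gives a $\Gamma$-equivariant isomorphism of the torsor of $\overline F'$ with the pushforward of that of $\overline F$ along $\tau$, compatible with the maps to $\C^*$. This is exactly a $2$-isomorphism $\overline F'\Rightarrow\overline\psi\circ\overline F$, well defined up to the $\Gamma$-constant ambiguity of \Cref{prop:strictification}. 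Specialising $\overline F=F$ from \Cref{prop:canonicalF} gives the final assertion.
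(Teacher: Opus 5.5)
Your route is the paper's route: local factorisation $f'=\widetilde\psi\circ f$, gluing, semi-equivariance, descent. It breaks at exactly the step you single out as the main obstacle.

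The inference \enquote{the fibres of $f$ are unions of leaves and $f'$ is leafwise constant, so the germs agree on overlaps of the image} is a non sequitur. A fibre of $f$ on $\widetilde{U^\circ}$ is in general a union of several, even infinitely many, leaves, and $f'$ is only constant on each leaf separately. This cannot be repaired under the stated hypotheses, as the following example shows.
On $U=\C^*\times\C$ take $\omega=dx/x$ (leaves $\{x=\mathrm{const}\}$) and $\eta=\alpha\,dx/x$ with $\alpha\in\R\setminus\Q$. Then $d\omega=\eta\wedge\omega=0$ and $\Gamma=\langle e^{2\pi i\alpha}\rangle$. On the universal cover $x=e^w$ the canonical lift is $z=e^{\alpha w}$. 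The function $z'=e^{(\alpha+1)w}$ is equivariant for the same character, so both define holomorphic stacky first integrals $F,F'$ into $[\C^*/\Gamma]$.
The fibre of $z$ through $w_0$ consists of the leaves $w\in w_0+\frac{2\pi i}{\alpha}\Z$, on which $z'$ takes the values $z'(w_0)e^{2\pi i k/\alpha}$. Hence no single-valued $\widetilde\psi$ exists. Worse, $\Gamma$ has accumulation, so by \Cref{proposition:hol-rigidity} every endomorphism is represented by $z\mapsto az^n$. Since $e^{(\alpha+1)w}/e^{n\alpha w}$ is never locally constant, $F'$ is not $2$-isomorphic to $\psi\circ F$ on any open set. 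So both the general assertion and the final clause fail here.
Stein factorisation does not rescue this: it factors $f'$ through the $w$-line rather than through $\C^*$, so it yields no endomorphism of $[\PP^1/\Gamma]$. Passing to a dense open subset does not help either. The paper's proof has the same hole: it asserts that the local $\psi_i$ glue \enquote{by uniqueness on overlaps}, which controls overlapping charts but not disjoint charts with overlapping images.

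The extension step is also unsupported. Shrinking $U^\circ$ can only shrink $f(\widetilde{U^\circ})$, so it cannot make the image cofinite in $\C^*$. Moreover, a single-valued $\widetilde\psi$ on a proper $\Gamma$-stable open set need not extend. Take $\eta=0$, $\Gamma=\{1\}$, and $U=\Delta^*\times\C$ foliated by $\{x=\mathrm{const}\}$. The first integrals $\overline F=x$ and $\overline F'=e^{1/(x-1)}$ force $\widetilde\psi(w)=e^{1/(w-1)}$ on $\Delta^*$, which has no holomorphic or meromorphic extension to $\C^*$. This equally refutes the paper's claim that a meromorphic function on an open subset of $\C^*$ extends to a rational map.

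Two further hypotheses are used without justification.
First, $\tau$ is defined on all of $\Gamma$ only if the character of $\overline F$ surjects onto $\Gamma$. This holds for the canonical $F$, but not for an arbitrary stacky first integral.
Second, your specialisation $\overline F=F$ presupposes that $F$ is a stacky first integral at all. Its lift satisfies $dz/z=p^*\eta$, so its level sets are tangent to $\ker\eta$, not to $\ker\omega$. For example, $\omega=x\,dy$ and $\eta=dx/x$ on $\C^*\times\C$ satisfy $d\omega=\eta\wedge\omega$, the canonical lift is $z=x$, and the leaves are $\{y=\mathrm{const}\}$.

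A correct statement needs extra hypotheses of this kind: connected fibres of the lifted $\overline F$, a character onto $\Gamma$, an image large enough for $\widetilde\psi$ to extend, and $\eta\wedge\omega=0$ for $F$ to be a first integral.
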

\begin{proof}
The holomorphic statement is the special case of the meromorphic one in which both maps already take values in $[\C^*/\Gamma]$.
After shrinking $U^\circ$, choose an analytic cover $\{V_i\}$ on which $\overline F$ and $\overline F'$ admit local lifts
$
f_i:V_i\to \C^*,
$
$
g_i:V_i\dashrightarrow \PP^1.
$
By the definition of stacky first integral, after possibly shrinking further there exists a dense open subset
$
V_i^\circ\subset V_i
$
on which both maps are holomorphic, $f_i$ is submersive, and
$
\ker(df_i|_{V_i^\circ})=\ker(dg_i|_{V_i^\circ}).
$
In local coordinates adapted to $f_i$, the same argument as in the holomorphic case shows that $g_i|_{V_i^\circ}$ depends only on the transverse coordinate. Hence, after refinement, there exist meromorphic one-variable maps
$
\psi_i:f_i(V_i)\dashrightarrow \PP^1
$
such that
$
g_i=\psi_i\circ f_i
$
as meromorphic maps on $V_i$. The local picture is
$$
\begin{tikzcd}[column sep=large,row sep=large]
V_i \arrow[r,"f_i"] \arrow[dr,swap,dashed,"g_i"] &
f_i(V_i)\subset \C^* \arrow[d,dashed,"\psi_i"] \\
& \PP^1 .
\end{tikzcd}
$$
By uniqueness on overlaps, the $\psi_i$ glue to a meromorphic one-variable map
$
\widetilde\psi:W\dashrightarrow \PP^1,
$
where $W\subset \C^*$ is the connected open subset swept out by the image of $\overline F$.
Passing to the universal cover and trivialising the pullback torsor for $\overline F'$ as in the holomorphic case, we obtain maps
$
z:\widetilde U\to \C^*,
$
$
z':\widetilde U\dashrightarrow \PP^1
$
together with characters
$$
\rho,\rho':\pi_1(U)\to \Gamma
$$
such that
$
z(\gamma x)=\rho(\gamma)z(x),
$
$
z'(\gamma x)=\rho'(\gamma)z'(x).
$
On the dense open set where $z'$ is finite, one has
$
z'=\widetilde\psi\circ z,
$
and the same calculation as in the holomorphic case shows that
$
\ker(\rho)\subseteq \ker(\rho'),
$
hence defines a homomorphism
$
\sigma:\Gamma\to \Gamma
$
with
$
\widetilde\psi(gw)=\sigma(g)\widetilde\psi(w)
$
as an identity of meromorphic maps on $W$.
Since $W$ is a nonempty open subset of $\C^*$ and $\widetilde\psi$ is meromorphic in one variable, it extends uniquely to a rational map
$
R:\PP^1\dashrightarrow \PP^1.
$
The semi-equivariance identity then extends globally, so the pair $(R,\sigma)$ defines a meromorphic endomorphism
$$
\overline\psi:[\PP^1/\Gamma]\dashrightarrow [\PP^1/\Gamma].
$$
This is represented by
\[
\begin{tikzcd}[column sep=large,row sep=large]
\widetilde U \arrow[r,"z"] \arrow[d,"p"'] \arrow[dr,swap,dashed,"z'"'] &
\C^* \arrow[r,hook] \arrow[d] &
\PP^1 \arrow[d,dashed,"R"] \\
U^\circ \arrow[r,dashed,"\overline F"] \arrow[rr,swap,bend right=25,dashed,"\overline F'"] &
\; [\PP^1/\Gamma] \arrow[r,dashed,"\overline\psi"] &
\; [\PP^1/\Gamma].
\end{tikzcd}
\]
By construction one has a $2$-isomorphism
$
\overline F'\Rightarrow \overline\psi\circ \overline F
$
on $U^\circ$.

In the holomorphic case, the image already lies in the open substack $[\C^*/\Gamma]$, so the same construction yields a holomorphic map
$
\widetilde\psi:\C^*\to \C^*
$
and therefore an endomorphism
$
\psi:[\C^*/\Gamma]\to [\C^*/\Gamma].
$
This gives the final assertion.
\end{proof}
\section{Kummer refinements}\label{sec:geo-singer}

The purpose of this section is to give a precise geometric formulation of a central principle of the paper: in the present setting, \textit{Singer's elementary operations are realised by  morphisms between one-dimensional stack-theoretic transverse bases.}

The holonomy constants, that is, the exponentials of residues and periods, determine the group
$\Gamma\subset\C^*$ (see \Cref{prop:period-residue}), and therefore determine the transverse stack
$[\C^*/\Gamma]$ together with its compactification $[\PP^1/\Gamma]$.
The reparametrization component of Singer's theorem is thus expressed by endomorphisms of these stacks
(see \Cref{proposition:stacky-singer,sec:classification}).
The algebraic-extension component admits an equally geometric description: it is effected by Kummer, equivalently root, base change along the boundary, that is, by passage to root stacks
\cite{Cadman,CSSST17,VistoliTalpo18}.
In order to place these two operations within a single formal setting, we introduce a modest $2$-categorical construction.

\subsection{The Morita 2-category and Kummer/root refinements}

We adopt throughout the Morita--bibundle formalism of differentiable and complex stacks, following \cite{BehrendXu,Lerman,MoerdijkPronk,Olsson16}. Let $\Gamma$ be a discrete group acting holomorphically on $\PP^1$ by transformations of the form $z \mapsto \gamma z$, and suppose that this action preserves the distinguished subset $\{0,\infty\}$.
We always regard $\Gamma$ with the discrete topology (so the quotient stack is defined even when the action is not proper).

\begin{definition} \label[definition]{def:morita-mor}
A \emph{Morita morphism} $[\PP^1/\Gamma]\to [\PP^1/\Gamma']$ is a $1$-morphism of stacks represented by a bibundle
in the sense of \Cref{def:bibundle}; $2$-morphisms are bibundle isomorphisms.
We write $\mathrm{Hom}_{\mathrm{Mor}}([\PP^1/\Gamma],[\PP^1/\Gamma'])$ for the resulting groupoid.
\end{definition}

On a connected atlas, Morita morphisms admit strict semi-equivariant representatives
(\Cref{prop:strictification};
compare also \cite{BehrendXu,Noohi05}).
This is why functional-equation methods apply in our setting and why the wild regime becomes rigid.


In Singer's differential-algebraic closure, one allows \emph{algebraic extensions}. Geometrically, for logarithmic structures,
this corresponds to Kummer/root constructions along boundary divisors \cite{Cadman}.
On the base $\PP^1$ this is encoded by adjoining $N$-th roots at $\{0,\infty\}$.

\begin{definition} \label[definition]{def:kummer-ref}
Fix $N\ge 1$. Define the \emph{$N$-Kummer refinement} of $[\PP^1/\Gamma]$ by
$$
[\PP^1/(\mu_N\times\Gamma)],
$$
where $(\zeta,\gamma)\in\mu_N\times\Gamma$ acts by $(\zeta,\gamma)\cdot z = (\zeta\gamma)\,z$.
Similarly, consider $[\C^*/(\mu_N\times\Gamma)]$.
\end{definition}

\begin{proposition} \label[proposition]{prop:kummer-map}
There is a canonical representable map (a Morita morphism)
$$
\kappa_N:[\PP^1/(\mu_N\times\Gamma)] \longrightarrow [\PP^1/\Gamma]$$
induced on atlases by $z\mapsto z^N$ and on groups by the homomorphism
$\mu_N\times\Gamma\to\Gamma$, $(\zeta,\gamma)\mapsto \gamma^N$.
\end{proposition}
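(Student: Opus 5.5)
The plan is to build $\kappa_N$ directly as a strict semi-equivariant pair, in the sense of \Cref{def:strict} and \Cref{prop:strictification} but with different groups on source and target, and then turn it into a bibundle as in \Cref{def:bibundle}. Set $\widetilde\kappa(z)=z^N$ on the atlas $\PP^1$ and $\sigma_N:\mu_N\times\Gamma\to\Gamma$, $(\zeta,\gamma)\mapsto\gamma^N$.

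\textbf{Step 1: $\sigma_N$ is a homomorphism into $\Gamma$.} Since $\mu_N\times\Gamma$ is abelian, $\gamma\mapsto\gamma^N$ is an endomorphism of $\Gamma$. So $\sigma_N$ is projection to $\Gamma$ followed by this endomorphism, and in particular it lands in $\Gamma$.

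\textbf{Step 2: semi-equivariance.} This is the one-line check
\[
\widetilde\kappa\bigl((\zeta,\gamma)\cdot z\bigr)=(\zeta\gamma z)^N=\zeta^N\gamma^N z^N=\sigma_N(\zeta,\gamma)\,\widetilde\kappa(z),
\]
using $\zeta^N=1$. Moreover $z\mapsto z^N$ is holomorphic on $\PP^1$, fixes $0$ and $\infty$, and preserves $\C^*$. So the pair also restricts to the open substacks $[\C^*/(\mu_N\times\Gamma)]\to[\C^*/\Gamma]$, compatibly with the inclusions.

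\textbf{Step 3: the bibundle.} Take $P=\PP^1\times\Gamma$. It carries the right principal $(\mu_N\times\Gamma)$-action through $\sigma_N$ twisted by the action on the first factor, the left $\Gamma$-action by multiplication on the second factor, and the equivariant map $\Phi(z,g)=g\,z^N$. This is the standard graph bibundle of a strict pair. Canonicity follows from \Cref{prop:strictification}: the only freedom is post-multiplication by a constant in $\Gamma$, which is a $2$-isomorphism. No choice enters beyond the formula $z\mapsto z^N$.

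\textbf{Step 4: representability, which I expect to be the main obstacle.} I would test it with the criterion that a morphism of quotient stacks induced by $(\widetilde\kappa,\sigma_N)$ is representable iff $\sigma_N$ is injective on stabilizers. Equivalently, the fibre product with the atlas $\PP^1\to[\PP^1/\Gamma]$ should be a space.

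\begin{itemize}
\item At a point $z\in\C^*$ the stabilizer is $\{(\zeta,\zeta^{-1}):\zeta^{-1}\in\Gamma\cap\mu_N\}$, and $\sigma_N(\zeta,\zeta^{-1})=\zeta^{-N}=1$. So injectivity over $\C^*$ needs $\Gamma\cap\mu_N$ trivial, which holds, for instance, in the non-torsion regimes of \Cref{sec:classification}.
\item At $0$ and $\infty$ the whole factor $\mu_N\times\{1\}$ lies in the kernel. This is precisely the root-stack phenomenon: the $\mu_N$-gerbe-like stabilizer over the boundary.
\end{itemize}

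So in the strict stack-theoretic sense representability can fail over $\{0,\infty\}$, just as the root stack morphism $X_{D,N}\to X$ is not representable along $D$. I would therefore prove the statement in the sense the paper uses, namely that $\kappa_N$ is represented by a bibundle, i.e.\ is a Morita morphism. I would add that it is honestly representable over the locus where $\sigma_N$ is injective on stabilizers: on $[\C^*/(\mu_N\times\Gamma)]$ whenever $\Gamma\cap\mu_N=1$. The factor $\mu_N$ at $\{0,\infty\}$ is exactly the Kummer datum one wants to adjoin.
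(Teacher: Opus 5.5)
Your Steps 1--2 are the paper's whole argument for the morphism. The paper also just checks $((\zeta,\gamma)\cdot z)^N=\gamma^N z^N$ and concludes that $(z\mapsto z^N,\sigma_N)$ is a strict semi-equivariant pair, hence a stack morphism.

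On representability you part ways with the paper, and your analysis is the correct one. The paper says representability ``follows because $\mu_N$ acts faithfully on the Kummer refinement and is killed by $\kappa_N$''. But a stabilizer element killed by $\kappa_N$ is exactly an obstruction to representability. Concretely, the base change of $\kappa_N$ along the atlas $\PP^1\to[\PP^1/\Gamma]$ is $[(\PP^1\times\Gamma)/(\mu_N\times\Gamma)]$, with $\mu_N\times\Gamma$ acting on the second factor through $\sigma_N$. There $\mu_N\times\{1\}$ fixes every point $(0,g)$ and $(\infty,g)$. So for every $N\ge 2$ this fibre product is not a space; you can say ``fails'' rather than ``can fail''. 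For $\Gamma=\{1\}$ this is just the familiar non-representability of the coarse map $[\PP^1/\mu_N]\to\PP^1$ of the root stack.

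Hence ``representable'' in the usual relative sense cannot be proved as stated. What you prove is the correct replacement: $\kappa_N$ is a canonical Morita morphism, representable exactly over the locus where $\sigma_N$ is injective on stabilizers. For $N\ge 2$ that locus is $[\C^*/(\mu_N\times\Gamma)]$ if $\Gamma\cap\mu_N=\{1\}$, and empty otherwise. If ``representable'' is read only as ``represented by a bibundle'', your Steps 1--3 already suffice.

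Two small corrections:
\begin{enumerate}
\item The condition $\Gamma\cap\mu_N=\{1\}$ is not guaranteed merely by leaving the torsion regime. The discrete groups $\mu_M\times q^{\Z}$ of \Cref{prop:discrete-subgroups}, and unitary or mixed wild groups, may contain nontrivial $N$-th roots of unity. The right hypothesis is that $\Gamma$ has no nontrivial $N$-torsion, e.g.\ $\Gamma=q^{\Z}$.
\item In Step 3, the action of the source group $\mu_N\times\Gamma$ on $P=\PP^1\times\Gamma$ is not principal, since $(\zeta,1)$ fixes $(0,g)$. The principal structure of $P\to\PP^1$ comes from the target $\Gamma$ acting on the second factor. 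The non-freeness of the source action is precisely the bibundle form of the failure you detect in Step 4.
\end{enumerate}
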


\begin{proof}
Equivariance is immediate:
$
((\zeta,\gamma)\cdot z)^N = (\zeta\gamma)^N z^N = \gamma^N z^N,
$
since $\zeta^N=1$.
Thus $z\mapsto z^N$ defines a strict semi-equivariant map and hence a stack morphism.
Representability follows because $\mu_N$ acts faithfully on the Kummer refinement and is killed by $\kappa_N$ exactly as in the root-stack description
of $[\PP^1/\mu_N]$ \cite{Cadman}.
\end{proof}

For $\Gamma=\{1\}$, the quotient $[\PP^1/\mu_N]$ is the $N$-th root stack along $\{0\}+\{\infty\}$.
More generally, $[\PP^1/(\mu_N\times\Gamma)]$ is the corresponding  \textit{$\Gamma$-equivariant root refinement}.
The comparison between root stacks and the Kato--Nakayama boundary is one of the main themes of \cite{CSSST17,VistoliTalpo18}
and explains why rational residues (torsion meridian multipliers) are naturally treated by Kummer refinements.


\begin{definition}\label[definition]{def:geo-singer-cat}
Let $\mathbf{Singer}$ be the $2$-subcategory of the Morita $2$-category generated by:
\begin{enumerate}[label=(\alph*),leftmargin=2.2em]
\item all objects $[\PP^1/\Gamma]$ and their Kummer refinements $[\PP^1/(\mu_N\times\Gamma)]$;
\item all endomorphisms $\psi\in\mathrm{End}([\PP^1/\Gamma])$ (Singer reparametrizations);
\item all Kummer maps $\kappa_N:[\PP^1/(\mu_N\times\Gamma)] \to [\PP^1/\Gamma]$.
\end{enumerate}
A \emph{geometric Singer operation} is a $1$-morphism in $\mathbf{Singer}$.
\end{definition}

Let $F:U\to[\C^*/\Gamma]$ be the canonical stacky developing map (\Cref{prop:canonicalF}).
A geometric Singer operation $\Psi:[\C^*/\Gamma]\dashrightarrow[\C^*/\Gamma']$ produces a new stacky first integral
by post-composition $F\mapsto \Psi\circ F$ on the largest open locus where the composition is defined.
If $\Psi$ is a Kummer refinement $\kappa_N$, the resulting operation corresponds to passing to an $N$-fold Kummer cover/root refinement on the source,
as in the usual algebraic-extension step of Singer.
A typical operation is encoded by a $2$-commutative diagram
$$
\begin{tikzcd}
U \arrow[r,"F"] \arrow[dr,swap,"F'"] & \;[\C^*/\Gamma] \arrow[d,"\psi"] \\
& \;[\C^*/\Gamma]
\end{tikzcd}
$$
for reparametrizations, and by
$
\kappa_N:[\C^*/(\mu_N\times\Gamma)]\to [\C^*/\Gamma]
$
for Kummer/root refinements.
The point of \Cref{thm:geo-singer-ops} below is that these operations exhaust the geometric analogues of Singer's closure steps,
once the holonomy constants are fixed.

\subsection{Singer operations as endomorphisms modulo Kummer}

\begin{theorem}\label{thm:geo-singer-ops}
Let $(\omega,\eta)$ be a  transversely affine  affine structure on $U=X\setminus D$, let
$
F:U\to[\C^*/\Gamma]
$
be its canonical stacky developing map, and let
$
j:[\C^*/\Gamma]\hookrightarrow [\PP^1/\Gamma]
$
be the natural open immersion. Then:
\begin{enumerate}[label=(\roman*),leftmargin=2.2em]

\item Let
$
\Psi:\; [\PP^1/\Gamma]\dashrightarrow \; [\PP^1/\Gamma']
$
be a meromorphic morphism. Then, on a dense open subset of $U$ on which $\Psi\circ j\circ F$ is holomorphic, the composite
$
\Psi\circ j\circ F:U\dashrightarrow \; [\PP^1/\Gamma']
$
is a stacky first integral for the foliation defined by $(\omega,\eta)$.

\item For every $N\ge 1$, let
$
\kappa_N:[\C^*/(\mu_N\times\Gamma)]\to [\C^*/\Gamma]
$
be the Kummer morphism. Then the pullback of $F$ along $\kappa_N$ is again a stacky first integral for the same foliation on a dense open subset.

\item Let
$
F':U^\circ\dashrightarrow \; [\PP^1/\Gamma']
$
be a stacky first integral for the same foliation on a dense open subset $U^\circ\subset U$. Then, after shrinking $U^\circ$ if necessary, there exist an integer $N\ge 1$ and a meromorphic morphism
$
\Psi:[\PP^1/(\mu_N\times\Gamma)]\dashrightarrow \; [\PP^1/\Gamma']
$
such that $F'$ is $2$-isomorphic to
$
\Psi\circ F^{(N)},
$
where
$
F^{(N)}:U^\circ\dashrightarrow [\PP^1/(\mu_N\times\Gamma)]
$
denotes the Kummer refinement of the canonical developing map.
\end{enumerate}
\end{theorem}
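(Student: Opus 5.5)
The three items have different characters. (i) and (ii) are compatibility checks of composites with the canonical developing map $F$ of \Cref{prop:canonicalF}. (iii) is the genuine content, and it reduces to the argument already given for \Cref{proposition:stacky-singer}, with an extra step to absorb the difference between $\Gamma$ and $\Gamma'$.

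For (i), I would work locally on a simply connected open set $V\subset U$. By \Cref{lem:dev}, $F$ admits a lift $f=z|_V:V\to\C^*$ with $df/f=\eta$. Hence $df$ is a nonvanishing multiple of $\eta$. Since $\fol=\ker(\omega)$ and $e^{-\Phi}\omega$ is closed, the fibres of $f$ consist of leaves only where $\eta\wedge\omega=0$. I therefore expect to use the standard fact that the affine developing coordinate $\int e^{-\Phi}p^*\omega$ is the local transverse coordinate, and that the stacky first integral property of $F$ is the hypothesis implicit in \Cref{def:stacky-first-integral}. Granting that $F$ is a stacky first integral, take a strict representative $(R,\sigma)$ of $\Psi$ as in \Cref{prop:strictification}. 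The composite then has local lift $R\circ f$. On the dense open set where $R$ is finite, nonconstant and étale along $f(V)$, one has $\ker d(R\circ f)=\ker df$, so the foliation is preserved. Representability is inherited, and $\Psi\circ j\circ F$ is holomorphic off a proper analytic subset.

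For (ii), I would form the $2$-fibre product $U\times_{[\C^*/\Gamma]}[\C^*/(\mu_N\times\Gamma)]$. Concretely, this is the $N$-th root of the $\Gamma$-torsor $P$ with the map $\bar z^{1/N}$. Locally its lift is a branch $f^{1/N}$, and $d(f^{1/N})=\tfrac1N f^{1/N}\,df/f$, so the kernels agree and the foliation is unchanged. The branch ambiguity is exactly the $\mu_N$ factor, which the refined group absorbs, and this yields the refined developing map $F^{(N)}$.

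For (iii), I would repeat the proof of \Cref{proposition:stacky-singer} with target group $\Gamma'$. First, on a cover $V_i$, the equality of kernels gives one-variable meromorphic maps $\psi_i$ with $g_i=\psi_i\circ f_i$. These glue on the image region $W\subset\C^*$ to a multivalued map $\widetilde\psi$. Passing to the universal cover gives $z'=\widetilde\psi\circ z$, with $z'(\gamma x)=\rho'(\gamma)z'(x)$ and $\rho'$ now valued in $\Gamma'$.

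The main obstacle is that $\ker\rho\subseteq\ker\rho'$ may fail. A loop $\gamma$ with $\rho(\gamma)=1$ returns $z$ to itself, but $\widetilde\psi$ might be defined only on a cover of $W$, as with $z^{1/N}$. I would handle this by first showing that $\widetilde\psi$ extends to a rational map, as in the proof of \Cref{proposition:stacky-singer}, after restricting to a branch on a sector. The monodromy of $\widetilde\psi$ around $0$ and $\infty$ is then finite. I would take $N$ to be the order of this monodromy, so that $\widetilde\psi(w)=R(w^{1/N})$ with $R$ rational.

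Second, I would replace $z$ by the lift $z^{1/N}$ of $F^{(N)}$. The kernel of its character $\widetilde\rho$ into $\mu_N\times\Gamma$ is now contained in $\ker\rho'$, because $R$ is single-valued. This defines $\sigma:\mu_N\times\Gamma\to\Gamma'$ with $R(gw)=\sigma(g)R(w)$, extended from a dense set by the identity principle. Third, the pair $(R,\sigma)$ is a strict meromorphic morphism $\Psi$. The trivialization of the pulled-back torsors gives the $2$-isomorphism $F'\Rightarrow\Psi\circ F^{(N)}$, unique up to a constant in $\Gamma'$ as in \Cref{prop:strictification}.

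The finiteness of the branching of $\widetilde\psi$, that is, the existence of $N$, is the step I expect to need the most care. If the rationality argument does not give it directly, I would argue from the fact that $\rho'$ has values in a group acting by dilations. That forces $\widetilde\psi$ to be semi-equivariant under $\Gamma$ up to the finite ambiguity detected by comparing $\ker\rho$ with $\ker\rho'$.
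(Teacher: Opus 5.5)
\textbf{Verdict: genuine gap in (iii), at the existence of $N$; as far as I can see it cannot be repaired.}

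Your (i) and (ii) follow the paper's argument: post-compose the local lift with a one-variable map, resp.\ pass to an $N$-th root, and compare kernels. Your caveat there is justified. The coordinate $z=e^{\Phi}$ can be a first integral only if $\eta\wedge\omega=0$: for $\omega=e^{x}dy$, $\eta=dx$ one has $d\omega=\eta\wedge\omega$, yet $z=e^{x}$ is transverse to the leaves $y=\mathrm{const}$. The paper's proof makes the same tacit assumption.

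In (iii) you want $\widetilde\psi$ to continue to a rational map ``as in the proof of \Cref{proposition:stacky-singer}'', hence to branch only at $0,\infty$ and with finite order. That step of \Cref{proposition:stacky-singer} is not valid. A meromorphic function on an open subset of $\C^*$ need not be rational; \Cref{proposition:End-muN} already gives semi-equivariant lifts $z^m g(z^N)$ with $g$ transcendental. The finiteness claim itself is also false.

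\emph{Infinite branching at $0$.} Take $\omega=\eta=\lambda\,dx/x+\mu\,dy/y$ on $(\C^*)^2$ with $1,\lambda,\mu$ linearly independent over $\Q$, so that $z=x^\lambda y^\mu$ really is a first integral. Let $z'=z^{c}=e^{c\Phi}$ with $c\in\C$ generic. Its monodromy $e^{c\int_\gamma\eta}$ is multiplicative, so $z'$ is a stacky first integral into $[\C^*/\Gamma']$ with $\Gamma'=\langle e^{2\pi i c\lambda},e^{2\pi i c\mu}\rangle$. Here $\ker\rho=\ker\rho'$ is trivial, so comparing kernels (your fallback) detects nothing. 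Yet $\widetilde\psi(w)=w^{c}$ has infinite monodromy at $0$. An identity $z'=a\,R(b\,z^{1/N})$ with $R$ single-valued on $\C^*$ forces $Nc\in\Z$ (continue around $w=0$).

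\emph{Branching away from $\{0,\infty\}$.} Take $\omega=dx$, $\eta=d\log(x^2+1)$ on $\C^2\setminus\{x^2+1=0\}$. Then $\Gamma=\{1\}$ and $z=x^2+1$, while $F'=x$ is a stacky first integral with $\Gamma'=\{1\}$. The local $\psi_i$ are the branches $\pm\sqrt{w-1}$. They do not glue on $W$, because the fibres of $z$ are disconnected.

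In the second example (iii) fails outright. Every $\Psi:[\PP^1/\mu_N]\dashrightarrow\PP^1$ factors through $w\mapsto w^N$, so $\Psi\circ F^{(N)}$ is a function of $x^2+1$ and never equals $x$. Extra hypotheses are needed, controlling both the fibres of $\bar z$ on $\widetilde U/\ker\rho$ and the relation between $\rho$ and $\rho'$.

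A second problem affects your second step even when some $N$ works. The multipliers $e^{\frac1N\int_\gamma\eta}$ of $z^{1/N}$ lie in $\{g: g^N\in\Gamma\}$, and in general not in $\mu_N\Gamma$ (take $c=\tfrac12$ in the first example). So the character $\widetilde\rho$ into $\mu_N\times\Gamma$ that you use need not exist.

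The paper's own proof of (iii) does not close this gap. It cites \Cref{proposition:stacky-singer}, whose gluing, rationality and $\ker\rho\subseteq\ker\rho'$ steps are exactly the points at issue. It then invokes a Kummer refinement ``if required'' without producing $N$.
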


\begin{proof}
For (i), let
$
G:=\Psi\circ j\circ F.
$
On any local orbifold chart on which $F$ is represented by a holomorphic transverse coordinate
$
f:V\to\C^*,
$
the map $G$ is represented, on the locus where $\Psi$ is defined, by the meromorphic one-variable map $\psi\circ f$, where $\psi$ is a local lift of $\Psi$. Hence $G$ is constant along the fibres of $f$, and therefore along the leaves of the foliation. On the open subset where $\psi$ is holomorphic and non-critical, one has
$
\ker d(\psi\circ f)=\ker(df).
$
Thus $G$ is a stacky first integral on a dense open subset.

For (ii), on the atlas $\C^*$ the morphism $\kappa_N$ is given by
$
z\mapsto z^N.
$
Accordingly, pulling back the transverse coordinate amounts locally to replacing it by an $N$-th root. This does not alter its level sets, hence does not alter the induced codimension-one foliation. Therefore the Kummer pullback is again a stacky first integral on a dense open subset.

For (iii), by \Cref{proposition:stacky-singer}, after shrinking $U^\circ$ there exists a meromorphic endomorphism
$
\overline\psi:[\PP^1/\Gamma]\dashrightarrow [\PP^1/\Gamma']
$
such that $F'$ is $2$-isomorphic to $\overline\psi\circ j\circ F$ on a dense open subset. If a Kummer refinement is required in order to trivialise the meridional torsion and make the transverse coordinate single-valued along the boundary, one replaces $F$ by the refined map
$
F^{(N)}:U^\circ\dashrightarrow [\PP^1/(\mu_N\times\Gamma)]
$
and composes with the induced meromorphic morphism from the refined compactified base. This yields the required factorisation.
\end{proof}

On the same notation as above, consider the universal cover $p:\widetilde U\to U$. The closed form $p^*\eta$ admits a primitive $\Phi$, and the multiplicative developing coordinate
$z=e^{\Phi}$ satisfies $dz/z=p^*\eta$. It is at this stage that the operations $\int$ and $\exp$ (and, implicitly, $\log$) enter.
They produce the \emph{target} $[\C^*/\Gamma]$ and the canonical morphism $F:U\to[\C^*/\Gamma]$ (\Cref{prop:canonicalF}).

If the residue exponentials are torsion, Kummer refinement along $\{0,\infty\}$ (equivalently, passage to root stacks) realizes the algebraic extension step
in Singer's closure process \cite{Cadman,CSSST17,VistoliTalpo18}. This alters the \emph{base} by replacing $\Gamma$ with $\mu_N\times\Gamma$,
but does not give rise to any new analytic reparametrizations.

A \emph{Singer reparametrization} of a first integral is nothing other than post-composition by an endomorphism
$\psi:[\C^*/\Gamma]\to[\C^*/\Gamma]$ (or, meromorphically, by $\psi:[\PP^1/\Gamma]\dashrightarrow[\PP^1/\Gamma]$),
as in \Cref{proposition:stacky-singer}. Accordingly, the monoid of reparametrizations is \emph{exactly} $\End([\C^*/\Gamma])$
(or $\End([\PP^1/\Gamma])$).
Because the target is one-dimensional, holomorphicity and semi-equivariance impose strong global constraints.

In short: \emph{Singer's transcendence in our setting sits in the constants generating $\Gamma$; reparametrizations are the rigid, classifiable endomorphisms of a one-dimensional stacky base.}




\section{Classification of the stacky base by regime}\label{sec:classification}

This section contains the classification-theoretic core of the paper. We begin with a log--argument description and properness criteria for finitely generated subgroups $\Gamma\subset\C^*$, then turn to holomorphic semi-equivariant pairs and the rigidity of the non-proper cases, next treat the proper discrete torsion and elliptic regimes, and finally record the meromorphic classification on the compactified base. The purpose is to present the regime-by-regime analysis in a single sequence, without separating the topological, holomorphic, and meromorphic parts.

\subsection{Subgroups of $\C^*$}

In this subsection we study finitely generated subgroups $\Gamma\subset\C^*$ and the topology of their quotients.

\begin{definition} \label[definition]{def:logarg}
Let
$
\ell:\C^*\longrightarrow \R\times(\R/2\pi\Z),
$
and 
$\ell(z)=(\log|z|,\arg z).
$
Write $\ell(\Gamma)=(A_\Gamma,B_\Gamma),$ where $A_\Gamma\subset\R$ is the radial subgroup and $B_\Gamma\subset \R/2\pi\Z$ is the angular subgroup.
\end{definition}

\begin{proposition}\label[proposition]{prop:discrete-subgroups}
Let $\Gamma\subset \C^*$ be a finitely generated subgroup. Then the following hold.

\begin{enumerate}[label=\textup{(\roman*)},leftmargin=2.8em]
\item If $A_\Gamma=\{0\}$, then $\Gamma\subset \Sone$, and $\overline{\Gamma}$ is finite if and only if $B_\Gamma$ is finite; otherwise $\overline{\Gamma}=\Sone$.

\item If $A_\Gamma\neq \{0\}$, then $\Gamma$ contains an element $q$ with $|q|\neq 1$. In particular, every non-fixed orbit of the action
$\Gamma\curvearrowright \PP^1$
accumulates at $0$ and $\infty$.

\item The following are equivalent:
\begin{enumerate}[label=\textup{(\alph*)},leftmargin=2.4em]
\item the action $\Gamma\curvearrowright \C^*$ is properly discontinuous;
\item $\Gamma$ is discrete in the Euclidean topology of $\C^*$;
\item either $\Gamma=\mu_N$ for some $N\ge 1$, or
$
\Gamma\cong \mu_N\times q^\Z$
  for some $N\ge 1$   and  $q\in\C^*$   with $|q|\neq 1$.
\end{enumerate}

\item In particular, if $\Gamma$ is torsion-free and discrete, then
$
\Gamma=q^\Z
$ for some $q\in\C^*$   with $|q|\neq 1$.
\end{enumerate}
\end{proposition}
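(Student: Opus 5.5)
The plan is to work throughout with the log--argument map $\ell$ of \Cref{def:logarg}. It is a continuous homomorphism and a homeomorphism of topological groups $\C^*\cong\R\times(\R/2\pi\Z)$. The four items can then be handled in order.

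For (i): if $A_\Gamma=\{0\}$, every $\gamma\in\Gamma$ has $|\gamma|=1$, so $\Gamma\subset\Sone$ and $\Gamma\cong B_\Gamma$ via $\arg$. Since $\Gamma$ is finitely generated, it is either finite (all generators roots of unity, so $\Gamma=\mu_N$ and $\overline\Gamma=\Gamma$ is finite) or it contains an element of infinite order $e^{2\pi i\theta}$ with $\theta$ irrational. In the latter case Kronecker's theorem makes $\{e^{2\pi i n\theta}\}$ dense in $\Sone$, so $\overline\Gamma=\Sone$. The only closed subgroups of $\Sone$ are finite cyclic groups or $\Sone$ itself, which gives the dichotomy.

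For (ii): if $A_\Gamma\ne\{0\}$, pick $q\in\Gamma$ with $\log|q|\ne0$. For $z\in\C^*$ we have $q^nz\to0$ or $\infty$ as $n\to+\infty$, and the opposite limit as $n\to-\infty$. So every orbit in $\C^*$ accumulates at both $0$ and $\infty$, while $0,\infty$ are the fixed points.

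For (iii), the implications (a)$\Rightarrow$(b) and (c)$\Rightarrow$(a) are the routine ones.
\begin{itemize}
\item[(a)$\Rightarrow$(b):] If the action is properly discontinuous, the orbit of $1$, namely $\Gamma$ itself, is discrete.
\item[(c)$\Rightarrow$(a):] Apply $\ell$. For $\mu_N\times q^\Z$ the radial projection $\gamma\mapsto\log|\gamma|$ has image $\log|q|\,\Z$, a lattice in $\R$, with finite kernel $\mu_N$. A compact $K\subset\C^*$ has bounded $\log|z|$, so only finitely many $\gamma$ satisfy $\gamma K\cap K\ne\emptyset$. The case $\mu_N$ is a finite group acting freely, which is trivially fine.
\item[(b)$\Rightarrow$(c):] This is the step needing the most care and is the main obstacle. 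Assume $\Gamma$ is discrete. Then $\Gamma\cap\Sone$ is a discrete subgroup of the compact group $\Sone$, hence finite, hence equal to $\mu_N$. Next consider the radial homomorphism $r:\Gamma\to\R$, $\gamma\mapsto\log|\gamma|$, whose kernel is $\mu_N$. I claim $A_\Gamma=r(\Gamma)$ is discrete in $\R$. If not, there are $\gamma_n\in\Gamma$ with $0\neq r(\gamma_n)\to0$. Their arguments lie in the compact circle, so after passing to a subsequence, $\gamma_n\gamma_m^{-1}$ converges to $1$ with $\gamma_n\ne\gamma_m$. Concretely, the $\gamma_n$ lie in the compact annulus $\{e^{-1}\le|z|\le e\}$ and are pairwise distinct, since their moduli differ after refining. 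Infinitely many points of $\Gamma$ in a compact set contradicts discreteness. So $A_\Gamma$ is discrete, hence either $\{0\}$ (giving $\Gamma=\mu_N$) or $c\Z$ with $c\neq0$. In the second case, choose $q$ with $r(q)=c$. The exact sequence $1\to\mu_N\to\Gamma\to c\Z\to0$ splits because $c\Z$ is free, so $\Gamma=\mu_N\times q^\Z$ with $|q|=e^c\ne1$.
\end{itemize}
The delicate point to verify is the compactness/subsequence argument excluding a dense radial group. Note that finite generation is not even needed for (iii), only for (i).

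For (iv), a torsion-free discrete group has $N=1$ in (iii)(c), and $\Gamma=\mu_1=\{1\}$ is excluded implicitly (or is the degenerate case $q^\Z$ with $q=1$, to be remarked). This leaves $\Gamma=q^\Z$ with $|q|\ne1$.
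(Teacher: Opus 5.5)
Your proof is correct and follows essentially the same route as the paper: you identify $\Gamma\cap\Sone=\mu_N$, show the radial image $A_\Gamma$ is discrete, and split off $q^\Z$, and your treatment of (i), (ii), (a)$\Rightarrow$(b) and (c)$\Rightarrow$(a) matches the paper's. You also improve on it in two places: your compact-annulus argument justifies the discreteness of $A_\Gamma$, which the paper only asserts, and your remark that $\Gamma=\{1\}$ is torsion-free and discrete but not of the form $q^\Z$ with $|q|\neq1$ correctly flags a degenerate exception to (iv) that the paper's proof overlooks.
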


\begin{proof}
Statement \textup{(i)} is the standard classification of finitely generated subgroups of the compact group $\Sone$: such a subgroup is either finite or dense in $\Sone$.

\noindent For \textup{(ii)}, if $A_\Gamma\neq \{0\}$, then by definition the radial part of $\Gamma$ is nontrivial, so there exists $q\in\Gamma$ with $|q|\neq 1$. Iterating $q$ or $q^{-1}$ shows that the orbit of any point of $\C^*$ accumulates at $0$ and at $\infty$ in $\PP^1$.

\noindent We now prove \textup{(iii)}.

\noindent \textup{(a) \(\Rightarrow\) (b).}
If $\Gamma$ were not discrete in $\C^*$, there would exist a sequence of distinct elements $\gamma_n\in\Gamma$ converging to some $\gamma\in\Gamma$. Then $\gamma^{-1}\gamma_n\to 1$, so for any sufficiently small compact neighbourhood $K$ of $1$ one would have
$
(\gamma^{-1}\gamma_n)\cdot K\cap K\neq\varnothing
$
for infinitely many $n$, contradicting proper discontinuity.

\noindent \textup{(b) \(\Rightarrow\) (c).}
Consider the homomorphism
$
\ell:\C^*\longrightarrow \R$,
 and $
\ell(z)=\log|z|.
$
Since $\Gamma$ is finitely generated and discrete in $\C^*$, the subgroup
$
\Gamma\cap \Sone
$
is finite, hence equal to $\mu_N$ for some $N\ge 1$. If $\ell(\Gamma)=\{0\}$, then $\Gamma\subset \Sone$, so $\Gamma=\mu_N$. Assume now that $\ell(\Gamma)\neq \{0\}$. Because $\ell(\Gamma)$ is a finitely generated discrete subgroup of $\R$, it is of the form
\[
\ell(\Gamma)=\lambda \Z
\ \text{for some }\lambda>0.
\]
Choose $q\in\Gamma$ with $\ell(q)=\lambda$, so that $|q|\neq 1$. For any $\gamma\in\Gamma$, there exists $m\in\Z$ such that
\[
\ell(\gamma)=m\lambda=\ell(q^m),
\]
hence $\gamma q^{-m}\in \Gamma\cap \Sone=\mu_N$. Therefore every element of $\Gamma$ may be written uniquely as
\[
\gamma=\zeta q^m,
\ 
\zeta\in\mu_N,\ m\in\Z.
\]
Since $|q|\neq 1$, one has
$
\mu_N\cap q^\Z=\{1\},
$
and therefore
$
\Gamma\cong \mu_N\times q^\Z.
$

\noindent\textup{(c) \(\Rightarrow\) (a).}
If $\Gamma=\mu_N$, the action is finite and therefore properly discontinuous. If
\[
\Gamma\cong \mu_N\times q^\Z
\ \text{with }|q|\neq 1,
\]
let $K\subset \C^*$ be compact. Since multiplication by $q^m$ rescales the modulus exponentially, only finitely many integers $m$ satisfy
$
q^mK\cap K\neq \varnothing.
$
As the torsion factor $\mu_N$ is finite, it follows that only finitely many elements of $\Gamma$ move $K$ so that it still meets itself. Hence the action is properly discontinuous.

\noindent Finally, \textup{(iv)} follows immediately from \textup{(iii)}: if $\Gamma$ is torsion-free, then the finite factor $\mu_N$ is trivial.
\end{proof}

Consider $D_i$ the irreducible components of the divisor $D$ and $\eta$ the meromorphic 1-form of the transversely affine structure on the foliation. Write each meridian multiplier as
$$
\lambda_i:=\exp\big(2\pi i\,\Res_{D_i}(\eta)\big)=\exp(2\pi i(a_i+i b_i))=e^{-2\pi b_i}\cdot e^{2\pi i a_i},
$$
so $|\lambda_i|=e^{-2\pi b_i}$ and $\lambda_i/|\lambda_i|=e^{2\pi i a_i}$.
Define the finitely generated additive subgroup
$$
B:=\langle b_1,\dots,b_k\rangle_{\Z}\subset \R,
\qquad
|\,\Gamma\,|:=\langle |\lambda_1|,\dots,|\lambda_k|\rangle \subset \R_{>0}.
$$
Then $|\,\Gamma\,|=\exp(-2\pi B)$.

\begin{proposition}\label[proposition]{prop:rank2-dense-nonT0}
Assume that $\rk(B)\ge 2$. Then $B$ is dense in $\R$, and consequently
$
|\Gamma|=\exp(-2\pi B)
$
is dense in $\R_{>0}$. In particular, the orbit space $\C^*/\Gamma$ is not $T_0$, hence in particular not Hausdorff.
\end{proposition}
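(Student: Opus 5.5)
The plan has three steps. First I prove density of $B$. Then I transport density through the exponential. Finally I deduce the failure of $T_0$ from density of the radial parts of orbits.

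\emph{Density of $B$.} The subgroup $B\subset\R$ is finitely generated. Since $\rk(B)\ge 2$, it contains two elements $b,b'$ that are linearly independent over $\Z$, hence over $\Q$, so $b/b'\notin\Q$. Every subgroup of $\R$ is either cyclic, of the form $\lambda\Z$, or dense: if it is not dense, its positive elements have a positive infimum, which is attained and generates the subgroup. A cyclic subgroup has rank at most $1$, so $B$ must be dense. Equivalently, $\langle b,b'\rangle_\Z$ is already dense by the classical Kronecker argument. Since $x\mapsto e^{-2\pi x}$ is a homeomorphism and group isomorphism $\R\to\R_{>0}$, density of $B$ gives density of $|\Gamma|=\exp(-2\pi B)$ in $\R_{>0}$.

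\emph{Failure of $T_0$.} In the setting of the paper, $\Gamma$ contains the meridian multipliers $\lambda_i$ by \Cref{prop:period-residue}. Hence the modulus map $\Gamma\to\R_{>0}$, $\gamma\mapsto|\gamma|$, has image containing $|\Gamma|$, and that image is dense. Take $z,w\in\C^*$ lying in distinct $\Gamma$-orbits. Such points exist since $\Gamma$ is countable, so its orbits are countable while $\C^*$ is uncountable.

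\emph{Main obstacle.} The difficulty is that density of the moduli alone does not place $w$ in the closure of $\Gamma z$, because the angular components might not approximate $\arg(w/z)$. I would handle this as follows. The closure $\overline{\Gamma}$ is a closed subgroup of $\C^*$, and its image under $\log|\cdot|$ is dense. A closed subgroup of $\C^*\cong\R\times S^1$ is a Lie subgroup whose identity component is $\{1\}$, $S^1$, a one-parameter spiral $\{e^{t(\alpha+i\beta)}\}$ with $\alpha\neq0$, or all of $\C^*$. In each case I would argue that the orbit closures $\overline{\Gamma z}=\overline{\Gamma}z$ cannot be separated. If $\overline\Gamma$ has positive dimension with nonzero radial part, then its orbit closures are the cosets $\overline{\Gamma}z$. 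Any open set meeting one coset is then met by the orbit of a point in another coset accumulating at it. Concretely, $w\in\overline{\Gamma z}$ while $z\notin\overline{\Gamma w}$ fails symmetrically, so we must instead show that two distinct points in the same closed coset, but in different $\Gamma$-orbits, have identical orbit closures. That already violates $T_0$ on the quotient.

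I will therefore aim for the following claim. Pick $z$, and pick $w\in\overline{\Gamma}z\setminus\Gamma z$, which is nonempty since $\overline\Gamma$ is uncountable. Then $\overline{\Gamma w}=\overline\Gamma w=\overline\Gamma z=\overline{\Gamma z}$. So the images of $z$ and $w$ in $\C^*/\Gamma$ are distinct points with the same closure, and no open set of the quotient separates them. Hence $\C^*/\Gamma$ is not $T_0$, and a fortiori not Hausdorff.

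The step I expect to be delicate is justifying that $\overline\Gamma$ is uncountable. This holds because $\overline\Gamma$ has dense image in $\R_{>0}$, and a countable closed subgroup is discrete, hence has discrete image under the modulus map by \Cref{prop:discrete-subgroups}(iii).
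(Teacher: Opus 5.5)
Your proposal is correct. For the density of $B$ it is essentially the paper's argument: the paper picks two rationally independent elements and invokes Kronecker, and your cyclic-or-dense dichotomy for subgroups of $\R$ is an equivalent standard route.

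For the failure of $T_0$ you take a genuinely different path. The paper restricts the action to $\R_{>0}$ and treats this set as $\Gamma$-invariant. It then observes that $|\Gamma|\cdot 1$ is dense there, so $\R_{>0}/|\Gamma|$ is not $T_0$, and concludes ``a fortiori'' for $\C^*/\Gamma$. You instead pass to $\overline{\Gamma}$ and show that a point $w\in\overline{\Gamma}z\setminus\Gamma z$ has the same orbit closure $\overline{\Gamma}z$ as $z$.

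What your route buys is robustness. $\R_{>0}$ is $\Gamma$-invariant only when $\Gamma\subset\R_{>0}$, since the multipliers $\lambda_i$ carry the phases $e^{2\pi i a_i}$. Moreover, the failure of $T_0$ for $\R_{>0}/|\Gamma|$ does not formally transfer to $\C^*/\Gamma$. Your coset argument handles precisely the angular issue you flagged, and it shows that density of $B$ is used only to make $\Gamma$ non-closed.

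Two tidy-ups are worth making.
\begin{enumerate}
\item The middle of your ``main obstacle'' paragraph is unnecessary and slightly garbled, so cut it. This means the ``fails symmetrically'' sentence and the list of closed subgroups of $\C^*$.
\item \Cref{prop:discrete-subgroups}(iii) concerns finitely generated groups, so apply it to $\Gamma$ rather than to $\overline{\Gamma}$. Here $\Gamma$ is discrete if $\overline{\Gamma}$ is, and the latter follows from countability via Baire.
\end{enumerate}
Simpler still, you only need $\overline{\Gamma}\neq\Gamma$. This is immediate from \Cref{prop:Gamma-closed}: in its case (c) the modulus image is $\{1\}$ or $|q|^{\Z}$, which is discrete and contradicts the density you proved.
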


\begin{proof}
Since $\rk(B)\ge 2$, there exist $b',b''\in B$ such that $b'/b''\notin\Q$. The subgroup
\[
\{m b'+n b'':m,n\in\Z\}\subset B
\]
is then dense in $\R$ by the classical Kronecker--Diophantine approximation argument. Hence $B$ itself is dense in $\R$.
Now the map
$
t\longmapsto e^{-2\pi t}
$
is continuous and maps dense subsets of $\R$ to dense subsets of $\R_{>0}$. Therefore, 
$
|\Gamma|=\exp(-2\pi B)
$
is dense in $\R_{>0}$.
Restrict the action to the closed $\Gamma$-invariant subset $\R_{>0}\subset \C^*$. Since the orbit
$
|\Gamma|\cdot 1
$
is dense in $\R_{>0}$, every orbit in $\R_{>0}$ has dense closure, and in particular distinct points have the same orbit closure. Hence the quotient
$
\R_{>0}/|\Gamma|
$
is not $T_0$. It follows a fortiori that $\C^*/\Gamma$ is not $T_0$, and therefore cannot be Hausdorff.
\end{proof}
On the above case the topology on the quotient is too wild, so the robust object to work with is the quotient stack $[\C^*/\Gamma]$. The following case split is the one we use throughout the paper:
\begin{enumerate}[label=(\alph*),leftmargin=2.2em]
\item \emph{Torsion/DM}: $\Gamma=\mu_N$, then $[\PP^1/\Gamma]$ is a DM curve.
\item \emph{Proper cyclic/elliptic}: $\Gamma=q^\Z$ with $|q|\ne 1$. Then $[\C^*/\Gamma]\simeq E_q$ is an elliptic curve.
\item \emph{Unitary wild}: $|\,\Gamma\,|=\{1\}$ but the angular subgroup is infinite (typically dense in $\Sone$). Then $\C^*/\Gamma$ is non-Hausdorff.
\item \emph{Radial wild}: $\rk(B)\ge 2$. Then $|\,\Gamma\,|$ is dense and $\C^*/\Gamma$ is not $T_0$.
\item \emph{Mixed wild}: $\rk(B)=1$ but the angular subgroup has accumulation at $1$; then the action still fails to be proper, and rigidity results apply.
\end{enumerate}

In our setting $\Gamma=\im(\rho)\subset\C^*$ is a finitely generated abelian group (generated by residue and period exponentials),
but its \emph{embedding} in $\C^*$ can be highly non-discrete (dense unitary, mixed, etc.).

The action of $\Gamma$ on $\C^*$ is left translation in the topological group $\C^*$.
For such actions, Hausdorffness of the orbit space is controlled by closedness of the subgroup.

\begin{lemma}\label[lemma]{lem:hausdorff-coset}
Let $G$ be a Hausdorff topological group and $H\subset G$ a subgroup.
Then the quotient space $G/H$ is Hausdorff if and only if $H$ is closed in $G$.
\end{lemma}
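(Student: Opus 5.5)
The plan is to prove the two implications separately. Throughout, $G/H$ carries the quotient topology and $q:G\to G/H$ is the canonical projection. The first thing to record is that $q$ is an open map. Indeed, for $V\subset G$ open one has $q^{-1}(q(V))=VH=\bigcup_{h\in H}Vh$, a union of translates of $V$. Each translate is open because right translation is a homeomorphism.

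For the direction ``Hausdorff $\Rightarrow$ $H$ closed'', we use that points of a Hausdorff space are closed. Hence the coset $\{q(e)\}$ is closed in $G/H$, and by continuity of $q$ its preimage $q^{-1}(q(e))=H$ is closed in $G$.

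For the converse, assume $H$ is closed and take two distinct cosets $xH\neq yH$, i.e.\ $x^{-1}y\notin H$. Since $G\setminus H$ is open and contains $x^{-1}y$, we look for open sets $U\ni x$ and $V\ni y$ with $U^{-1}V\cap H=\varnothing$. To find them, consider the continuous map $(a,b)\mapsto a^{-1}b$, which is continuous by the topological group axioms. The preimage of the open set $G\setminus H$ under this map is an open subset of $G\times G$ containing $(x,y)$. By definition of the product topology, this preimage contains a basic open box $U\times V$ with $x\in U$ and $y\in V$.

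Next we check that $q(U)$ and $q(V)$ are disjoint. Suppose $uH=vH$ with $u\in U$ and $v\in V$. Then $u^{-1}v\in H$, which contradicts the choice of $U\times V$. Since $q$ is open, $q(U)$ and $q(V)$ are disjoint open neighbourhoods of $xH$ and $yH$, so $G/H$ is Hausdorff.

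There is no serious obstacle in this argument. The only points needing care are the openness of $q$ and choosing the neighbourhoods via the continuity of $(a,b)\mapsto a^{-1}b$ rather than via a more delicate argument with symmetric neighbourhoods.

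For the application that follows, note that since $\C^*$ is abelian, the left action of $\Gamma$ agrees with the right coset action. Hence $\C^*/\Gamma$ is precisely the coset space $G/H$ with $G=\C^*$ and $H=\Gamma$, and the lemma applies directly.
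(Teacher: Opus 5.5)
The paper states this lemma without proof, treating it as a standard fact about topological groups, so there is no argument in the paper to compare with. Your proof is the standard one, and it is correct and complete.

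Each of the three steps is justified as written:
\begin{enumerate}
\item openness of $q$, via $q^{-1}(q(V))=VH$;
\item closedness of $H=q^{-1}(q(e))$ once points of $G/H$ are closed;
\item separation of $xH\neq yH$ by a box $U\times V$ inside the preimage of $G\setminus H$ under $(a,b)\mapsto a^{-1}b$.
\end{enumerate}

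You never use that $G$ is Hausdorff. This is not a gap, since the equivalence holds for an arbitrary topological group. Your closing remark that $\C^*$ is abelian correctly justifies identifying the orbit space $\C^*/\Gamma$ with the coset space, which is how the lemma is used in \Cref{prop:Gamma-closed}.
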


Applied to the case \(G=\C^*\), this shows that a monodromy subgroup which is not closed necessarily gives rise to a non-Hausdorff orbit space. In particular, in the wild regimes, the resulting orbit space fails to be a curve in any reasonable geometric sense. Since \(\Gamma\) is finitely generated, the question of closedness may moreover be decided explicitly.

\begin{proposition}\label[proposition]{prop:Gamma-closed}
Let $\Gamma\subset\C^*$ be a finitely generated subgroup.
Then the following are equivalent:
\begin{enumerate}[label=(\alph*),leftmargin=2.2em]
\item $\Gamma$ is closed in $\C^*$.
\item $\Gamma$ is discrete in $\C^*$.
\item Either $\Gamma$ is finite (hence $\Gamma=\mu_N$), or $\Gamma$ is virtually cyclic of the form
$
\Gamma \cong \mu_N\times q^\Z
$
for some $N\ge 1$ and some $q\in\C^*$ with $|q|\neq 1$.
\end{enumerate}
Consequently, $\C^*/\Gamma$ is Hausdorff if and only if one is in case (c).
\end{proposition}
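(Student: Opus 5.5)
The plan is to reduce everything to \Cref{prop:discrete-subgroups} and \Cref{lem:hausdorff-coset}. The equivalence (b)$\Leftrightarrow$(c) is already contained in \Cref{prop:discrete-subgroups}(iii), where (b) and (c) there coincide with (b) and (c) here. So it suffices to prove (a)$\Leftrightarrow$(b). The final sentence about the orbit space then follows from \Cref{lem:hausdorff-coset} applied to $G=\C^*$ and $H=\Gamma$. Since $\C^*$ is abelian, the orbit space of the multiplication action of $\Gamma$ is exactly the coset space $\C^*/\Gamma$. Hence it is Hausdorff iff $\Gamma$ is closed, that is, iff (c) holds.

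For (b)$\Rightarrow$(a), I would use the standard fact that a discrete subgroup of a Hausdorff topological group is closed. Choose a neighbourhood $W$ of $1$ with $W\cap\Gamma=\{1\}$, and a neighbourhood $V$ of $1$ with $V^{-1}V\subset W$. Suppose $x\in\overline\Gamma\setminus\Gamma$. Then $xV$ contains two distinct elements $\gamma_1,\gamma_2$ of $\Gamma$. But $\gamma_1^{-1}\gamma_2\in V^{-1}V\cap\Gamma=\{1\}$, a contradiction. Alternatively, one can read closedness off directly from the explicit form (c): $\mu_N$ is finite, and $\mu_N\times q^\Z$ meets every compact annulus in finitely many points.

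The direction (a)$\Rightarrow$(b) is the main obstacle, because it genuinely uses finite generation; for instance, $\Sone$ itself is closed but not discrete. Here I would argue with the log--argument coordinates of \Cref{def:logarg}. Suppose $\Gamma$ is closed but not discrete. Then $\Gamma$ is a closed subgroup of the Lie group $\C^*$, so by Cartan's closed subgroup theorem it is a Lie subgroup of positive dimension. Its identity component $\Gamma^0$ is then a nontrivial connected one-parameter subgroup, hence uncountable. This contradicts the countability of the finitely generated group $\Gamma$.

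If one prefers to avoid Cartan's theorem, the alternative is to work in the universal cover. Lift $\Gamma$ via $\exp:\C\to\C^*$ to the subgroup $\widetilde\Gamma=\exp^{-1}(\Gamma)\subset\C\cong\R^2$. This subgroup is closed and finitely generated, since it is generated by lifts of the generators together with $2\pi i$. Closed subgroups of $\R^2$ have the form $V\oplus\Lambda$ with $V$ a linear subspace and $\Lambda$ discrete. Countability forces $V=0$, so $\widetilde\Gamma$ is discrete, and hence $\Gamma$ is discrete because $\exp$ is a local homeomorphism. With this, (a)$\Leftrightarrow$(b)$\Leftrightarrow$(c) and the Hausdorff criterion follow as outlined.
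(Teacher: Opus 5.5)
Your proof is correct, but it is organised differently from the paper's.

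\emph{The paper's route.} The paper does a direct case analysis on the radial image $A=\log|\Gamma|\subset\R$. It asserts that $A$ is either $0$ or $\alpha\Z$. It treats $A=0$ via the finite-or-dense dichotomy in $\Sone$. In the case $A=\alpha\Z$ it appeals to \Cref{prop:discrete-subgroups} to obtain $\mu_N\times q^\Z$.

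\emph{Your route.} You hand off (b)$\Leftrightarrow$(c) entirely to \Cref{prop:discrete-subgroups}(iii), and prove (a)$\Leftrightarrow$(b) by a soft argument. A discrete subgroup of a Hausdorff group is closed. A closed countable subgroup of the Lie group $\C^*$ must be discrete, either by Cartan's theorem or by the $V\oplus\Lambda$ structure of closed subgroups of $\R^2$ after lifting through $\exp$.

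\emph{What your route buys: rigour.} The paper's first step is false as stated: a finitely generated subgroup of $\R$ need not be cyclic. For example, $\Gamma=\langle 2,3\rangle$ gives the dense group $A=\Z\log 2+\Z\log 3$, which is exactly the radial wild regime. That step only becomes true after using closedness of $\Gamma$ and properness of $|\cdot|:\C^*\to\R_{>0}$ to see that $A$ is closed.

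Similarly, in the case $A=\alpha\Z$ the paper claims $q^\Z$ has finite index and invokes the classification of discrete subgroups without having shown discreteness. The group $\langle 2, e^{2\pi i\sqrt2}\rangle$ has $A=\Z\log 2$ but infinite $\Gamma\cap\Sone$. Ruling this out under hypothesis (a) needs precisely your countability input: a closed subgroup dense in $\Sone$ would contain $\Sone$, which is uncountable.

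\emph{What the paper's route would buy.} Once repaired in this way, it yields the explicit form $\mu_N\times q^\Z$ directly from closedness, whereas you obtain it only through the earlier proposition. Your decomposition, however, isolates cleanly the one genuinely nontrivial point: closed plus countable implies discrete.
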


\begin{proof}
Let $\log|\cdot|:\C^*\to\R$ be the continuous homomorphism.
The image $A:=\log|\Gamma|\subset\R$ is a finitely generated subgroup, hence either $A=0$ or $A=\alpha\Z$ for some $\alpha>0$.
If $A=0$, then $\Gamma\subset\Sone$; a finitely generated subgroup of $\Sone$ is either finite or dense, hence it is closed iff finite.
If $A=\alpha\Z$, choose $q\in\Gamma$ with $\log|q|=\alpha$, so $|q|\neq 1$.
Then $\Gamma$ contains an element of modulus $\neq 1$, and by \Cref{prop:discrete-subgroups} any discrete subgroup of $\C^*$ is either finite or
$q^\Z$ with $|q|\neq 1$.
Since $\Gamma$ is finitely generated and contains $q^\Z$ as a finite-index subgroup, it is of the stated form $\mu_N\times q^\Z$.
In this situation $\Gamma$ is discrete and therefore closed.
Finally, (a)$\Leftrightarrow$Hausdorffness follows from \Cref{lem:hausdorff-coset}.
\end{proof}

If $\Gamma=q^\Z$ with $|q|=1$ and $q$ not a root of unity, then $\Gamma$ is not discrete (its powers are dense in $\Sone$),
hence not closed, and $\C^*/\Gamma$ fails to be Hausdorff by \Cref{lem:hausdorff-coset}.
Nevertheless the quotient stack $[\C^*/\Gamma]$ is perfectly well-defined and is the correct transverse base in the wild regime.

\begin{corollary}\label[corollary]{cor:curve-then-algebraic}
Assume $\Gamma$ is finitely generated and closed. Then $\C^*/\Gamma$ is a Euclidean Riemann surface: it is isomorphic to $\C^*$ (finite case) or to an elliptic curve
(infinite case $\Gamma=q^\Z$, $|q|\neq 1$).
In particular, in dimension $1$ these coarse quotients are algebraic: $\C^*$ is affine and every compact Riemann surface (hence every elliptic curve)
is projective.
\end{corollary}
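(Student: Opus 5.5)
By \Cref{prop:Gamma-closed}, a finitely generated closed $\Gamma$ is either finite, so $\Gamma=\mu_N$, or of the form $\mu_N\times q^\Z$ with $|q|\neq 1$. By \Cref{prop:discrete-subgroups}(iii) the action of $\Gamma$ on $\C^*$ is properly discontinuous in both cases. It is also free, since $\Gamma$ acts by multiplication in the group $\C^*$ and $\gamma z=z$ forces $\gamma=1$. The standard quotient theorem for free properly discontinuous holomorphic actions then makes $\C^*/\Gamma$ a Riemann surface: the local inverses of the covering map $\C^*\to\C^*/\Gamma$ serve as charts, and the transition maps are multiplications by elements of $\Gamma$, hence holomorphic. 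Hausdorffness also follows from \Cref{lem:hausdorff-coset}.

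In the finite case I will use $z\mapsto z^N$. This map is $\mu_N$-invariant, and it separates $\mu_N$-orbits because the fibres of $z^N$ are exactly those orbits. It is a holomorphic covering of degree $N$, so it induces a bijective local biholomorphism $\C^*/\mu_N\to\C^*$, which is therefore a biholomorphism. In the infinite case I will first divide out the finite factor: $\C^*/(\mu_N\times q^\Z)\cong(\C^*/\mu_N)/\langle q^N\rangle$, because $z\mapsto z^N$ intertwines multiplication by $q$ with multiplication by $q^N$. Writing $q'=q^N$, we have $|q'|\neq1$, and this reduces the problem to $\C^*/q'^\Z$. Pulling back along $\exp:\C\to\C^*$ with $q'=e^{2\pi i\tau}$, we get $\C^*/q'^\Z\cong\C/(\Z+\tau\Z)$. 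Here $\operatorname{Im}\tau\neq0$ precisely because $|q'|\neq1$, so this is a complex torus, that is, an elliptic curve $E_{q'}$.

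The algebraicity statement is then classical. $\C^*=\operatorname{Spec}\C[z,z^{-1}]$ is affine. Every compact Riemann surface is projective by Riemann's existence theorem; for a torus one can see this explicitly through the Weierstrass embedding $(\wp,\wp')$ into $\PP^2$. The main point needing care is the identification of the quotient by the mixed group $\mu_N\times q^\Z$ via the two-step quotient. I expect no real obstacle there beyond checking that $z\mapsto z^N$ is equivariant for the stated homomorphism $\gamma\mapsto\gamma^N$, as in \Cref{prop:kummer-map}.
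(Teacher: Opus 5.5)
Your argument is correct and follows the paper's route: $z\mapsto z^N$ identifies $\C^*/\mu_N$ with $\C^*$, and logarithmic coordinates identify $\C^*/q^\Z$ with a one-dimensional complex torus. It is in fact more complete than the paper's proof, which treats only $\Gamma=\mu_N$ and $\Gamma=q^\Z$ and so omits the non-cyclic closed groups $\mu_N\times q^\Z$ with $N>1$ allowed by \Cref{prop:Gamma-closed} (for example $\Gamma=\{\pm 2^n\}$); your two-step reduction to $\C^*/(q^N)^\Z$ covers these groups, and you also justify the Riemann-surface structure from the free, properly discontinuous action rather than from Hausdorffness alone.
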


\begin{proof}
First notice that the quotient $\mathbb{C}^*/\Gamma$ is Hausdorff. If $\Gamma=\mu_N$, then $\C^*/\Gamma\simeq \C^*$ via $z\mapsto z^N$.
If $\Gamma=q^\Z$ with $|q|\neq 1$, then logarithmic coordinates identify $\C^*/q^\Z$ with
$\C/(2\pi i\Z+(\log q)\Z)$, a complex torus of dimension $1$.
\end{proof}

\subsection{GAGA in the proper cases}

In the proper cases occurring in the classification, the transverse base is no longer of a genuinely non-algebraic kind, but becomes a one-dimensional algebraic object: either an elliptic curve, or, in the finite-monodromy case, a proper Deligne--Mumford stack. The rôle of the present subsection is therefore simply to record that, in these regimes, the analytic automorphism theory considered in the paper coincides with the corresponding algebraic one.
This observation clarifies the structure of the classification. It shows that the proper regimes recover the expected algebraic models, whereas the non-proper cases are precisely those in which one must work intrinsically with analytic quotient stacks. The proposition below makes this passage precise.

\begin{proposition}[GAGA for automorphisms]\label[proposition]{prop:GAGA-aut}
Let $\mathcal X$ be either
\begin{enumerate}[label=\textup{(\roman*)},leftmargin=2.5em]
\item a proper Deligne--Mumford analytic curve arising as the analytification of a proper algebraic Deligne--Mumford stack over $\C$, or
\item an elliptic curve.
\end{enumerate}
Then every analytic auto-equivalence of $\mathcal X$ is algebraic. In particular, the natural comparison functor identifies the analytic and algebraic automorphism groups; in the Deligne--Mumford case, it also identifies the corresponding automorphism $2$-groups.
\end{proposition}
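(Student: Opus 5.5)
The plan is to reduce both cases to Serre's GAGA for proper schemes and its extension to proper algebraic stacks. The key principle is that, for proper algebraic objects, the analytification functor is fully faithful on morphisms. Once that is known, every analytic auto-equivalence comes from an algebraic one, and every $2$-morphism between algebraic $1$-morphisms is algebraic.

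\emph{Case (ii), elliptic curves.} Let $E$ be an elliptic curve, that is, a compact Riemann surface of genus one. By \Cref{cor:curve-then-algebraic}, and by the projectivity of compact Riemann surfaces, $E$ is the analytification of a smooth projective curve $E^{\mathrm{alg}}$. Let $\phi:E\to E$ be an analytic automorphism, and consider its graph $\Gamma_\phi\subset E\times E$. Since $E\times E$ is projective, $\Gamma_\phi$ is a closed analytic subvariety of a projective variety. Chow's theorem then makes $\Gamma_\phi$ algebraic, so $\phi$ is a regular morphism; the same argument applied to $\phi^{-1}$ shows the inverse is regular too. Hence $\Aut(E)=\Aut(E^{\mathrm{alg}})$. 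Concretely, every such $\phi$ is a translation composed with a group automorphism $z\mapsto \zeta z$ on $\C/\Lambda$, and this description can serve as a cross-check.

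\emph{Case (i), Deligne--Mumford curves.} Let $\mathcal X=\mathcal X_{\mathrm{alg}}^{\mathrm{an}}$ with $\mathcal X_{\mathrm{alg}}$ a proper Deligne--Mumford stack over $\C$. The main input is the GAGA theorem for proper algebraic stacks: analytification is an equivalence between coherent sheaves on $\mathcal X_{\mathrm{alg}}$ and on $\mathcal X$. Such a statement is available from Toën's work and from the general formulation of Porta--Yu. Given this, the plan proceeds as follows.
\begin{enumerate}[label=(\alph*),leftmargin=2.2em]
\item Realise a $1$-morphism $f:\mathcal X\to\mathcal X$ through its graph $\mathcal X\to\mathcal X\times\mathcal X$. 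Because $\mathcal X$ is separated, this graph is a closed substack, proper over the first factor, and isomorphic to it via the first projection.
\item Apply GAGA on the proper DM stack $\mathcal X\times\mathcal X$ to the ideal sheaf of the graph. This algebraises the graph, and hence algebraises $f$.
\item Treat $2$-morphisms $f\Rightarrow g$ as sections of the finite unramified isomorphism scheme $\mathrm{Isom}(f,g)\to\mathcal X$. By finite GAGA together with properness, these analytic sections are the same as algebraic sections.
\end{enumerate}
This yields full faithfulness on the Hom-groupoids. Restricting to invertible objects then identifies the automorphism $2$-groups, and therefore also their $\pi_0$, the automorphism groups.

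\emph{Main obstacle.} The delicate point is step (b) for stacks, namely that analytic closed substacks of a proper DM stack are algebraic. I would prove this first by passing to a finite étale (or finite flat) cover by a projective curve. For a one-dimensional DM curve, a coarse moduli space $\mathcal X\to X$ exists with $X$ a projective curve. Since $\mathcal X$ is generically a scheme, in the tame characteristic-zero situation it is a quotient $[Y/G]$ of a smooth projective curve $Y$ by a finite group $G$. Once this quotient presentation is in hand, the stack problem becomes a $G$-equivariant problem on $Y$ and $Y\times Y$, where Chow's theorem for projective varieties applies directly. I would carry out the argument in this form to avoid citing stacky GAGA in full generality.
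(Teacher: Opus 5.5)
Your skeleton is the paper's: classical GAGA for (ii) (your Chow argument on $E\times E$ is fine), GAGA for proper Deligne--Mumford stacks for (i), and $2$-morphisms as sections of the finite unramified $\mathrm{Isom}$-scheme. The problems are in the two places where you go beyond that citation.

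\textbf{Step (a) is false once $\mathcal X$ has nontrivial inertia.} The graph $(\mathrm{id},f)\colon\mathcal X\to\mathcal X\times\mathcal X$ is a base change of the diagonal of $\mathcal X$. For a separated DM stack this diagonal is finite and unramified, but it is not a monomorphism: the fibre of the graph over $(x,f(x))$ is $\mathrm{Aut}(f(x))$. For $f=\mathrm{id}$ on $[\PP^1/\mu_N]$, the fibre over $(0,0)$ is $\mu_N$. The closed image of the graph is $[Z/(\mu_N\times\mu_N)]$ with $Z=\{w^N=z^N\}\subset\PP^1\times\PP^1$, and its stabiliser at $(0,0)$ is $\mu_N\times\mu_N$. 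So this image is not isomorphic to $\mathcal X$ via the first projection, and algebraising its ideal sheaf does not recover $f$. There are two repairs:
\begin{itemize}
\item algebraise the finite representable morphism itself: apply coherent GAGA on $\mathcal X\times\mathcal X$ to the algebra $(\mathrm{id},f)_*\mathcal O_{\mathcal X}$, take relative $\mathrm{Spec}$, and use GAGA again to see that the first projection of the result is an isomorphism;
\item or simply cite full faithfulness of analytification on Hom-groupoids with proper source (Hall, Porta--Yu), which is what the paper does.
\end{itemize}

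\textbf{The route in your last paragraph rests on a false structural claim.} You say you would actually follow this route, but a proper DM curve need not be $[Y/G]$ with $Y$ a smooth projective curve and $G$ finite.
\begin{itemize}
\item The teardrop $[(\C^2\setminus 0)/\C^*]$ with weights $(1,N)$, $N\ge 2$, has coarse space $\PP^1$ and a single $\mu_N$-point. It is smooth, proper and generically a scheme, with trivial \'etale fundamental group. Hence any finite \'etale cover of it by a scheme would force it to be a scheme. The same holds for weights $(m,n)$ with $m\neq n$ coprime.
\item Your hypothesis ``generically a scheme'' is not part of the statement. The gerbe of $n$-th roots of $\mathcal O_{\PP^1}(1)$ is a proper DM curve that also admits no finite \'etale cover by a scheme, since $\PP^1$ is simply connected and $\mathcal O(1)$ has no $n$-th root.
\item Finite flat covers by schemes do exist in general, but they are not Galois, so they do not reduce the problem to a $G$-equivariant one without a separate descent argument.
\item Even when $\mathcal X=[Y/G]$, a $1$-morphism $[Y/G]\to[Y/G]$ is a bibundle, i.e.\ a $G$-torsor $P\to Y$ together with an equivariant map $P\to Y$. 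It is not an equivariant self-map of $Y$, so you would also need Riemann existence to algebraise $P$.
\end{itemize}
As written, this route proves the proposition only for good orbifold curves such as $[\PP^1/\mu_N]$. For the general statement you need the stacky GAGA input after all.
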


\begin{proof}
In case \textup{(i)}, the statement is a direct consequence of GAGA for proper Deligne--Mumford stacks. One may appeal to Porta--Yu's comparison theorem for proper higher analytic stacks, which applies in particular to proper Deligne--Mumford $1$-stacks \cite{PortaYuGAGA}. In the one-dimensional situation considered here, Hall's relative GAGA theorem for families of curves gives an equivalent algebraization statement \cite{HallGAGA}. Thus analytic morphisms, closed substacks, and isomorphisms algebraize uniquely, and the same conclusion applies to auto-equivalences and to their $2$-automorphisms.
In case \textup{(ii)}, the statement is classical GAGA for projective complex curves: every holomorphic self-map of an elliptic curve is algebraic, and every biholomorphism is therefore an algebraic automorphism \cite{Serre56}.
For the final assertion, note that in the Deligne--Mumford case a $2$-morphism between two auto-equivalences is a global section of the corresponding Isom-stack, which is finite over the source. Proper GAGA therefore identifies analytic and algebraic $2$-morphisms as well.
\end{proof}

When $\Gamma$ is not closed, the action is non-proper and the quotient falls outside the scope of any proper GAGA comparison of this kind. One is then forced to work intrinsically on the analytic stack side. In particular, it becomes essential to distinguish between strict morphisms attached to a chosen presentation and Morita morphisms of the quotient stack itself: the former describe symmetries of a fixed action groupoid, whereas the latter capture the intrinsic automorphism $2$-group of the quotient stack. This is precisely the situation relevant to the non-proper regimes studied later; see \cite{BehrendXu,Noohi05,Noohi12,Romagny}.

\subsection{Normal form}

\begin{proposition}\label[proposition]{prop:hol-normal-aut-Cstar}
Let $f:\C^*\to\C^*$ be holomorphic. Then there exist a unique $n\in\Z$ and a holomorphic function $H\in\cO(\C^*)$ such that
$
f(z)=z^n\exp(H(z)).
$
Moreover, $f$ is a biholomorphism if and only if
$
f(z)=az
$
or
$
f(z)=a/z
$
for some $a\in\C^*$.
\end{proposition}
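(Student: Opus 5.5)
The plan has two parts: first the normal form $f(z)=z^n\exp(H(z))$ via the winding number of $f$ on a loop, then the classification of biholomorphisms via the isolated singularities of $f$ at $0$ and $\infty$.

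\emph{Normal form.} Let $n$ be the winding number of $f$ along the unit circle,
\[
n:=\frac{1}{2\pi i}\oint_{|z|=1}\frac{f'(z)}{f(z)}\,dz .
\]
It is an integer because $f$ never vanishes. Since $H_1(\C^*,\Z)\cong\Z$ is generated by this circle, the closed holomorphic $1$-form $f'/f\,dz-n\,dz/z$ has zero period on every loop in $\C^*$. Hence it is exact, say equal to $dH$ with $H\in\cO(\C^*)$. Then $g:=z^{-n}f$ satisfies $dg/g=dH$, so $g=c\,e^{H}$ for a constant $c$, and we absorb $c$ into $H$ by adding a logarithm of $c$.

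For uniqueness, suppose $z^n e^{H}=z^m e^{H'}$. Then $z^{n-m}=e^{H'-H}$. The left side has winding number $n-m$ around the unit circle, while the right side, being the exponential of a single-valued function, has winding number $0$. So $n=m$, and $H$ is then unique up to an additive constant in $2\pi i\Z$. The statement only claims uniqueness of $n$, which this settles.

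\emph{Biholomorphisms.} The maps $az$ and $a/z$ are clearly biholomorphisms of $\C^*$, so only the converse needs work. Let $f$ be a biholomorphism and look at its isolated singularity at $0$.

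\begin{itemize}
\item If the singularity were essential, Casorati--Weierstrass would make $f(\{0<|z|<\varepsilon\})$ dense in $\C^*$. But $f(\{|z|>2\})$ is a nonempty open set, disjoint from that image by injectivity. This is a contradiction, so $f$ extends meromorphically at $0$.
\item The same argument at $\infty$ shows that $f$ extends meromorphically there too.
\end{itemize}

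So $f$ extends to a rational map $\PP^1\to\PP^1$. Being injective on $\C^*$, this map has degree $1$ (a generic value has $\deg f$ preimages, all in $\C^*$), so it is a Möbius transformation. It maps $\C^*$ onto $\C^*$, so it preserves $\{0,\infty\}$. Hence $f(z)=az$ if it fixes both points, and $f(z)=a/z$ if it swaps them.

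The main obstacle is the Casorati--Weierstrass step ruling out essential singularities. The injectivity argument above handles it cleanly. Alternatively, one can use Picard, or note that $e^{H}$ with $H$ non-polynomial in $z^{\pm1}$ is infinite-to-one; the direct argument above is preferable.
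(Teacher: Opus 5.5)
Your proof is correct. For the biholomorphism part it follows a genuinely different, and in fact more reliable, route than the paper.

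\textbf{Normal form.} The two arguments are equivalent. The paper lifts $f\circ\exp$ to an entire function $h$ and reads off $n$ from the constant $h(w+2\pi i)-h(w)\in 2\pi i\Z$. You instead integrate the closed form $f'/f\,dz-n\,dz/z$. Both amount to saying that $df/f$, once its single period is removed, is exact on $\C^*$.

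\textbf{Biholomorphisms.} The paper first obtains $n=\pm1$, because $f_*$ must be an automorphism of $\pi_1(\C^*)\cong\Z$. It then argues that a non-constant $H$ would force $f'$ to vanish somewhere on $\C^*$. That last step does not hold as stated. Take $H(z)=\sum_{k\ge1} z^k/(k\cdot k!)$, so that $zH'(z)=e^z-1$. Then $f(z)=z\,e^{H(z)}$ satisfies $f'(z)=e^{H(z)+z}\neq 0$ on $\C^*$, yet $f$ is not injective. Absence of critical points is a local condition and cannot detect this failure. What excludes a non-constant $H$ is global injectivity.

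That is exactly what your Casorati--Weierstrass step uses. It rules out essential singularities at $0$ and $\infty$, which reduces $f$ to a rational map of degree one. Surjectivity onto $\C^*$ then forces a M\"obius transformation preserving $\{0,\infty\}$.

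\textbf{Comparison.} Your argument is self-contained and does not need the normal form or the value of $n$ at all. The paper's route has the advantage of staying inside the $z^n e^{H}$ framework that it reuses in the later rigidity statements. However, its final step needs an injectivity argument like yours to be complete, for instance applied to the essential singularity that $e^{H}$ must have at $0$ or $\infty$ when $H$ is non-constant.
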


\begin{proof}
Let $\pi:\C\to\C^*$, $\pi(w)=e^w$, be the universal cover. Choose a holomorphic function $h:\C\to\C$ such that
$
\exp(h(w))=f(e^w).
$
Then
$
h(w+2\pi i)-h(w)\in 2\pi i\Z.
$
By connectedness, this difference is constant, say $2\pi i n$.
Thus $h(w)-nw$ is $2\pi i$-periodic and descends to a holomorphic function $H\in\cO(\C^*)$, giving
$
f(z)=z^n\exp(H(z)).
$
The integer $n$ is uniquely determined, since it is the winding number of $f$ around $0$ on a small circle.

Assume now that $f$ is a biholomorphism. Then $f^{-1}$ is holomorphic on $\C^*$, so $f$ has neither zeros nor critical points on $\C^*$.
Applying the same normal form to $f^{-1}$, one sees that the degree of $f$ on $\pi_1(\C^*)\cong\Z$ must be $\pm 1$.
Hence $n=\pm 1$. If $H$ were non-constant, then $\exp(H)$ would have non-trivial logarithmic derivative, and $f'(z)$ would vanish somewhere on $\C^*$, contradiction.
Thus $H$ is constant, say $\exp(H)=a\in\C^*$, and therefore
$
f(z)=az
$
or
$
f(z)=a/z.
$
Conversely, both maps are biholomorphisms.
\end{proof}

Combining \Cref{prop:strictification,prop:hol-normal-aut-Cstar} gives the general form for automorphisms of $\cB_{\Gamma}$:
they are induced by $\widetilde\psi(z)=az$ (with $\sigma=\mathrm{id}$) and $\widetilde\psi(z)=a/z$ (with $\sigma(\gamma)=\gamma^{-1}$),
modulo the $2$-isomorphism ambiguity $a\sim \gamma a$.
Thus the group of $1$-isomorphism classes of automorphisms is $(\C^*/\Gamma)\rtimes(\Z/2)$, where $\Z/2$ acts by inversion.
We make this precise in \Cref{sec:AutBGamma}.

\begin{lemma}\label[lemma]{lem:entire-periodicity-rigidity}
Let $h:\C\to\C$ be entire.
\begin{enumerate}[label=\textup{(\roman*)},leftmargin=2.6em]
\item If there exist $\omega_1,\omega_2\in\C$ which are $\R$-linearly independent and satisfy
$
h(w+\omega_1)=h(w)$ and $
h(w+\omega_2)=h(w)
$
for all $w$, then $h$ is constant.

\item If there exists a sequence $u_n\to 0$ with $u_n\neq 0$ such that
$
h(w+u_n)-h(w)
$
is constant in $w$ for every $n$, then $h$ is affine-linear.
If moreover $h$ is $2\pi i$-periodic, then $h$ is constant.
\end{enumerate}
\end{lemma}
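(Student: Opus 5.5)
For part (i), the plan is to use the $\R$-linear independence of $\omega_1,\omega_2$ to produce a compact fundamental domain. The lattice $\Lambda=\Z\omega_1+\Z\omega_2$ has the closed parallelogram
\[
\mathcal P=\{s\omega_1+t\omega_2: s,t\in[0,1]\}
\]
as a compact fundamental domain. Since $h$ is $\Lambda$-periodic, every value $h(w)$ is already attained on $\mathcal P$. Hence $|h|$ is bounded on $\C$ by $\max_{\mathcal P}|h|$, and Liouville's theorem gives that $h$ is constant.

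For part (ii), I would argue through the derivative. For each $n$ the function $w\mapsto h(w+u_n)-h(w)$ is constant, so differentiating gives $h'(w+u_n)=h'(w)$ for all $w$ and all $n$. Thus $h'$ is an entire function invariant under translation by every element of the additive subgroup $G$ generated by the $u_n$.

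Since $u_n\to 0$ with $u_n\neq 0$, the group $G$ accumulates at $0$. I would then show that $h'$ is constant as follows.
\begin{itemize}
\item Fix $w_0\in\C$. For every $m\in\Z$ we have $h'(w_0+mu_n)=h'(w_0)$, so in particular $h'(w_0+u_n)=h'(w_0)$ for all $n$.
\item The points $w_0+u_n$ are distinct from $w_0$ (after passing to a subsequence, from each other as well) and accumulate at $w_0$.
\item The entire function $h'-h'(w_0)$ therefore vanishes on a set with an accumulation point in $\C$. By the identity theorem it vanishes identically.
\end{itemize}
So $h'\equiv c$ is constant, and $h(w)=cw+d$ is affine-linear.

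For the final clause, suppose in addition that $h$ is $2\pi i$-periodic. Then
\[
cw+2\pi i c+d=cw+d,
\]
which forces $c=0$, so $h$ is constant.

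I do not expect a real obstacle. The only point needing care is the identity-theorem step: the accumulation must take place at an interior point of the domain. Here this is automatic because the domain is all of $\C$ and the accumulation point $w_0$ lies in it.
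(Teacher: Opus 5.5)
Your proof is correct. Part (i) and the final clause of (ii) match the paper's argument exactly: compact fundamental parallelogram, boundedness, Liouville, and then $c\cdot 2\pi i=0$.

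The difference is in how you show $h'$ is constant in (ii). The paper works with $h$ directly. It writes $h(w+u_n)=h(w)+c_n$, divides by $u_n$ and lets $n\to\infty$, obtaining $h'(w)=\lim_n c_n/u_n$ for every $w$. Since the right-hand side does not depend on $w$, $h'$ is constant.

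You instead differentiate the functional equation to get $h'(w+u_n)=h'(w)$, and then apply the identity theorem at the accumulation point $w_0$.

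The paper's route is slightly more elementary: it uses only the definition of the complex derivative, not the identity theorem, and it identifies the slope as $\lim_n c_n/u_n$. Yours puts (ii) on the same footing as (i): both become statements that a holomorphic function invariant under a group of translations is constant. It would also apply verbatim if one knew only that the $u_n$ generate a non-discrete subgroup of $\C$. That is the form in which the accumulation property enters \Cref{proposition:hol-rigidity}.
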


\begin{proof}
For \textup{(i)}, a fundamental parallelogram for the lattice $\Lambda=\Z\omega_1+\Z\omega_2$ is compact, hence $h$ is bounded on it.
By periodicity, $h$ is bounded on all of $\C$, so Liouville's theorem implies that $h$ is constant.

For \textup{(ii)}, fix $n$ and write
$
h(w+u_n)=h(w)+c_n
$
with $c_n\in\C$.
Dividing by $u_n$ and letting $u_n\to 0$, one obtains by uniform convergence on compact sets that $h'(w)$ is constant.
Hence $h$ is affine-linear.
If $h$ is $2\pi i$-periodic, then its linear term must vanish, so $h$ is constant.
\end{proof}

\subsection{Properness, discreteness, and accumulation}

\begin{definition}\label[definition]{def:accum}
We say that $\Gamma$ has the \emph{accumulation property} if either:
\begin{enumerate}
\item it contains an element $q$ with $|q|\ne 1$; or
\item it contains a sequence $\gamma_n\to 1$ in $\C^*$ with $\gamma_n\ne 1$.
\end{enumerate}
\end{definition}

\begin{lemma}\label[lemma]{lem:rank2-acc}
If $\Gamma$ is a finitely generated abelian group of abstract rank $\ge 2$ embedded in $\C^*$, then it has the accumulation property.
\end{lemma}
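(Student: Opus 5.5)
The plan is to prove the stronger, intended form of the accumulation property: $\Gamma$ is \emph{not discrete}, so there is a sequence $\gamma_n\in\Gamma\setminus\{1\}$ with $\gamma_n\to 1$. This holds even when $\Gamma$ contains elements of modulus $\neq 1$, and it is there that the hypothesis $\operatorname{rk}\ge 2$ is needed. It then implies the disjunction in \Cref{def:accum} as literally stated.

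The argument is by contraposition via \Cref{prop:discrete-subgroups}\textup{(iii)}. Suppose $\Gamma\subset\C^*$ is finitely generated and discrete. The implication (b)$\Rightarrow$(c) of that proposition gives either $\Gamma=\mu_N$ or $\Gamma\cong\mu_N\times q^{\Z}$ with $|q|\neq 1$. In both cases $\Gamma\otimes_{\Z}\Q$ has dimension at most $1$, so the abstract rank of $\Gamma$ is at most $1$. Hence, if $\operatorname{rk}\Gamma\ge 2$, the subgroup $\Gamma$ is not discrete in $\C^*$.

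Next I translate non-discreteness into accumulation at $1$. For a subgroup of a topological group, discreteness is equivalent to $1$ being an isolated point, because translation by any $\gamma\in\Gamma$ is a homeomorphism of $\C^*$ carrying $\Gamma$ to itself and $1$ to $\gamma$. So if $\Gamma$ is not discrete, every neighbourhood of $1$ contains some $\gamma\neq 1$ in $\Gamma$. Choosing $\gamma_n$ in the disc $|z-1|<1/n$ produces the required sequence. This is alternative (2) of \Cref{def:accum}, so $\Gamma$ has the accumulation property.

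The only step with real content is the rank bound for discrete subgroups, and I would make it self-contained rather than rely solely on the case analysis in \Cref{prop:discrete-subgroups}. Pull $\Gamma$ back along the exponential $\C\to\C^*$ to $\widetilde\Gamma:=\exp^{-1}(\Gamma)$. Since $\exp$ is a local homeomorphism, $\widetilde\Gamma$ is discrete in $\C\cong\R^2$ whenever $\Gamma$ is discrete in $\C^*$. A discrete subgroup of $\R^2$ is a lattice of rank at most $2$. Moreover $\widetilde\Gamma$ contains $2\pi i\Z$, which is a primitive rank-one subgroup after saturation. Therefore $\Gamma=\widetilde\Gamma/2\pi i\Z$ has rank at most $2-1=1$.

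This gives the same conclusion directly: two multiplicatively independent elements $g,h\in\Gamma$ force non-discreteness. Concretely, the subgroup $\langle g,h\rangle$ lifts to a rank-$3$ subgroup $\Z\log g+\Z\log h+2\pi i\Z\subset\C$, which cannot be discrete. Its small nonzero elements then exponentiate to elements of $\Gamma$ near $1$ that are different from $1$.
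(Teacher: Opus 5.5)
Your proof is correct and follows essentially the same route as the paper. Both arguments show that a discrete subgroup of $\C^*\cong\R\times(\R/2\pi\Z)$ has rank at most $1$, so rank $\ge 2$ forces elements $\gamma_n\neq 1$ with $\gamma_n\to 1$. Your lift to the universal cover, where discrete subgroups of $\R^2$ have rank $\le 2$ and the kernel $2\pi i\Z$ absorbs one unit of rank, is a cleaner justification of the paper's brief appeal to the compactness of the angular factor.
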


\begin{proof}
Write generators as $\gamma_j = r_j e^{i\theta_j}$.
Consider the homomorphism $\ell:\C^*\to \R\times(\R/2\pi\Z)$, $\ell(z)=(\log|z|,\arg z)$.
The subgroup $\ell(\Gamma)$ is a finitely generated abelian subgroup of a Lie group.
If it had rank $\ge 2$ and were discrete, it would define a rank $\ge 2$ lattice in $\R\times(\R/2\pi\Z)$, which is impossible because the second factor is compact.
Equivalently, diophantine approximation yields nontrivial integer combinations of the generators converging to $(0,0)$, hence elements of $\Gamma$ converging to $1$ in $\C^*$.
\end{proof}

\begin{proposition}\label{proposition:hol-rigidity}
Assume $\Gamma$ has the accumulation property.
Let $(\widetilde\psi,\sigma)$ be a strict holomorphic endomorphism of $[\C^*/\Gamma]$, i.e.\ $\widetilde\psi:\C^*\to\C^*$ holomorphic and
$\sigma:\Gamma\to\Gamma$ a homomorphism with $\widetilde\psi(\gamma z)=\sigma(\gamma)\widetilde\psi(z)$.
Then there exist $a\in\C^*$ and $n\in\Z$ such that
$
\widetilde\psi(z)=a z^n, $ and $ \sigma(\gamma)=\gamma^n$ for all $\gamma\in\Gamma.
$
\end{proposition}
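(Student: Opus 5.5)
We would first put $\widetilde\psi$ in the normal form of \Cref{prop:hol-normal-aut-Cstar}, namely $\widetilde\psi(z)=z^n\exp(H(z))$ with $n\in\Z$ and $H\in\cO(\C^*)$. Then we pass to the universal cover $w\mapsto e^w$ and set $h(w):=H(e^w)$. This $h$ is an entire, $2\pi i$-periodic function.

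Next we translate semi-equivariance into a statement about $h$. For $\gamma\in\Gamma$, choose a logarithm $u_\gamma$ with $e^{u_\gamma}=\gamma$. The identity $\widetilde\psi(\gamma z)=\sigma(\gamma)\widetilde\psi(z)$ becomes
\[
\exp\bigl(n u_\gamma + h(w+u_\gamma)-h(w)\bigr)=\sigma(\gamma)
\qquad\text{for all } w\in\C .
\]
So $w\mapsto h(w+u_\gamma)-h(w)$ is a continuous function with values in the discrete set $\log\sigma(\gamma)-nu_\gamma+2\pi i\Z$. Since $\C$ is connected, it is a constant $c_\gamma$. This is the key intermediate fact: every logarithm of every element of $\Gamma$ is a translation under which $h$ changes by an additive constant.

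We then use the accumulation property, in its two cases.

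\emph{Case (2).} There are $\gamma_n\to 1$ with $\gamma_n\neq 1$. Choose the principal logarithms $u_n:=u_{\gamma_n}\to 0$; these are nonzero. \Cref{lem:entire-periodicity-rigidity}(ii) then shows that $h$ is affine-linear. Since $h$ is $2\pi i$-periodic, it is constant.

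\emph{Case (1).} There is $q\in\Gamma$ with $|q|\neq 1$. Then $u:=u_q$ has $\operatorname{Re}u\neq 0$, so $u$ and $2\pi i$ are $\R$-linearly independent. We know $h(w+u)=h(w)+c$ and $h(w+2\pi i)=h(w)$. Put $g:=h'$. Then $g$ is entire and doubly periodic, so it is constant by \Cref{lem:entire-periodicity-rigidity}(i). Hence $h$ is affine-linear, and periodicity again forces $h$ to be constant.

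This step is the main (mild) obstacle: part (i) of the lemma needs genuine periodicity, and we only have quasi-periodicity with an additive constant. Differentiating, as above, is what removes that constant.

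In both cases $\exp(H)$ is a constant $a\in\C^*$, so $\widetilde\psi(z)=az^n$. Finally,
\[
\sigma(\gamma)=\widetilde\psi(\gamma z)/\widetilde\psi(z)=\gamma^n,
\]
which gives the asserted form of $\sigma$.
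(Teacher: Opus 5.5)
Your proposal is correct and follows essentially the same route as the paper: the normal form $z^n\exp(H(z))$, the lift to the $2\pi i$-periodic entire function $h$, the constancy of $h(w+u)-h(w)$, and the two cases of the accumulation property, each settled by \Cref{lem:entire-periodicity-rigidity}. The one difference is in Case (1), where you differentiate to turn quasi-periodicity into genuine double periodicity of $h'$. The paper instead asserts directly that the additive constant $c_0$ vanishes by comparing $w$ and $w+2\pi i$, but that comparison does not by itself force $c_0=0$, so your version is the more careful one.
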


\begin{proof}
Let $(\widetilde\psi,\sigma)$ be semi-equivariant and write $\widetilde\psi(z)=z^n\exp(H(z))$ by \Cref{prop:hol-normal-aut-Cstar}.
Lift $H$ to an entire function $h:\C\to\C$ via $h(w)=H(e^w)$, so $h$ is $2\pi i$-periodic.
Semi-equivariance gives $\exp(H(\gamma z)-H(z))=\sigma(\gamma)\gamma^{-n}$, hence for each fixed $\gamma\in\Gamma$ the difference
$H(\gamma z)-H(z)$ is constant in $z$. Writing $\gamma=e^{u}$ (choosing any logarithm $u$), this becomes
$$
h(w+u)-h(w)=c(u)\quad\text{(constant in $w$)}.
$$

\medskip\noindent
\textbf{Case 1:} $\Gamma$ contains $q$ with $|q|\ne 1$.
Choose a logarithm $u_0=\log q$. Then $h(w+u_0)-h(w)=c_0$ for some constant $c_0$.
Iterating gives $h(w+nu_0)=h(w)+nc_0$.
But $h$ is $2\pi i$-periodic, so comparing $w$ and $w+2\pi i$ shows $c_0=0$.
Hence $h$ is periodic with both $2\pi i$ and $u_0$. Since $|q|\ne 1$, the periods are $\R$-linearly independent.
By \Cref{lem:entire-periodicity-rigidity}, $h$ is constant, so $H$ is constant.

\medskip\noindent
\textbf{Case 2:} $\Gamma$ contains a sequence $\gamma_m\to 1$ with $\gamma_m\ne 1$.
Choose logarithms $u_m=\log\gamma_m$ with $u_m\to 0$.
Then $h(w+u_m)-h(w)$ is constant in $w$ for each $m$.
By \Cref{lem:entire-periodicity-rigidity}, $h$ is constant (because it is $2\pi i$-periodic). Hence $H$ is constant.

\medskip
In either case, $\widetilde\psi(z)=a z^n$ with $a=e^{H}\in\C^*$.
Substituting into semi-equivariance yields $a(\gamma z)^n=\sigma(\gamma) a z^n$, hence $\sigma(\gamma)=\gamma^n$ for all $\gamma\in\Gamma$.
\end{proof}

\begin{corollary}\label[corollary]{cor:realizability-wild}
If $\Gamma$ has accumulation, the only realizable group endomorphisms $\sigma\in\End(\Gamma)$ coming from holomorphic endomorphisms of $[\C^*/\Gamma]$
are the power maps $\sigma(\gamma)=\gamma^n$ (matrices $nI$ on the period--residue generator system).
In particular, the only realizable automorphisms are $\sigma=\mathrm{id}$ and $\sigma(\gamma)=\gamma^{-1}$ (matrices $\pm I$).
\end{corollary}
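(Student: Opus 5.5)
The corollary follows from \Cref{proposition:hol-rigidity}. I will first show that only power maps are realizable, then translate this into matrices on the generator system of \Cref{prop:period-residue}, and finally treat automorphisms.

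\textbf{Step 1: only power maps occur.} Let $\psi$ be a holomorphic endomorphism of $[\C^*/\Gamma]$. By \Cref{prop:strictification}, $\psi$ is $2$-isomorphic to a strict pair $(\widetilde\psi,\sigma)$. The homomorphism $\sigma$ is well defined: a $2$-isomorphism only replaces $\widetilde\psi$ by $a\widetilde\psi$ with $a\in\Gamma$, and this does not change $\sigma$. Since $\Gamma$ has the accumulation property, \Cref{proposition:hol-rigidity} gives $\widetilde\psi(z)=az^n$ and $\sigma(\gamma)=\gamma^n$. Conversely, for every $n\in\Z$ the pair $(z\mapsto z^n,\ \gamma\mapsto\gamma^n)$ is semi-equivariant, since $\gamma^n\in\Gamma$. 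So every power map is realized.

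\textbf{Step 2: the matrix description.} Use the presentation \eqref{eq:Gamma-presentation} and \Cref{prop:EndGamma-matrix}. The power map $\gamma\mapsto\gamma^n$ sends each generator $g_i,h_j$ to its $n$-th power. It therefore lifts to the endomorphism $nI$ of $\Z^{k+m}$, which preserves $L$. This gives the class of $nI$ in $\End(\Z^{k+m},L)/\Hom(\Z^{k+m},L)$.

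A caveat is needed here. Distinct $n$ may give the same $\sigma$ when $\Gamma$ has torsion; for example, if $\Gamma=\mu_N$ then $n$ and $n+N$ agree. This is why the statement should be read as identifying the \emph{classes} of $nI$, not distinct matrices.

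\textbf{Step 3: automorphisms.} This is the only place needing care. Let $\psi$ be an automorphism of $[\C^*/\Gamma]$. Its inverse is also a holomorphic endomorphism, so by Step 1 both are monomial: $\widetilde\psi(z)=az^n$, and the inverse is $bz^{m}$. Their composite is $2$-isomorphic to the identity, so by \Cref{prop:strictification} it equals $cz$ with $c\in\Gamma$. But the composite is $a^{m}b\,z^{nm}$, so $nm=1$ and $n=\pm1$. Hence $\sigma=\mathrm{id}$ or $\sigma(\gamma)=\gamma^{-1}$, corresponding to the matrices $\pm I$. Alternatively, \Cref{prop:hol-normal-aut-Cstar} gives the same conclusion directly, since $z\mapsto az^n$ is a biholomorphism of $\C^*$ only when $n=\pm1$.

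One subtlety: the statement only requires $\sigma$ to be invertible on $\Gamma$, not $\widetilde\psi$ on $\C^*$. Whether $\sigma=\gamma\mapsto\gamma^n$ is invertible for some other $n$ depends on $\Gamma$. When $\Gamma$ contains an element of infinite order, or more generally when $\Gamma$ is torsion-free, $\gamma\mapsto\gamma^n$ is bijective only for $n=\pm1$. I would therefore read ``realizable automorphism'' as one induced by an automorphism of the stack, where the argument above applies directly.
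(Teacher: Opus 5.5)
Your proposal is correct and follows the paper's route: the paper states the corollary as an immediate consequence of Proposition~\ref{proposition:hol-rigidity}, and your strictification step, the lift of $\gamma\mapsto\gamma^n$ to $nI$, and the $nm=1$ argument for automorphisms simply make that deduction explicit. One small correction to your asides: $\Gamma=\mu_N$ does not have the accumulation property, and a finitely generated $\Gamma$ with accumulation is infinite, hence contains an element of infinite order; so distinct $n$ give distinct $\sigma$, and invertibility of $\gamma\mapsto\gamma^n$ on $\Gamma$ already forces $n=\pm1$ under either reading of ``realizable automorphism''.
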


For the compactified base, we have the following result.

\begin{proposition}\label{proposition:mero-rigidity}
Assume $\Gamma$ has accumulation.
If $R\in\C(z)$ and $\sigma:\Gamma\to\Gamma$ satisfy the semi-equivariance relation
$
R(\gamma z)=\sigma(\gamma)R(z)\ (\gamma\in\Gamma,\ z\in\C^*),
$
then $R(z)=c z^n$ for some $c\in\C^*$ and $n\in\Z$.
\end{proposition}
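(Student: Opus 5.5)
The plan is to exploit rational structure directly rather than passing through entire functions: a rational function is determined by its divisor on $\PP^1$ up to a scalar, and semi-equivariance forces this divisor to be $\Gamma$-invariant. We may assume $R\not\equiv 0,\infty$, since otherwise the relation is vacuous or degenerate; the statement implicitly concerns nonconstant-scalar-multiple maps with values in $\PP^1$ that are not identically $0$ or $\infty$. Write
\[
\operatorname{div}(R)=\sum_{p\in\PP^1} m_p\,[p],
\]
a finite divisor.

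\emph{First step.} The relation $R(\gamma z)=\sigma(\gamma)R(z)$ with $\sigma(\gamma)\in\C^*$ shows that $z\mapsto \gamma z$ maps zeros to zeros and poles to poles, with the same multiplicities. Hence $m_{\gamma p}=m_p$ for all $p\in\C^*$ and $\gamma\in\Gamma$. It follows that the finite set $S=\operatorname{supp}(\operatorname{div} R)\cap\C^*$ is a union of $\Gamma$-orbits.

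\emph{Second step.} Show that under the accumulation property (\Cref{def:accum}) every $\Gamma$-orbit in $\C^*$ is infinite. In case (1), $q\in\Gamma$ with $|q|\neq1$ gives points $q^n p$ with pairwise distinct moduli. In case (2), the elements $\gamma_n\neq1$, $\gamma_n\to1$, are infinitely many distinct elements of $\Gamma$, and the action on $\C^*$ is free, so $\gamma_n p$ are pairwise distinct. Therefore $S=\varnothing$, so $R$ has zeros and poles only at $0$ and $\infty$. This forces $R(z)=c z^n$ with $c\in\C^*$ and $n=m_0\in\Z$: indeed $R(z)z^{-n}$ is a rational function with no zeros or poles on $\PP^1$ (the degree count at $\infty$ is automatic), hence constant.

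\emph{Consistency check.} Substituting back gives $\sigma(\gamma)=\gamma^n$, in agreement with \Cref{proposition:hol-rigidity}. The only step needing care is the first: it requires that semi-equivariance hold as an identity of meromorphic functions, so that multiplicities transport under $z\mapsto\gamma z$. This is immediate because multiplication by $\gamma$ is a biholomorphism of $\C^*$ and multiplication by the nonzero constant $\sigma(\gamma)$ preserves orders of vanishing. I do not expect a genuine obstacle. As an alternative route, one could restrict to an annulus avoiding $\operatorname{supp}(\operatorname{div} R)$ and rerun the argument of \Cref{proposition:hol-rigidity}, but the divisor argument is cleaner.
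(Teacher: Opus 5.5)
Your proposal is correct and follows essentially the same route as the paper: semi-equivariance makes the finite divisor of $R$ invariant under $\Gamma$, the accumulation property forces every $\Gamma$-orbit in $\C^*$ to be infinite, so the divisor lies in $\{0,\infty\}$ and $R=cz^n$. You also check explicitly that orbits are infinite in both cases of \Cref{def:accum}, using freeness of the action in the second case, and you exclude the degenerate case $R\equiv 0$, for which the statement as written would actually fail; both are details the paper leaves implicit.
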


\begin{proof}
Let $\Div(R)$ be the divisor of zeros and poles of $R$ on $\PP^1$, a finite set.
Semi-equivariance implies $\Div(R)$ is $\Gamma$-invariant.
If $\Gamma$ has accumulation, the only finite $\Gamma$-invariant subset of $\PP^1$ is contained in $\{0,\infty\}$
(because any point of $\C^*$ has infinite orbit accumulating in $\PP^1$).
Hence $\Div(R)\subset\{0,\infty\}$ and $R$ has no zeros or poles in $\C^*$.
Therefore $R$ is a monomial $c z^n$.
\end{proof}

\subsection{Finite monodromy case}

Assume $\Gamma=\mu_N$ is the group of $N$-th of the unity. The stacky base $[\PP^1/\mu_N]$ is a Deligne--Mumford curve and is canonically identified with the
$N$-th root stack of $(\PP^1,\{0\}+\{\infty\})$ \cite{Cadman,BehrendNoohi06}.
In this regime, the developing coordinate becomes single-valued after taking an $N$-th power, which produces an honest meromorphic first integral.

\begin{proposition}\label[proposition]{prop:Nth-power}
Let $F:U\to [\C^*/\mu_N]$ be the canonical stacky developing map.
Choose a multiplicative developing coordinate $z$ on the universal cover $\widetilde U$ so that $z(\gamma x)=\rho(\gamma)z(x)$ and
$\rho(\pi_1(U))=\mu_N$.
Then the function $z^N$ is invariant under deck transformations and descends to a single-valued holomorphic map
$
f:U\to \C^*.
$ \ 
After compactifying the target, $f$ extends to a rational map $X\dashrightarrow \PP^1$ whose stacky refinement is exactly
$U\to [\PP^1/\mu_N]\simeq \sqrt[N]{(\PP^1,\{0\}+\{\infty\})}$.
\end{proposition}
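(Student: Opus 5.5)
The plan is to prove the statement in three steps: descend $z^N$ to $U$, extend it meromorphically across $D$, and then identify the stacky factorisation through the Kummer map $\kappa_N$ of \Cref{prop:kummer-map}.

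\emph{Step 1: descent.} By \Cref{lem:dev}, $z(\gamma x)=\rho(\gamma)z(x)$ for every deck transformation $\gamma$. Since $\rho(\gamma)\in\mu_N$, we get
\[
z^N(\gamma x)=\rho(\gamma)^N z^N(x)=z^N(x).
\]
So $z^N$ is $\pi_1(U)$-invariant and descends to a holomorphic map $f:U\to\C^*$. Equivalently, $f$ is the composite of $F=(P,\bar z)$ from \Cref{prop:canonicalF} with the coarse map $[\C^*/\mu_N]\to\C^*$, $w\mapsto w^N$. This is the semi-equivariant pair $(w\mapsto w^N,\ \gamma\mapsto\gamma^N=1)$, i.e.\ $\kappa_N$ with trivial $\Gamma$.

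\emph{Step 2: extension across $D$.} This is where the logarithmic hypothesis enters, and I expect it to be the main point. Note that $df/f=N\,\eta$ on $U$. Near a general point of $D_i$, take coordinates $x_1=0$ defining $D_i$ and write $\eta=\Res_{D_i}(\eta)\,dx_1/x_1+\beta$ with $\beta$ holomorphic. Since $\exp(2\pi i N\Res_{D_i}\eta)=g_i^N=1$ by \Cref{prop:period-residue}, we have $r_i:=N\Res_{D_i}\eta\in\Z$. Locally, then,
\[
f=c\,x_1^{r_i}\exp\Bigl(N\int\beta\Bigr),
\]
which is meromorphic near $D_i$. The remaining locus, the singular points of $D$ and the poles/indeterminacy in codimension $\ge2$, is handled by Levi's extension theorem for meromorphic functions across codimension-two analytic subsets. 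The result is a meromorphic map $X\dashrightarrow\PP^1$ with zeros and poles supported on $D$.

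If $\eta$ has non-logarithmic poles, the same local computation fails. In that case I would either restrict to the logarithmic setting of \Cref{prop:period-residue} or invoke a moderate-growth argument. I would state the logarithmic assumption explicitly.

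\emph{Step 3: stacky refinement.} The map $F$ lifts $f$ through the morphism $[\C^*/\mu_N]\to\C^*$. Compactifying, this morphism becomes $[\PP^1/\mu_N]\to\PP^1$, $w\mapsto w^N$. By \cite{Cadman}, this is the coarse map of the root stack $\sqrt[N]{(\PP^1,\{0\}+\{\infty\})}$: over $\C^*$ it is the $\mu_N$-torsor $w\mapsto w^N$, and near $0$ and $\infty$ the chart $[\C/\mu_N]$ is the standard local model of an $N$-th root. Hence $U\to[\PP^1/\mu_N]$ is the unique lift of $f|_U$ whose underlying torsor is $P$, namely the torsor of $N$-th roots of $f$. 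Its tautological root is $\bar z$, and this recovers exactly the $U$-point of \Cref{prop:canonicalF}. Uniqueness up to $2$-isomorphism follows from \Cref{prop:strictification}.
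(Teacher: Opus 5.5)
Your proof is correct under the logarithmic hypothesis you flag, and it is more complete than the paper's. The paper's argument is exactly your Step~1, followed by one sentence: the compactified statement ``follows by composing with $\C^*\hookrightarrow\PP^1$ and taking the corresponding stacky compactification''. It never explains why $f$ extends across $D$.

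Your Step~2 supplies that missing input:
\begin{itemize}
\item the identity $df/f=N\eta$;
\item the integrality $N\,\Res_{D_i}(\eta)\in\Z$, forced by $g_i\in\mu_N$;
\item the local normal form $f=c\,x_1^{r_i}e^{Nh}$ at smooth points of $D$, which uses that the residue is constant along $D_i$;
\item Levi's extension theorem across the codimension-two remainder.
\end{itemize}
As a by-product you obtain $\mathrm{div}(f)=\sum_i N\,\Res_{D_i}(\eta)\,D_i$. Your caveat is a real restriction on the statement, not only on your method. Take $\omega=dy$ with the irregular structure $\eta=d(1/y)$. Then $\Gamma=\{1\}$ and $f=e^{1/y}$, so the proposition is false without logarithmic poles.

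Two small corrections:
\begin{itemize}
\item The word ``rational'' in the statement needs $X$ compact algebraic together with GAGA. In the paper's generality, where $X$ is only a complex manifold, your word ``meromorphic'' is the right conclusion.
\item Uniqueness in Step~3 does not come from \Cref{prop:strictification}, which is about endomorphisms of the stacky base. It comes from $\mu_N$ acting freely on $\C^*$. Hence $[\C^*/\mu_N]\to\C^*$, $w\mapsto w^N$, is an isomorphism, and the lift of $f|_U$ is unique up to unique $2$-isomorphism. The map $\bar z$ then identifies $P$ with the torsor of $N$-th roots of $f$ automatically, so the phrase ``whose underlying torsor is $P$'' adds no condition.
\end{itemize}
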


\begin{proof}
For $\gamma\in\pi_1(U)$ we have $z(\gamma x)=\rho(\gamma)z(x)$ with $\rho(\gamma)\in\mu_N$, hence $(z^N)(\gamma x)=z(x)^N$.
Thus $z^N$ is invariant and descends to $U$.
The compactified statement follows by composing with $\C^*\hookrightarrow \PP^1$ and taking the corresponding stacky compactification.
\end{proof}

The DM stack $[\PP^1/\mu_N]$ is the weighted projective stack $\PP(1,N)$ in standard conventions.
Equivalently, it is the root stack along $\{0\}+\{\infty\}$ \cite{Cadman}.
Thus the finite monodromy case is the algebraic Singer regime: after a finite (Kummer) step, one obtains a genuine rational target and classical
equivariant rational reparametrizations.

\begin{proposition}\label[proposition]{proposition:End-muN}
Strict holomorphic endomorphisms of $[\C^*/\mu_N]$ are pairs $(\widetilde\psi,\sigma)$ with
$$
\sigma(\zeta)=\zeta^m,\qquad
\widetilde\psi(z)=z^m g(z^N),
$$
where $m\in\Z$ and $g:\C^*\to\C^*$ is holomorphic.
Modulo $2$-isomorphism, $g$ is determined up to multiplication by a constant root of unity.
\end{proposition}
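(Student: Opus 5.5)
We reduce to the normal form of \Cref{prop:hol-normal-aut-Cstar} and then read off what semi-equivariance under the finite cyclic group $\mu_N$ forces. Let $(\widetilde\psi,\sigma)$ be a strict holomorphic endomorphism of $[\C^*/\mu_N]$. Every endomorphism of the cyclic group $\mu_N$ is a power map, so we may write $\sigma(\zeta)=\zeta^m$ for some $m\in\Z$, well defined modulo $N$. We fix one integer representative $m$.

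With $m$ fixed, set $g_0(z):=z^{-m}\widetilde\psi(z)$. This is holomorphic $\C^*\to\C^*$ because $\widetilde\psi$ takes values in $\C^*$. Semi-equivariance gives
\[
g_0(\zeta z)=\zeta^{-m}z^{-m}\,\zeta^m\widetilde\psi(z)=g_0(z)
\qquad(\zeta\in\mu_N).
\]
So $g_0$ is $\mu_N$-invariant. It therefore factors through the quotient map $z\mapsto z^N$, which identifies $\C^*/\mu_N\cong\C^*$ as in \Cref{cor:curve-then-algebraic}. This gives $g_0(z)=g(z^N)$ for a function $g$ on $\C^*$.

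The only step needing care is that $g$ is holomorphic. The map $z\mapsto z^N$ is an unramified covering $\C^*\to\C^*$. Hence $g$ is locally $g_0\circ s$ for a holomorphic local inverse $s$, and is therefore holomorphic with values in $\C^*$. Equivalently, in the Laurent expansion of $g_0$ only exponents divisible by $N$ survive averaging over $\mu_N$. Conversely, any pair of the form $(z^m g(z^N),\,\zeta\mapsto\zeta^m)$ is semi-equivariant, since $(\zeta z)^m g((\zeta z)^N)=\zeta^m z^m g(z^N)$. This proves the first assertion.

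The choice of the representative $m$ is harmless. Replacing $m$ by $m+kN$ replaces $g(w)$ by $w^{-k}g(w)$, so the description is the same up to reindexing.

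For the $2$-isomorphism statement we invoke \Cref{prop:strictification}. Two strict pairs are $2$-isomorphic exactly when $\sigma=\sigma'$ and $\widetilde\psi'=a\widetilde\psi$ with $a\in\Gamma=\mu_N$. For a fixed $m$ this means $g'=a g$ with $a$ an $N$-th root of unity, which is the claim.
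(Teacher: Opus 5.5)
Your proof is correct and follows the paper's argument. You write $\sigma$ as a power map $\zeta\mapsto\zeta^m$, observe that $\widetilde\psi(z)/z^m$ is $\mu_N$-invariant and so descends along the quotient map $z\mapsto z^N$, and read off the ambiguity $g\mapsto ag$ with $a\in\mu_N$ from \Cref{prop:strictification}. Your extra checks (holomorphy of $g$ via local inverses of the unramified covering, and the harmless choice of representative of $m$ modulo $N$) just make explicit points the paper leaves implicit.
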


\begin{proof}
Any homomorphism $\sigma:\mu_N\to\mu_N$ is of the form
$
\sigma(\zeta)=\zeta^m
$
for some $m\in\Z$. Thus semi-equivariance for a strict endomorphism $(\widetilde\psi,\sigma)$ is the condition
$$
\widetilde\psi(\zeta z)=\zeta^m\widetilde\psi(z)
\qquad
(\zeta\in\mu_N).
$$
Equivalently,
$
 \widetilde\psi(z)/z^m
$
is $\mu_N$-invariant. Since the quotient map
$
\C^*\longrightarrow \C^*,\  z\longmapsto z^N
$
is the analytic quotient by $\mu_N$, it follows that there exists a holomorphic map
$
g:\C^*\to\C^*
$
such that
$
\widetilde\psi(z)=z^m g(z^N).
$
Conversely, every map of this form satisfies
$
\widetilde\psi(\zeta z)=\zeta^m\widetilde\psi(z),
$
hence defines a strict holomorphic endomorphism.
The statement about $2$-isomorphism follows from \Cref{prop:strictification}: two strict pairs with the same $\sigma$ are $2$-isomorphic exactly when their lifts differ by multiplication by a constant element of $\mu_N$.
\end{proof}

\subsection{Elliptic regime case}

Assume $\Gamma=q^\Z$ with $|q|\ne 1$.

\begin{proposition}\label[proposition]{prop:elliptic}
The action of $q^\Z$ on $\C^*$ is free and properly discontinuous, and the quotient
$$
E_q:=\C^*/q^\Z
$$
is a one-dimensional complex torus, hence an elliptic curve. In particular, $[\C^*/\Gamma]\simeq E_q$ as analytic stacks.
\end{proposition}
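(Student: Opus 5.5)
The plan is to reduce everything to the universal cover $\exp:\C\to\C^*$, $w\mapsto e^w$. We then identify the quotient with a lattice quotient of $\C$, and finally pass from the coarse space to the stack using freeness.

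\textbf{Freeness.} If $q^n z=z$ for some $z\in\C^*$, then $q^n=1$. Since $|q|\neq 1$, this forces $n=0$.

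\textbf{Proper discontinuity.} This is already contained in \Cref{prop:discrete-subgroups}\textup{(iii)}, applied with $N=1$. Directly: let $K\subset\{r\le|z|\le R\}$ be compact. Then $q^nK\cap K\neq\varnothing$ forces $|q|^n\in[r/R,R/r]$, and since $|q|\ne 1$ only finitely many $n$ satisfy this.

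\textbf{Identification with a torus.} Choose a logarithm $\tau=\log q$, so that $\operatorname{Re}\tau=\log|q|\neq 0$. The composite $\C\to\C^*\to\C^*/q^\Z$ is the quotient of $\C$ by the group generated by $w\mapsto w+2\pi i$ and $w\mapsto w+\tau$. Set $\Lambda:=2\pi i\Z+\tau\Z$. Because $2\pi i$ is purely imaginary and $\operatorname{Re}\tau\neq0$, the two generators are $\R$-linearly independent, so $\Lambda$ is a lattice. Hence $E_q\cong\C/\Lambda$, a compact one-dimensional complex torus. This is the computation already indicated in the proof of \Cref{cor:curve-then-algebraic}.

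The complex structure on $E_q$ is the one making $\C^*\to E_q$ a holomorphic covering map. This is legitimate because a free, properly discontinuous, holomorphic action yields a Hausdorff quotient with charts given by local sections of the projection. Algebraicity then follows from GAGA, or from the Weierstrass $\wp$-function, as recorded in \Cref{prop:GAGA-aut}\textup{(ii)}.

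\textbf{From the coarse space to the stack.} This is the main conceptual step. I will show that the natural morphism $[\C^*/\Gamma]\to E_q$ is an equivalence, by checking that for a free, properly discontinuous action of a discrete group, $\C^*\to E_q$ is a principal $\Gamma$-bundle.

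Local triviality holds because each point of $E_q$ has a neighbourhood $V$ whose preimage is $\bigsqcup_{n}q^n\widetilde V$. Given this, an object $(P\to T,\ \phi:P\to\C^*)$ of $[\C^*/\Gamma](T)$ determines the map $\bar\phi:T=P/\Gamma\to\C^*/\Gamma=E_q$. Conversely, a map $T\to E_q$ determines the pullback torsor $T\times_{E_q}\C^*$ together with its projection to $\C^*$.

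These two constructions are mutually inverse up to canonical isomorphism. The key point is that $P\to T\times_{E_q}\C^*$ is a $\Gamma$-equivariant map of $\Gamma$-torsors over $T$, and any such map is an isomorphism. Automorphisms are trivial because stabilizers are trivial, the same observation used in \Cref{prop:strictification}. Therefore $[\C^*/\Gamma]\simeq E_q$ as analytic stacks.
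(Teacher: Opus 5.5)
Your proof is correct and follows the paper's route: uniformise by $\exp:\C\to\C^*$, identify $E_q\simeq\C/(2\pi i\Z+\tau\Z)$, and use freeness and proper discontinuity to identify the quotient stack with the quotient manifold. You simply write out the details the paper leaves implicit, such as $\R$-linear independence of the generators, the principal-bundle check, and triviality of automorphisms. One small correction: \Cref{prop:GAGA-aut}(ii) concerns the algebraicity of automorphisms of an elliptic curve, not the projectivity of $\C/\Lambda$ itself. For that step, cite the standard fact that compact Riemann surfaces are projective (e.g.\ via the Weierstrass $\wp$-embedding), as remarked in \Cref{cor:curve-then-algebraic}.
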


\begin{proof}
Write $\C^*=\exp(\C)$ and choose a logarithm $\log q$.
Then multiplication by $q$ corresponds to translation by $\log q$ on $\C$, together with the deck translation $2\pi i\Z$.
Thus $E_q\simeq \C/(2\pi i\Z+\log q\,\Z)$.
Free proper actions yield a quotient manifold and the quotient stack agrees with it.
\end{proof}

\begin{proposition}\label[proposition]{proposition:End-elliptic}
Write $E_q=\C/\Lambda$ with $\Lambda=2\pi i\Z+\log q\,\Z$.
Every holomorphic endomorphism of $E_q$ is affine:
$$
[w]\longmapsto[\alpha w+\beta],
$$
where $\beta\in \C/\Lambda$ and $\alpha\in\C$ satisfies $\alpha\Lambda\subset\Lambda$.
In particular, $\End(E_q)$ is $\Z$ for generic $q$ and is an order in an imaginary quadratic field in the CM case.
\end{proposition}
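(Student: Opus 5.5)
The plan is to lift a holomorphic endomorphism $f:E_q\to E_q$ to the universal cover $\C\to\C/\Lambda$ and show that the lift is affine. Since $\C$ is simply connected, the composite $\C\to E_q\xrightarrow{f}E_q$ lifts to an entire function $F:\C\to\C$ with $[F(w)]=f([w])$. For each $\lambda\in\Lambda$, the function $w\mapsto F(w+\lambda)-F(w)$ is continuous and takes values in the discrete set $\Lambda$, so by connectedness it is a constant $c(\lambda)\in\Lambda$. Differentiating gives $F'(w+\lambda)=F'(w)$ for all $\lambda\in\Lambda$. Hence $F'$ is an entire function with two $\R$-linearly independent periods $2\pi i$ and $\log q$; these are independent because $|q|\neq 1$ forces $\mathrm{Re}\log q\neq 0$. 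By \Cref{lem:entire-periodicity-rigidity}(i), $F'$ is constant, say $\alpha$, so $F(w)=\alpha w+\beta$.

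From $c(\lambda)=F(w+\lambda)-F(w)=\alpha\lambda\in\Lambda$ we get $\alpha\Lambda\subset\Lambda$. Only the class of $\beta$ modulo $\Lambda$ matters for $f$, so $\beta\in\C/\Lambda$. Conversely, any such pair $(\alpha,\beta)$ descends to a holomorphic self-map of $E_q$. This gives the affine description.

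For the last assertion, we identify $\End(E_q)$ with the ring of multipliers $\{\alpha\in\C:\alpha\Lambda\subset\Lambda\}$. This ring contains $\Z$, and we reduce to $\Lambda=\Z+\tau\Z$ with $\tau=\log q/(2\pi i)\notin\R$. Write $\alpha\cdot 1=a+b\tau$ and $\alpha\tau=c+d\tau$ with integers $a,b,c,d$. Then $\alpha$ is an eigenvalue of an integer $2\times 2$ matrix, so it is an algebraic integer of degree at most $2$. If $b\neq 0$, eliminating $\alpha$ gives the quadratic relation $b\tau^2+(a-d)\tau-c=0$. Since $\tau\notin\R$, $\tau$ is then imaginary quadratic, and $\alpha\in\Q(\tau)$ is integral over $\Z$, so $\End$ is an order in $\Q(\tau)$. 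If no such relation holds, then $b=0$ for every multiplier $\alpha$, so $\alpha=a\in\Z$. This is the generic case.

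The main subtlety is making precise the meaning of ``generic'': it means $\tau$ does not satisfy a quadratic equation over $\Q$, and this condition fails only for countably many $q$. The analytic step is immediate from \Cref{lem:entire-periodicity-rigidity}, and the CM dichotomy is the classical lattice computation, which we would cite if needed.
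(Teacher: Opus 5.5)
Your proposal is correct and follows the paper's proof: lift to $\C$, use discreteness of $\Lambda$ to make $F(w+\lambda)-F(w)$ constant, differentiate, and conclude that $F'$ is constant, which gives $\alpha\Lambda\subset\Lambda$. The only differences are that the paper gets constancy of $F'$ from the maximum principle on the compact quotient $E_q$ rather than from Liouville via \Cref{lem:entire-periodicity-rigidity}(i), and that it simply cites Silverman for the $\Z$-versus-CM dichotomy, which you work out explicitly with the integer $2\times 2$ matrix.
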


\begin{proof}
Let $f:E_q\to E_q$ be holomorphic and let $\pi:\C\to E_q$ be the universal covering map.
Since $\C$ is simply connected, $f\circ\pi$ lifts to a holomorphic map $F:\C\to\C$ such that
$\pi\circ F=f\circ\pi$.
For every $\lambda\in\Lambda$ and every $w\in\C$, the two points $w$ and $w+\lambda$ project to the same point of $E_q$, hence
$F(w+\lambda)-F(w)\in\Lambda$.
Because $\Lambda$ is discrete and the map $w\mapsto F(w+\lambda)-F(w)$ is holomorphic, it is constant.
Thus there exists $c_\lambda\in\Lambda$ with
$$
F(w+\lambda)=F(w)+c_\lambda.
$$
Differentiating gives $F'(w+\lambda)=F'(w)$, so $F'$ is $\Lambda$-periodic and descends to a holomorphic function on the compact Riemann surface $E_q$.
By the maximum principle, that descended function is constant; hence $F'(w)=\alpha$ for some $\alpha\in\C$ and therefore
$$
F(w)=\alpha w+\beta
$$
for some $\beta\in\C$.
The relation $F(w+\lambda)-F(w)=c_\lambda\in\Lambda$ now becomes $\alpha\lambda\in\Lambda$ for all $\lambda\in\Lambda$, i.e.\ $\alpha\Lambda\subset\Lambda$.
Conversely, every affine map $w\mapsto \alpha w+\beta$ with $\alpha\Lambda\subset\Lambda$ descends to a holomorphic self-map of $E_q$.
The description of the endomorphism ring is the standard one for elliptic curves; see for instance \cite[Ch.~III, \S4]{Silverman09}.
\end{proof}

In the elliptic regime $[\C^*/\Gamma]\simeq E_q=\C/\Lambda$, an isogeny is induced by multiplication by $\alpha$ on $\C$ with $\alpha\Lambda\subset\Lambda$.
If one rewrites this in the multiplicative coordinate $z=e^w$ on $\C^*$, it becomes formally $z\mapsto e^{\alpha\log z}$.
When $\alpha\notin\Z$, the expression involves a choice of branch of $\log$ and therefore is not a single-valued holomorphic map $\C^*\to\C^*$.
This is exactly why, in the quotient-stack picture, elliptic isogenies should be treated as Morita/bibundle morphisms rather than strict maps on the atlas.
The \emph{geometric} operation is completely algebraic on the compact base, despite looking multi-valued on $\C^*$.

Write $q=e^{\tau}$ for a fixed choice of logarithm $\tau\in\C$ with $\Re(\tau)\neq 0$.
Then $E_q$ is uniformized by
$$
E_q \ \simeq\ \C/\Lambda,
\qquad
\Lambda:=2\pi i\,\Z+\tau\,\Z,
$$
via $w\mapsto e^{w}$.

\begin{proposition}\label[proposition]{prop:meridian-in-qZ}
Let $\eta\in \Omega^1_X(\log D)$ be closed on $U$ and let $\lambda_i=\exp(2\pi i\,\Res_{D_i}(\eta))$ be the meridian multipliers.
If $\Gamma=\im(\rho)=q^\Z$, then for each $i$ there exists an integer $m_i\in\Z$ such that
$
\lambda_i=q^{m_i}.
$
Equivalently, for a fixed logarithm $\tau=\log q$ one has the congruence
\begin{equation}\label{eq:residue-congruence}
2\pi i\,\Res_{D_i}(\eta)\ \equiv\ m_i\,\tau \quad (\mathrm{mod}\ 2\pi i\,\Z),
\end{equation}
i.e.
$$
\Res_{D_i}(\eta)\ \in\ \frac{\tau}{2\pi i}\,\Z+\Z.
$$
\end{proposition}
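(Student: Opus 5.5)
The argument is a direct unwinding of definitions, combined with \Cref{prop:period-residue}. First, each meridian multiplier lies in $\Gamma$. Let $\gamma_i$ be a positively oriented meridian around $D_i$, viewed as a based loop in $U$ via a chosen path from $q$. Then by definition $\rho(\gamma_i)\in\im(\rho)=\Gamma$. The residue formula for logarithmic $1$-forms, already used in the proof of \Cref{prop:period-residue}, gives
\[
\int_{\gamma_i}\eta=2\pi i\,\Res_{D_i}(\eta).
\]
Hence $\rho(\gamma_i)=\lambda_i$, so $\lambda_i\in\Gamma$. This step also uses that $\rho$ factors through $H_1(U,\Z)$, so the auxiliary path does not affect the value.

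Second, since $\Gamma=q^\Z$, there is an integer $m_i$ with $\lambda_i=q^{m_i}$. This integer is unique, because $|q|\neq 1$ in the present elliptic regime: $q$ has infinite order, so $q^\Z\cong\Z$. For the congruence, write $q=e^\tau$ with the fixed logarithm $\tau$. The identity $\lambda_i=q^{m_i}$ then reads
\[
\exp\bigl(2\pi i\,\Res_{D_i}(\eta)\bigr)=\exp(m_i\tau).
\]
Since $\ker(\exp)=2\pi i\Z$, this is equivalent to
\[
2\pi i\,\Res_{D_i}(\eta)-m_i\tau\in 2\pi i\Z,
\]
which is exactly \eqref{eq:residue-congruence}. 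Dividing by $2\pi i$ gives
\[
\Res_{D_i}(\eta)\in \tfrac{\tau}{2\pi i}\Z+\Z.
\]

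The only point needing care is that the meridian $\gamma_i$ really lies in $U$ and that the residue integral is legitimate. This holds because $\eta$ is logarithmic along $D$, and the meridian is taken at a smooth point of $D_i$ away from the other components, as in the setup of \Cref{prop:period-residue}. No step is genuinely hard; the main thing to handle correctly is the bookkeeping of the branch of $\log q$, and fixing $\tau$ once at the start takes care of it.
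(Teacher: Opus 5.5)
Your proof is correct and follows essentially the same route as the paper: from $\lambda_i\in\Gamma=q^{\Z}$ you get $\lambda_i=q^{m_i}$, and comparing exponentials modulo $\ker(\exp)=2\pi i\Z$ gives the congruence. The only addition is that you justify $\lambda_i=\rho(\gamma_i)\in\Gamma$ through the residue formula of \Cref{prop:period-residue}, whereas the paper treats this as immediate from the definitions.
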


\begin{proof}
By definition, $\lambda_i\in \Gamma=q^\Z$, hence $\lambda_i=q^{m_i}$ for some $m_i\in\Z$.
Writing $q=e^{\tau}$ yields $\lambda_i=e^{m_i\tau}$.
On the other hand $\lambda_i=e^{2\pi i\,\Res_{D_i}(\eta)}$, so equality of exponentials gives
$2\pi i\,\Res_{D_i}(\eta)-m_i\tau\in 2\pi i\,\Z$, which is \eqref{eq:residue-congruence}.
\end{proof}

\begin{remark}
Let $\Pi(\eta)\subset\C$ be the additive subgroup generated by the residue periods $2\pi i\,\Res_{D_i}(\eta)$ and the global periods
$\int_{\delta_j}\eta$ (cf.\ \Cref{prop:period-residue}).
If $\Gamma=q^\Z$, then necessarily
\begin{equation}\label{eq:Pi-in-lattice}
\Pi(\eta)\subset \Lambda=2\pi i\,\Z+\tau\,\Z,
\end{equation}
because $\exp(\Pi(\eta))=\Gamma$ and $\exp(w)=\exp(w')$ iff $w-w'\in 2\pi i\,\Z$.
Thus the elliptic regime is characterized by the fact that the complete period datum of $\eta$ is contained in a rank-$2$ lattice in $\C$.
\end{remark}

\begin{lemma}\label[lemma]{lem:period-lattice-qZ}
Let $\Pi(\eta)\subset\C$ be the additive subgroup generated by the residue periods $2\pi i\,\Res_{D_i}(\eta)$ and the global periods $\int_{\delta_j}\eta$.
Set $\Gamma=\exp(\Pi(\eta))\subset\C^*$.
Then the following are equivalent:
\begin{enumerate}[label=(\alph*),leftmargin=2.2em]
\item $\Gamma$ is infinite cyclic and generated by some $q\in\C^*$ with $|q|\neq 1$, i.e.\ $\Gamma=q^\Z$ with $|q|\neq 1$.
\item There exists $\tau\in\C$ with $\Re(\tau)\neq 0$ such that
$
\Pi(\eta)\subset 2\pi i\,\Z+\tau\,\Z
$
and the image of $\Pi(\eta)$ in $\C/(2\pi i\,\Z)$ is generated by the class of $\tau$.
Equivalently, $\Pi(\eta)+2\pi i\,\Z=2\pi i\,\Z+\tau\,\Z$.
\end{enumerate}
In this case $q=e^{\tau}$ and $\Gamma=\exp(\Pi(\eta))=q^\Z$.
\end{lemma}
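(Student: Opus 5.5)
The argument works entirely through the exponential map $\exp:\C\to\C^*$, whose kernel is $2\pi i\,\Z$. Since $\exp$ is a homomorphism from $(\C,+)$ to $(\C^*,\cdot)$, the preimage of $\Gamma=\exp(\Pi(\eta))$ is exactly $\Pi(\eta)+2\pi i\,\Z$. This turns every statement about $\Gamma$ into a statement about the additive group $\widehat\Pi:=\Pi(\eta)+2\pi i\,\Z$. I will prove (a)$\Rightarrow$(b) and (b)$\Rightarrow$(a) by comparing $\widehat\Pi$ with the lattice $\Lambda=2\pi i\,\Z+\tau\,\Z$, in the spirit of the preceding Remark.

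For (a)$\Rightarrow$(b), suppose $\Gamma=q^\Z$ with $|q|\neq 1$.

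\begin{enumerate}[label=\textup{(\arabic*)},leftmargin=2.4em]
\item Choose $\pi_0\in\Pi(\eta)$ with $e^{\pi_0}=q$, which exists because $q\in\Gamma=\exp(\Pi(\eta))$. Set $\tau:=\pi_0$. Then $\Re(\tau)=\log|q|\neq 0$.
\item For any $w\in\widehat\Pi$, we have $e^w=q^m=e^{m\tau}$ for some $m$. Hence $w-m\tau\in 2\pi i\,\Z$, so $\widehat\Pi\subset 2\pi i\,\Z+\tau\,\Z$.
\item Since $\tau\in\Pi(\eta)$, the reverse inclusion is clear, giving $\widehat\Pi=2\pi i\,\Z+\tau\,\Z$.
\item The image of $\Pi(\eta)$ in $\C/2\pi i\,\Z$ equals the image of $\widehat\Pi$, which is generated by the class of $\tau$.
\end{enumerate}

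The one point needing care is to pick $\tau$ inside $\Pi(\eta)$ rather than an arbitrary logarithm of $q$. With an arbitrary choice the statements still hold modulo $2\pi i\,\Z$, but choosing $\tau\in\Pi(\eta)$ avoids bookkeeping.

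For (b)$\Rightarrow$(a), suppose $\widehat\Pi=2\pi i\,\Z+\tau\,\Z$ with $\Re(\tau)\neq 0$.

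\begin{enumerate}[label=\textup{(\arabic*)},leftmargin=2.4em]
\item Applying $\exp$ gives $\Gamma=\exp(\widehat\Pi)=\exp(\tau\,\Z)=q^\Z$, where $q=e^\tau$ and $|q|=e^{\Re\tau}\neq 1$.
\item $\Gamma$ is infinite, since $|q^m|=e^{m\Re\tau}$ are pairwise distinct.
\item $\Gamma$ is torsion-free, because $q^m=1$ forces $m\Re\tau=0$, hence $m=0$. So $\Gamma$ is infinite cyclic.
\end{enumerate}

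Finally, the equivalence of the two formulations inside (b) must be checked. The inclusion $\Pi(\eta)\subset\Lambda$ together with ``the image in $\C/2\pi i\,\Z$ is generated by $[\tau]$'' says exactly that $\widehat\Pi/2\pi i\,\Z=\langle[\tau]\rangle$. Since $2\pi i\,\Z\subset\widehat\Pi$, this is equivalent to $\widehat\Pi=2\pi i\,\Z+\tau\,\Z$.

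I expect no real obstacle; the proof is bookkeeping with the kernel of $\exp$. The main subtlety is that $\tau$ is determined only modulo $2\pi i\,\Z$ and up to sign, and should be matched to the chosen generator $q$, so that $q=e^\tau$ as asserted at the end of the statement.
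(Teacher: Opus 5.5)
Your proposal is correct and follows essentially the same route as the paper: both directions are read off through $\exp:\C\to\C^*$ and its kernel $2\pi i\,\Z$, by comparing $\Pi(\eta)$ modulo $2\pi i\,\Z$ with $\tau\Z$. Choosing $\tau$ inside $\Pi(\eta)$ makes explicit the step the paper phrases as the image ``containing $\tau$ up to a generator''. Your remark that $\tau$, and hence $q=e^{\tau}$, is determined only up to sign and modulo $2\pi i\,\Z$ correctly identifies the one imprecision in the final sentence of the statement.
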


\begin{proof}
(a)$\Rightarrow$(b): If $\Gamma=q^\Z$ with $q=e^\tau$ and $\Re(\tau)\neq 0$, then any $w\in\Pi(\eta)$ satisfies $e^w\in q^\Z$, hence
$w\equiv n\tau\ (\mathrm{mod}\ 2\pi i\,\Z)$ for some $n\in\Z$. Thus $\Pi(\eta)\subset 2\pi i\,\Z+\tau\,\Z$, and the image of $\Pi(\eta)$ in
$\C/(2\pi i\,\Z)$ is contained in $\tau\Z$ and contains $\tau$ up to a generator, so it is cyclic generated by $\tau$.
(b)$\Rightarrow$(a): If $\Pi(\eta)\subset 2\pi i\,\Z+\tau\,\Z$ and its image modulo $2\pi i\,\Z$ equals $\tau\Z$,
then $\exp(\Pi(\eta))\subset \exp(\tau\Z)=q^\Z$ with $q=e^\tau$.
Conversely, since the image is all of $\tau\Z$, there exists $w\in\Pi(\eta)$ with $w\equiv \tau\ (\mathrm{mod}\ 2\pi i\,\Z)$,
so $e^w=q$, and hence $q\in \Gamma$. Therefore $\Gamma=q^\Z$.
\end{proof}

\begin{corollary}\label[corollary]{cor:elliptic-dense-angular}
Let $\Gamma=q^\Z$ with $|q|\neq 1$ and write $q=e^{\tau}$ with $\tau=a+ib$, $a\neq 0$.
If $b/2\pi\notin\Q$, then the set of arguments $\{e^{i n b}:n\in\Z\}$ is dense in $\Sone$.
Nevertheless $\Gamma$ is a discrete subset of $\C^*$, hence the action of $\Gamma$ on $\C^*$ is properly discontinuous.
\end{corollary}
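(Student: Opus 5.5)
The statement has two independent parts: density of the angular parts, and discreteness of $\Gamma$ despite that density. I plan to treat them separately, using only elementary facts about $\C^*$ and the results of \Cref{prop:discrete-subgroups}.

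\textbf{Density of the angular parts.} Since $q^n=e^{na}e^{inb}$, the argument of $q^n$ is $e^{inb}$. The set $\{e^{inb}:n\in\Z\}$ is the cyclic subgroup of $\Sone$ generated by $e^{ib}$. Because $b/2\pi\notin\Q$, the element $e^{ib}$ is not a root of unity, so this subgroup is infinite. By the dichotomy already quoted in part \textup{(i)} of \Cref{prop:discrete-subgroups}, a finitely generated subgroup of $\Sone$ is either finite or dense, so it is dense. Alternatively, I would invoke Kronecker's (or Weyl's) equidistribution theorem for the irrational rotation by $b$.

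\textbf{Discreteness of $\Gamma$.} The point is that the moduli separate the elements. We have $|q^n|=e^{na}$ with $a\neq 0$, so $\log|q^n|=na$. For $n\neq m$ this gives
\[
\bigl|\log|q^n|-\log|q^m|\bigr|\ge |a|.
\]
Hence any sequence $q^{n_k}$ converging in $\C^*$ must have $n_k$ eventually constant. In particular, no sequence of nontrivial elements converges to $1$, and $\Gamma$ is discrete in $\C^*$.

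\textbf{Proper discontinuity.} This now follows from the equivalence (b)$\Rightarrow$(a) in \Cref{prop:discrete-subgroups}\textup{(iii)}. One can also argue directly: a compact $K\subset\C^*$ lies in an annulus $r\le|z|\le R$, and $q^nK\cap K\neq\varnothing$ forces $|n||a|\le\log(R/r)$, so only finitely many $n$ qualify.

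There is no real obstacle here. The only conceptual point worth stressing is that density of the angular projection $B_\Gamma$ does not contradict discreteness, because the radial projection $A_\Gamma=a\Z$ is itself discrete. This is exactly the case $\ell(\Gamma)=\lambda\Z$ in the proof of \Cref{prop:discrete-subgroups}, with $N=1$.
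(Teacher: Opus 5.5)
Your proposal is correct and follows essentially the same route as the paper. Density comes from the irrational-rotation (finite-or-dense) fact for $e^{ib}$, and discreteness comes from the radial control $|q^n|=e^{na}$ with $a\neq 0$; your explicit annulus argument for proper discontinuity simply spells out the step the paper leaves implicit via \Cref{prop:discrete-subgroups}.
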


\begin{proof}
Density of $\{e^{inb}\}$ in $\Sone$ is the standard irrational-rotation fact.
Discreteness of $\Gamma$ follows because $|q^n|=e^{an}$ tends to $0$ as $n\to-\infty$ and to $\infty$ as $n\to+\infty$;
hence $\Gamma$ has no accumulation point in $\C^*$.
\end{proof}

\begin{corollary}\label[corollary]{cor:extra-unitary}
Let $q\in\C^*$ with $|q|\neq 1$ and let $u\in\Sone$ have infinite order.
Then the subgroup $\langle q,u\rangle\subset\C^*$ is not discrete (it has accumulation on the circle $|z|=1$).
In particular, a discrete subgroup of $\C^*$ containing an element of modulus $\neq 1$ must be cyclic (of the form $q^\Z$), in agreement with
\Cref{prop:discrete-subgroups}.
\end{corollary}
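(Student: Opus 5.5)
We argue in logarithmic coordinates and apply a Kronecker-type density argument. Write $q=e^{\tau}$ with $\tau=a+ib$, $a\neq 0$, and $u=e^{i\theta}$ with $\theta/2\pi\notin\Q$, which is exactly the condition that $u$ has infinite order. We then look for elements of $\langle q,u\rangle$ that converge to $1$ without being equal to $1$.

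\emph{First step.} The powers $u^m=e^{im\theta}$ with $m\in\Z$ are pairwise distinct, because $u$ has infinite order. Since $\Sone$ is compact, this infinite set has a convergent subsequence. Hence there are integers $m_k\neq m_l$ with $u^{m_k}$ and $u^{m_l}$ arbitrarily close, so the quotients $u^{m_k-m_l}\neq 1$ converge to $1$. Equivalently, we may invoke \Cref{prop:discrete-subgroups}(i): the subgroup $\langle u\rangle\subset\Sone$ is infinite, hence dense in $\Sone$. In either form this produces a sequence $\gamma_n\in\langle q,u\rangle\cap\Sone$ with $\gamma_n\neq 1$ and $\gamma_n\to 1$. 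So $\langle q,u\rangle$ has an accumulation point at $1$, lying on the unit circle, and is therefore not discrete. The element $q$ plays no role in this part; the accumulation comes entirely from the unitary factor.

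\emph{Second step: the ``in particular'' clause.} Let $\Gamma\subset\C^*$ be discrete and suppose it contains some $q$ with $|q|\neq 1$. By \Cref{prop:discrete-subgroups}(iii), $\Gamma\cong\mu_N\times q_0^\Z$. Its unitary part $\Gamma\cap\Sone$ is then finite, so by the first step $\Gamma$ can contain no unitary element of infinite order. This matches the structure $\mu_N\times q_0^\Z$ given there. If in addition $\Gamma$ is torsion-free (or $N=1$), \Cref{prop:discrete-subgroups}(iv) gives $\Gamma=q_0^\Z$.

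\emph{Main obstacle.} The one point needing care is the phrase ``must be cyclic''. Taken literally it is false when torsion is present: $\mu_N\times q^\Z$ is discrete but is not of the form $q^\Z$. In the write-up I would state the conclusion as ``cyclic up to the finite torsion factor $\mu_N$'', which is the precise agreement with \Cref{prop:discrete-subgroups}. For completeness I would add that $\mu_N\times q^\Z$ is in fact cyclic, generated by $\zeta q$ for a primitive $\zeta\in\mu_N$, when $\gcd$ considerations allow; I would treat this only as a remark. The analytic content is just the first step.
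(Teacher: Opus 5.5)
Your first step is the paper's argument: infinitely many distinct powers $u^m$ lie in the compact circle, so they must accumulate. Your passage to the quotients $u^{m_k-m_l}\neq 1$ with $u^{m_k-m_l}\to 1$ is a clean way to turn accumulation of that set into non-discreteness of the subgroup.

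You are also right about the ``in particular'' clause, which the paper's proof does not address. Read literally it is false: $\{\pm 2^n : n\in\Z\}$ is discrete, contains $2$, and is not cyclic. What is actually true is $\Gamma\cong\mu_N\times q_0^\Z$, from \Cref{prop:discrete-subgroups}(iii)--(iv), and this group is cyclic exactly when $N=1$. Citing (iii) for an arbitrary discrete subgroup is harmless, even though (iii) assumes finite generation. A discrete subgroup of $\C^*$ is automatically finitely generated, because $\Gamma\cap\Sone$ is finite and $\log|\Gamma|\subset\R$ is discrete.

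One correction: delete your closing remark that $\mu_N\times q^\Z$ ``is in fact cyclic, generated by $\zeta q$, when gcd considerations allow''. For $N\ge 2$ this group is $\Z/N\times\Z$. It is infinite and has torsion, so it is never cyclic. Concretely, $\langle\zeta q\rangle=\{\zeta^n q^n\}$ does not contain $\zeta$, since $|\zeta^n q^n|=|q|^n\neq 1$ for $n\neq 0$. The gcd criterion you have in mind applies to $\Z/N\times\Z/M$ with both factors finite, not here.
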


\begin{proof}
The set $\{u^m:m\in\Z\}\subset\Sone$ has accumulation because it is infinite in a compact set.
Thus $\langle q,u\rangle$ contains a non-discrete subset already on the unit circle, hence is not discrete in $\C^*$.
\end{proof}

\begin{proposition}\label[proposition]{prop:gcd-meridian}
In the situation of \Cref{prop:meridian-in-qZ}, the subgroup of $\Gamma$ generated by the meridians equals $q^{d\Z}$ where
$
d:=\gcd(m_1,\dots,m_k).
$
In particular, the meridian subgroup is either trivial ($d=0$), all of $\Gamma$ ($d=1$), or a finite-index subgroup of $\Gamma$ ($d>1$).
\end{proposition}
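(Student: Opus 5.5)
The plan is to reduce the statement to the elementary structure of subgroups of $\Z$, transported along the isomorphism $\Z\to q^\Z$, $n\mapsto q^n$. This map is injective because $|q|\neq 1$: $q$ is not a root of unity, so $q^n=1$ forces $n=0$. By \Cref{prop:meridian-in-qZ}, each meridian multiplier can be written as $\lambda_i=q^{m_i}$ with $m_i\in\Z$. Injectivity makes each exponent $m_i$ uniquely determined, so $d=\gcd(m_1,\dots,m_k)$ is well defined. We use the convention $\gcd(0,\dots,0)=0$.

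Next we identify the meridian subgroup $M:=\langle \lambda_1,\dots,\lambda_k\rangle\subset\Gamma$. Under the isomorphism above, $M$ corresponds to the additive subgroup $\langle m_1,\dots,m_k\rangle_\Z\subset\Z$. By B\'ezout's identity, this subgroup equals $d\Z$.
\begin{itemize}
\item The inclusion $\subseteq$ holds because $d$ divides each $m_i$.
\item The inclusion $\supseteq$ holds because there are integers $c_i$ with $\sum_i c_i m_i=d$, so $\prod_i\lambda_i^{c_i}=q^d\in M$.
\end{itemize}
Hence $M=q^{d\Z}$.

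For the trichotomy, we split according to the value of $d$.
\begin{itemize}
\item If $d=0$, every $m_i$ vanishes, so every $\lambda_i=1$ and $M$ is trivial.
\item If $d=1$, then $M=q^\Z=\Gamma$.
\item If $d>1$, then $q^{d\Z}$ has index $d$ in $q^\Z$, since $\Z/d\Z\cong q^\Z/q^{d\Z}$ via the same isomorphism. So $M$ is a proper subgroup of finite index.
\end{itemize}

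The argument is routine, and the only point requiring care is the uniqueness of the exponents $m_i$ together with the degenerate case $d=0$. Both are handled by the injectivity of $n\mapsto q^n$ noted at the start.
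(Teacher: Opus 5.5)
Your proposal is correct and follows the same route as the paper, which just notes that $\langle q^{m_1},\dots,q^{m_k}\rangle=q^{\langle m_1,\dots,m_k\rangle}=q^{d\Z}$. Your version fills in the details the paper leaves implicit: injectivity of $n\mapsto q^n$ (from $|q|\neq 1$), which makes the $m_i$ and $d$ well defined; B\'ezout for the reverse inclusion; and the index count $[q^{\Z}:q^{d\Z}]=d$.
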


\begin{proof}
The subgroup generated by $\{q^{m_i}\}$ is $q^{\langle m_1,\dots,m_k\rangle}=q^{d\Z}$.
\end{proof}

The previous results explain how residues constrain $\Gamma$ in the elliptic case. We now record the parallel constraint on reparametrizations.

\begin{proposition}\label[proposition]{prop:qZ-rigidity}
Assume $\Gamma=q^\Z$ with $|q|\neq 1$.
\begin{enumerate}[label=\textup{(\roman*)},leftmargin=2.6em]
\item Let $\psi:[\C^*/\Gamma]\to[\C^*/\Gamma]$ be a stack endomorphism admitting a strict semi-equivariant representative
$
\widetilde\psi:\C^*\to\C^*
$
satisfying
$
\widetilde\psi(qz)=q^{n}\widetilde\psi(z)
$
for some $n\in\Z$.
Then
$
\widetilde\psi(z)=c z^n
$
for some $c\in\C^*$.
The induced endomorphism on $E_q$ is multiplication by $n$ composed with translation by the class of $c$.

\item Let $R\in\C(z)$ and let $\sigma:\Gamma\to\Gamma$ satisfy
$$
R(\gamma z)=\sigma(\gamma)R(z)
\qquad
(\gamma\in\Gamma,\ z\in\C^*).
$$
Then
$
R(z)=c z^n
$
for some $c\in\C^*$ and $n\in\Z$.
\end{enumerate}
\end{proposition}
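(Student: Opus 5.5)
Both parts reduce to rigidity results already proved, since $\Gamma=q^\Z$ with $|q|\neq 1$ has the accumulation property of \Cref{def:accum} (clause (1)).

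\textbf{Part (i).} I would argue directly from the normal form. By \Cref{prop:hol-normal-aut-Cstar}, write $\widetilde\psi(z)=z^{k}\exp(H(z))$ with $k\in\Z$ and $H\in\cO(\C^*)$. Substituting into $\widetilde\psi(qz)=q^n\widetilde\psi(z)$ gives
\[
\exp\bigl(H(qz)-H(z)\bigr)=q^{n-k}.
\]
So $H(qz)-H(z)$ is a constant $c_0$, since it is continuous with values in a discrete set. Pass to $h(w)=H(e^w)$, which is entire and $2\pi i$-periodic. Case 1 of the proof of \Cref{proposition:hol-rigidity} then shows $c_0=0$: iterating $h(w+n\tau)=h(w)+nc_0$ forces $c_0=0$. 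Hence $h$ is doubly periodic with respect to the $\R$-independent periods $2\pi i$ and $\tau=\log q$. By \Cref{lem:entire-periodicity-rigidity}(i), $h$ is constant.

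It follows that $\widetilde\psi(z)=cz^k$. Comparing with the semi-equivariance relation gives $q^k=q^n$, and since $q$ has infinite order, $k=n$.

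For the induced map on $E_q=\C/\Lambda$, lift through $z=e^w$: the map $cz^n$ becomes $w\mapsto nw+\log c$. This is multiplication by $n$ followed by translation by the class of $c$, with $\log c$ well defined modulo $2\pi i\Z\subset\Lambda$. This is consistent with \Cref{proposition:End-elliptic}.

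\textbf{Part (ii).} I would invoke \Cref{proposition:mero-rigidity} directly, re-checking its key step in this regime. Semi-equivariance makes the finite set $\Div(R)\subset\PP^1$ invariant under $q^\Z$. For $z\in\C^*$ the orbit $\{q^mz\}$ is infinite because $|q^m z|=|q|^m|z|$ are pairwise distinct. Hence the zeros and poles of $R$ lie in $\{0,\infty\}$, so $R=cz^n$. One subtlety: $R$ might be $0$ or $\infty$ identically. This is excluded because $R$ must be a genuine nonzero rational function for the relation to define a map into $\C^*$ generically; I would state this assumption explicitly.

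\textbf{Main obstacle.} The only delicate point is the constancy argument in (i), namely ruling out a nonzero additive constant $c_0$ when passing from the multiplicative relation to $h$. Periodicity in $2\pi i$ alone does not kill $c_0$; what forces $c_0=0$ is that $H$ is single-valued on $\C^*$, hence bounded on the compact annulus $1\le|z|\le|q|$ (taking $|q|>1$ without loss of generality). Combined with $H(q^mz)=H(z)+mc_0$, this boundedness would be contradicted by linear growth along the orbit $q^m z$ unless $c_0=0$ — but only if $H$ is also known to be bounded on the whole of $\C^*$. Since $H$ is a priori unbounded on $\C^*$, the cleanest route is the alternative: differentiate to get $qH'(qz)=H'(z)$, i.e. $g(w)=h'(w)$ is doubly periodic. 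Liouville then gives $h'$ constant, so $h$ is affine, and $2\pi i$-periodicity forces $h$ to be constant. I would present this derivative argument to avoid the gap.
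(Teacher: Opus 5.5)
Your proof is correct. Part (ii) is the paper's argument: the finite set $\Div(R)$ is $q^\Z$-invariant and orbits in $\C^*$ are infinite. Your remark that $R\not\equiv 0$ has to be assumed is a fair correction, since $R=0$ satisfies the relation trivially.

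For part (i) you take a different and sounder route. The paper's main argument for (i) applies the divisor argument of (ii) to ``any meromorphic extension of $\widetilde\psi$ to $\PP^1$''. That presupposes $\widetilde\psi$ has no essential singularity at $0$ or $\infty$, which is not among the hypotheses and is essentially what has to be proved. The paper's alternative (write $\widetilde\psi=z^ne^{H}$ and compare) is left without details. The step it would rest on, Case 1 of \Cref{proposition:hol-rigidity}, justifies $c_0=0$ by ``comparing $w$ and $w+2\pi i$''. That inference is not valid: $h(w+2\pi i)=h(w)$ and $h(w+\tau)=h(w)+c_0$ are compatible on their own.

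Your derivative argument closes exactly this gap and needs no a priori meromorphy of $\widetilde\psi$. The function $h'$ is $2\pi i$- and $\tau$-periodic with $\R$-independent periods, hence constant by Liouville. So $h=\alpha w+\beta$, $2\pi i$-periodicity gives $\alpha=0$, and therefore $c_0=\alpha\tau=0$. Keeping the winding number $k$ unknown and deducing $k=n$ from $q^k=q^n$ is also more careful than the paper, which writes $z^ne^{H}$ from the start.

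In the write-up, delete the sentence in your main paragraph claiming that iterating $h(w+n\tau)=h(w)+nc_0$ via Case 1 forces $c_0=0$. It inherits the same invalid step, and the derivative argument from your final paragraph should replace it.
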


\begin{proof}
Since $\Gamma=q^\Z$ is infinite cyclic, any homomorphism $\sigma:\Gamma\to\Gamma$ is determined by
$
\sigma(q)=q^n
$
for some $n\in\Z$.
We first prove \textup{(ii)}. Let $\mathrm{Div}(R)$ be the divisor of zeros and poles of $R$ on $\PP^1$.
The semi-equivariance relation implies that $\mathrm{Div}(R)$ is invariant under the action of $q^\Z$.
If $a\in\C^*$ belonged to the support of $\mathrm{Div}(R)$, then the whole orbit
$
q^\Z\cdot a=\{q^m a:m\in\Z\}
$
would also lie in the support.
This orbit is infinite because $q$ is not a root of unity, whereas the support of $\mathrm{Div}(R)$ is finite.
Hence the support of $\mathrm{Div}(R)$ is contained in $\{0,\infty\}$, and therefore $R$ is a monomial:
$
R(z)=c z^n
$
for some $c\in\C^*$.
For \textup{(i)}, the same rigidity argument applies to any meromorphic extension of $\widetilde\psi$ to $\PP^1$, and yields
$
\widetilde\psi(z)=c z^n.
$
Equivalently, one may write
$
\widetilde\psi(z)=z^n e^{H(z)}
$
with $H$ holomorphic on $\C^*$ and compare with the relation
$
\widetilde\psi(qz)=q^n\widetilde\psi(z)
$
to conclude that $H$ is constant.
The descended map on $E_q$ is then the usual multiplication-by-$n$ isogeny followed by translation by the class of $c$.
\end{proof}

By \Cref{proposition:End-elliptic}, every holomorphic endomorphism of $E_q$ is the composition of a translation and an isogeny.
On the universal cover $\C$ such a map is affine, say $w\mapsto \alpha w+\beta$, with $\alpha\Lambda\subset\Lambda$.
In multiplicative coordinates $z=e^w$, it is formally given by $z\mapsto e^{\alpha\log z}$ and is single-valued on $E_q$,
but it need not be single-valued as a holomorphic map $\C^*\to\C^*$ unless $\alpha\in\Z$.
Thus the strict atlas description captures only the monomial part, whereas the full elliptic endomorphism theory requires Morita, or bibundle, morphisms; see \cite{MoerdijkPronk,Lerman,BehrendXu}.

Relative to a $\Z$-basis of $\Lambda$, the condition $\alpha\Lambda\subset\Lambda$ is encoded by an integer $2\times 2$ matrix.
On the holonomy group $\Gamma=q^\Z$ itself one has $\Aut(\Gamma)=\{\pm 1\}$, whereas $\End(E_q)$ is richer because it also contains translations.

\begin{theorem}\label{thm:EndP1-regimes}
Let $\Gamma\subset\C^*$ act on $\PP^1$ by $\gamma\cdot z=\gamma z$ on $\C^*$ and fixing $0,\infty$.
A strict meromorphic endomorphism is a pair $(R,\sigma)$ with $R\in\C(z)$ and $\sigma:\Gamma\to\Gamma$ satisfying
$
R(\gamma z)=\sigma(\gamma)R(z)
$
for all $z\in\C^*$.
Then the possibilities are as follows.
\begin{enumerate}[label=(\alph*),leftmargin=2.2em]
\item If $\Gamma=\mu_N$, then all such endomorphisms are exactly the maps
$$
R(z)=z^m\frac{P(z^N)}{Q(z^N)},
\qquad
\sigma(\zeta)=\zeta^m,
$$
with $P,Q\in\C[t]$ coprime.

\item If $\Gamma=q^\Z$ with $|q|\neq 1$, then every such endomorphism is a monomial
$
R(z)=c z^n,
$
with $c\in\C^*$ and $n\in\Z$; equivalently, compactified Singer reparametrizations are rigid in the elliptic regime.

\item If $\Gamma$ has accumulation in $\C^*$, then every such endomorphism is again a monomial
$
R(z)=c z^n,
$
with $c\in\C^*$ and $n\in\Z$, by \Cref{proposition:mero-rigidity}.
\end{enumerate}
In particular, once one leaves the Deligne--Mumford torsion regime, compactified Singer reparametrizations are already rigid at the meromorphic level.
\end{theorem}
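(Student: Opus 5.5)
The plan is to treat the three regimes separately. The two non-torsion cases reduce directly to the rigidity results already established, so almost all the work lies in case (a).

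For case (a), let $\Gamma=\mu_N$. Every homomorphism $\sigma:\mu_N\to\mu_N$ has the form $\zeta\mapsto\zeta^m$ for some $m\in\Z$, well defined modulo $N$; fix an integer representative $m$. The semi-equivariance relation $R(\zeta z)=\zeta^m R(z)$ says precisely that $S(z):=z^{-m}R(z)$ is a $\mu_N$-invariant rational function.

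\begin{itemize}
\item \emph{Invariant functions.} The key step is the classical invariant-theory fact $\C(z)^{\mu_N}=\C(z^N)$. I would prove it either by Galois theory, since $\C(z)/\C(z^N)$ has degree $N$ and $\mu_N$ acts faithfully by $\C(z^N)$-automorphisms, or directly. For the direct argument, average over $\mu_N$ to see that the numerator can be chosen invariant after multiplying numerator and denominator by the conjugates of the denominator. An invariant polynomial only has monomials $z^{k}$ with $N\mid k$.
\item \emph{Normal form.} Hence $S=P(z^N)/Q(z^N)$ with $P,Q\in\C[t]$. Cancelling common factors in $\C[t]$ makes them coprime, which gives $R(z)=z^mP(z^N)/Q(z^N)$.
\item \emph{Converse and well-definedness.} Every such $R$ satisfies the relation, since $(\zeta z)^N=z^N$. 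Changing the representative $m$ by a multiple of $N$ is absorbed by multiplying $P/Q$ by a power of $t$, so the description does not depend on that choice.
\end{itemize}

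This mirrors the holomorphic statement of \Cref{proposition:End-muN}, with \Cref{prop:strictification} no longer needed because we work with strict pairs directly.

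For case (b), let $\Gamma=q^\Z$ with $|q|\ne1$. I would invoke \Cref{prop:qZ-rigidity}(ii) verbatim. Its proof uses the finiteness of the divisor of $R$ together with its $q^\Z$-invariance. The orbit of any $a\in\C^*$ is infinite, so $\Div(R)\subset\{0,\infty\}$ and $R=cz^n$. I would then note that $\sigma(\gamma)=\gamma^n$ follows by substituting $R$ back into the relation.

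For case (c), $\Gamma$ has the accumulation property, and \Cref{proposition:mero-rigidity} gives $R=cz^n$ directly. The final sentence of the theorem then follows from \Cref{prop:discrete-subgroups}. Once $\Gamma$ is not finite, either it is discrete of the form $\mu_N\times q^\Z$, which contains an element of modulus $\ne1$ and so is covered by the divisor-orbit argument, or it is non-discrete and therefore has accumulation. In every such case each point of $\C^*$ has an infinite orbit, forcing monomiality.

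The main obstacle is minor. The only step requiring any care is the invariant-field computation in case (a), specifically the passage from a $\mu_N$-invariant quotient to a quotient of invariant polynomials. I would handle it with the norm trick: multiply numerator and denominator by $\prod_{\zeta\ne1}Q(\zeta z)$, which makes the denominator invariant and hence the numerator invariant too. One should also check that $\{0,\infty\}$ causes no issue: the relation is only required on $\C^*$, and rational functions are determined by their restriction there.
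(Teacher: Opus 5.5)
Your proposal is correct and follows the paper's proof essentially verbatim. In case (a) you pass to the $\mu_N$-invariant function $z^{-m}R(z)$ and use $\C(z)^{\mu_N}=\C(z^N)$, and in cases (b) and (c) you quote \Cref{prop:qZ-rigidity}(ii) and \Cref{proposition:mero-rigidity}. Your extra details are sound refinements of steps the paper only states: the norm trick for the invariant field, and the observation that an infinite discrete $\Gamma$ is $\mu_N\times q^\Z$ rather than just $q^\Z$, so the final sentence really rests on the infinite-orbit argument.
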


\begin{proof}
In case (a), any homomorphism $\sigma:\mu_N\to\mu_N$ is of the form
$
\sigma(\zeta)=\zeta^m
$
for some $m\in\Z$. The semi-equivariance relation then reads
$
R(\zeta z)=\zeta^m R(z).
$
Hence
$
S(z):= R(z) /z^m
$
is $\mu_N$-invariant. Since
$
\C(z)^{\mu_N}=\C(z^N),
$
there exist coprime polynomials $P,Q\in\C[t]$ such that
$$
S(z)=\frac{P(z^N)}{Q(z^N)}.
$$
Therefore
$$
R(z)=z^m\frac{P(z^N)}{Q(z^N)}.
$$
Conversely, every map of this form is plainly semi-equivariant with respect to $\sigma(\zeta)=\zeta^m$.

Case (b) is exactly the meromorphic part of \Cref{prop:qZ-rigidity}.

Case (c) is \Cref{proposition:mero-rigidity}.

The final statement follows because every closed infinite discrete subgroup of $\C^*$ is of the form $q^\Z$ with $|q|\neq 1$ by \Cref{prop:discrete-subgroups}, whereas every non-discrete subgroup has accumulation.
\end{proof}

Applying this classification of endomorphisms to \Cref{thm:geo-singer-ops}, we obtain a classification of the Singer operations modulo Kummer refinements.

\section{The  automorphism groups and the 2-group of the compactified base}\label{sec:AutBGamma}

For an analytic stack $\mathcal X$, the automorphisms of $\mathcal X$ naturally come in two layers. First, there are the self-equivalences
$
F:\mathcal X\longrightarrow \mathcal X.
$
Secondly, if $F$ and $G$ are two such self-equivalences, there may exist a $2$-isomorphism
\[
\eta:F\Rightarrow G
\]
between them. Thus the automorphisms of $\mathcal X$ do not form merely an ordinary group: they form a groupoid, endowed with composition, and hence a weak $2$-group; see, for instance, \cite{NoohiButterflies,NoohiWP2}.

From this $2$-categorical automorphism object one extracts two ordinary groups.

\begin{enumerate}[label=\textup{(\roman*)},leftmargin=2.6em]
\item The group
$
\pi_0\,\Aut_2(\mathcal X)
$
is the group of equivalence classes of self-equivalences of $\mathcal X$, where two self-equivalences are identified whenever they are related by a $2$-isomorphism.

\item The group
$
\pi_1\,\Aut_2(\mathcal X):=\Aut_2(\mathrm{id}_{\mathcal X})
$
is the group of $2$-automorphisms of the identity functor of $\mathcal X$. This does not produce new self-equivalences of $\mathcal X$; rather, it measures the intrinsic isotropy carried by the stack itself, that is, the residual symmetry which persists even when the underlying self-map is the identity.
\end{enumerate}

In the present article, this distinction is forced by the nature of the transverse object. The natural transverse base of a  transversely affine  affine foliation is the quotient stack
$[\mathbb C^*/\Gamma]$ or $[\mathbb P^1/\Gamma].$
Hence the appropriate notion of symmetry is the automorphism $2$-group of the stack. The invariant $\pi_0\,\Aut_2(\mathcal X)$ is the one that governs the classification of reparametrizations, since it tells us which self-equivalences are distinct up to $2$-isomorphism. By contrast, $\pi_1\,\Aut_2(\mathcal X)$ becomes relevant at the boundary, where the quotient stack carries nontrivial inertia.

\subsection{Quotient stacks \texorpdfstring{$[U/G]$}{[U/G]} and their automorphism $2$-groups}\label{subsec:UG-automorphisms}

For an analytic quotient stack
$
\mathcal X=[U/G],$
the appropriate automorphism object is the $2$-group of self-equivalences of the presenting action groupoid, taken up to Morita equivalence. This is the standard point of view in the theory of quotient stacks and automorphism $2$-groups; see \cite{Noohi05,NoohiButterflies,Romagny}.

Let $G$ be a complex Lie group acting holomorphically on a complex manifold $U$, and consider the action groupoid
\[
U\times G \rightrightarrows U,
\qquad
s(u,g)=u,
\qquad
t(u,g)=g\cdot u.
\]
A strict self-functor of this groupoid is determined by a pair $(f,\alpha)$, where
\[
f\in \Aut_{\mathrm{hol}}(U),
\qquad
\alpha\in \Aut(G),
\]
subject to the semi-equivariance relation
$
f(g\cdot u)=\alpha(g)\cdot f(u)
$ for all 
$g\in G,\ u\in U.
$
If $(f,\alpha)$ and $(f',\alpha')$ are two such strict pairs, a $2$-isomorphism between them is given by a holomorphic map
$
b:U\to G
$
satisfying
\[
f'(u)=b(u)\cdot f(u),
\qquad
b(g\cdot u)\,\alpha(g)=\alpha'(g)\,b(u)
\qquad
\text{for all }g\in G,\ u\in U.
\]
Thus two strict models may define the same automorphism of the quotient stack even when they are not literally equal. This is the basic stack-theoretic gauge ambiguity.
The resulting automorphism $2$-group $\Aut_2([U/G])$ has two invariants which will be used:
\[
\pi_0\,\Aut_2([U/G]) \ \ \ \mathrm{and} \ \ \ \pi_1\,\Aut_2([U/G]).
\]
Concretely, in the quotient model, the right group consists of holomorphic maps
$
b:U\to G
$
such that $b(u)\in \mathrm{Stab}(u)$ for every $u$, together with the evident equivariance condition. In particular, if the action of $G$ on $U$ is free, then
$
\pi_1\,\Aut_2([U/G])=1.
$

A second important point is that not every self-equivalence of the quotient stack need be strict on a fixed atlas. The correct notion is Morita self-equivalence, or equivalently a self-bibundle of the presenting groupoid. Thus the strict pairs $(f,\alpha)$ provide concrete representatives when available, but the automorphism theory of the stack itself is intrinsically Morita-invariant. This is summarized by the diagram
\[
\begin{tikzcd}[column sep=large,row sep=large]
U\times G \arrow[r, shift left=1.2ex, "s"] \arrow[r, shift right=1.2ex, "t"'] &
U \arrow[r] &
{[U/G]}.
\end{tikzcd}
\]

For the present article, it is essential to retain the quotient stack itself rather than only its coarse quotient. The reason is that the stack remembers stabilizer data which the coarse space forgets. In non-proper regimes, this is exactly why the stacky base
\(
[\mathbb C^*/\Gamma]
\)
retains isotropy and monodromy information that would be lost at the level of an ordinary quotient.

\subsection{Inertia and $2$-automorphisms of the quotient bases}\label{subsec:inertia-quotient-bases}

For an analytic stack $\mathcal{X}$, the group $\pi_1 \Aut_2(\mathcal{X})$ is governed by the inertia stack; see \cite[\S3.4]{Noohi05}. More precisely, if $x:W\to \mathcal X$ is a $W$-valued point, its inertia group is the automorphism group of $x$ in the groupoid $\mathcal X(W)$. The inertia stack $I\mathcal X$ is the stack whose $W$-points are pairs $(x,\alpha)$, where $x\in \mathcal X(W)$ and $\alpha$ is an automorphism of $x$. A $2$-automorphism of the identity functor is therefore nothing but a functorial choice of such automorphisms for all points of $\mathcal X$, that is, a global section of $I\mathcal X$. Equivalently, one has a canonical identification
\[
\pi_1\,\Aut_2(\mathcal X)\cong \Gamma(\mathcal X,I\mathcal X).
\]

We now specialize this general observation to the quotient stacks $[\mathbb C^*/\Gamma]$ and $[\mathbb P^1/\Gamma]$,
where $\Gamma\subset \mathbb C^*$ is discrete and acts by multiplication.

\begin{proposition} \label[proposition]{prop:inertia-BGamma}
Let $\Gamma\subset \mathbb C^*$ be a discrete subgroup acting by multiplication.

\begin{enumerate}[label=\textup{(\roman*)},leftmargin=2.6em]
\item The action of $\Gamma$ on $\mathbb C^*$ is free. Consequently,
$
\pi_1\,\Aut_2([\C^*/\Gamma])=1.
$
\item For the compactification $[\PP^1/\Gamma]$, the inertia is supported  at the fixed points $0$ and $\infty$. More precisely, there is an equivalence
\[
I[\PP^1/\Gamma] \simeq [\PP^1/\Gamma] \sqcup (B\Gamma)_0 \sqcup (B\Gamma)_\infty,
\]
where $(B\Gamma)_0$ and $(B\Gamma)_\infty$ denote the residual gerbes at $0$ and $\infty$. Consequently,
\[
\pi_1\,\Aut_2([\PP^1/\Gamma])\cong \Gamma\times \Gamma.
\]
\end{enumerate}
\end{proposition}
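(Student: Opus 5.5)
The plan is to use the identification $\pi_1\,\Aut_2(\mathcal X)\cong \Gamma(\mathcal X,I\mathcal X)$ recalled just before the statement. In the quotient model of \Cref{subsec:UG-automorphisms}, a $2$-automorphism of the identity is a holomorphic map $b:U\to\Gamma$ with $b(u)\in\mathrm{Stab}(u)$ and the equivariance condition $b(\gamma u)\gamma=\gamma b(u)$. Since $\Gamma$ is abelian, the equivariance condition reduces to $\Gamma$-invariance of $b$.

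For \textup{(i)}, I will first observe that $\gamma z=z$ with $z\in\C^*$ forces $\gamma=1$, so the action is free and all stabilizers are trivial. Then $b\equiv 1$, which gives $\pi_1\,\Aut_2([\C^*/\Gamma])=1$. This is exactly the remark made after the description of the quotient model, and it is compatible with \Cref{prop:strictification}.

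For \textup{(ii)}, I will compute the inertia of the action groupoid directly as $I[\PP^1/\Gamma]=[\{(z,\gamma):\gamma z=z\}/\Gamma]$, where $\Gamma$ acts by $\delta\cdot(z,\gamma)=(\delta z,\delta\gamma\delta^{-1})=(\delta z,\gamma)$.
\begin{itemize}
\item The fixed locus $\{(z,\gamma):\gamma z=z\}\subset\PP^1\times\Gamma$ splits as $\PP^1\times\{1\}$ together with $\{0\}\times(\Gamma\setminus\{1\})$ and $\{\infty\}\times(\Gamma\setminus\{1\})$. This is because $\gamma\neq 1$ fixes exactly $0$ and $\infty$.
\item The component with $\gamma=1$ gives $[\PP^1/\Gamma]$.
\item The points over $0$ form $[(\{0\}\times\Gamma)/\Gamma]$ with trivial action on the $\Gamma$-factor, which I will identify with the inertia of the residual gerbe $(B\Gamma)_0$ at $0$, and likewise at $\infty$.
\end{itemize}
Stated carefully, $I[\PP^1/\Gamma]$ is the union of $[\PP^1/\Gamma]$ with the inertia of the two residual gerbes, glued along the identity sections. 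This is how I will interpret the displayed equivalence: the identity element is counted once in the main component. I expect the bookkeeping here to be the main point needing care.

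For the global sections, I take $b:\PP^1\to\Gamma$ holomorphic with $b(z)\in\mathrm{Stab}(z)$. On $\C^*$ this forces $b=1$. The difficulty is that a continuous map to a discrete group on the connected $\PP^1$ is constant, which would force $b\equiv 1$ everywhere. So the claimed answer $\Gamma\times\Gamma$ cannot come from globally continuous $b$ on the atlas.

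To obtain $\Gamma\times\Gamma$, I would therefore read $\pi_1$ as sections of the inertia over each residual gerbe, i.e.\ as pointwise isotropy at the two fixed points. The automorphism group of the point $0$ is $\mathrm{Stab}(0)=\Gamma$, and likewise for $\infty$. Choosing independently $\gamma_0$ at $0$ and $\gamma_\infty$ at $\infty$, with $1$ elsewhere, gives an element of $\Gamma\times\Gamma$. This is a genuine section over the disjoint union of the components of $I[\PP^1/\Gamma]$ in the decomposition above, since the nontrivial elements over $0$ and $\infty$ lie in open-and-closed components of the inertia.

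The main obstacle is to justify that $\Gamma(\mathcal X,I\mathcal X)$ is computed componentwise on this decomposition, so that these pointwise choices are allowed, rather than requiring a continuous $b$ on $\PP^1$. I would argue this using the fact that, as the decomposition makes explicit, the nontrivial-isotropy loci are closed and open inside $I\mathcal X$ and carry a trivial $\Gamma$-action on the $\Gamma$-factor. I would flag that without this componentwise reading the honest group of natural transformations is trivial, and I would state the result with that interpretation.
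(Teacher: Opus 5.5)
Your argument for (i) and your explicit computation of the inertia are correct. Your bookkeeping remark is also sharper than the displayed formula. Since $\Gamma$ is discrete and acts on the $\Gamma$-factor by (trivial) conjugation, the fixed locus is $\PP^1\times\{1\}\sqcup\bigsqcup_{\gamma\neq 1}\{0,\infty\}\times\{\gamma\}$. Hence $I[\PP^1/\Gamma]\simeq[\PP^1/\Gamma]\sqcup\bigsqcup_{\gamma\neq 1}\bigl((B\Gamma)_0\sqcup(B\Gamma)_\infty\bigr)$, which agrees with the stated decomposition only when $|\Gamma|=2$.

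The genuine gap is your last step, and it cannot be closed. Open-and-closedness of the components over $0$ and $\infty$ \emph{inside} $I\mathcal X$ does not license componentwise choices. A section of $I\mathcal X\to\mathcal X$ is a morphism \emph{out of} $\mathcal X=[\PP^1/\Gamma]$, so you would need $\{0\}$ and $\{\infty\}$ to be open and closed in the source, which they are not. On the atlas, a section is a continuous map $z\mapsto(z,b(z))$ from the connected $\PP^1$ into the fixed locus. It therefore lands in a single component, and only $\PP^1\times\{1\}$ surjects onto $\PP^1$. Equivalently, in the quotient model of \Cref{subsec:UG-automorphisms}, a holomorphic $b:\PP^1\to\Gamma$ into the discrete group $\Gamma$ is constant, and it equals $1$ on $\C^*$. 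Conceptually, a connected stack with trivial generic stabiliser has no nontrivial $2$-automorphisms of the identity. So $\pi_1\Aut_2([\PP^1/\Gamma])=\Gamma(\mathcal X,I\mathcal X)=1$. The group $\Gamma\times\Gamma$ you build is $\Aut(x_0)\times\Aut(x_\infty)$, i.e.\ sections of the inertia restricted to the closed substack $(B\Gamma)_0\sqcup(B\Gamma)_\infty$, not $2$-automorphisms of $\mathrm{id}_{\mathcal X}$.

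Hence the ``Consequently'' in (ii) is false as stated, and no reading of $\Gamma(\mathcal X,I\mathcal X)$ rescues it. The paper's own proof reaches $\Gamma\times\Gamma$ by exactly the step you were trying to justify: it asserts that taking global sections amounts to an independent choice of an element of $\Gamma$ at $0$ and at $\infty$. It therefore has the same defect, and your diagnosis up to that point is more accurate than the paper's.

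What is actually provable is the following:
\begin{itemize}
\item statement (i);
\item the corrected inertia decomposition above;
\item the vanishing $\pi_1\Aut_2([\PP^1/\Gamma])=1$;
\item the fact that the isotropy groups at the two stacky points give $\Gamma\times\Gamma$, on which $z\mapsto a/z$ acts by swapping and inverting the factors.
\end{itemize}
The last item is the content that \Cref{thm:2group}(ii) should be rephrased in terms of. State the result that way rather than reinterpreting $\pi_1$.
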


\begin{proof}
The action of $\Gamma$ on $\C^*$ is free, since $\gamma z=z$ with $z\in\C^*$ implies $\gamma=1$. Hence the quotient stack $[\C^*/\Gamma]$ has trivial inertia, and therefore
$
\pi_1\,\Aut_2([\C^*/\Gamma])=1.
$
For $[\PP^1/\Gamma]$, the only points with nontrivial stabiliser are $0$ and $\infty$, both fixed by $\Gamma$. The usual description of the inertia stack of a quotient stack $[X/G]$ as the stack of pairs $(x,g)$ with $gx=x$ therefore yields the claimed form of $I[\PP^1/\Gamma]$; see \cite[\S2]{NoohiButterflies} and \cite[\S2]{Romagny}. Taking global sections shows that
$
\pi_1\,\Aut_2([\PP^1/\Gamma])
$
is given by an independent choice of an element of $\Gamma$ at $0$ and at $\infty$, hence is naturally isomorphic to $\Gamma\times\Gamma$.
\end{proof}

Notice that item $(i)$ tell us that $\Aut_2([\C^*/\Gamma])$ is 1-truncated, that is 
\begin{equation}\label{eq: 1-truncated}
    \pi_0\,\Aut_2([\C^*/\Gamma])=\Aut_1([\C^*/\Gamma]).
\end{equation}
The open base behaves, from the point of view of $2$-symmetry, like an honest curve, whereas the compactified base necessarily carries nontrivial $2$-categorical information concentrated at its stacky boundary points. This is the mechanism that becomes relevant as soon as one allows meromorphic reparametrizations.

\subsection{Automorphisms of the open base}

Since $\pi_1\,\Aut_2([\C^*/\Gamma])=1$ by \Cref{prop:inertia-BGamma}, no stacky ambiguity remains at the level of the open base, and the automorphism problem reduces to the classification of self-equivalences up to ordinary $2$-isomorphism.

\begin{theorem} \label{thm:AutB}
Let $\Gamma\subset \mathbb C^*$ be any discrete subgroup, not necessarily proper as a subgroup of $\mathbb C^*$. Then every class in $\Aut_1([\C^*/\Gamma])$ is represented by one of the two types of maps
$
z\longmapsto az$ or 
$
z\longmapsto a/z,
$
with $a\in \mathbb C^*$. Two maps of the same type define the same class if and only if their parameters differ by multiplication by an element of $\Gamma$. Consequently,
\[
\Aut_1([\C^*/\Gamma])\cong (\mathbb C^*/\Gamma)\rtimes (\mathbb Z/2),
\]
where the nontrivial element of $\mathbb Z/2$ acts on $\mathbb C^*/\Gamma$ by inversion.
\end{theorem}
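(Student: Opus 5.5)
Our approach combines the strictification result with the normal form for automorphisms of $\C^*$. The point is that a self-equivalence of $[\C^*/\Gamma]$ is detected by its strict lift to the atlas, and that this lift must be a biholomorphism of $\C^*$.

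First, let $\psi$ be a self-equivalence of $[\C^*/\Gamma]$ with quasi-inverse $\phi$. By \Cref{prop:strictification}, both are $2$-isomorphic to strict pairs $(\widetilde\psi,\sigma)$ and $(\widetilde\phi,\tau)$. The composites $\phi\circ\psi$ and $\psi\circ\phi$ are represented by the strict pairs $(\widetilde\phi\circ\widetilde\psi,\tau\sigma)$ and $(\widetilde\psi\circ\widetilde\phi,\sigma\tau)$. Each is $2$-isomorphic to the identity pair $(\mathrm{id},\mathrm{id})$, so the uniqueness clause of \Cref{prop:strictification} gives $\tau\sigma=\sigma\tau=\mathrm{id}$ together with constants $a,b\in\Gamma$ such that
\[
\widetilde\phi\circ\widetilde\psi=a\cdot\mathrm{id},\qquad \widetilde\psi\circ\widetilde\phi=b\cdot\mathrm{id}.
\]
Hence $\widetilde\psi$ is injective and surjective, that is, a biholomorphism of $\C^*$, and $\sigma\in\Aut(\Gamma)$.

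Second, \Cref{prop:hol-normal-aut-Cstar} then gives $\widetilde\psi(z)=az$ or $\widetilde\psi(z)=a/z$ with $a\in\C^*$. Substituting into the semi-equivariance relation forces $\sigma=\mathrm{id}$ in the first case and $\sigma(\gamma)=\gamma^{-1}$ in the second. Conversely, both maps, with these $\sigma$, are strict automorphisms: their inverses are $z\mapsto a^{-1}z$ and $z\mapsto a/z$.

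Third, for the identification of classes: two strict pairs of different type have different $\sigma$ whenever $\Gamma$ contains an element of order greater than $2$, so by \Cref{prop:strictification} they are not $2$-isomorphic. Within one type, \Cref{prop:strictification} says that two pairs are $2$-isomorphic exactly when $a'=ga$ for some $g\in\Gamma$. (The degenerate case $\Gamma\subset\{\pm1\}$ needs a brief separate check, comparing $az$ with $a'/z$ directly: these are never proportional by a constant.) This yields a bijection
\[
\Aut_1([\C^*/\Gamma])\;\longleftrightarrow\;(\C^*/\Gamma)\times\Z/2 .
\]
To get the group law, compose representatives:
\[
(z\mapsto az)\circ(z\mapsto bz)=(z\mapsto abz),\qquad (z\mapsto a/z)\circ(z\mapsto bz)=(z\mapsto (a/b)/z).
\]
This is exactly the multiplication in $(\C^*/\Gamma)\rtimes\Z/2$ with $\Z/2$ acting by inversion. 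By \eqref{eq: 1-truncated}, no further $2$-categorical data intervene.

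The main obstacle is the first step: showing that a Morita self-equivalence, and not merely a strict one, yields a strict lift that is a biholomorphism. The approach above relies on \Cref{prop:strictification} applying to composites and to the identity. The freeness of the action, via \Cref{prop:inertia-BGamma}, is what makes the $2$-isomorphisms constant and lets the argument go through.
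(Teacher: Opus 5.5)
You follow the paper's proof: a strict representative from \Cref{prop:strictification}, the normal form of \Cref{prop:hol-normal-aut-Cstar}, then the parameter taken modulo $\Gamma$. Your quasi-inverse argument forcing $\widetilde\psi$ to be bijective, and your check of the case $\Gamma\subset\{\pm1\}$, fill in points the paper only asserts. However, the step you flagged as the main obstacle is a genuine gap, and the paper's own proof rests on the same claim: the existence half of \Cref{prop:strictification} fails on the atlas $\C^*$.

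Pulling a self-equivalence $\psi$ back along $\C^*\to[\C^*/\Gamma]$ gives a principal $\Gamma$-bundle $P\to\C^*$ with an equivariant map $P\to\C^*$. A strict lift $\widetilde\psi\colon\C^*\to\C^*$ exists iff $P$ has a global section, i.e.\ is trivial. Connectedness of $\C^*$ together with discreteness of $\Gamma$ is what makes $2$-isomorphisms constant; freeness plays no role there. But connectedness does not trivialize $P$: since $\pi_1(\C^*)\cong\Z$, $P$ is classified by a monodromy $g\in\Gamma$ around $0$, which can be nontrivial.

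This does happen, and the statement fails as written. Take $q=e^{-2\pi}$ and $\Gamma=q^{\Z}$. The coordinate $z=e^{w}$ identifies $[\C^*/\Gamma]$ with $\C/\Lambda$, where $\Lambda=2\pi i\Z+2\pi\Z=2\pi\Z[i]$, and $[w]\mapsto[iw]$ is an automorphism. Along $w\colon 0\to 2\pi i$ its lift $e^{iw}$ runs from $1$ to $q$, so $P$ has monodromy $q\neq 1$. Hence there is no strict representative, and the map is not $2$-isomorphic to any $z\mapsto az^{\pm1}$, since its linear part is $i\neq\pm1$. Thus $\Aut_1([\C^*/\Gamma])\cong E_q\rtimes\mu_4$ here, and similarly $E_q\rtimes\mu_6$ for $\Lambda$ homothetic to $\Z[e^{2\pi i/3}]$ (e.g.\ $q=-e^{-\pi\sqrt3}$). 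This is the same phenomenon the paper itself notes for elliptic isogenies with $\alpha\notin\Z$.

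Your argument does go through on the simply connected atlas. Present $[\C^*/\Gamma]\simeq[\C/\widetilde\Lambda]$ with $\widetilde\Lambda:=\exp^{-1}(\Gamma)\supset 2\pi i\Z$ acting by translations. There every pulled-back torsor is trivial, so strictification holds, with $2$-isomorphisms given by constants in $\widetilde\Lambda$. Your quasi-inverse step then makes the lift a biholomorphism of $\C$, hence $w\mapsto cw+d$, and semi-equivariance with $\sigma\in\Aut(\widetilde\Lambda)$ forces $c\widetilde\Lambda=\widetilde\Lambda$. This gives $\Aut_1([\C^*/\Gamma])\cong(\C^*/\Gamma)\rtimes\{c\in\C^*: c\widetilde\Lambda=\widetilde\Lambda\}$. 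That group equals the claimed $(\C^*/\Gamma)\rtimes(\Z/2)$ exactly when $c=\pm1$ are the only such multipliers. This holds always for $\Gamma=\mu_N$, and for infinite discrete $\Gamma$ unless the lattice $\widetilde\Lambda$ is homothetic to $\Z[i]$ or $\Z[e^{2\pi i/3}]$. If non-closed $\Gamma$ are also meant, $\Gamma=\langle e^{2\pi i\sqrt2}\rangle$ with $\widetilde\Lambda=2\pi i\Z[\sqrt2]$ and $c=1+\sqrt2$ is a further counterexample.
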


\begin{proof}
Any automorphism of $[\C^*/\Gamma]$ may be represented by a strict self-equivalence of the action groupoid of the $\Gamma$-action on $\mathbb C^*$. Its lift to the atlas must therefore be a biholomorphism of $\mathbb C^*$, hence necessarily of the form
$
z\longmapsto az$ or 
$
z\longmapsto a/z,
$
by \Cref{prop:hol-normal-aut-Cstar}. The semi-equivariance relation forces the induced automorphism of $\Gamma$ to be the identity in the first case and inversion in the second. Two such lifts determine the same automorphism of the quotient stack      when they differ by multiplication by a constant element of $\Gamma$, which yields the quotient by $\Gamma$ on the parameter $a$. Composition is immediate and gives the stated semidirect product structure.
\end{proof}

The induced automorphism of $\Gamma$ is therefore only the discrete $\pm1$ part. The parameter $a\in \mathbb C^*/\Gamma$ is additional geometric data of the stack automorphism and cannot be recovered from the action on $\Gamma$ alone.

\subsection{Automorphisms of the compactified base}\label{sec:2group}

For the compactified base, the situation changes for a simple reason: the inertia at $0$ and $\infty$ is nontrivial, so the automorphism object is genuinely $2$-categorical.

\begin{theorem} \label{thm:2group}
Assume $\Gamma\neq \{1\}$. Then the automorphism $2$-group $\Aut_2([\PP^1/\Gamma])$ is determined by the following data.

\begin{enumerate}[label=\textup{(\roman*)},leftmargin=2.6em]
\item Its group of connected components is
$
\pi_0\,\Aut_2([\PP^1/\Gamma])\cong (\mathbb C^*/\Gamma)\rtimes(\mathbb Z/2),
$
represented by the Möbius transformations $
z\longmapsto az$ or 
$
z\longmapsto a/z. 
$ 
modulo multiplication of $a$ by elements of $\Gamma$.

\item Its group of automorphisms of the identity is
$
\pi_1\,\Aut_2([\PP^1/\Gamma])\cong \Gamma\times \Gamma.
$
\item The action of $\pi_0$ on $\pi_1$ is given as follows: a scaling
$
z\longmapsto az
$
acts trivially on $\Gamma\times\Gamma$, whereas an inversion
$
z\longmapsto  a/z
$
interchanges the two factors and inverts them:
\[
(\alpha_0,\alpha_\infty)\longmapsto (\alpha_\infty^{-1},\alpha_0^{-1}).
\]
\end{enumerate}
\end{theorem}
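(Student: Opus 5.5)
The plan is to reduce all three assertions to the open base and the two fixed points, computing each layer of $\Aut_2([\PP^1/\Gamma])$ from earlier results. Part (ii) is \Cref{prop:inertia-BGamma}(ii) combined with the identification $\pi_1\,\Aut_2(\mathcal X)\cong\Gamma(\mathcal X,I\mathcal X)$. A section of the inertia stack over $\PP^1$ is a holomorphic map $b:\PP^1\to\Gamma$ with $b(z)\in\mathrm{Stab}(z)$. On the dense open set $\C^*$ the stabilisers are trivial, so $b=1$ there. At $0$ and $\infty$ the values are unconstrained, because the inertia splits off the residual gerbes $(B\Gamma)_0\sqcup(B\Gamma)_\infty$. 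The resulting pair $(\alpha_0,\alpha_\infty)$ gives $\Gamma\times\Gamma$. I would make clear that this is to be read on the inertia components, not as a continuous map on $\PP^1$, exactly as the proposition is phrased.

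For (i), the first step is to show that every self-equivalence of $[\PP^1/\Gamma]$ preserves the open substack $[\C^*/\Gamma]$. The argument is that the complement $\{0,\infty\}$ is precisely the locus of nontrivial inertia, which is intrinsic; this uses $\Gamma\neq\{1\}$. Restriction therefore gives a homomorphism $\pi_0\,\Aut_2([\PP^1/\Gamma])\to\Aut_1([\C^*/\Gamma])$, and the target is $(\C^*/\Gamma)\rtimes\Z/2$ by \Cref{thm:AutB}.

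\emph{Surjectivity.} The representatives $z\mapsto az$ and $z\mapsto a/z$ are Möbius maps. They extend to $\PP^1$ and are semi-equivariant for $\sigma=\mathrm{id}$ and $\sigma=$ inversion respectively.

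\emph{Injectivity.} Take a self-equivalence whose restriction to the open base is $2$-isomorphic to the identity. By \Cref{prop:strictification}, adapted to the connected atlas $\PP^1$, it has a strict representative $(\widetilde\psi,\sigma)$. Its restriction equals $\gamma z$ on $\C^*$, so by continuity $\widetilde\psi(z)=\gamma z$ on all of $\PP^1$. Hence it is $2$-isomorphic to the identity via the constant gauge $b\equiv\gamma$.

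The main obstacle is that not every Morita self-equivalence need be strict on the atlas $\PP^1$. My first attempt would be to argue through the free part. Restricted over the open base, the bibundle is a principal $\Gamma$-bundle over $\C^*$ with an equivariant map, and is trivialised by a section, as in the proof of \Cref{prop:strictification}. The section and the strict lift $az^{\pm1}$ then extend across $0$ and $\infty$ by Riemann extension of the lift.

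For (iii), I would compute conjugation $\eta\mapsto \psi\circ\eta\circ\psi^{-1}$ on sections of inertia. The map $z\mapsto az$ fixes $0$ and $\infty$ and induces $\sigma=\mathrm{id}$ on stabilisers, so it acts trivially. The map $z\mapsto a/z$ swaps $0$ and $\infty$, and on stabilisers it induces $\sigma(\gamma)=\gamma^{-1}$; this comes from the semi-equivariance $\widetilde\psi(\gamma z)=\gamma^{-1}\widetilde\psi(z)$. Transporting the inertia element $\alpha_\infty$ at $\infty$ therefore yields $\alpha_\infty^{-1}$ at $0$, and symmetrically at $\infty$. This gives $(\alpha_0,\alpha_\infty)\mapsto(\alpha_\infty^{-1},\alpha_0^{-1})$. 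The only care needed is the convention for the direction of conjugation, which is harmless since the formula is an involution.
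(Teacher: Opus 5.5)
Your argument for (ii) fails at exactly the step you flag, and it cannot be repaired: $\pi_1\,\Aut_2([\PP^1/\Gamma])$ is in fact trivial. As you say, a $2$-automorphism $\alpha$ of $\mathrm{id}_{[\PP^1/\Gamma]}$ is given on the atlas by a holomorphic $b:\PP^1\to\Gamma$ with $b(z)\in\mathrm{Stab}(z)$. Since $\Gamma$ is discrete and $\PP^1$ is connected, $b$ is constant, and $b=1$ on $\C^*$ forces $b\equiv 1$.

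The values at $0$ and $\infty$ are not free, because $\alpha$ must be compatible with pullback. The point $x_0:\mathrm{pt}\to[\PP^1/\Gamma]$ at $0$ is the restriction of the tautological object over the atlas $\PP^1$, whose automorphism group is trivial, so $\alpha_{x_0}=1$. Equivalently, a section of $I\mathcal X\to\mathcal X$ over the connected stack $\mathcal X=[\PP^1/\Gamma]$ lands in a single connected component of $I\mathcal X$. Only the identity component surjects onto $\mathcal X$; the gerbe components, one pair for each $\gamma\neq 1$, lie over $\{0,\infty\}$. So reading the choice ``on the inertia components rather than as a continuous map'' is not an option.

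What you have computed, $\Gamma\times\Gamma$, is the automorphism group of the identity of the closed substack $(B\Gamma)_0\sqcup(B\Gamma)_\infty$, not of $\mathrm{id}_{[\PP^1/\Gamma]}$. The paper's proof of (ii) rests on \Cref{prop:inertia-BGamma}(ii), which makes the same mistake. It also conflicts with the paper's own description in \Cref{subsec:UG-automorphisms} of these $2$-morphisms as holomorphic $b$ with $b(u)\in\mathrm{Stab}(u)$. Citing it therefore does not rescue the step: (ii) is false as stated, and (iii) is vacuous. Your computation in (iii) is correct only as the action of $z\mapsto a/z$ on the product $\Gamma_0\times\Gamma_\infty$ of the two local stabilisers.

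For (i) your conclusion is right, but the fix you propose for the ``main obstacle'' goes the wrong way. A principal $\Gamma$-bundle over $\C^*$ need not have a global section, since it is classified by its monodromy in $\Gamma$. This is precisely where non-strict self-equivalences of the open base come from. For $q=e^{2\pi}$, the automorphism $w\mapsto iw$ of $E_q=\C/(2\pi i\Z+2\pi\Z)\simeq[\C^*/q^\Z]$ has no strict lift to $\C^*$. So \Cref{thm:AutB} undercounts there and cannot bound the target of your restriction map.

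On the compactified base the obstacle disappears. The bibundle of a self-equivalence of $[\PP^1/\Gamma]$ is a principal $\Gamma$-bundle over the simply connected atlas $\PP^1$, hence trivial, so there is a global strict lift $\widetilde\psi:\PP^1\to\PP^1$. Composing with a strict lift of the inverse gives $z\mapsto gz$ with $g\in\Gamma$, so $\widetilde\psi$ is injective, hence M\"obius. Semi-equivariance with $\Gamma\neq\{1\}$ then forces $\widetilde\psi(\{0,\infty\})=\{0,\infty\}$, i.e.\ $\widetilde\psi(z)=az^{\pm1}$. This is the paper's route to (i), with its implicit simple-connectivity step made explicit, and it never needs $[\C^*/\Gamma]$.
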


\begin{proof}
Any self-equivalence of $[\PP^1/\Gamma]$ must preserve the locus where the inertia is nontrivial, namely the set $\{0,\infty\}\subset \mathbb P^1$. Hence, on the atlas $\mathbb P^1$, it is represented by a Möbius transformation preserving $\{0,\infty\}$, and is therefore necessarily of the form
$
z\longmapsto az$ or 
$
z\longmapsto a/z. 
$
As in the open case, two such lifts define the same $1$-automorphism class if and only if their parameters differ by multiplication by an element of $\Gamma$. This yields the asserted description of $\pi_0$.
The description of $\pi_1$ is      \Cref{prop:inertia-BGamma}(ii): the only nontrivial $2$-automorphisms of the identity arise from the stabilizers at $0$ and at $\infty$, and these two choices are independent.
It remains to compute the action of $\pi_0$ on $\pi_1$. A scaling fixes $0$ and $\infty$ and acts trivially on their stabilizers, hence acts trivially on $\Gamma\times\Gamma$. By contrast, an inversion exchanges $0$ and $\infty$ and conjugates multiplication by $\gamma$ into multiplication by $\gamma^{-1}$. This yields the formula
\[
(\alpha_0,\alpha_\infty)\longmapsto (\alpha_\infty^{-1},\alpha_0^{-1}),
\]
as claimed.
\end{proof}

\begin{remark}
In the case $\Gamma=\{1\}$, one recovers the familiar automorphism groups
$
\Aut(\mathbb C^*)\cong \mathbb C^*\rtimes(\mathbb Z/2)$ and $
\Aut(\mathbb P^1)=\PGL_2(\mathbb C),$
and there is no nontrivial $\pi_1$.
\end{remark}

\section{The Kato--Nakayama space and the boundary dynamics}\label{sec:KN}

This section has two closely related aims. First, it describes the local topology of the Kato--Nakayama space $X^{\log}$ as a real oriented blow-up with torus fibres over the boundary strata, and identifies the canonical boundary data carried by the logarithmic form $\eta$, namely the boundary characters determined by the residues. Secondly, it studies the real codimension-two foliation induced by $\pi^*\eta$, both through its local logarithmic normal form and through the linear dynamics it induces on the boundary tori, including the topology and closures of the resulting angular leaves. In this way, the section makes explicit the topological, geometric, and dynamical content of the logarithmic boundary.

The discussion uses the standard construction of Kato and Nakayama \cite{KN99}; for comparisons with root-stack constructions and their role in logarithmic topology, see also \cite{CSSST17,VistoliTalpo18}.

\subsection{The Kato--Nakayama space and the boundary characters}

Let $j:U\hookrightarrow X$ be the inclusion.

\begin{definition}\label[definition]{def:log-structure}
The standard fine saturated log structure associated with the divisor $D$ is
$
\mathcal M_X:=\mathcal O_X\cap j_*\mathcal O_U^*,
$
with structure morphism $\alpha:\mathcal M_X\to\mathcal O_X$ given by inclusion. This is the usual logarithmic structure attached to a normal crossings boundary.
\end{definition}

The above definition means that the local sections of $\mathcal M_X$ are holomorphic functions that do not vanish on the complement of $D$.

\begin{definition}\label[definition]{def:KN}
The Kato--Nakayama space $X^{\log}$ is the topological space whose points are pairs $(x,h)$, where $x\in X$ and
$
h:\mathcal M_{X,x}^{\mathrm{gp}}\to \mathbb S^1
$
is a group homomorphism satisfying
$
h(u)=u(x)/|u(x)|
$
for every unit $u\in \mathcal O_{X,x}^*\subset \mathcal M_{X,x}$.
It comes equipped with the continuous projection
\[
\pi:X^{\log}\to X,
\qquad
\pi(x,h)=x.
\]
This is the Kato--Nakayama realisation of the fs log space $(X,\mathcal M_X)$, see \cite{KN99}.
\end{definition}

Let $x\in X$ be a point at which exactly $k$ irreducible components of $D$ meet, and choose local coordinates
$(z_1,\dots,z_n)$ on a neighbourhood $U_x\subset X$ such that
\[
D\cap U_x=\{z_1\cdots z_k=0\},
\qquad
D_i\cap U_x=\{z_i=0\}
\quad (1\le i\le k).
\]
In this simple normal crossings situation, the Kato--Nakayama space admits the familiar local description as a real oriented blow-up of the boundary. More precisely, there is a canonical homeomorphism
\begin{equation}\label{eq:KN-local-model}
\pi^{-1}(U_x)\simeq (\mathbb R_{\ge 0}\times \mathbb S^1)^k\times \mathbb C^{n-k},
\end{equation}
under which the projection $\pi$ is given by
\[
(r_1,\theta_1,\dots,r_k,\theta_k,z_{k+1},\dots,z_n)
\longmapsto
(r_1e^{i\theta_1},\dots,r_ke^{i\theta_k},z_{k+1},\dots,z_n).
\]
Thus $\pi^{-1}(U_x\setminus D)\to U_x\setminus D$ is a homeomorphism, while the fibre over a point of the stratum $D_I^\circ$ with $|I|=\ell$ is canonically a torus $(\mathbb S^1)^\ell$.
Indeed, in the chosen chart the logarithmic structure is generated by $z_1,\dots,z_k$ together with the units of $\mathcal O_X$. A point of $X^{\log}$ over $U_x$ is therefore specified by a point of $U_x$ together with a choice of argument for each local boundary parameter $z_i$. Writing
\[
z_i=r_ie^{i\theta_i},
\qquad
r_i\ge 0,
\qquad
\theta_i\in \mathbb S^1,
\]
one sees that the compatibility condition with units determines the phase whenever $z_i\neq 0$, whereas if $z_i=0$ the angular variable remains free. This yields exactly the model \eqref{eq:KN-local-model}. See \cite[\S2]{KN99} for the construction of $X^{\log}$ and its local model; compare also \cite[\S3--\S4]{CSSST17} and \cite[\S2]{VistoliTalpo18}.

In the local model \eqref{eq:KN-local-model}, one has, for each $1\le i\le k$,
\begin{equation}\label{eq:dz-over-z-kn}
\pi^*\!\left(\frac{dz_i}{z_i}\right)=\frac{dr_i}{r_i}+i\,d\theta_i
\qquad
\text{on }\pi^{-1}(U_x\setminus D).
\end{equation}
Indeed, on $\pi^{-1}(U_x\setminus D)$ one may write $z_i=r_ie^{i\theta_i}$, so that
$
\log z_i=\log r_i+i\theta_i
$
after choosing a local branch upstairs. Differentiating gives
\[
\frac{dz_i}{z_i}=d(\log r_i+i\theta_i)=\frac{dr_i}{r_i}+i\,d\theta_i,
\]
and pulling back by $\pi$ yields \eqref{eq:dz-over-z-kn}. This is the standard decomposition of a logarithmic differential into its radial and angular parts on $X^{\log}$.

Assume now that the transversely affine structure is logarithmic along $D$, so that $\eta\in \Omega_X^1(\log D)$ and $d\eta=0$ on $U$. In the above snc chart one may write
\begin{equation}\label{eq:eta-local}
\eta=\sum_{i=1}^k \alpha_i\frac{dz_i}{z_i}+\beta,
\qquad
\alpha_i\in\mathbb C,
\qquad
\beta\ \text{holomorphic and closed}.
\end{equation}
Substituting \eqref{eq:dz-over-z-kn} into \eqref{eq:eta-local}, one obtains on $\pi^{-1}(U_x\setminus D)$
\begin{equation}\label{eq:eta-pullback-kn}
\pi^*\eta=\sum_{i=1}^k \alpha_i\left(\frac{dr_i}{r_i}+i\,d\theta_i\right)+\pi^*\beta.
\end{equation}

\begin{proposition}\label[proposition]{prop:boundary-fiber-restriction}
Let $x\in D_I^\circ$, with $I\subset\{1,\dots,k\}$, and choose local coordinates
$(z_1,\dots,z_n)$ on an open neighbourhood $U_x\subset X$ such that
$
D\cap U_x=\{z_1\cdots z_k=0\},
$ and $
D_i\cap U_x=\{z_i=0\}.$
Write
\[
\eta=\sum_{i=1}^k \alpha_i\,\frac{dz_i}{z_i}+\beta,
\qquad
\alpha_i\in\C,
\]
with $\beta$ holomorphic. If
$
\pi:X^{\log}\to X
$
is the Kato--Nakayama space, then the form induced by $\pi^* \eta$ on the fibre $\pi^{-1}(x)$ is 
\[
i_x^*(\pi^*\eta)= i\sum_{i\in I}\alpha_i\,d\theta_i,
\]
where
$
i_x:\pi^{-1}(x)\hookrightarrow X^{\log}
$
is the inclusion.
\end{proposition}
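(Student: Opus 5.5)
Our plan is to work entirely in the local model \eqref{eq:KN-local-model} and compute the restriction term by term. First we will fix the chart $\pi^{-1}(U_x)\simeq(\R_{\ge0}\times\Sone)^k\times\C^{n-k}$. Here $x\in D_I^\circ$ means $z_i(x)=0$ exactly for $i\in I$ and $z_i(x)\neq0$ for $i\in\{1,\dots,k\}\setminus I$. In these coordinates the fibre $\pi^{-1}(x)$ is the subset
\[
\{r_i=0,\ \theta_i\ \text{free}\ (i\in I);\quad r_je^{i\theta_j}=z_j(x)\ (j\notin I,\ j\le k);\quad z_\ell=z_\ell(x)\ (\ell>k)\},
\]
which is the torus $(\Sone)^{I}$ with coordinates $(\theta_i)_{i\in I}$. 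We then give a meaning to $\pi^*\eta$ up to the boundary. Formula \eqref{eq:eta-pullback-kn} holds only on the interior. Since $\theta_i$ is a coordinate up to $r_i=0$, the forms $d\theta_i$ and $\pi^*\beta$ are continuous forms on the manifold with corners $\pi^{-1}(U_x)$. The terms $\alpha_i\,dr_i/r_i$ are not continuous there.

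The natural reading, which we will adopt, is that ``the form induced on the fibre'' means the pull-back along $i_x$ of the extension of \eqref{eq:eta-pullback-kn}. This extension is taken as a logarithmic form on the manifold with corners, and $dr_i/r_i$ is a log form with respect to the boundary $\{r_i=0\}$. Pulling back along the inclusion of a submanifold on which a coordinate is constant kills its differential. This is the standard convention for log forms restricted to strata, where the residue and angular parts survive.

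Concretely, $i_x^*$ acts on the pieces as follows.
\begin{itemize}
\item $i_x^*(dr_i)=0$ and $i_x^*(dr_i/r_i)=0$ for $i\in I$, since $r_i\equiv0$ on the fibre.
\item For $j\notin I$ both $r_j$ and $\theta_j$ are constant on the fibre, so $i_x^*(dr_j/r_j+i\,d\theta_j)=0$.
\item $i_x^*\pi^*\beta=(\pi\circ i_x)^*\beta=0$, because $\pi\circ i_x$ is the constant map to $x$ and $\beta$ is holomorphic, hence smooth at $x$.
\end{itemize}
The only surviving terms are $\alpha_i\,i\,d\theta_i$ for $i\in I$, which gives $i\sum_{i\in I}\alpha_i\,d\theta_i$.

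The main obstacle is the justification of the radial terms. There are two possible routes. The first is to pass to the logarithmic de Rham complex on $X^{\log}$ (Kato--Nakayama), where $\pi^{-1}\Omega^1_X(\log D)$ maps to forms whose restriction to the torus fibres is the angular part. The second, which we would try first because it is elementary, is a limiting argument. Consider the nearby tori $T_\epsilon=\{r_i=\epsilon\ (i\in I)\}$ in the interior. The restriction of $\pi^*\eta$ to $T_\epsilon$ is exactly $i\sum_{i\in I}\alpha_i d\theta_i+O(\epsilon)$ by \eqref{eq:eta-pullback-kn}: the $dr_i$ terms vanish because $r_i$ is constant on $T_\epsilon$, and $\beta$ contributes $O(\epsilon)$. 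Letting $\epsilon\to0$ identifies the induced form on $\pi^{-1}(x)$. As a consistency check, this agrees with \Cref{prop:period-residue}: integrating over the $\theta_i$-circle gives $2\pi i\alpha_i=2\pi i\,\Res_{D_i}(\eta)$.
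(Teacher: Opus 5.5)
Your proposal is correct and follows the same route as the paper: you work in the local model \eqref{eq:KN-local-model}, use \eqref{eq:eta-pullback-kn}, and observe that on vectors tangent to the torus $\pi^{-1}(x)$ the radial terms, the frozen coordinates $j\notin I$, and $\pi^*\beta$ all vanish, leaving $i\sum_{i\in I}\alpha_i\,d\theta_i$. Where you go further is the singular term $\alpha_i\,dr_i/r_i$ at $r_i=0$, which the paper disposes of simply by noting $dr_i(v)=0$; your log-form convention, or your limit over the interior tori $T_\epsilon$ (on which the radial terms vanish identically and $\pi^*\beta$ contributes only $O(\epsilon)$), makes that step rigorous.
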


\begin{proof}
In the local model of $X^{\log}$, one has
$
\pi^{-1}(U_x)\simeq (\R_{\ge 0}\times \mathbb S^1)^k\times \C^{n-k},
$
and the projection is given by
\[
(r_1,\theta_1,\dots,r_k,\theta_k,z_{k+1},\dots,z_n)
\longmapsto
(r_1e^{i\theta_1},\dots,r_ke^{i\theta_k},z_{k+1},\dots,z_n).
\]
On the interior, where $r_i>0$, by \eqref{eq:eta-pullback-kn} one has the identity
\[
\pi^*\eta
=
\sum_{i=1}^k \alpha_i\left(\frac{dr_i}{r_i}+i\,d\theta_i\right)+\pi^*\beta.
\]
Now fix $x\in D_I^\circ$. This means that:
\begin{enumerate}[label=\textup{(\arabic*)},leftmargin=2.5em]
\item if $i\in I$, then $z_i(x)=0$, hence $r_i=0$ along the fibre;
\item if $i\notin I$ and $1\le i\le k$, then $z_i(x)\neq 0$, so both $r_i$ and $\theta_i$ are fixed along the fibre;
\item the coordinates $z_{k+1},\dots,z_n$ are also fixed along the fibre.
\end{enumerate}
Thus the fibre
$
\pi^{-1}(x)
$
is parametrized only by the angular variables $\theta_i$ with $i\in I$, and therefore
 $
\pi^{-1}(x)\simeq (\mathbb S^1)^{|I|}.
$
Let $v\in T(\pi^{-1}(x))$ be tangent to the fibre. Then necessarily:
\begin{enumerate}[label=\textup{(\arabic*)},leftmargin=2.5em]
\item $dr_i(v)=0$ for every $i$, because the radial coordinates are constant along the fibre;
\item $d\theta_j(v)=0$ for $j\notin I$, because these angular coordinates are also frozen along the fibre;
\item $\pi^*\beta(v)=0$, because $\beta$ comes from the base and $\pi\circ i_x$ is constant on the fibre.
\end{enumerate}
Therefore, when one evaluates $\pi^*\eta$ on vectors tangent to the fibre, only the angular terms with indices in $I$ survive:
\[
\pi^*\eta(v)= i\sum_{i\in I}\alpha_i\,d\theta_i(v).
\]
Since this holds for every tangent vector $v$, it follows that
\[
i_x^*(\pi^*\eta)= i\sum_{i\in I}\alpha_i\,d\theta_i.
\]
This proves the formula.
\end{proof}

For a boundary stratum $D_I^\circ$, let $\partial_I X^{\log}:=\pi^{-1}(D_I^\circ)$. By \Cref{prop:boundary-fiber-restriction}, if $x\in D_I^\circ$ then the restriction of $\pi^*\eta$ to the boundary fibre
$
\pi^{-1}(x)\simeq (\mathbb S^1)^{|I|}
$
is the closed $1$-form
$
i_x^*(\pi^*\eta)=i\sum_{i\in I}\alpha_i\,d\theta_i.
$
Accordingly, the canonical object attached to the boundary fibre is not, in general, a single-valued function on $(\mathbb S^1)^{|I|}$, but the corresponding monodromy character
\begin{equation}\label{eq:boundary-character}
\chi_{I,x}:\pi_1\bigl(\pi^{-1}(x)\bigr)\cong \Z^{|I|}\longrightarrow \C^*,
\end{equation}
defined by
\begin{equation}\label{eq:monodromy-character}
    \chi_{I,x}(m_i)_{i\in I}
=
\exp\!\left(2\pi i\sum_{i\in I}\alpha_i m_i\right).
\end{equation}
Equivalently, on the universal cover $\R^{|I|}\to (\mathbb S^1)^{|I|}$ with coordinates $t_i$, one has the $\C^*$-valued function
\[
\widetilde\chi_{I,x}(t_i)_{i\in I}
=
\exp\!\left(2\pi i\sum_{i\in I}\alpha_i t_i\right),
\]
whose deck-transformation law is governed by \eqref{eq:boundary-character}. Its image is the subgroup of $\C^*$ generated by the residue exponentials $\exp(2\pi i\alpha_i)$, $i\in I$.
Indeed, if $e_i\in \pi_1((\mathbb S^1)^{|I|})\cong \Z^{|I|}$ denotes the $i$-th coordinate loop, then
\[
\int_{e_i} i_x^*(\pi^*\eta)=2\pi i\,\alpha_i,
\]
and exponentiation yields
$
\chi_{I,x}(e_i)=\exp(2\pi i\alpha_i).
$
This is the classical residue--monodromy rule for logarithmic regular-singular rank-one connections, now read directly on the boundary torus of $X^{\log}$; see \cite[\S I.3]{Deligne70}.





\subsection{The induced real foliation and the linear dynamics on the boundary tori}

The boundary characters constructed above constitute the first layer of the logarithmic boundary structure attached to $\eta$. We now pass to the second layer, namely the real foliation canonically induced by $\pi^*\eta$ on the interior of $X^{\log}$ and by its restriction to the boundary tori.

\begin{proposition}\label[proposition]{prop:real-codim2}
Set
$
\eta_1:=\Re(\pi^*\eta), \eta_2:=\Im(\pi^*\eta)
$
as real $1$-forms on $\pi^{-1}(U)$. Then $\eta_1$ and $\eta_2$ are closed, and the distribution
\begin{equation}\label{eq:real-codim2}
\mathcal D:=\ker(\eta_1)\cap\ker(\eta_2)\subset T_{\pi^{-1}(U)}
\end{equation}
is integrable. On any simply connected open subset $W\subset X^{\log}$ on which $\pi^*\eta$ admits a primitive $\Phi$, the leaves of $\mathcal D$ are      the connected components of the level sets of $\Phi$.
\end{proposition}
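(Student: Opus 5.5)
The plan is to work on the interior $\pi^{-1}(U)$, which $\pi$ identifies homeomorphically (indeed biholomorphically, by the local model \eqref{eq:KN-local-model}) with $U$. There $\pi^*\eta$ is just $\eta$, a holomorphic closed $1$-form. Closedness of $\eta_1,\eta_2$ is immediate: $d$ commutes with taking real and imaginary parts, since $d$ is a real operator, so $d\eta_1=\Re(d\pi^*\eta)=0$ and likewise $d\eta_2=0$.

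For integrability I would use the Frobenius criterion in its dual form. A distribution defined as the common kernel of $1$-forms is involutive as soon as the ideal they generate is differentially closed, and here $d\eta_1=d\eta_2=0$ lies trivially in that ideal. One must also check that $\mathcal D$ has constant rank, so that it is a genuine subbundle.

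Away from the zero set of $\eta$, the forms $\eta_1,\eta_2$ are $\R$-linearly independent at each point. Indeed, since $\eta$ is of type $(1,0)$, we have $\eta_2=\Im\eta=-\eta_1\circ J$, where $J$ is the complex structure. A real relation $a\eta_1+b\eta_2=0$ with $(a,b)\neq(0,0)$ would then force $\eta$ to vanish at that point. So $\mathcal D$ has real codimension two where $\eta\neq 0$, and it coincides there with the real distribution underlying the complex hyperplane field $\ker\eta$.

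At zeros of $\eta$ the rank jumps. I would therefore state the integrability on the open set $\{\eta\neq0\}$ (the zeros form an analytic subset), or interpret $\mathcal D$ there as a singular foliation. This rank issue is the main subtlety, and I would handle it by this restriction or remark.

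Finally, on a simply connected open set $W\subset\pi^{-1}(U)$, closedness gives a primitive $\Phi$ with $d\Phi=\pi^*\eta$. Writing $\Phi=\Phi_1+i\Phi_2$, we get $d\Phi_1=\eta_1$ and $d\Phi_2=\eta_2$, so $\mathcal D|_W=\ker d\Phi_1\cap\ker d\Phi_2=\ker d\Phi$.

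Where $d\Phi\neq0$, the map $\Phi$ is a submersion to $\C\cong\R^2$. Its level sets are therefore embedded submanifolds of real codimension two, tangent to $\mathcal D$ and of the same dimension. Each connected component of a level set is then an integral manifold of $\mathcal D$.

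Conversely, $\Phi$ is constant along any curve tangent to $\mathcal D$, since its derivative is $d\Phi(v)=0$. Hence every connected integral manifold lies in a single level set, and maximality identifies the leaves (in $W$) with the connected components of the fibres of $\Phi$.
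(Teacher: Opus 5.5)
Your proof is correct and follows the same route as the paper's: closedness of $\eta_1,\eta_2$ from $d(\pi^*\eta)=\pi^*(d\eta)=0$, Frobenius for the common kernel of closed forms, and the identification $\mathcal D|_W=\ker d(\Re\Phi)\cap\ker d(\Im\Phi)$ via a primitive $\Phi$. Your extra steps are points the paper passes over silently: the rank drop of $\mathcal D$ at zeros of $\eta$, where you restrict to $\{\eta\neq 0\}$; the $J$-argument showing real codimension two there; and the converse inclusion that every connected integral manifold lies in a single level set.
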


\begin{proof}
Since $d\eta=0$ on $U$, one has
$
d(\pi^*\eta)=\pi^*(d\eta)=0
$
on $\pi^{-1}(U)$. Hence both $\eta_1$ and $\eta_2$ are closed, and Frobenius integrability follows immediately. If
$
\pi^*\eta=d\Phi
$
on $W$, then
\[
\eta_1=d(\Re\Phi)
\qquad\text{and}\qquad
\eta_2=d(\Im\Phi).
\]
Therefore $\mathcal D$ is tangent to the common level sets of $\Re\Phi$ and $\Im\Phi$, equivalently to the level sets of $\Phi$.
\end{proof}

Work in an snc chart as above, and write
\begin{equation}\label{eq:eta-local-again}
\eta=\sum_{i=1}^k \alpha_i \frac{dz_i}{z_i}+\beta,
\qquad
\alpha_i\in\mathbb C,\;
\beta\ \text{holomorphic and closed}.
\end{equation}
On the region where $r_i>0$ for all $i$, set $x_i:=\log r_i$, choose lifts $\theta_i\in\mathbb R$, and define
$
w_i:=x_i+i\theta_i .
$
After shrinking to a simply connected open set if necessary, write $\beta=dh$ with $h$ holomorphic.

\begin{proposition} \label[proposition]{prop:affine-leaf}
On the universal cover of the real coordinates $(x_1,\theta_1,\dots,x_k,\theta_k)$, the distribution~\eqref{eq:real-codim2} is tangent to the level sets of the holomorphic function
\begin{equation}\label{eq:affine-leaf-function}
\widetilde F(w_1,\dots,w_k):=\sum_{i=1}^k \alpha_i w_i+h .
\end{equation}
Equivalently, the leaves are the connected components of the sets
$
\Re(\widetilde F)=\mathrm{constant},
$ and $
\Im(\widetilde F)=\mathrm{constant}.
$
If $\beta=0$, this reduces to the complex affine function
$
F(w_1,\dots,w_k)=\sum_{i=1}^k \alpha_i w_i .
$
\end{proposition}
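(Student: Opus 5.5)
The plan is to show that $\widetilde F$ is a local primitive of $\pi^*\eta$ in these coordinates, and then apply \Cref{prop:real-codim2} with $\Phi=\widetilde F$.

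The first step is to pass to logarithmic coordinates. On the region where all $r_i>0$, the map $(x_i,\theta_i)\mapsto z_i=e^{x_i+i\theta_i}=e^{w_i}$ is a covering of the punctured polydisc factor. Hence the universal cover of the real coordinates $(x_1,\theta_1,\dots,x_k,\theta_k)$ is a simply connected domain in $\C^k$ with holomorphic coordinates $w_i$. The remaining coordinates $z_{k+1},\dots,z_n$ are carried along unchanged.

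The second step is to compute the differential. Using \eqref{eq:dz-over-z-kn}, we have
\[
\pi^*\Bigl(\frac{dz_i}{z_i}\Bigr)=\frac{dr_i}{r_i}+i\,d\theta_i=dx_i+i\,d\theta_i=dw_i .
\]
By \eqref{eq:eta-pullback-kn}, together with $\beta=dh$, this gives
\[
\pi^*\eta=\sum_i\alpha_i\,dw_i+d(\pi^*h)=d\widetilde F .
\]
Here $h$ is pulled back to the cover, which is legitimate because we shrank to a simply connected set on which $\beta$ is exact. So $\widetilde F$ is a global primitive of $\pi^*\eta$ on the covering domain, and it is holomorphic in $(w,z_{k+1},\dots,z_n)$.

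The third step reads off the leaves. From $\eta_1=d\Re\widetilde F$ and $\eta_2=d\Im\widetilde F$, the distribution $\mathcal D=\ker\eta_1\cap\ker\eta_2$ is precisely $\ker d\widetilde F$ viewed as a real distribution. By \Cref{prop:real-codim2}, the leaves are therefore the connected components of the common level sets $\{\Re\widetilde F=c_1,\ \Im\widetilde F=c_2\}$. When $\beta=0$ we may take $h=0$ (it is a constant, which does not change the level sets), and the formula reduces to the affine function $F=\sum\alpha_iw_i$.

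The main point requiring care is where the pullback happens. The relation is proved on $\pi^{-1}(U_x\setminus D)$, and only there is $w_i$ defined. The identification of the universal cover with a domain on which both $\log z_i$ and $h$ are single-valued needs the shrinking hypothesis on $h$. I would also note the following: if some $\alpha_i\ne0$, or if $dh\neq0$, then $d\widetilde F\ne0$. In that case the level sets are smooth of real codimension two, so $\mathcal D$ has constant rank there.
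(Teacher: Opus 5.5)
Your proposal is correct and is essentially the paper's proof: both verify $d\widetilde F=\sum_i\alpha_i\,dw_i+dh=\pi^*\eta$ from $dw_i=dr_i/r_i+i\,d\theta_i$ and $dh=\beta$, and then identify $\mathcal D$ with the common kernel of $d\Re\widetilde F$ and $d\Im\widetilde F$. You cite \Cref{prop:real-codim2} for the last step, whereas the paper repeats that one-line argument inline. One small caveat on your closing aside: $\alpha_i\neq 0$ only gives $d\widetilde F\not\equiv 0$, since in the $w$-coordinates $d\widetilde F=\sum_{i\le k}(\alpha_i+z_i\,\partial_{z_i}h)\,dw_i+\sum_{j>k}\partial_{z_j}h\,dz_j$ can vanish at zeros of $\eta$, so smoothness of the level sets in real codimension two and constancy of rank hold only on the regular locus, which is also all the paper claims.
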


\begin{proof}
By \eqref{eq:dz-over-z-kn}, one has
$
dw_i=d(\log r_i+i\theta_i)=dr_i/r_i+i\,d\theta_i .
$
Hence
\[
d\widetilde F
=
\sum_{i=1}^k \alpha_i\,dw_i+dh
=
\sum_{i=1}^k \alpha_i\Bigl(\frac{dr_i}{r_i}+i\,d\theta_i\Bigr)+\beta
=
\pi^*\eta
\]
by \eqref{eq:eta-pullback-kn}. Therefore
$
d(\Re\widetilde F)=\Re(\pi^*\eta),
$ and $
d(\Im\widetilde F)=\Im(\pi^*\eta).
$
Since $\mathcal D=\ker(\Re(\pi^*\eta))\cap\ker(\Im(\pi^*\eta))$, it follows that $\mathcal D$ is tangent exactly to the common level sets of $\Re(\widetilde F)$ and $\Im(\widetilde F)$, equivalently to the level sets of $\widetilde F$.
\end{proof}

\begin{proposition}\label[proposition]{prop:explicit-real-leaf-equations}
Keep the local notation above and write $
\alpha_i=a_i+ib_i,
$ with $
a_i,b_i\in\R.$
Then, on the universal cover of the angular variables, the leaves of $\mathcal D$ are the connected components of the common level sets of the two real-valued functions
\begin{equation}\label{eq:Phi1-Phi2-local}
\Phi_1=
\sum_{i=1}^k a_i \log r_i
-
\sum_{i=1}^k b_i \theta_i
+
\Re(h),
\qquad
\Phi_2=
\sum_{i=1}^k b_i \log r_i
+
\sum_{i=1}^k a_i \theta_i
+
\Im(h).
\end{equation}
Equivalently,
$
\mathcal D=\ker(d\Phi_1)\cap \ker(d\Phi_2).$
Thus the residues determine the complete logarithmic normal model of the leaves, while the holomorphic remainder governs the regular part of the local geometry.
\end{proposition}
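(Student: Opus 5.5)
The plan is to derive the statement directly from \Cref{prop:affine-leaf} by taking real and imaginary parts of the holomorphic primitive $\widetilde F$. On the simply connected region of the universal cover of the angular variables, with $w_i=\log r_i+i\theta_i$ and $\beta=dh$, \Cref{prop:affine-leaf} already gives $d\widetilde F=\pi^*\eta$ and identifies the leaves of $\mathcal D$ with the connected components of the level sets of $\widetilde F$. It therefore suffices to show that $\Phi_1=\Re\widetilde F$ and $\Phi_2=\Im\widetilde F$.

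The computation is short. Write $\alpha_i=a_i+ib_i$. Then
\[
\alpha_i w_i=(a_i+ib_i)(\log r_i+i\theta_i)
=\bigl(a_i\log r_i-b_i\theta_i\bigr)+i\bigl(b_i\log r_i+a_i\theta_i\bigr).
\]
Summing over $i$ and adding $h=\Re(h)+i\,\Im(h)$ gives $\Re\widetilde F=\Phi_1$ and $\Im\widetilde F=\Phi_2$ exactly as in \eqref{eq:Phi1-Phi2-local}. Hence
\[
d\Phi_1=\Re(\pi^*\eta)=\eta_1,
\qquad
d\Phi_2=\Im(\pi^*\eta)=\eta_2 ,
\]
and the equality $\mathcal D=\ker(d\Phi_1)\cap\ker(d\Phi_2)$ is just the definition \eqref{eq:real-codim2}.

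For the leaf description, I would note two facts. First, common level sets of $(\Phi_1,\Phi_2)$ coincide with level sets of $\widetilde F$. Second, $\widetilde F$ is a submersion wherever $\pi^*\eta\neq0$, since then $d\Phi_1$ and $d\Phi_2$ are $\R$-linearly independent: they are the real and imaginary parts of a nonzero complex-linear form. The connected components of these level sets are therefore integral manifolds of $\mathcal D$ of the correct dimension, as in \Cref{prop:real-codim2}.

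The only point needing care is well-definedness. The functions $\theta_i$ and $h$ are single-valued only after passing to the universal cover of the angular variables and shrinking to a simply connected set on which $\beta=dh$. This is where the hypothesis ``on the universal cover'' is used. The closing interpretive sentence of the statement, that the residues give the logarithmic normal model and $h$ the regular part, is a reading of the formula rather than something to prove. I expect no substantive obstacle beyond keeping track of these branch choices.
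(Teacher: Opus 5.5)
Your proposal is correct and is essentially the paper's own argument: expand $\alpha_i w_i$ into real and imaginary parts to get $\Phi_1=\Re\widetilde F$ and $\Phi_2=\Im\widetilde F$, hence $d\Phi_1=\eta_1$ and $d\Phi_2=\eta_2$, and then read off the leaf description and $\mathcal D=\ker(d\Phi_1)\cap\ker(d\Phi_2)$ from \Cref{prop:affine-leaf}. Your extra remark, that $d\Phi_1$ and $d\Phi_2$ are linearly independent where $\pi^*\eta\neq 0$, is the same observation the paper makes immediately after the proposition.
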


\begin{proof}
Writing $w_i=\log r_i+i\theta_i$ and $\alpha_i=a_i+ib_i$, one computes
\[
\alpha_i w_i
=
(a_i+ib_i)(\log r_i+i\theta_i)
=
(a_i\log r_i-b_i\theta_i)
+
i(b_i\log r_i+a_i\theta_i).
\]
Taking real and imaginary parts in \eqref{eq:affine-leaf-function} therefore yields exactly the formulas in \eqref{eq:Phi1-Phi2-local}. The identity
$
\mathcal D=\ker(d\Phi_1)\cap \ker(d\Phi_2)
$
is just \Cref{prop:affine-leaf} rewritten in real coordinates. The final statement follows from the fact that the coefficients of the logarithmic variables $\log r_i$ and $\theta_i$ are      the real and imaginary parts of the residues.
\end{proof}

At every regular point, a leaf of $\mathcal D$ is a smooth real submanifold of codimension $2$, and is therefore locally diffeomorphic to $\R^{2n-2}$. Indeed, on the regular locus the forms $d\Phi_1$ and $d\Phi_2$ are linearly independent, so the claim follows immediately from the implicit function theorem applied to \eqref{eq:Phi1-Phi2-local}.
\Cref{prop:explicit-real-leaf-equations} is useful not only for its explicit formulas, but also for the geometric distinction it makes clear. The residues determine the logarithmic normal and angular behaviour of the leaves near the boundary---namely the meridional winding, the radial dilation, and the induced linear foliation on the boundary tori---but they do not determine the global diffeomorphism type of a full leaf of $\mathcal D$. The remaining local regular contribution is encoded by the holomorphic term $h$, while the global geometry of the leaves also depends on the periods and on the topology of $U$.

Let $D_I^\circ$ be a stratum along which exactly $\ell=|I|$ components of $D$ meet. On the corresponding boundary fibre $(\mathbb S^1)^\ell$, it follows from \Cref{prop:boundary-fiber-restriction} that
\[
\pi^*\eta\big|_{(\mathbb S^1)^\ell}
=
i\sum_{i\in I}\alpha_i\,d\theta_i.
\]
Writing $\alpha_i=a_i+ib_i$ with $a_i,b_i\in\mathbb R$, we obtain
\[
\eta_1=\Re(\pi^*\eta)\big|_{(\mathbb S^1)^\ell}
=
-\sum_{i\in I} b_i\,d\theta_i,
\qquad
\eta_2=\Im(\pi^*\eta)\big|_{(\mathbb S^1)^\ell}
=
\sum_{i\in I} a_i\,d\theta_i.
\]
Thus the restriction of the real distribution $\mathcal D$ to a boundary torus is the common kernel of two constant covectors on $\mathbb R^\ell$, namely the covectors with coefficients $(-b_i)_{i\in I}$ and $(a_i)_{i\in I}$. In particular, the induced foliation on the boundary torus is linear in the angular coordinates, and its geometry is governed entirely by the residue vector $(\alpha_i)_{i\in I}$.

The case $\ell=2$ already contains the essential dichotomy and provides a transparent model for the general discussion. Let $T^2$ be a boundary fibre with angular coordinates $(\theta_1,\theta_2)$ and write the corresponding residues as $\alpha_i=a_i+ib_i$. Consider the determinant
\[
\Delta:=
\det
\begin{pmatrix}
a_1 & a_2\\
b_1 & b_2
\end{pmatrix}
=
a_1b_2-a_2b_1.
\]

\begin{proposition}\label[proposition]{prop:T2-classification}
Let $T^2$ be a boundary fibre with angular coordinates $(\theta_1,\theta_2)$ and residues $\alpha_i=a_i+ib_i$.
\begin{enumerate}[label=\textup{(\alph*)},leftmargin=2.4em]
\item If $\Delta\neq 0$, then the restrictions of $\eta_1$ and $\eta_2$ to $T^2$ are linearly independent. Consequently,
\[
\mathcal D\cap T(T^2)=\{0\},
\]
and the intersection of any leaf of $\mathcal D$ with $T^2$ is discrete.

\item If $\Delta=0$ and $(a_1,a_2)\neq (0,0)$, then the induced foliation on $T^2$ is the linear foliation defined by
\[
a_1\,d\theta_1+a_2\,d\theta_2=0.
\]
Its leaves are closed circles if $-a_1/a_2\in\mathbb Q$, and dense in $T^2$ if $-a_1/a_2\notin\mathbb Q$.

\item If $\Delta=0$, $(a_1,a_2)=(0,0)$, and $(b_1,b_2)\neq (0,0)$, then the induced foliation on $T^2$ is defined by
\[
b_1\,d\theta_1+b_2\,d\theta_2=0.
\]
Again, the leaves are closed precisely in the rational-slope case and dense in the irrational-slope case.
\end{enumerate}
\end{proposition}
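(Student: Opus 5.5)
The argument is linear algebra on the boundary torus. By \Cref{prop:boundary-fiber-restriction} and the computation following \Cref{prop:explicit-real-leaf-equations}, the restrictions of $\eta_1,\eta_2$ to $T^2$ are the constant covectors
\[
\eta_1|_{T^2}=-b_1\,d\theta_1-b_2\,d\theta_2,
\qquad
\eta_2|_{T^2}=a_1\,d\theta_1+a_2\,d\theta_2 .
\]
So the induced distribution on $T^2$ is the common kernel $\ker(\eta_1|_{T^2})\cap\ker(\eta_2|_{T^2})$ inside $\R^2$. This kernel is translation invariant in the angular coordinates and can be computed pointwise from the $2\times 2$ matrix with rows $(a_1,a_2)$ and $(b_1,b_2)$, whose determinant is $\Delta$.

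\emph{Case (a).} If $\Delta\neq 0$, the two covectors $(-b_1,-b_2)$ and $(a_1,a_2)$ are linearly independent, since their determinant is $\pm\Delta$. Their common kernel in $\R^2$ is therefore $\{0\}$, which gives $\mathcal D\cap T(T^2)=\{0\}$. For discreteness, I would use the local primitive $\widetilde F=\sum\alpha_iw_i+h$ of \Cref{prop:affine-leaf}; at the boundary one interprets $h$ only through its value at the base point. Along the fibre this restricts to $\theta\mapsto i(\alpha_1\theta_1+\alpha_2\theta_2)+\mathrm{const}$. Viewed as a map $\R^2\to\C\cong\R^2$, this is an affine map with linear part of determinant $\pm\Delta\neq 0$, hence injective on the universal cover. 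A leaf is contained in a level set of $\widetilde F$ on the lift, so its trace on each fundamental domain is at most one point, and its projection to $T^2$ is a discrete (countable) set.

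\emph{Cases (b) and (c).} If $\Delta=0$, the two covectors are proportional, so the kernel is the common one-dimensional kernel of whichever covector is nonzero. When $(a_1,a_2)\neq 0$, every $(b_1,b_2)$ is a multiple of $(a_1,a_2)$, and the distribution is $\ker(a_1d\theta_1+a_2d\theta_2)$. When $(a_1,a_2)=0$ and $(b_1,b_2)\neq 0$, the distribution is $\ker(b_1d\theta_1+b_2d\theta_2)$. (If all four coefficients vanish the whole tangent space is the kernel; this case is excluded from the statement.) In both cases we get a linear foliation of $T^2=\R^2/2\pi\Z^2$ by lines of direction $(a_2,-a_1)$, respectively $(b_2,-b_1)$. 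The classical Kronecker dichotomy then applies: the image of such a line is a closed circle iff the direction is proportional to an integer vector, i.e.\ iff the slope is rational or infinite (handle $a_2=0$ or $b_2=0$ separately as the rational case). Otherwise it is dense, by the standard irrational rotation argument already invoked in \Cref{cor:elliptic-dense-angular}, applied to the first return map on the circle $\theta_1=0$.

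The only delicate point is the discreteness claim in (a). The leaves of $\mathcal D$ live a priori only on $\pi^{-1}(U)$, so I would interpret "intersection with $T^2$" via closures in the local model \eqref{eq:KN-local-model}. Transversality $\mathcal D\cap T(T^2)=0$ then gives local discreteness by the implicit function theorem applied to $(\Phi_1,\Phi_2)$ from \Cref{prop:explicit-real-leaf-equations} restricted to the fibre. The rational/irrational statements are routine.
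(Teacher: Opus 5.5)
Your kernel computation and your treatment of (b) and (c) follow the paper's argument: constant covectors $(-b_1,-b_2)$ and $(a_1,a_2)$ on $T^2$ with determinant $\Delta$, followed by the Kronecker dichotomy. Treating $a_2=0$ (resp.\ $b_2=0$) separately is a small improvement, since the paper's slope $-a_1/a_2$ is undefined there.

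The gap is in the extra justification you give for the last clause of (a). The paper does not attempt this, and your version does not work. Under your closure interpretation the clause is actually false. Take $\eta=dz_1/z_1+i\,dz_2/z_2$ (e.g.\ with $\omega=z_1\,dz_2$, for which $d\omega=\eta\wedge\omega$), so that $\alpha=(1,i)$ and $\Delta=1$. On the angular universal cover the leaves of $\mathcal D$ are $\log r_1-\theta_2=c_1$, $\log r_2+\theta_1=c_2$. Along $\theta_1=\phi_1+2\pi k$, $\theta_2=\phi_2-2\pi k$ with $k\to\infty$, one gets $r_1,r_2\to 0$ while the angles stay at $(\phi_1,\phi_2)$. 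Hence the closure of \emph{every} leaf contains the whole boundary torus.

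The implicit-function step cannot rescue this. The functions $\Phi_1,\Phi_2$ contain the terms $a_i\log r_i$ and $b_i\log r_i$, so they are not defined, let alone $C^1$, at $r_i=0$. Consequently $\mathcal D\cap T(T^2)=\{0\}$ is not a transversality statement for a foliation extending across $T^2$. In fact, in the local-model coordinates $(r_i,\theta_i)$ the leaf above is spanned by $\partial_{\theta_1}-r_2\partial_{r_2}$ and $\partial_{\theta_2}+r_1\partial_{r_1}$. Its tangent planes therefore converge to $T(T^2)$: the leaves become tangent to the boundary torus, not transverse to it.

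Your first argument has a related defect. A leaf of $\mathcal D$ lies in $\pi^{-1}(U)$, not in the fibre. So it is not contained in a level set of $i(\alpha_1\theta_1+\alpha_2\theta_2)+\mathrm{const}$; that expression silently drops the divergent terms $\alpha_i\log r_i$. Moreover, ``at most one point per fundamental domain'' only gives a countable subset of $T^2$, not a discrete one.

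The clause is meaningful and correct in the intrinsic reading that (b) and (c) already use. There the relevant leaves are those of the distribution $\ker(\eta_1|_{T^2})\cap\ker(\eta_2|_{T^2})$ on $T^2$. This distribution is zero when $\Delta\neq 0$, so its integral manifolds are points, and discreteness follows immediately from the kernel computation. That is all the paper's one-line argument delivers. You should state the clause in this form and drop the closure and implicit-function step.
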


\begin{proof}
This is the standard Kronecker-foliation dichotomy on the two-torus; we record the short argument in the present notation.
On $T^2$, the forms $\eta_1$ and $\eta_2$ are constant coefficient real $1$-forms. Their coefficient vectors are linearly independent if and only if $\Delta\neq 0$. In that case their common kernel in $\mathbb R^2$ is trivial, so the restricted distribution $\mathcal D\cap T(T^2)$ vanishes identically; hence a leaf of $\mathcal D$ can meet the torus only in a discrete set.
Assume now that $\Delta=0$. Then the two covectors are proportional, so their common kernel is one-dimensional, provided they do not both vanish. If $(a_1,a_2)\neq(0,0)$, the foliation is given by the single linear equation
\[
a_1\,d\theta_1+a_2\,d\theta_2=0.
\]
If instead $(a_1,a_2)=(0,0)$ and $(b_1,b_2)\neq(0,0)$, it is given by
\[
b_1\,d\theta_1+b_2\,d\theta_2=0.
\]
In either case one obtains a linear foliation on the real two-torus. On the universal cover $\mathbb R^2$, the leaves are affine lines of slope $-a_1/a_2$, respectively $-b_1/b_2$. Such a line projects to a closed circle precisely when its direction is rational, and otherwise its image is dense in $T^2$. This gives the stated dichotomy.
\end{proof}

Let $T^\ell=(\mathbb S^1)^\ell$ be a boundary torus, and write $\mathbb R^\ell\to T^\ell$ for its universal covering.
As explained above, the restriction of $\mathcal D$ to $T^\ell$ is the kernel of the two constant real covectors determined by the real and imaginary parts of the residue vector. Accordingly, if
$a=(a_i)_{i\in I}$ and $b=(b_i)_{i\in I}$, we set
\begin{equation}\label{eq:boundary-linear-space}
V:=\{v\in \mathbb R^\ell \mid a\cdot v=0,\ b\cdot v=0\}\subset \mathbb R^\ell .
\end{equation}
The leaves on the universal cover are then translates of the linear subspace $V$.

\begin{proposition}\label[proposition]{prop:closure-subtorus}
Let $L\subset T^\ell$ be a leaf of the foliation induced by $\mathcal D$ on a boundary torus $T^\ell$.
Then the closure $\overline L$ is a translate of a subtorus of $T^\ell$.
More precisely, let $V_{\mathbb Q}\subset \mathbb Q^\ell$ be the smallest $\mathbb Q$-subspace such that
$V\subset V_{\mathbb Q}\otimes_{\mathbb Q}\mathbb R$. Then $\overline L$ is a translate of the subtorus whose tangent space is
$V_{\mathbb Q}\otimes_{\mathbb Q}\mathbb R$.
In particular, $L$ is closed if and only if $V$ is defined over $\mathbb Q$.
\end{proposition}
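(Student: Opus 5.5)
The plan is to reduce the claim to the classical description of closures of subgroups of a torus. Write $p:\mathbb R^\ell\to T^\ell=\mathbb R^\ell/\Z^\ell$ for the universal covering, with $2\pi$ absorbed into the angular coordinates. By the discussion preceding \eqref{eq:boundary-linear-space}, a leaf through $p(x_0)$ is $L=p(x_0+V)$, where $V$ is the common kernel of the two constant covectors $a$ and $b$. Translating by $p(x_0)$, which is a homeomorphism of $T^\ell$ preserving the linear foliation, we may assume $x_0=0$. Then $L=p(V)$ is the image of a connected subgroup under the continuous homomorphism $p$. Its closure $\overline{L}$ is therefore a closed connected subgroup of the compact torus $T^\ell$. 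Any closed connected subgroup of a torus is a subtorus, by the closed-subgroup theorem together with the structure of compact connected abelian Lie groups. This gives the first assertion: $\overline L$ is a translate of a subtorus.

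To identify this subtorus, I would use the standard correspondence between subtori of $T^\ell$ and rational subspaces. A subtorus $S$ is exactly $p(W)$, where $W=\mathrm{Lie}(S)\subset\mathbb R^\ell$ is spanned by $W\cap\Z^\ell$, i.e.\ $W=W_{\mathbb Q}\otimes_{\mathbb Q}\mathbb R$ for a rational subspace $W_{\mathbb Q}$. One checks that $p(W)$ is closed, since it is the image of the compact set $W/(W\cap\Z^\ell)$. Set $W:=V_{\mathbb Q}\otimes_{\mathbb Q}\mathbb R$. Since $V\subset W$, we have $L\subset p(W)$, and so $\overline L\subset p(W)$. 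Conversely, $\overline L=p(W')$ for some rational subspace $W'$ with $V\subset \mathrm{Lie}(\overline L)=W'$. Here $V\subset W'$ holds because $p(V)\subset \overline L$ and $p$ is a local diffeomorphism, so tangent directions are preserved. By minimality of $V_{\mathbb Q}$, this forces $W\subset W'$, and hence $\overline L=p(W)$. The smallest rational subspace is well defined because intersections of subspaces of the form $U_{\mathbb Q}\otimes\mathbb R$ are again of that form: $(U_{\mathbb Q}\cap U'_{\mathbb Q})\otimes\mathbb R=(U_{\mathbb Q}\otimes\mathbb R)\cap(U'_{\mathbb Q}\otimes\mathbb R)$ by flatness, or by Galois-type descent of linear equations from $\mathbb R$ to $\mathbb Q$. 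This is the one point I would state with some care.

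For the last assertion, suppose first that $V$ is defined over $\mathbb Q$. Then $V=W$, so $L=p(V)=\overline L$ is closed. Conversely, if $L$ is closed, then $L=\overline L=p(W)$ has dimension $\dim W$. But $L=p(V)$ is an injectively immersed image of $V/(V\cap\Z^\ell)$, a Lie group of dimension $\dim V$. Comparing the dimensions of the connected Lie subgroups $L\subseteq p(W)$, using Baire category to compare an immersed subgroup with the closed one, gives $\dim V=\dim W$, and hence $V=W$ is rational.

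The main obstacle is this dimension argument. I would handle it by noting that a closed subgroup $p(V)$ is an embedded Lie subgroup whose Lie algebra is exactly $V$, because $\exp$ for $T^\ell$ is $p$. Its Lie algebra is then also $W$, since it equals $p(W)$, which forces $V=W$.
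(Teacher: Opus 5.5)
Your proposal is correct and follows essentially the same route as the paper: the leaf is a translate of $p(V)$, its closure is a closed connected subgroup and hence a subtorus, its Lie algebra is the rational hull $V_{\mathbb Q}\otimes_{\mathbb Q}\mathbb R$, and closedness is equivalent to rationality of $V$, with you supplying the details the paper only cites as standard. One caution: in your last paragraph, $\exp=p$ by itself only gives $V\subseteq\mathrm{Lie}(p(V))$, so the reverse inclusion must come from the Sard/Baire comparison you sketch just before (a surjective smooth homomorphism $V\to p(W)$ forces $\dim V\ge\dim W$), not from $\exp=p$ alone.
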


\begin{proof}
This is a standard fact on linear subgroups of compact tori; we record the short proof in the present notation.
On the universal cover $\mathbb R^\ell$, the leaves are      the affine subspaces parallel to $V$.
Their images in $T^\ell=\mathbb R^\ell/\mathbb Z^\ell$ are therefore translates of the image of $V$, that is, of the subgroup
\[
\exp(2\pi iV)\subset T^\ell.
\]
The closure of a connected subgroup of a compact torus is again a compact connected subgroup, hence a subtorus. Its Lie algebra is the smallest real subspace defined over $\mathbb Q$ that contains $V$, namely $V_{\mathbb Q}\otimes_{\mathbb Q}\mathbb R$.
The final assertion is the standard closedness criterion for linear subgroups of a torus: the image of $V$ in $T^\ell$ is closed if and only if
$V\cap \mathbb Z^\ell$ is a full lattice in $V$, equivalently if and only if $V$ is defined over $\mathbb Q$.
\end{proof}

\begin{theorem}\label[theorem]{thm:topology-angular-leaf}
Let $T^\ell=(\mathbb S^1)^\ell$ be a boundary torus over a stratum $D_I^\circ$, and let
\[
V=\{v\in \R^\ell\mid a\cdot v=0,\ b\cdot v=0\}
\]
be the linear subspace defined in \eqref{eq:boundary-linear-space}. Write
\[
r_I:=\rk_{\R}\langle a,b\rangle\in\{0,1,2\},
\qquad
\Lambda_I:=(2\pi)\Z^\ell,
\qquad
s_I:=\rk_{\Z}(V\cap \Lambda_I).
\]
Then every leaf $L\subset T^\ell$ of the induced angular foliation is diffeomorphic to
\begin{equation}\label{eq:leaf-topology-boundary}
L\cong V/(V\cap \Lambda_I)\cong \R^{\ell-r_I-s_I}\times (\mathbb S^1)^{s_I}.
\end{equation}
In particular:
\begin{enumerate}[label=\textup{(\roman*)},leftmargin=2.8em]
\item the angular dimension of the leaf is $\ell-r_I$;
\item $L$ is compact if and only if $V$ is rational with respect to the lattice $\Lambda_I$;
\item in general, the closure $\overline L$ is the translated subtorus described in \Cref{prop:closure-subtorus}, and $L$ is dense in $\overline L$ whenever $V$ is not rational.
\end{enumerate}
\end{theorem}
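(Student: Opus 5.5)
The plan is to work entirely on the universal cover $\R^\ell\to T^\ell=\R^\ell/\Lambda_I$. Here the angular coordinates $\theta_i$ are taken modulo $2\pi$, which is why the deck lattice is $\Lambda_I=(2\pi)\Z^\ell$. By \Cref{prop:boundary-fiber-restriction} and the computation preceding \Cref{prop:T2-classification}, the restriction of $\mathcal D$ to $T^\ell$ is the common kernel of the constant covectors $-b$ and $a$. Hence it is the translation-invariant distribution with fibre $V$, and $\dim V=\ell-r_I$ by rank--nullity. On $\R^\ell$ the lifted foliation is therefore the foliation by affine translates $x_0+V$, as already noted before \Cref{prop:closure-subtorus}. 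A leaf $L$ through $p=[x_0]$ is then the image of $x_0+V$ under the covering map. More precisely, $L$ is the orbit of $p$ under the action of the vector group $V$ by translations, $v\cdot[x]=[x+v]$. This action is smooth and its orbits are exactly the maximal connected integral manifolds of $\mathcal D|_{T^\ell}$.

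The key step is to identify $L$, with its leaf topology, with the orbit $V/\mathrm{Stab}(p)$. The stabiliser of $p$ is $\{v\in V: x_0+v\equiv x_0 \bmod \Lambda_I\}=V\cap\Lambda_I$, independently of $p$. The orbit map then induces an injective immersion $V/(V\cap\Lambda_I)\to T^\ell$ with image $L$. This map is a diffeomorphism onto $L$ with its leaf (manifold) topology, by the standard fact that an orbit of a free action of a Lie group quotient is an injectively immersed submanifold which is a leaf. I would state this via the local product structure, since plaques are images of small balls in $V$ and are embedded. This step, making precise that the leaf topology and not the subspace topology is meant, is where I expect the only real care to be needed. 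The subspace topology differs exactly in the non-closed case, where $L$ is dense in $\overline L$.

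Next, $V\cap\Lambda_I$ is a discrete subgroup of the real vector space $V$. Hence it is a lattice in its $\R$-span $W\subset V$, with $\dim W=s_I$. Choosing a complement $V=W\oplus W'$ gives
\[
V/(V\cap\Lambda_I)\cong W'\times W/(V\cap\Lambda_I)\cong \R^{\ell-r_I-s_I}\times(\mathbb S^1)^{s_I},
\]
which is \eqref{eq:leaf-topology-boundary}. Item (i) is $\dim V=\ell-r_I$. For (ii), $L$ is compact if and only if $\ell-r_I-s_I=0$, that is, if and only if $V\cap\Lambda_I$ is a full lattice in $V$. By the closedness criterion in the proof of \Cref{prop:closure-subtorus}, this is exactly the condition that $V$ be rational with respect to $\Lambda_I$. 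Finally, (iii) follows directly from \Cref{prop:closure-subtorus}: $\overline L$ is the translate of the subtorus with tangent space $V_\Q\otimes\R$, and $L$ is a proper dense subset of it precisely when $V\neq V_\Q\otimes\R$, i.e.\ when $V$ is not rational.
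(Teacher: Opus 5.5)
Your proposal is correct and follows essentially the same route as the paper. You lift to the universal cover $\R^\ell\to T^\ell$, identify each leaf with $(x_0+V)/(V\cap\Lambda_I)\cong V/(V\cap\Lambda_I)$, and use the structure of the discrete subgroup $V\cap\Lambda_I$ to obtain $\R^{\ell-r_I-s_I}\times(\mathbb S^1)^{s_I}$. You then read off (ii) from the full-lattice criterion and (iii) from \Cref{prop:closure-subtorus}, as the paper does.

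Your explicit remark that the diffeomorphism is taken with respect to the leaf topology is a useful precision that the paper leaves implicit. This matters because the leaf topology differs from the subspace topology exactly when $V$ is not rational.
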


\begin{proof}
On the universal cover $\R^\ell\to T^\ell$, the angular foliation is constant and its leaves are the affine subspaces parallel to $V$. Fix one such affine subspace $x_0+V$. Two points of $x_0+V$ have the same image in $T^\ell$ exactly when they differ by an element of the lattice $\Lambda_I=(2\pi)\Z^\ell$. Hence the projected leaf is
\[
L\cong (x_0+V)/(V\cap \Lambda_I)\cong V/(V\cap \Lambda_I).
\]
Since $\dim_{\R}V=\ell-r_I$, the quotient of the real vector space $V$ by the free abelian subgroup $V\cap \Lambda_I$ has the standard form
\[
V/(V\cap \Lambda_I)\cong \R^{\ell-r_I-s_I}\times (\mathbb S^1)^{s_I},
\]
where $s_I=\rk_{\Z}(V\cap \Lambda_I)$. This proves \eqref{eq:leaf-topology-boundary} and the formula for the angular dimension.
The compactness criterion is equivalent to the condition that $V\cap \Lambda_I$ be a full lattice in $V$, namely $s_I=\ell-r_I$, which is exactly the rationality of $V$ with respect to $\Lambda_I$. The final statement is the content of \Cref{prop:closure-subtorus}.
\end{proof}

The vectors \(a\) and \(b\) are, by construction, the real and imaginary parts of the residue vector \((\alpha_i)_{i\in I}\). It follows that the boundary dynamics is governed entirely by the arithmetic of these residues, more precisely by the rational relations they satisfy. In particular, the naive dichotomy between closed and dense leaves is only a first approximation: in general, a leaf need not be dense in the whole torus, but rather in the subtorus canonically determined by the rational span of the residue data.

 \section{The lifted developing map on $X^{\log}$ and the compactified target}
\label{sec:Flog}

We now pass from the local boundary constructions of \Cref{sec:KN} to a global logarithmic lift of the developing map. The first step is to construct the local logarithmic representatives and to prove that they descend to a canonical morphism
\[
F^{\log}:X^{\log}\to[\C^*/\Gamma].
\]
The second is to analyse the behaviour of this lift near the boundary, distinguishing radial asymptotics, meridial monodromy, and the rôle of Kummer base change.

\subsection{The local logarithmic formula}

\begin{definition}\label[definition]{def:logcover}
Let $\rho:\pi_1(U)\to \Gamma$ be the multiplicative holonomy character. Let
$
\widetilde U\longrightarrow U
$
be the covering corresponding to $\ker(\rho)$. Via the canonical identification
$
\pi^{-1}(U)\cong U,
$
we also regard it as a covering of the interior of $X^{\log}$. When convenient, we denote this interior logarithmic covering by
$
\widetilde U^{\log}\longrightarrow \pi^{-1}(U)\subset X^{\log}.
$
\end{definition}

For the construction below, one works over the interior $\pi^{-1}(U)\cong U$, while $X^{\log}$ supplies the angular boundary variables. The only rôle of the covering in \Cref{def:logcover} is to kill the multiplicative monodromy measured by $\rho$.
Fix an snc chart in which $D$ is given by $z_1\cdots z_\ell=0$, and write
\[
\eta=\sum_{i=1}^{\ell}\alpha_i\,\frac{dz_i}{z_i}+\beta,
\qquad
d\beta=0.
\]
Let $W\subset X^{\log}$ be a simply connected open subset contained in this chart. Choose a primitive $h:W\to\mathbb C$ of $\pi^*\beta$, so that $dh=\pi^*\beta$. On $W\cap \pi^{-1}(U)$ define
\begin{equation}\label{eq:zlog-local}
z_W^{\log}
:=
\exp(h)\cdot \prod_{i=1}^{\ell} r_i^{\alpha_i}\cdot
\exp\!\left(i\sum_{i=1}^{\ell}\alpha_i\theta_i\right),
\qquad
r_i^{\alpha_i}:=\exp(\alpha_i\log r_i).
\end{equation}

\begin{lemma}\label[lemma]{lem:zlog-local}
The function $z_W^{\log}$ satisfies
\begin{equation}\label{eq:zlog-differential}
\frac{dz_W^{\log}}{z_W^{\log}}=\pi^*\eta
\
\text{on }W\cap \pi^{-1}(U).
\end{equation}
\end{lemma}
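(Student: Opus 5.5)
The plan is to compute the logarithmic differential of $z_W^{\log}$ factor by factor and then match the result with the local expansion \eqref{eq:eta-pullback-kn} of $\pi^*\eta$.

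First, rewrite the defining product \eqref{eq:zlog-local} as a single exponential. On $W\cap\pi^{-1}(U)$ all radii satisfy $r_i>0$, so $\log r_i$ is a genuine real-analytic function. Since $W$ is simply connected, the angular coordinates $\theta_i$ can be taken as continuous real-valued lifts on $W\cap\pi^{-1}(U)$; any continuous choice works, as only $d\theta_i$ will enter. With these choices,
\[
z_W^{\log}=\exp\Bigl(h+\sum_{i=1}^{\ell}\alpha_i(\log r_i+i\theta_i)\Bigr)=\exp\Bigl(h+\sum_{i=1}^{\ell}\alpha_i w_i\Bigr),
\]
where $w_i=\log r_i+i\theta_i$ is exactly as in \Cref{prop:affine-leaf}. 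In particular $z_W^{\log}$ is smooth and nowhere vanishing.

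Next, for any smooth function $G$ one has $d(e^G)/e^G=dG$. Applying this with $G=h+\sum_i\alpha_i w_i$ gives
\[
\frac{dz_W^{\log}}{z_W^{\log}}=dh+\sum_{i=1}^{\ell}\alpha_i\Bigl(\frac{dr_i}{r_i}+i\,d\theta_i\Bigr).
\]
By construction $dh=\pi^*\beta$, and by \eqref{eq:dz-over-z-kn} each bracket equals $\pi^*(dz_i/z_i)$. Comparing with \eqref{eq:eta-pullback-kn} then yields $\pi^*\eta$, which proves \eqref{eq:zlog-differential}. This is the same computation as in the proof of \Cref{prop:affine-leaf}, read after exponentiation.

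The only delicate point is well-definedness, and it shapes how the proof should be written. One must make sure that the primitive $h$ of $\pi^*\beta$ exists on $W$ and that the angle lifts are defined on $W\cap\pi^{-1}(U)$. The existence of $h$ is a hypothesis of the construction. For the angles, I would note that in the local model \eqref{eq:KN-local-model} the functions $\theta_i$ are defined on all of $\pi^{-1}(U_x)$, including the boundary where $r_i=0$, with values in $\mathbb S^1$. Simple connectivity of $W$ then lets them lift to $\R$. The identity \eqref{eq:zlog-differential} holds for every such choice, because different lifts change $z_W^{\log}$ only by multiplicative constants $e^{2\pi i\alpha_i m_i}$, and these do not affect the logarithmic derivative.
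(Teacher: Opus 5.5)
Your proof is correct and is essentially the paper's argument: the paper also takes the logarithmic derivative of \eqref{eq:zlog-local} term by term and substitutes $dh=\pi^*\beta$ and \eqref{eq:dz-over-z-kn} to recover $\pi^*\eta$. Your extra care in lifting the angles $\theta_i$ over the simply connected $W$ is a sound addition, and you correctly attribute the multiplicative ambiguity $e^{2\pi i\alpha_i m_i}$ to the choice of angle lifts rather than to a branch of the real function $\log r_i$.
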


\begin{proof}
Taking logarithmic derivatives in \eqref{eq:zlog-local} gives
\[
d\log z_W^{\log}
=
dh+\sum_{i=1}^{\ell}\alpha_i\,d\log r_i
+i\sum_{i=1}^{\ell}\alpha_i\,d\theta_i.
\]
Since $dh=\pi^*\beta$ and $d\log r_i+i\,d\theta_i=\pi^*(dz_i/z_i)$ by \eqref{eq:dz-over-z-kn}, this becomes
\[
d\log z_W^{\log}
=
\pi^*\beta+\sum_{i=1}^{\ell}\alpha_i\,\pi^*\!\left(\frac{dz_i}{z_i}\right)
=
\pi^*\eta,
\]
which is exactly \eqref{eq:zlog-differential}.
\end{proof}

Changing the branch of $\log r_i$ replaces $z_W^{\log}$ by multiplication with
$
\exp(2\pi i n_i\alpha_i).
$
Hence any change of branches multiplies $z_W^{\log}$ by a product of residue exponentials, and therefore by an element of $\Gamma$.

\begin{theorem}\label{thm:Flog}
There exists a canonical morphism of analytic stacks
$$
F^{\log}:X^{\log}\longrightarrow [\mathbb C^*/\Gamma]
$$
whose restriction to the interior $\pi^{-1}(U)\cong U$ coincides with the developing morphism
$
F:U\longrightarrow [\mathbb C^*/\Gamma].
$
Locally on simply connected logarithmic charts, the morphism $F^{\log}$ is represented by the functions $z_W^{\log}$ of \Cref{lem:zlog-local}, and these local representatives differ on overlaps by multiplication by elements of $\Gamma$.
\end{theorem}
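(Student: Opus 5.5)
The construction is a descent argument for a principal $\Gamma$-bundle with an equivariant map to $\C^*$. Using the description in \Cref{prop:canonicalF}, a morphism $X^{\log}\to[\C^*/\Gamma]$ is the same as a principal right $\Gamma$-bundle $P^{\log}\to X^{\log}$ together with a $\Gamma$-equivariant map $P^{\log}\to\C^*$. Equivalently, it is given by \v{C}ech data: an open cover $\{W\}$ of $X^{\log}$ by simply connected log charts, local maps $z_W:W\to\C^*$, and locally constant transition functions $g_{WW'}:W\cap W'\to\Gamma$ with $z_{W'}=g_{WW'}z_W$ satisfying the cocycle condition. The plan is to produce such data from the functions $z_W^{\log}$ of \Cref{lem:zlog-local}, show that the class is independent of choices, and compare it with $F$ on the interior.

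\textbf{Step 1 (local representatives extend to the boundary).} In the local model \eqref{eq:KN-local-model}, the formula \eqref{eq:zlog-local} contains $r_i^{\alpha_i}=\exp(\alpha_i\log r_i)$, which is singular at $r_i=0$. The plan is to read $F^{\log}$ on boundary charts through the angular factor together with the radial part. On a boundary point the factor $\exp(i\sum\alpha_i\theta_i)$ is the boundary character of \eqref{eq:monodromy-character}. The factor $r_i^{\alpha_i}$ is handled by working with $\widetilde F=\sum\alpha_i w_i+h$ from \Cref{prop:affine-leaf}, where $w_i=\log r_i+i\theta_i$. Concretely, I would take the logarithmic version of $X^{\log}$, where $\log r_i$ is treated as a coordinate: either the real blow-up with the boundary moved to $-\infty$, or the interior plus the boundary tori with a radial truncation. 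On the boundary, the map is then defined only through its angular part, and its compactified radial behaviour is deferred to \Cref{cor:landingP1}.

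This is the main obstacle. As literally stated, $z_W^{\log}$ is only defined on $W\cap\pi^{-1}(U)$. One must therefore either interpret $F^{\log}$ on $\partial X^{\log}$ via the $\Gamma$-torsor, which extends because $\pi^{-1}(U)\hookrightarrow X^{\log}$ is a homotopy equivalence and $\rho$ is defined on $\pi_1(X^{\log})\cong\pi_1(U)$, or accept that the $\C^*$-valued map only extends after radial truncation.

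I would choose the torsor route. Take $P^{\log}$ to be the $\Gamma$-covering of $X^{\log}$ attached to $\rho$, which exists because $\pi_1(X^{\log})\cong\pi_1(U)$. The equivariant map is obtained by extending $\bar z$ from $P$ by continuity along the boundary tori, in the sense of \Cref{lem:zlog-local}. The homotopy equivalence is standard: locally $(\R_{\ge0}\times\mathbb S^1)^k$ retracts onto its interior.

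\textbf{Step 2 (transition functions).} On an overlap $W\cap W'$ (taken connected), both $z_W^{\log}$ and $z_{W'}^{\log}$ satisfy $d\log z=\pi^*\eta$ by \Cref{lem:zlog-local}. Hence their ratio is locally constant. Tracking the choices shows the ratio lies in $\Gamma$: a change of the primitive $h$ of $\pi^*\beta$ along a loop contributes a period exponential, and a change of branch of $\theta_i$ contributes $\exp(2\pi i n_i\alpha_i)$, as remarked after \Cref{lem:zlog-local}. Both kinds of factor lie in $\Gamma$ by \Cref{prop:period-residue}. Additive constants in $h$ are absorbed by normalising all $h$ against a fixed base point, with the primitive continued along paths. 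The cocycle condition holds automatically, since the ratios are ratios of the actual functions.

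\textbf{Step 3 (agreement with $F$ and canonicity).} On $\pi^{-1}(U)\cong U$, each $z_W^{\log}$ is a local branch of $z=e^\Phi$ from \Cref{lem:dev}: both are local primitives of $p^*\eta$ exponentiated, so they agree up to a constant in $\Gamma$ once the normalisation is fixed. Hence the cocycle $(g_{WW'})$ coincides with the one defining $P=\widetilde U/\ker\rho$ in \Cref{prop:canonicalF}, and $F^{\log}|_U\simeq F$. Canonicity follows from \Cref{prop:strictification}: different choices of chart, of $h$, or of branch change the data by a constant element of $\Gamma$ on each connected piece, which is a $2$-isomorphism. Moreover, a morphism to $[\C^*/\Gamma]$ is determined up to unique $2$-isomorphism by its restriction to the dense, homotopy-equivalent interior, because $\Gamma$-torsors over $X^{\log}$ and over $U$ correspond.
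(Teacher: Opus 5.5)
Your Step 2 is the paper's entire argument. Both $z_W^{\log}$ and $z_{W'}^{\log}$ satisfy $d\log z=\pi^*\eta$, so their ratio is locally constant on overlaps, and the ambiguities are residue and period exponentials. Your normalisation of the primitives $h$ by continuation from a base point is genuinely needed there: the paper asserts that the constant produced by changing $h$ lies in $\Gamma$, which is false for an arbitrary additive constant. With that normalisation, Step 3 correctly identifies the resulting data on $\pi^{-1}(U)$ with the pair $(P,\bar z)$ of \Cref{prop:canonicalF}. The gap is in Step 1, and it cannot be closed.

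The torsor part is fine. Since $\pi^{-1}(U)\hookrightarrow X^{\log}$ is a homotopy equivalence, $P$ extends uniquely to a $\Gamma$-covering $P^{\log}\to X^{\log}$, i.e.\ $b_\rho$ extends to $X^{\log}\to B\Gamma$. But the equivariant map cannot be extended ``by continuity along the boundary tori''. Take $x\in D_i$ on no other component, and a small simply connected chart $W$ around a point of $\pi^{-1}(x)$. In this chart \Cref{lem:zlog-local} has $\ell=1$, and writing $\alpha_i=a_i+ib_i$ one has
\[
\log\bigl|z_W^{\log}\bigr|=\Re(h)+a_i\log r_i-b_i\theta_i,
\qquad
\arg z_W^{\log}=\Im(h)+b_i\log r_i+a_i\theta_i ,
\]
with $h$ and the lifted angle $\theta_i$ bounded on $W$. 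There are two cases:
\begin{itemize}
\item if $a_i\neq 0$, then $|z_W^{\log}|\to 0$ or $\infty$ as $r_i\to 0$;
\item if $a_i=0\neq b_i$, the modulus stays bounded but the argument diverges like $b_i\log r_i$, so $z_W^{\log}$ spirals with no limit.
\end{itemize}
A continuous $\Gamma$-equivariant map $P^{\log}\to\C^*$ extending $\bar z$ would, in a trivialisation of $P^{\log}|_W$, be a continuous $\C^*$-valued extension of $c\,z_W^{\log}$ for some constant $c$. That is impossible.

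Hence, as soon as some residue is nonzero, no pair (a $\Gamma$-torsor on $X^{\log}$ with a continuous equivariant map to $\C^*$) restricts to $(P,\bar z)$. Your density remark in Step 3 gives uniqueness, not existence. The paper's proof does not supply this step either: it only compares ratios on $(W\cap W')\cap\pi^{-1}(U)$, and its own \Cref{cor:landingP1} sends boundary faces to $[0]$ or $[\infty]$.

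What your argument does prove is the extension of the linear torsor to $X^{\log}\to B\Gamma$, whose restrictions to the fibres $\pi^{-1}(x)$ are the boundary characters, together with $F$ on the interior. A boundary value for the developing coordinate requires passing to $[\PP^1/\Gamma]$. Even then it fails at generic points of $D^0$ and at corners where the $\Re(\alpha_j)$ have opposite signs.
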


\begin{proof}
Let $W$ and $W'$ be two simply connected logarithmic charts as in \Cref{lem:zlog-local}. On the overlap $W\cap W'$, both local functions $z_W^{\log}$ and $z_{W'}^{\log}$ satisfy the same logarithmic differential equation
$dz/z=\pi^*\eta.$
Hence
\[
d\log\!\left(\frac{z_W^{\log}}{z_{W'}^{\log}}\right)=0
\]
on $(W\cap W')\cap \pi^{-1}(U)$, so the ratio $z_W^{\log}/z_{W'}^{\log}$ is locally constant there, and therefore constant on each connected component of the overlap.
The ambiguity in this constant is      measured by $\Gamma$. Indeed, changing the primitive $h$ replaces $z_W^{\log}$ by multiplication by an exponential constant, while changing the branches of the logarithms replaces it by multiplication by a product of residue exponentials. By the definition of $\Gamma$, both kinds of variation lie in $\Gamma$. Thus the family $\{z_W^{\log}\}$ defines a canonical descent datum only modulo the natural action of $\Gamma$ on $\mathbb C^*$.
We use here the standard description of quotient stacks in terms of descent data, or equivalently in terms of principal $\Gamma$-bundles together with $\Gamma$-equivariant maps to $\mathbb C^*$; see \cite{Noohi05,Noohi12,Romagny}. By this description, the family $\{z_W^{\log}\}$ determines a morphism
$
F^{\log}:X^{\log}\longrightarrow [\mathbb C^*/\Gamma].
$
By construction, over the interior $\pi^{-1}(U)\cong U$ one recovers the original developing morphism $F$.
\end{proof}

\begin{corollary}\label[corollary]{cor:landingP1}
In the local expression of \Cref{lem:zlog-local}, one has
\begin{equation}\label{eq:log-modulus-zlog}
\log |z_W^{\log}|=\Re(h)+\sum_{i=1}^{\ell}\Re(\alpha_i)\log r_i.
\end{equation}
Suppose that along a boundary face the radial term
$
\sum_{i\in I}\Re(\alpha_i)\log r_i$
tends to $-\infty$, respectively to $+\infty$. Then $z_W^{\log}$ tends to $0$, respectively to $\infty$, in $\mathbb P^1$. Consequently, on such a region the logarithmic lift admits a natural compactified form with target
$[\mathbb P^1/\Gamma],
$
and the corresponding boundary face is sent to the fixed point $[0]$, respectively $[\infty]$.
\end{corollary}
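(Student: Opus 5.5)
The plan has three steps: compute the modulus of $z_W^{\log}$ directly from the product formula \eqref{eq:zlog-local}, read off the limits, and then pass to the compactified stacky target using the fixed points $0,\infty$ of the $\Gamma$-action on $\PP^1$.

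\textbf{Modulus formula.} Take absolute values in \eqref{eq:zlog-local}. We have $|\exp(h)|=e^{\Re(h)}$. Writing $r_i^{\alpha_i}=\exp(\alpha_i\log r_i)$ with $\log r_i$ real gives $|r_i^{\alpha_i}|=\exp(\Re(\alpha_i)\log r_i)$. The angular factor $\exp(i\sum\alpha_i\theta_i)$ has modulus $\exp(-\sum\Im(\alpha_i)\theta_i)$, which is not identically $1$ when some residue has nonzero imaginary part. I would handle this term in one of two ways. Either I absorb it into the bounded part: on a simply connected chart $W$ with chosen lifts $\theta_i$ varying in a bounded interval, it is a bounded factor, so it does not affect the limits below. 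Or I read \eqref{eq:log-modulus-zlog} as stated modulo bounded terms. I would make this explicit, noting that \eqref{eq:log-modulus-zlog} holds exactly when the $\alpha_i$ are real, and up to the bounded term $-\sum\Im(\alpha_i)\theta_i$ in general. This is the only delicate point; everything else is routine.

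\textbf{Limits.} Restrict to $W$ with $\overline W$ relatively compact in the chart, so that $\Re(h)$ is bounded (since $h$ is continuous up to the boundary of $X^{\log}$, $\beta$ being holomorphic) and the $\theta_i$ are bounded. Split the radial sum into the terms with $i\in I$, where $r_i\to 0$, and those with $i\notin I$, which stay bounded. Then $\log|z_W^{\log}|$ equals $\sum_{i\in I}\Re(\alpha_i)\log r_i$ plus a bounded function. If this radial term tends to $-\infty$, resp. $+\infty$, then $|z_W^{\log}|\to 0$, resp. $\infty$, so $z_W^{\log}\to 0$, resp. $\infty$, in $\PP^1$.

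\textbf{Compactified morphism.} The local representatives $z_W^{\log}$ extend continuously to the boundary face, with value $0$ (resp. $\infty$), as maps into $\PP^1$. By \Cref{thm:Flog} these representatives differ on overlaps by constants in $\Gamma$. Since $\Gamma$ fixes $0$ and $\infty$, the same constants still give compatible descent data for $[\PP^1/\Gamma]$, extended by the constant value on the face. Composing with the open immersion $j$ therefore yields an extension of $j\circ F^{\log}$ to $[\PP^1/\Gamma]$ on this region, sending the face to the residual gerbe at $[0]$, resp. $[\infty]$ (cf. \Cref{prop:inertia-BGamma}).
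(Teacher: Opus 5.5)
Your proposal is correct and follows the same route as the paper's proof: read off the modulus from \eqref{eq:zlog-local}, deduce the limits $0$ or $\infty$, and observe that the $\Gamma$-valued transition constants of \Cref{thm:Flog} still define descent data for $[\PP^1/\Gamma]$, because $\Gamma$ fixes $0$ and $\infty$. Your caveat about the angular factor is justified and is sharper than the paper, which simply asserts that \eqref{eq:log-modulus-zlog} ``follows immediately'': since $|\exp(i\alpha_i\theta_i)|=\exp(-\Im(\alpha_i)\theta_i)$, the exact identity is $\log|z_W^{\log}|=\Re(h)+\sum_i\Re(\alpha_i)\log r_i-\sum_i\Im(\alpha_i)\theta_i$ (this is $\Phi_1$ of \Cref{prop:explicit-real-leaf-equations}), and it is precisely your choice of a small neighbourhood on which the lifts $\theta_i$ and $\Re(h)$ stay bounded that makes the stated limits, and hence the landing at $[0]$ or $[\infty]$, legitimate.
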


\begin{proof}
Formula \eqref{eq:log-modulus-zlog} follows immediately from the definition of $z_W^{\log}$. If the radial term tends to $-\infty$, then $|z_W^{\log}|\to 0$; if it tends to $+\infty$, then $|z_W^{\log}|\to \infty$. Thus the local representative extends naturally as a map to $\mathbb P^1$ with limit value $0$ or $\infty$ along the corresponding boundary face.
Since the multiplicative action of $\Gamma$ on $\mathbb C^*$ extends to an action on $\mathbb P^1$ fixing $0$ and $\infty$, these compactified local representatives are compatible with the same quotient construction used in \Cref{thm:Flog}. They therefore determine a morphism to the compactified quotient stack
$[\mathbb P^1/\Gamma],$
and the boundary face is mapped to $[0]$ or $[\infty]$ accordingly.
\end{proof}

\subsection{Boundary asymptotics, meridional monodromy, and the roles of Kummer and $X^{\log}$}\label{subsec:boundary-asymptotics}

 The compactified target $[\PP^1/\Gamma]$ records the radial limits of the logarithmic lift, but not the full boundary behaviour. Near the divisor one must distinguish radial asymptotics, meridial monodromy, and the angular boundary characters carried by $X^{\log}$. We now make this distinction precise.

\begin{lemma}\label[lemma]{lem:residue-constant-component}
Let $D_i$ be an irreducible component of the simple normal crossings divisor $D$, and let
$
\eta\in \Omega_X^1(\log D)
$
be closed. Then the logarithmic residue
$
\alpha_i:=\Res_{D_i}(\eta)
$
is locally constant on the smooth locus of $D_i$, hence constant on $D_i$.
\end{lemma}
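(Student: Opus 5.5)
Two facts will do the work: in a local snc chart the coefficient of $dz_i/z_i$ in $\eta$ is a holomorphic function, and closedness of $\eta$ forces it to be constant. Concretely, fix a point $p$ of $D_i$ and an snc chart $(z_1,\dots,z_n)$ around $p$ in which $D$ is given by $z_1\cdots z_k=0$ and $D_i=\{z_1=0\}$. Because $\eta$ is logarithmic, it can be written as
\[
\eta=\sum_{j=1}^k f_j\,\frac{dz_j}{z_j}+\beta
\]
with $f_j$ and $\beta$ holomorphic. The residue along $D_i$ near $p$ is then the function $f_1|_{D_i}$. In \eqref{eq:eta-local} the coefficients were written as constants; the point of the lemma is to justify this, so I will not assume it.

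The key computation is to expand $d\eta=0$. This gives
\[
\sum_j df_j\wedge \frac{dz_j}{z_j}+d\beta=0.
\]
Multiply by $z_1$ and restrict to $\{z_1=0\}$. All terms except $df_1\wedge dz_1$ vanish there. More precisely, wedging with $dz_1$ removes the $dz_1$ components, and the identity shows that $\partial f_1/\partial z_m$ vanishes on $z_1=0$ for every $m\neq 1$.

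There is a cleaner route that avoids the normalisation of coefficients. The integral $\int_{\gamma}\eta$ over a small meridian $\gamma$ around $D_i$ equals $2\pi i\,f_1(p')$ up to a holomorphic correction that tends to $0$ as the loop shrinks. Closedness of $\eta$ means that homotopic meridians, obtained as the base point moves along $D_i^\circ$, have equal integrals. Hence $p'\mapsto f_1(p')$ is locally constant on $D_i^{\circ}$, and by continuity it is locally constant on the smooth locus of $D_i$, including its intersections with other components. I will present the meridian argument as the main proof and the coordinate computation as a check.

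To pass from local to global constancy, I will use that the smooth locus of an irreducible component is connected. Since $D$ is snc, $D_i$ is itself smooth and irreducible, so it is connected. A locally constant function on it is therefore constant.

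I expect the main obstacle to be the local step. The residue must be shown to be independent of both the chart and the point, even at points where other components meet $D_i$. The meridian-integral formulation makes chart independence automatic, so I will rely on it.
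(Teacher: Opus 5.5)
Your proposal is correct. Its secondary coordinate check is essentially the paper's proof, but your main argument takes a different route.

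The paper writes $\eta=a\,df_i/f_i+\beta$ near a point of $D_i$ away from the other components. It notes that the polar part of $0=d\eta$ is $da\wedge df_i/f_i$, so $da$ kills $TD_i$, and then gets global constancy from irreducibility of $D_i$ and density of the good locus.

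Your main argument instead identifies the residue with a meridian period, $\int_{\gamma_\epsilon(p')}\eta=2\pi i\,f_1(p')$, and then invokes homotopy invariance of periods of the closed form $\eta$ on $U$ as the foot point $p'$ moves in $D_i^\circ$. For the loop $z_1=\epsilon e^{it}$ with the other coordinates frozen, this identity holds exactly, not just up to a vanishing correction: Cauchy's formula gives the $f_1$ term, and the $\beta$ term integrates to zero over the $z_1$-disc.

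The two routes buy different things. The paper's argument is purely local, works at the level of forms, and needs no topology. Yours is chart-independent by construction. It also shows that $\int_{\gamma_i}\eta=2\pi i\,\Res_{D_i}(\eta)$ does not depend on which meridian of $D_i$ is chosen, which is exactly what \Cref{prop:period-residue} and the residue--monodromy rule rely on.

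Two small points need tightening.

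First, ``by continuity'' at the crossing points $D_i\cap D_j$ is not enough as stated. A continuous function that is locally constant on a dense open set need not be locally constant; the Cantor function is a counterexample. Either argue that $d(f_1|_{D_i})$ is continuous and vanishes on the dense set $D_i^\circ$, hence everywhere. Or observe that near such a point $D_i^\circ$ is a polydisc minus coordinate hyperplanes, hence connected, so $f_1$ is constant there and therefore on the whole polydisc.

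Second, in your coordinate check, multiplying by $z_1$ alone does not make the terms $z_1\,df_j\wedge dz_j/z_j$ vanish where $z_j=0$. Work on $D_i^\circ$ first, or clear all denominators, and then extend the vanishing of $\partial f_1/\partial z_m$ along $\{z_1=0\}$ by continuity.
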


\begin{proof}
Fix a smooth point of $D_i$ away from the other components of $D$, and choose a local coordinate $f_i$ defining $D_i$.
Write
\[
\eta= a\,\frac{df_i}{f_i}+\beta,
\]
where $a$ and $\beta$ are holomorphic and $\beta$ has no pole along $D_i$.
Since $d\eta=0$, we have
\[
0=d\eta = da\wedge \frac{df_i}{f_i}+d\beta.
\]
The form $d\beta$ is holomorphic, so the polar part of $d\eta$ along $D_i$ is exactly
\[
da\wedge \frac{df_i}{f_i}.
\]
Its vanishing implies that $da$ restricts to zero along the tangent directions of $D_i$.
Therefore $a|_{D_i}$ is locally constant on the smooth locus of $D_i$.
Since $D_i$ is irreducible and the smooth locus is dense, $a|_{D_i}$ is constant, which is the claimed constancy of the residue.
\end{proof}

The sign of $\Re(\alpha_i)$ controls the radial behaviour of the local developing coordinate: positive real part forces convergence to $0$, negative real part forces divergence to $\infty$, and vanishing real part produces no radial collapse. This leads to the decomposition
\[
D=D^+\cup D^-\cup D^0,
\]
whose three pieces correspond respectively to attracting, repelling, and purely angular boundary components.

\begin{definition}\label[definition]{def:radial-sign-decomposition}
For each irreducible component $D_i$ of $D$, let
$
\alpha_i:=\Res_{D_i}(\eta)\in \C.
$
Set
\[
D^+ := \bigcup_{\Re(\alpha_i)>0} D_i,
\qquad
D^- := \bigcup_{\Re(\alpha_i)<0} D_i,
\qquad
D^0 := \bigcup_{\Re(\alpha_i)=0} D_i.
\]
Thus $
D=D^+\cup D^-\cup D^0 $
as a disjoint union of unions of irreducible components.
\end{definition}

\begin{proposition}\label[proposition]{prop:radial-asymptotics-component}
Let $x\in D_i$ be a smooth point of a single irreducible component $D_i$, and choose a simply connected neighborhood on which
\[
\eta=\alpha_i\,\frac{df_i}{f_i}+\beta,
\qquad d\beta=0,
\]
with $f_i=0$ defining $D_i$.
Choose a primitive $h$ of $\beta$, and define the local multiplicative developing coordinate
$
z:=e^h f_i^{\alpha_i}.$
Then
$
\log|z| = \Re(h)+\Re(\alpha_i)\log|f_i|.
$
In particular:
\begin{enumerate}[label=(\alph*),leftmargin=2.2em]
\item if $\Re(\alpha_i)>0$, then $z\to 0$ as $f_i\to 0$;
\item if $\Re(\alpha_i)<0$, then $z\to \infty$ as $f_i\to 0$;
\item if $\Re(\alpha_i)=0$, then the modulus of $z$ is not forced to tend either to $0$ or to $\infty$.
\end{enumerate}
\end{proposition}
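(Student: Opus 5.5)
The plan is to compute $\log|z|$ directly from the definition and then read off the three regimes. On the simply connected neighbourhood $V$ minus $D_i$, the expression $f_i^{\alpha_i}$ means $\exp(\alpha_i\log f_i)$ for a chosen branch of $\log f_i$. A different branch changes $z$ by a factor $\exp(2\pi i n\alpha_i)\in\Gamma$. This ambiguity does not affect $|z|$ up to the unimodular part, and in any case it does not affect whether $|z|\to0$ or $|z|\to\infty$. Hence, exactly as in the remark after \Cref{lem:zlog-local}, we may work with a fixed local branch.

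First I verify that $z$ is a multiplicative developing coordinate: $dz/z=dh+\alpha_i\,df_i/f_i=\beta+\alpha_i\,df_i/f_i=\eta$, as in \Cref{lem:zlog-local}. Next, writing $\log f_i=\log|f_i|+i\arg f_i$ and $\alpha_i=a_i+ib_i$, I get
\[
\log|z|=\Re\bigl(h+\alpha_i\log f_i\bigr)=\Re(h)+a_i\log|f_i|-b_i\arg f_i .
\]
This is the one delicate point. The stated formula omits the term $-b_i\arg f_i$. That omission is justified on any sector where a branch of $\arg f_i$ is fixed and bounded, for instance a simply connected slit neighbourhood or a single chart $W$ of $X^{\log}$ with $\theta_i$ confined to an interval. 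There the term is a bounded correction, and the formula should be read as holding up to this bounded angular term; compare \eqref{eq:log-modulus-zlog}, where only the $\Re(\alpha_i)\log r_i$ term survives in the radial part. I will state this explicitly: on each such sector, $\log|z|=\Re(h)+\Re(\alpha_i)\log|f_i|+O(1)$.

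For the regimes, $h$ is holomorphic on the whole neighbourhood including $D_i$, since $\beta$ has no pole there, so $\Re(h)$ is bounded near $x$. Since $\log|f_i|\to-\infty$ as $f_i\to0$:

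\begin{itemize}
\item for (a), $\Re(\alpha_i)>0$ gives $\log|z|\to-\infty$, i.e. $z\to0$;
\item for (b), $\Re(\alpha_i)<0$ gives $\log|z|\to+\infty$, i.e. $z\to\infty$;
\item for (c), $\Re(\alpha_i)=0$ gives $\log|z|=\Re(h)-b_i\arg f_i$, which stays bounded on a sector.
\end{itemize}

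In case (c), going around the meridian changes $\log|z|$ by $-2\pi b_i n$, matching $|\lambda_i|=e^{-2\pi b_i}$. So $|z|$ is neither forced to $0$ nor to $\infty$; it oscillates or stays bounded according to the winding. Here it suffices to exhibit the example $\eta=ib\,df/f$, where $|z|=e^{-b\arg f}$ has no limit along spirals and is constant along radial rays.

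The main obstacle is only the bookkeeping of the angular term and the branch choice. The estimates themselves are immediate once boundedness of $h$ near $x$ is noted, and that boundedness uses the holomorphy of $\beta$ across $D_i$ together with simple connectivity to choose $h$.
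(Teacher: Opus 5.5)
Your argument is correct and has the same skeleton as the paper's proof: compute $\log|z|$, use boundedness of $\Re(h)$ near $x$, then read off the sign of $\Re(\alpha_i)$. Your version is more accurate than the paper's, and the discrepancy you flag is a genuine error rather than bookkeeping.

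The paper's proof asserts $|f_i^{\alpha_i}|=\exp(\Re(\alpha_i)\log|f_i|)$, which silently drops the term $-\Im(\alpha_i)\arg f_i$. Hence the displayed identity is exact only when $\Im(\alpha_i)=0$. The same omission affects $|z|=e^{\Re(h)}$ in the paper's case (c). It also affects \eqref{eq:log-modulus-zlog}, which is missing $-\sum_i\Im(\alpha_i)\theta_i$, so that formula does not independently support reading the identity as exact.

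Your restriction to sectors with bounded $\arg f_i$ is also needed for (a) and (b) themselves, not only for the formula. Such a sector is, for instance, a chart of $X^{\log}$ in which $\theta_i$ stays in a bounded interval. To see why it is needed, take $\Re(\alpha_i)>0$ and $\Im(\alpha_i)\neq 0$, and continue $z$ along the spiral $\arg f_i=\bigl(2\Re(\alpha_i)/\Im(\alpha_i)\bigr)\log|f_i|$. This gives $\log|z|=\Re(h)-\Re(\alpha_i)\log|f_i|\to+\infty$.

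One small imprecision concerns your example $\eta=ib\,df/f$. Along a spiral with $\arg f\to\pm\infty$, the modulus $e^{-b\arg f}$ does tend to $0$ or $\infty$. What the example really shows is path-dependence and constancy along radial rays, and that is all (c) requires.
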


\begin{proof}
Since $dh=\beta$, the function $h$ is holomorphic on the chosen neighborhood, hence bounded there.
Taking absolute values in
$
z=e^h f_i^{\alpha_i}
$
and using $|f_i^{\alpha_i}|=\exp\bigl(\Re(\alpha_i)\log|f_i|\bigr)$ gives
$
|z| = e^{\Re(h)} |f_i|^{\Re(\alpha_i)}.
$
Taking logarithms yields the displayed identity.
Now $\log|f_i|\to -\infty$ as $f_i\to 0$.
If $\Re(\alpha_i)>0$, the term $\Re(\alpha_i)\log|f_i|\to -\infty$, so $|z|\to 0$.
If $\Re(\alpha_i)<0$, the same term tends to $+\infty$, so $|z|\to\infty$.
If $\Re(\alpha_i)=0$, then $|z|=e^{\Re(h)}$ remains controlled by the bounded holomorphic factor and there is, in general, no radial tendency to $0$ or $\infty$.
\end{proof}

\begin{proposition}\label[proposition]{prop:leaf-accumulation-component}
In the setting of \Cref{prop:radial-asymptotics-component}, after shrinking the neighborhood if necessary, the leaves of the foliation are the connected components of the level sets of
$
\log z = h+\alpha_i\log f_i.
$
Consequently:
\begin{enumerate}[label=(\alph*),leftmargin=2.2em]
\item along $D_i\subset D^+$, only leaves whose transverse value tends to $0$ may accumulate on $D_i$;
\item along $D_i\subset D^-$, accumulation is visible only in the compactified target, as tendency to $\infty$;
\item along $D_i\subset D^0$, there is no radial collapse, and the dominant boundary phenomenon is angular (monodromy or rotation) rather than convergence to $0$ or $\infty$.
\end{enumerate}
\end{proposition}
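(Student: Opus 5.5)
To prove \Cref{prop:leaf-accumulation-component}, I will first identify the leaves locally and then read off the three cases from the modulus formula of \Cref{prop:radial-asymptotics-component}.

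\emph{Step 1: local first integral.} On the chosen simply connected neighbourhood $V$ (minus $D_i$, after passing to the universal cover of $V\setminus D_i$, which is the cover of a punctured polydisc), the function $\log z=h+\alpha_i\log f_i$ is a primitive of $\eta$. I use the local description from the Singer discussion: on a simply connected open set with $\eta=d\varphi$, the form $e^{-\varphi}\omega$ is closed, so its primitive $F_V$ satisfies $\fol=\ker(dF_V)$. This shows the leaves are level sets of the affine developing datum.

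\emph{Step 2: leaves as level sets of $\log z$.} To pass from the affine developing datum to the multiplicative coordinate $z$, I will read ``the foliation'' here as the transverse structure whose stacky first integral is $F$ (\Cref{prop:canonicalF}), consistent with \Cref{proposition:stacky-singer}. Its local lift is $z$, and \Cref{prop:real-codim2} states the corresponding statement upstairs for $\pi^*\eta$. After shrinking $V$ so that $d\log z=\eta\neq0$ away from a thin set, the leaves are the connected components of the level sets of $\log z$ by the implicit function theorem. On a small neighbourhood the constant $\log z$ and $\log f_i$ are defined up to the branch change $\log f_i\mapsto\log f_i+2\pi i$, which multiplies $z$ by $e^{2\pi i\alpha_i}\in\Gamma$. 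Hence level sets are well defined modulo $\Gamma$, which is harmless.

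\emph{Step 3: the three cases.} Take a leaf $L=\{\log z=c\}$ and a sequence of points of $L$ with $f_i\to0$. By the formula $\log|z|=\Re(h)+\Re(\alpha_i)\log|f_i|$, with $h$ bounded, $|z|\to0$ when $\Re(\alpha_i)>0$. On $L$, however, $|z|$ is constant in the universal-cover sense, namely $|e^{c}|$ up to moduli of elements of $\Gamma$ coming from branch changes. Therefore accumulation on $D_i$ is only possible along the sequence of representatives whose transverse value tends to $0$, i.e.\ in the compactified target, which is \Cref{cor:landingP1}. This gives (a). The case $\Re(\alpha_i)<0$ is symmetric with $\infty$ and gives (b). When $\Re(\alpha_i)=0$, $|z|=e^{\Re h}$ is bounded above and below, so there is no radial collapse. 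Going once around the meridian multiplies $z$ by $e^{2\pi i\alpha_i}$, which in this case has modulus $e^{-2\pi b_i}$ with $a_i=0$; this pure monodromy or rotation, together with \Cref{prop:boundary-fiber-restriction}, gives (c).

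\emph{Main obstacle.} The delicate point is Step 2: justifying that level sets of $\log z$ rather than of the affine primitive describe the foliation. I would handle it by interpreting the proposition in terms of the real foliation $\mathcal D$ and the stacky first integral. I would also phrase ``transverse value'' as the value in $[\PP^1/\Gamma]$, so that the multivaluedness by $\Gamma$ is built in.
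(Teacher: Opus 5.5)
Your argument is essentially the paper's. The paper also obtains the level-set description from the real codimension-two foliation $\mathcal D$ of $\pi^*\eta$, via \Cref{prop:affine-leaf,prop:explicit-real-leaf-equations}, which make your appeal to \Cref{prop:real-codim2} explicit in the snc chart, and then reads (a)--(c) directly off the modulus formula of \Cref{prop:radial-asymptotics-component}. Your Step 1 is a detour rather than part of the proof: it correctly shows that the leaves of $\ker\omega$ are level sets of the affine primitive, which differ from those of $\log z$ unless $\eta\wedge\omega=0$, so the reinterpretation as $\mathcal D$ in your Step 2 is exactly the one the paper's proof tacitly relies on.
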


 \begin{proof}
By \Cref{prop:affine-leaf,prop:explicit-real-leaf-equations}, after shrinking to a simply connected neighbourhood the leaves are the connected components of the level sets of the local transverse coordinate $z$, equivalently of $\log z$.
The three conclusions are therefore immediate from \Cref{prop:radial-asymptotics-component}: if $\Re(\alpha_i)>0$, the only possible boundary value is $0$; if $\Re(\alpha_i)<0$, it is $\infty$ in the compactified target; and if $\Re(\alpha_i)=0$, there is no radial collapse, so the residual boundary behaviour is angular.
\end{proof}

\begin{figure}[ht]
\centering
\begin{tikzpicture}[scale=1,line cap=round,line join=round]

\begin{scope}[shift={(0,0)}]
  \draw[thin] (0,0) circle (1.65);
  \fill (0,0) circle (1.2pt);
  \node[fill=white,inner sep=1pt] at (0.16,0.14) {\small $x$};

  \node[align=center] at (0,2.95)
    {local transverse model\\ near $x\in D_i\subset D^{+}$};
  \node at (0,2.20) {$\Re(\alpha_i)>0$};

  \draw[thick,midarrow]
    (-1.00,0.95) .. controls (-0.40,1.05) and (-0.05,0.42) .. (0.00,0.02);
  \draw[thick,midarrow]
    (-1.05,0.00) .. controls (-0.92,0.48) and (-0.32,0.30) .. (0.00,0.01);
  \draw[thick,midarrow]
    (-0.75,-0.95) .. controls (-0.18,-0.95) and (0.12,-0.30) .. (0.02,-0.02);
  \draw[thick,midarrow]
    (0.00,-1.10) .. controls (0.42,-0.98) and (0.30,-0.22) .. (0.02,-0.02);
  \draw[thick,midarrow]
    (0.85,-0.85) .. controls (0.90,-0.18) and (0.32,0.10) .. (0.02,0.01);
  \draw[thick,midarrow]
    (1.05,0.00) .. controls (0.92,0.55) and (0.32,0.35) .. (0.02,0.02);
  \draw[thick,midarrow]
    (0.90,0.95) .. controls (0.30,1.02) and (0.02,0.40) .. (0.02,0.02);

  \node at (0,-2.05) {$\Delta_x$};
  \node[align=center] at (0,-2.78)
    {leaves approach $x$\\[-1mm]$z\to 0$};
\end{scope}

\begin{scope}[shift={(5.6,0)}]
  \draw[thin] (0,0) circle (1.65);
  \fill (0,0) circle (1.2pt);
  \node[fill=white,inner sep=1pt] at (0.16,0.14) {\small $x$};

  \node[align=center] at (0,2.95)
    {local transverse model\\ near $x\in D_i\subset D^{0}$};
  \node at (0,2.20) {$\Re(\alpha_i)=0$};

  \draw[thick,midarrow]
    (1.10,0.00) arc[start angle=0,end angle=330,radius=1.10];
  \draw[thick,midarrow]
    (0.78,0.00) arc[start angle=0,end angle=325,radius=0.78];
  \draw[thick,midarrow]
    (0.45,0.00) arc[start angle=0,end angle=315,radius=0.45];

  \node at (0,-2.05) {$\Delta_x$};
  \node[align=center] at (0,-2.78)
    {no radial collapse\\[-1mm]angular / monodromic behaviour};
\end{scope}

\begin{scope}[shift={(11.2,0)}]
  \draw[thin] (0,0) circle (1.65);
  \fill (0,0) circle (1.2pt);
  \node[fill=white,inner sep=1pt] at (0.16,0.14) {\small $x$};

  \node[align=center] at (0,2.95)
    {local transverse model\\ near $x\in D_i\subset D^{-}$};
  \node at (0,2.20) {$\Re(\alpha_i)<0$};

  \draw[thick,midarrow]
    (0.00,0.02) .. controls (-0.05,0.40) and (-0.40,1.05) .. (-1.00,1.15);
  \draw[thick,midarrow]
    (0.00,0.01) .. controls (-0.32,0.30) and (-0.92,0.48) .. (-1.08,0.00);
  \draw[thick,midarrow]
    (0.00,-0.02) .. controls (-0.12,-0.30) and (-0.55,-0.92) .. (-0.95,-1.10);
  \draw[thick,midarrow]
    (0.02,-0.02) .. controls (0.30,-0.22) and (0.42,-0.98) .. (0.00,-1.20);
  \draw[thick,midarrow]
    (0.02,0.01) .. controls (0.32,0.10) and (0.90,-0.18) .. (0.92,-0.95);
  \draw[thick,midarrow]
    (0.02,0.02) .. controls (0.32,0.35) and (0.92,0.55) .. (1.08,0.00);
  \draw[thick,midarrow]
    (0.02,0.02) .. controls (0.02,0.40) and (0.30,1.02) .. (0.95,1.15);

  \node at (0,-2.05) {$\Delta_x$};
  \node[align=center] at (0,-2.78)
    {leaves are repelled from $x$\\[-1mm]$z\to \infty$};
\end{scope}

\end{tikzpicture}
\caption{Schematic transverse portraits near a smooth point $x\in D_i\subset D$.
In a small transverse disk $\Delta_x$, the divisor is represented by the point $\Delta_x\cap D_i=\{x\}$.
Left: attracting behaviour along $D_i\subset D^{+}$.
Middle: neutral behaviour along $D_i\subset D^{0}$, with no radial collapse and dominant angular motion.
Right: repelling behaviour along $D_i\subset D^{-}$, visible in the compactified target.}
\label{fig:radial-sign-phase-portraits}
\end{figure}

\begin{proposition}\label[proposition]{prop:monodromy-obstruction-extension}
Let $\Delta$ be a small transverse disk to a smooth point of $D_i$, with punctured disk $\Delta^*:=\Delta\setminus\{0\}$.
Then the restriction of the stacky developing map to $\Delta^*$ is represented by a principal $\Gamma$-bundle with monodromy
$
g_i = \exp\bigl(2\pi i\,\alpha_i\bigr)=\exp\bigl(2\pi i\,\Res_{D_i}(\eta)\bigr)\in \Gamma.
$
In particular, if $g_i\neq 1$, this $\Gamma$-bundle does not extend to a principal $\Gamma$-bundle on $\Delta$.
Therefore the map
\[
\overline F_U=j\circ F:\Delta^*\longrightarrow [\PP^1/\Gamma]
\]
is, in general, not induced by an ordinary holomorphic or meromorphic map defined on the whole disk $\Delta$.
Compactifying the target records the radial limits $0$ or $\infty$, but it does not, by itself, remove the monodromy obstruction.
\end{proposition}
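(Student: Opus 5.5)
The plan is to compute the pulled-back torsor explicitly and then argue topologically. By \Cref{prop:canonicalF}, the restriction of $F$ to $\Delta^*$ is given by the pullback of the principal $\Gamma$-bundle $P=\widetilde U/\ker(\rho)\to U$ along the inclusion $\Delta^*\hookrightarrow U$, together with the equivariant map $\bar z$. Since $\pi_1(\Delta^*)\cong\Z$ is generated by the positively oriented meridian $\gamma_i$, a principal $\Gamma$-bundle over $\Delta^*$ is determined up to isomorphism by the image of this generator. By construction of $P$, that image is $\rho(\gamma_i)$. The residue computation already carried out in the proof of \Cref{prop:period-residue} gives $\rho(\gamma_i)=\exp(2\pi i\,\Res_{D_i}(\eta))$. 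Alternatively, one can use the local coordinate $z=e^hf_i^{\alpha_i}$ of \Cref{prop:radial-asymptotics-component}: continuing $z$ once around $\gamma_i$ multiplies it by $e^{2\pi i\alpha_i}$, because $h$ is single-valued on the simply connected neighbourhood. By \Cref{lem:residue-constant-component}, $\alpha_i$ does not depend on the chosen point of $D_i$, so $g_i$ is well defined.

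For the non-extension statement, observe that $\Delta$ is contractible, so every principal $\Gamma$-bundle on $\Delta$ is trivial. Its restriction to $\Delta^*$ therefore has trivial monodromy. Principal $\Gamma$-bundles over $\Delta^*$ are classified by $\Hom(\Z,\Gamma)$ (here $\Gamma$ is abelian, so conjugacy plays no role), so a bundle with monodromy $g_i\neq1$ cannot be isomorphic to such a restriction. Hence it does not extend.

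Next, deduce the statement about $j\circ F$. Suppose $j\circ F$ were induced by an ordinary holomorphic or meromorphic map $\phi:\Delta\dashrightarrow\PP^1$. That means $\Delta\to[\PP^1/\Gamma]$ would be given by the trivial torsor together with $\phi$, so its underlying torsor on $\Delta^*$ would be trivial. Two $2$-isomorphic morphisms to a quotient stack have isomorphic underlying torsors. This contradicts $g_i\neq1$, which is why I write ``in general'': the obstruction is present exactly when $g_i\neq 1$. One subtlety is that the meromorphic map is undefined at indeterminacy points. In dimension one, on the disk, a meromorphic map to $\PP^1$ is holomorphic, so the torsor is defined over all of $\Delta$ and the argument goes through.

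Finally, for the compactification remark, compare with \Cref{prop:radial-asymptotics-component} and \Cref{cor:landingP1}. When $\Re(\alpha_i)\neq0$, $|z|$ tends to $0$ or $\infty$, so the equivariant map extends continuously to the fixed points of $\PP^1$. The torsor, however, is unchanged, because the $\Gamma$-action on $\PP^1$ does not alter the classifying data on $\Delta^*$. So the monodromy obstruction persists. The only delicate point is keeping straight the distinction between the torsor and the equivariant map in the stacky language. I expect that part to be bookkeeping rather than a genuine obstacle.
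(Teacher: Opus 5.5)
Your proposal is correct and follows essentially the same route as the paper: you identify the monodromy of the pulled-back torsor on $\Delta^*$ as $\rho(\gamma_i)=\exp(2\pi i\alpha_i)$, use that every $\Gamma$-torsor on the simply connected disk is trivial, and observe that passing to $[\PP^1/\Gamma]$ changes only the equivariant map and not the torsor. Your additional argument, that a morphism induced by an ordinary map carries the trivial torsor and that $2$-isomorphic morphisms have isomorphic torsors, spells out a step the paper leaves implicit.
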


\begin{proof}
The punctured disk $\Delta^*$ has fundamental group generated by a meridian around $0$.
By definition of the holonomy character,
\[
\rho(\text{meridian}) = \exp\left(\int_{\text{meridian}}\eta\right)=\exp(2\pi i\alpha_i)=g_i.
\]
Hence the restricted stacky developing map corresponds to a principal $\Gamma$-bundle on $\Delta^*$ with monodromy $g_i$.
If this bundle extended to a principal $\Gamma$-bundle on the whole disk $\Delta$, then, because $\Delta$ is simply connected, the extended bundle would be trivial.
Its restriction to $\Delta^*$ would therefore have trivial monodromy.
Thus extension is impossible when $g_i\neq 1$.
The final statement follows because the passage from $F$ to $\overline F_U=j\circ F$ changes only the target from $[\C^*/\Gamma]$ to $[\PP^1/\Gamma]$; this compactification records possible radial limits but does not alter the underlying local monodromy of the $\Gamma$-torsor on the punctured disk.
\end{proof}

\begin{theorem}\label{thm:kummer-vs-xlog-boundary}
Let $D_i$ be an irreducible component of $D$, let
$
\alpha_i:=\Res_{D_i}(\eta)\in\mathbb C,
$ and $
g_i:=\exp(2\pi i\alpha_i)\in\Gamma,
$
and consider the boundary behaviour of the developing coordinate near $D_i$. Then two complementary mechanisms occur.

\begin{enumerate}[label=\textup{(\alph*)},leftmargin=2.4em]
\item
Assume that $g_i$ has finite order $r$. Then, after the local root construction
$
f_i=t^r,
$
the pulled-back logarithmic form has residue $r\alpha_i$ along $t=0$, and the corresponding meridional multiplier is
$
\exp(2\pi i r\alpha_i)=g_i^r=1.
$
Equivalently, the $r$-th root stack, or local Kummer cover, along $D_i$ trivializes the meridional monodromy. If, moreover, $\Re(\alpha_i)\neq 0$, then on this Kummer refinement the compactified developing coordinate acquires a genuine radial limit to $0$ or to $\infty$ along the lifted boundary component.

\item
Assume that $\Re(\alpha_i)=0$. Then the radial analysis does not force a limit to $0$ or to $\infty$. Instead, the boundary behaviour is carried by the angular variable on $X^{\log}$. In local polar coordinates
$
f_i=r_i e^{i\theta_i},
$
one has
\[
\frac{df_i}{f_i}=\frac{dr_i}{r_i}+i\,d\theta_i,
\]
and on the universal cover of the boundary circle the horizontal multiplicative coordinate carries the factor
$
\exp(i\alpha_i\theta_i).
$
Equivalently, the corresponding boundary character has deck-transformation multiplier
$
\exp(2\pi i\alpha_i).
$
Thus $X^{\log}$ records the purely angular part of the boundary data, which is not visible at the level of a purely radial compactification.
\end{enumerate}
\end{theorem}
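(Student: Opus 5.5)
The proof is local near a smooth point $x\in D_i$ away from the other components. By \Cref{lem:residue-constant-component}, $\alpha_i$ is constant along $D_i$. On a simply connected neighbourhood we write $\eta=\alpha_i\,df_i/f_i+\beta$ with $\beta$ holomorphic and closed, and we fix a primitive $h$ of $\beta$, as in \Cref{prop:radial-asymptotics-component}. The two parts are then treated separately.

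\textbf{Part (a).} Let $\kappa:(t,y)\mapsto(t^r,y)$ be the local $r$-fold Kummer cover; this is the chart of the $r$-th root stack along $D_i$. Since $\kappa^*(df_i/f_i)=r\,dt/t$ and $\kappa^*\beta$ is still holomorphic, the pulled-back form is
\[
\kappa^*\eta=r\alpha_i\,\frac{dt}{t}+\kappa^*\beta ,
\]
so its residue along $t=0$ is $r\alpha_i$. The meridian around $t=0$ maps to $r$ times the meridian around $D_i$. Applying \Cref{prop:monodromy-obstruction-extension} to the pulled-back data, the new meridional multiplier is $\exp(2\pi i r\alpha_i)=g_i^r=1$, because $g_i$ has order $r$. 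Hence the restricted $\Gamma$-torsor on the punctured Kummer disk is trivial, and it extends over the disk.

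Concretely, the local coordinate $\kappa^*z=e^{\kappa^*h}\,t^{r\alpha_i}$ is single-valued. The reason is that $r\alpha_i\in\Z+\frac{1}{2\pi i}\log(1)$, so $t^{r\alpha_i}$ changes only by $\exp(2\pi i r\alpha_i)=1$ around $t=0$. When $\Re(\alpha_i)\neq0$, we apply \Cref{prop:radial-asymptotics-component} to the refined form, with $\Re(r\alpha_i)$ of the same sign as $\Re(\alpha_i)$. This gives $\log|\kappa^*z|=\Re(\kappa^*h)+r\Re(\alpha_i)\log|t|\to\mp\infty$, so $\kappa^*z$ has a genuine limit $0$ or $\infty$. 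As in \Cref{cor:landingP1}, we obtain a morphism to $[\PP^1/\Gamma]$ across the lifted boundary, sending it to $[0]$ or $[\infty]$.

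\textbf{Part (b).} Assume $\Re(\alpha_i)=0$. By \Cref{prop:radial-asymptotics-component}(c), $|z|=e^{\Re h}|f_i|^{\Re\alpha_i}=e^{\Re h}$ stays bounded away from $0$ and $\infty$, so no radial limit is forced. We pass to $X^{\log}$ using the local model \eqref{eq:KN-local-model} and the identity \eqref{eq:dz-over-z-kn}. The local formula \eqref{eq:zlog-local} gives
\[
z^{\log}_W=e^h\,r_i^{\alpha_i}\,e^{i\alpha_i\theta_i}.
\]
Since $\alpha_i$ is purely imaginary, $r_i^{\alpha_i}=\exp(\alpha_i\log r_i)$ has modulus $1$. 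The factor carrying the boundary behaviour on the universal cover of the circle $\{r_i=0\}$ is $\exp(i\alpha_i\theta_i)$. Under the deck transformation $\theta_i\mapsto\theta_i+2\pi$ this factor is multiplied by $\exp(2\pi i\alpha_i)$, which is exactly the boundary character \eqref{eq:monodromy-character} and agrees with \Cref{prop:boundary-fiber-restriction}.

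The remaining claim is that this angular datum is invisible to the radial compactification, and this is the step to phrase carefully. We will argue it by combining \Cref{prop:monodromy-obstruction-extension}, which says the compactified target does not remove the monodromy, with \Cref{thm:Flog}, which says $F^{\log}$ extends over the boundary circle with exactly this character. One subtle point is that the circle factor $r_i^{\alpha_i}$ oscillates as $r_i\to0$. We handle it by saying that $X^{\log}$ records the $\theta_i$-dependence through the character, not through a limit value.
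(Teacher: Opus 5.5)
Your argument is essentially the paper's: part (a) is the root-coordinate computation of \Cref{prop:kummer-trivializes} with $N=r$ (since $g_i^r=1$ forces $r\alpha_i\in\Z$), followed by the radial asymptotics of \Cref{prop:radial-asymptotics-component}, and part (b) is read off from \Cref{prop:boundary-fiber-restriction} and \Cref{cor:residue-monodromy}; your appeal to \Cref{thm:Flog} is not needed, since the character already lives on the boundary fibre. One small correction in (b), an omission inherited from \Cref{prop:radial-asymptotics-component}: for $\alpha_i=ib$ with $b\neq 0$, your own formula for $z^{\log}_W$ gives $|z|=e^{\Re h}e^{-b\theta_i}$ rather than $e^{\Re h}$, but this is still bounded above and below on a sector, so the conclusion is unaffected.
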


\begin{proof}
Assertion \textup{(a)} is exactly the local Kummer trivialisation described in \Cref{prop:kummer-trivializes}: after the root construction $f_i=t^r$, the residue becomes $r\alpha_i$ and the meridial multiplier becomes
\[
\exp(2\pi i\,r\alpha_i)=g_i^r=1.
\]
If moreover $\Re(\alpha_i)\neq 0$, the radial asymptotics are those described in \Cref{prop:radial-asymptotics-component}.
Assertion \textup{(b)} follows from \Cref{prop:boundary-fiber-restriction,cor:residue-monodromy}: when $\Re(\alpha_i)=0$, the radial analysis does not force a limit to $0$ or $\infty$, whereas the boundary fibre carries the character with multiplier
\[
\exp(2\pi i\,\alpha_i).
\]
Thus the Kato--Nakayama boundary records the angular part of the same residue data.
The final statement is therefore just the comparison of these two mechanisms: Kummer base change kills torsion in the meridial monodromy, whereas $X^{\log}$ retains the angular boundary information.
\end{proof}

\subsection{The logarithmic Betti meaning of the linear part of a transversely affine foliation}\label{sec:log-RH}
Let $(\omega,\eta)$ be a  transversely affine  affine foliation on $U=X\setminus D$, with $\eta\in \Omega_X^1(\log D)|_U$ and $d\eta=0$. We reinterpret the logarithmic lift and the boundary formalism developed to the language of the rank-one logarithmic Riemann--Hilbert correspondence. The closed logarithmic form $\eta$ defines the rank-one logarithmic flat connection
\begin{equation}\label{eq:nabla-eta}
\nabla_\eta:=d+\eta
\end{equation}
on the trivial line bundle over $U$. On a simply connected open subset $W \subset X^{\log}$, the function $z_W^{\log}$ defined in \eqref{eq:zlog-local} is a local horizontal multiplicative section of $\nabla_\eta$. 

It is useful to isolate the usual residue--monodromy rule in the form naturally dictated by the logarithmic boundary.

\begin{corollary}\label[corollary]{cor:residue-monodromy}
Near a smooth point of a component of $D$, choose a local transverse parameter $t$ and write
\begin{equation}\label{eq:local-transverse-residue}
\eta=\alpha\,\frac{dt}{t}+\cdots .
\end{equation}
On $X^{\log}$ one has the logarithmic coordinate
$
w=\log r+i\theta,$ and $
t=e^w,
$
and hence $dt/t=dw$. A local horizontal multiplicative section therefore contains the factor $\exp(\alpha w)$. The meridian acts by $w\mapsto w+2\pi i$, so the corresponding multiplier is
$
\exp(2\pi i\alpha).
$
In particular, if $\alpha=\Res_{D_i}(\eta)$, then the meridional holonomy around $D_i$ is $\exp(2\pi i\alpha)$.
\end{corollary}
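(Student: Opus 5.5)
The plan is a direct local computation in the logarithmic coordinate on $X^{\log}$, modelled on \Cref{lem:zlog-local}. Near a smooth point of $D_i$ away from the other components, I will choose a local defining function $t$ for $D_i$. By \Cref{lem:residue-constant-component} the residue is a constant $\alpha=\Res_{D_i}(\eta)$, so
\[
\eta=\alpha\,\frac{dt}{t}+\beta,
\]
with $\beta$ holomorphic and closed. On a simply connected neighbourhood, $\beta=dh$ with $h$ holomorphic and single-valued. On the interior part of the local model \eqref{eq:KN-local-model}, I will write $t=re^{i\theta}$ and set $w=\log r+i\theta$ on the universal cover of the punctured region. Then \eqref{eq:dz-over-z-kn} gives $\pi^*(dt/t)=dw$, so $t=e^w$ there.

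Next I will identify the horizontal multiplicative sections. Any local solution of $dz/z=\pi^*\eta$ agrees, up to a multiplicative constant, with
\[
z=e^{h}\exp(\alpha w).
\]
This follows from \Cref{lem:zlog-local}, which shows that this $z$ satisfies the equation, together with uniqueness of solutions up to constants on a connected domain. This establishes the claim that a horizontal multiplicative section contains the factor $\exp(\alpha w)$.

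Finally I will compute the monodromy. A positively oriented meridian lifts, on the universal cover, to the deck transformation $\theta\mapsto\theta+2\pi$ with $r$ fixed, that is, $w\mapsto w+2\pi i$. Along this lift $h$ returns to its value, since it is single-valued near the point, while $\exp(\alpha w)$ is multiplied by $\exp(2\pi i\alpha)$. Hence the multiplier is $\exp(2\pi i\alpha)$. As a consistency check I will compare with \Cref{lem:dev} and \Cref{prop:period-residue}, where $\rho(\gamma_i)=\exp(\int_{\gamma_i}\eta)$ and $\int_{\gamma_i}\eta=2\pi i\alpha$, so the computed multiplier is the meridional holonomy $g_i$.

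The main point needing care is that the conclusion does not depend on the choice of local transverse parameter. If $t'=ut$ with $u$ a unit, then $dt/t$ changes by the holomorphic closed form $du/u$. That term is absorbed into $\beta$ without changing $\alpha$. It also contributes no winding along a sufficiently small meridian, because $u$ is nonvanishing on the disk, so $\int du/u=0$ there. The residue, and hence the multiplier, is therefore intrinsic.
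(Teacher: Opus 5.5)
Your proposal is correct and follows essentially the same route as the paper. Both pass to $w=\log r+i\theta$ so that $dt/t=dw$, write the horizontal multiplicative section as $e^{h}\exp(\alpha w)$ with $h$ single-valued, and read off the multiplier $\exp(2\pi i\alpha)$ from the deck translation $w\mapsto w+2\pi i$. Your extra steps — constancy of the residue via \Cref{lem:residue-constant-component}, absorbing the holomorphic remainder into $\beta$, and independence of the choice of $t$ — only make explicit what the paper's short proof leaves implicit.
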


\begin{proof}
The identity $dt/t=dw$ follows   from  $w=\log r+i\theta$, and 
$
t=e^w$. Substituting this into the local expression of the horizontal multiplicative section of $\nabla_{\eta}$, one sees that the only non-single-valued factor is $\exp(\alpha w)$. Under a full meridian, $w$ is translated by $2\pi i$, so the multiplier is $\exp(2\pi i\alpha)$. This is the standard residue--monodromy rule; compare \cite[\S I.3]{Deligne70}.
\end{proof}

We now examine the effect of a root construction along the boundary. In the present setting, this is the logarithmic version of the classical algebraic-extension step in Liouvillian towers.

\begin{proposition}\label[proposition]{prop:rational}
Assume that
\begin{equation}\label{eq:rational-residues}
\Res_{D_i}(\eta)\in \frac{1}{N}\Z
\qquad
\text{for every irreducible component }D_i\subset D.
\end{equation}
Then, for every meridian $\gamma_i$ around $D_i$, one has
$
\rho(\gamma_i)\in \mu_N.
$
In particular, the subgroup of $\Gamma$ generated by the meridional holonomies is finite.
\end{proposition}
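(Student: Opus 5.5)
The plan is to reduce everything to the residue--monodromy rule already established. By \Cref{prop:period-residue}, or equivalently \Cref{cor:residue-monodromy}, for a positively oriented meridian $\gamma_i$ around $D_i$ one has
\[
\rho(\gamma_i)=\exp\!\left(\int_{\gamma_i}\eta\right)=\exp\bigl(2\pi i\,\Res_{D_i}(\eta)\bigr).
\]
By \Cref{lem:residue-constant-component}, the residue is a well-defined constant along $D_i$. Hence this value does not depend on the smooth point of $D_i$ chosen to define the meridian. It also does not depend on the connecting path from the base point, since $\rho$ factors through $H_1(U,\Z)$ because $\C^*$ is abelian.

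Next, write $\Res_{D_i}(\eta)=n_i/N$ with $n_i\in\Z$, using hypothesis \eqref{eq:rational-residues}. Then $\rho(\gamma_i)=\exp(2\pi i n_i/N)$, and its $N$-th power is $\exp(2\pi i n_i)=1$, so $\rho(\gamma_i)\in\mu_N$.

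For the final assertion, observe that the subgroup of $\Gamma$ generated by the meridional holonomies $\rho(\gamma_1),\dots,\rho(\gamma_k)$ is contained in the group $\mu_N$. This holds because $\mu_N$ is a subgroup of $\C^*$ and the generators lie in it. A subgroup of the finite group $\mu_N$ is finite, and in fact it is cyclic of order dividing $N$. Its order is $N/\gcd(N,n_1,\dots,n_k)$.

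There is no serious obstacle here. The only point requiring care is that the meridional holonomy is intrinsically attached to $D_i$. This is handled by the constancy of residues together with the homological factorisation of $\rho$. It is also worth remarking that $\Gamma$ itself need not be finite, since the period contributions $h_j$ of \Cref{prop:period-residue} are unconstrained by \eqref{eq:rational-residues}. This is why the statement concerns only the meridional subgroup.
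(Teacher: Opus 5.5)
Your proof is correct and follows essentially the same route as the paper: apply the residue--monodromy rule (\Cref{cor:residue-monodromy}) to get $\rho(\gamma_i)=\exp\bigl(2\pi i\,\Res_{D_i}(\eta)\bigr)$, then observe that its $N$-th power equals $1$. Your extra remarks supply details the paper leaves implicit: independence of the chosen point and connecting path, and the explicit finiteness of the meridional subgroup, whose order is $N/\gcd(N,n_1,\dots,n_k)$.
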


\begin{proof}
For each meridian $\gamma_i$, \Cref{cor:residue-monodromy} gives $\rho(\gamma_i)=\exp\!\bigl(2\pi i\,\Res_{D_i}(\eta)\bigr)$. Under the assumption \eqref{eq:rational-residues}, the exponent is an integral multiple of $2\pi i/N$. Hence $\rho_(\gamma_i)^N=1$, and therefore $\rho(\gamma_i)\in\mu_N$.
\end{proof}

Thus rational residues force the local monodromy at the boundary to be torsion. The next step is to show that this torsion disappears after the corresponding Kummer base change.

\begin{proposition}\label[proposition]{prop:kummer-trivializes}
Assume \eqref{eq:rational-residues}. After pulling back the transversely affine foliation by the local $N$-th root construction along $D$, all meridional holonomies become trivial. Equivalently, on the Kummer cover every boundary character factors through the trivial character in the meridional directions.
\end{proposition}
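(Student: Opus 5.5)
The plan is a local computation in an snc chart, followed by the residue--monodromy rule of \Cref{cor:residue-monodromy}. Fix a point $x\in D$ and an snc chart with $D=\{z_1\cdots z_k=0\}$, in which, by \eqref{eq:eta-local},
\[
\eta=\sum_{i=1}^k\alpha_i\frac{dz_i}{z_i}+\beta,
\qquad
\alpha_i=\Res_{D_i}(\eta)\in\tfrac1N\Z .
\]
The residues are constant along each component by \Cref{lem:residue-constant-component}. The local $N$-th root construction is the Kummer map
\[
\kappa:(t_1,\dots,t_k,z_{k+1},\dots,z_n)\longmapsto(t_1^N,\dots,t_k^N,z_{k+1},\dots,z_n).
\]
Globally, this is the chart description of the root stack $\sqrt[N]{(X,D)}$ of \cite{Cadman}, on which the foliation and its transverse affine data $(\kappa^*\omega,\kappa^*\eta)$ are pulled back. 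Since $\kappa^*(dz_i/z_i)=N\,dt_i/t_i$, one gets
\[
\kappa^*\eta=\sum_{i}N\alpha_i\frac{dt_i}{t_i}+\kappa^*\beta .
\]
The new residues $N\alpha_i$ are therefore integers, and $\kappa^*\beta$ is still holomorphic and closed. Pulling back also preserves the structure equations $d\omega=\eta\wedge\omega$ and $d\eta=0$, so $(\kappa^*\omega,\kappa^*\eta)$ is again transversely affine and logarithmic along the lifted divisor $\{t_1\cdots t_k=0\}$.

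Next, apply \Cref{cor:residue-monodromy} on the Kummer cover. The meridian around $\{t_i=0\}$ has holonomy $\exp(2\pi i\,N\alpha_i)=1$, because $N\alpha_i\in\Z$. The same computation gives the boundary characters. By \Cref{prop:boundary-fiber-restriction} and \eqref{eq:monodromy-character}, the boundary character of the lifted form on a torus fibre $(\mathbb S^1)^{|I|}$ sends $e_i\mapsto\exp(2\pi i N\alpha_i)=1$, so it is trivial in every meridional direction. One should also note the alternative gauge-theoretic formulation: locally $\kappa^*\eta-\sum_i d\log(t_i^{N\alpha_i})$ is holomorphic. So after the gauge change $\eta\mapsto\eta-d\log g$ with $g=\prod t_i^{N\alpha_i}$, which is a meromorphic monomial and allowed on the complement of the divisor, the meridional part of the monodromy is visibly removed. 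Gauge invariance of $\rho$ is already established.

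The main obstacle I expect is the global, rather than chart-wise, meaning of ``pulling back by the local $N$-th root construction'' and the identification of meridians. The meridian $\gamma_i$ on $U$ lifts to an $N$-fold loop: on the Kummer cover the meridian $\tilde\gamma_i$ maps to $\gamma_i^N$. The clean consistency check is that $\rho(\kappa_*\tilde\gamma_i)=\rho(\gamma_i)^N=1$ by \Cref{prop:rational}, which agrees with the residue computation. To handle globality, I would argue that the statement concerns only meridional holonomies and boundary characters, which are local along $D$. The chart computation therefore suffices, provided the local Kummer covers glue as the root stack; this is standard by \cite{Cadman}. Periods $\int_{\delta_j}\eta$ are not affected, consistent with the claim being only about meridional directions.
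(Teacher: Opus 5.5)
Your argument is correct and is essentially the paper's proof: a local snc computation with $z_i=t_i^N$ gives residues $N\alpha_i\in\Z$, hence trivial meridional multipliers and trivial boundary characters (the paper phrases the latter as the rescaling $\theta_i=N\varphi_i$ of the angular variables). Your globality remark is not needed, since the statement is local. Strictly speaking, the charts $t_i\mapsto t_i^N$ taken separately for each component glue to Cadman's iterated root stack along $D_1,\dots,D_k$, not to $\sqrt[N]{(X,D)}$ of the single divisor, whose chart at a crossing is $t^N=z_1\cdots z_k$.
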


\begin{proof}
Work locally in an snc chart in which
\begin{equation}\label{eq:kummer-local-form}
\eta=\sum_{i=1}^{\ell}\alpha_i\,\frac{dz_i}{z_i}+\beta,
\qquad
\alpha_i\in \frac{1}{N}\Z,
\qquad
d\beta=0.
\end{equation}
Introduce root coordinates by
$
z_i=w_i^N.
$
Then
\begin{equation}\label{eq:kummer-pullback}
\frac{dz_i}{z_i}=N\,\frac{dw_i}{w_i},
\end{equation}
so the pulled-back logarithmic form becomes
\begin{equation}\label{eq:kummer-pulledback-eta}
\eta'=\sum_{i=1}^{\ell}N\alpha_i\,\frac{dw_i}{w_i}+\beta'.
\end{equation}
Since each $N\alpha_i$ is an integer, the residue of $\eta'$ along $w_i=0$ is integral. Therefore the corresponding meridional multiplier is $\exp(2\pi i\,N\alpha_i)=1$, and the meridional holonomy is trivial after Kummer base change.
The same statement may be read directly on the logarithmic boundary. Before base change, on the universal cover of a boundary torus one has
\begin{equation}\label{eq:kummer-boundary-character-before}
\widetilde\chi_I(\theta_1,\dots,\theta_\ell)
=
\exp\left(2\pi i\sum_{i=1}^{\ell}\alpha_i\theta_i\right).
\end{equation}
Under the root construction, the angular variables rescale by
$
\theta_i=N\varphi_i,
$
so the transformed character becomes
\begin{equation}\label{eq:kummer-boundary-character-after}
\widetilde\chi_I'(\varphi_1,\dots,\varphi_\ell)
=
\exp\!\left(2\pi i\sum_{i=1}^{\ell}N\alpha_i\varphi_i\right).
\end{equation}
Because each $N\alpha_i$ is an integer, a full turn in any meridional direction acts trivially. This is exactly the foliation-theoretic form of the Kummer unwinding of rational residues.
\end{proof}

\begin{corollary}\label[corollary]{cor:kummer-trivialization}
Assume that all residues of $\eta$ are rational. Then, after a suitable local Kummer base change along $D$, the linear part of the  transversely affine  affine structure has trivial meridional monodromy. Equivalently, the rank-one Betti local system attached to $\nabla_\eta$ becomes meridionally trivial, and the boundary characters become trivial on the corresponding logarithmic boundary circles.
\end{corollary}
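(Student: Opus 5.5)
This corollary follows from \Cref{prop:rational} and \Cref{prop:kummer-trivializes}; the plan is to reduce to them. Since $D$ has finitely many irreducible components $D_1,\dots,D_k$ and each residue $\alpha_i=\Res_{D_i}(\eta)$ is a single complex number by \Cref{lem:residue-constant-component}, rationality means $\alpha_i=p_i/q_i$ with $q_i\ge 1$. Set $N:=\operatorname{lcm}(q_1,\dots,q_k)$. Then $\Res_{D_i}(\eta)\in\frac1N\Z$ for all $i$, which is exactly hypothesis \eqref{eq:rational-residues}. By \Cref{prop:rational}, each meridional holonomy $\rho(\gamma_i)$ then lies in $\mu_N$.

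Next, I apply \Cref{prop:kummer-trivializes} with this $N$. In each snc chart, the substitution $z_i=w_i^N$ turns the residues into the integers $N\alpha_i$. By \Cref{cor:residue-monodromy}, the meridional multipliers $\exp(2\pi i N\alpha_i)$ are then all $1$. This is the first assertion: the linear part $\rho$ has trivial meridional monodromy on the Kummer cover. Compatibility across charts is not an issue, because the residues are global constants along each $D_i$ and $N$ is chosen uniformly.

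For the two equivalent reformulations I argue as follows. The rank-one Betti local system of $\nabla_\eta=d+\eta$ has monodromy $\rho$, since horizontal sections are locally $z_W^{\log}$ or its inverse depending on sign conventions, and either has monodromy determined by $\rho$. Triviality of $\rho$ on meridians is therefore the same as meridional triviality of the local system. Similarly, by \Cref{prop:boundary-fiber-restriction} and \eqref{eq:monodromy-character}, the boundary character on a torus fibre over $D_I^\circ$ sends the $i$-th coordinate loop to $\exp(2\pi i\alpha_i)$. After base change this becomes $\exp(2\pi i N\alpha_i)=1$, as in \eqref{eq:kummer-boundary-character-after}. So the boundary characters are trivial on the lifted boundary circles.

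The main point needing care is that the meridian of the Kummer cover maps to $N$ times the original meridian. This is what justifies replacing $\alpha_i$ by $N\alpha_i$, both for the residue and for the angular variable $\theta_i=N\varphi_i$. I would check this once, in the local model \eqref{eq:KN-local-model}, and quote it for all components.
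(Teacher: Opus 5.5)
Your proposal is correct and is essentially the paper's argument: the paper simply declares the corollary immediate from \Cref{prop:rational} and \Cref{prop:kummer-trivializes}, and your choice $N=\operatorname{lcm}(q_1,\dots,q_k)$ makes explicit the reduction, left implicit in the paper, from ``all residues rational'' to the uniform hypothesis \eqref{eq:rational-residues}. Your hedge about whether horizontal sections of $\nabla_\eta=d+\eta$ are $z_W^{\log}$ or its inverse is also well placed. With this sign convention they are $1/z_W^{\log}$, with monodromy $\rho^{-1}$, and this changes nothing about meridional triviality.
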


\begin{proof}
This is immediate from \Cref{prop:rational,prop:kummer-trivializes}.
\end{proof}
 
\medskip

\medskip

Before formulating the final synthesis, let us make explicit the precise sense
in which Kummer base change and the Kato--Nakayama boundary capture two
different aspects of the same residue data.
Fix an irreducible component $D_i\subset D$ and write
\[
\alpha_i:=\Res_{D_i}(\eta),
\qquad
g_i:=\exp(2\pi i\alpha_i)\in \Gamma.
\]
The element $g_i$ is the meridional multiplier of the rank-one logarithmic
connection $\nabla_\eta$ around $D_i$. If $g_i$ has finite order $r$, then,
after the local root construction
$
f_i=t^r,
$
the pulled-back logarithmic form has residue $r\alpha_i$ along $t=0$, and the
corresponding meridional multiplier becomes
$
\exp(2\pi i\,r\alpha_i)=g_i^r=1.
$
Thus a suitable Kummer base change removes the torsion in the meridional part
of the monodromy.
By contrast, on the logarithmic boundary one does not remove this information,
but rather records it in angular form. Indeed, if $x\in D_i$ is a smooth point,
then on the boundary fibre $\pi^{-1}(x)\simeq \Sone$ the restriction of
$\pi^*\eta$ determines the character
\[
\chi_{i,x}:\pi_1(\pi^{-1}(x))\cong \Z \longrightarrow \C^*,
\qquad
m\longmapsto \exp(2\pi i\,\alpha_i m).
\]
More generally, on a higher-codimension stratum $D_I^\circ$, the same
construction yields the character
\[
\chi_{I,x}(m_i)_{i\in I}
=
\exp\!\left(2\pi i\sum_{i\in I}\alpha_i m_i\right).
\]
In this sense, Kummer base change and the Kato--Nakayama boundary do not play
the same rôle: the former trivialises torsion in the meridional monodromy,
whereas the latter retains the angular boundary data determined by the same
residues.

\begin{theorem}
\label[theorem]{thm:KN-boundary-realisation}
Let $(\omega,\eta)$ be logarithmic transversely affine data on $U=X\setminus D$,
with
\[
\eta\in \Omega_X^1(\log D),
\qquad
d\omega=\eta\wedge\omega,
\qquad
d\eta=0.
\]
Then the logarithmic boundary of the transversely affine structure is
canonically realised on the Kato--Nakayama space $X^{\log}$ in the following
sense:

\begin{enumerate}[label=\textup{(\roman*)},leftmargin=2.8em]
\item for every boundary stratum $D_I^\circ$ and every point $x\in D_I^\circ$,
the residues of $\eta$ determine a boundary character
\[
\chi_{I,x}:\pi_1(\pi^{-1}(x))\to\C^*;
\]

\item the pull-back $\pi^*\eta$ induces on $\pi^{-1}(U)\subset X^{\log}$ a
canonical real foliation of codimension two, whose restriction to the boundary
tori is linear in the angular variables and whose topology and closures are
governed by the arithmetic of the residue vector;

\item the interior developing map admits a canonical logarithmic lift
\[
F^{\log}:X^{\log}\to[\C^*/\Gamma],
\qquad
\Gamma=\operatorname{im}(\rho).
\]
\end{enumerate}
\end{theorem}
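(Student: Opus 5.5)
The theorem is a synthesis statement, so the plan is to assemble it from results already established in \Cref{sec:KN,sec:Flog}, after checking that their hypotheses hold under the present assumptions. First I record the common setup. Since $\eta\in\Omega^1_X(\log D)$ is closed and $D$ is simple normal crossings, in every snc chart around $x\in D_I^\circ$ one has the normal form \eqref{eq:eta-local}, with $\beta$ holomorphic and closed. By \Cref{lem:residue-constant-component}, the coefficients $\alpha_i=\Res_{D_i}(\eta)$ are constants depending only on the component $D_i$, not on the chart. This chart-independence is what makes every subsequent object canonical rather than merely local.

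For (i), I apply \Cref{prop:boundary-fiber-restriction}: on the fibre $\pi^{-1}(x)\simeq(\Sone)^{|I|}$ one has $i_x^*(\pi^*\eta)=i\sum_{i\in I}\alpha_i\,d\theta_i$. I then define $\chi_{I,x}$ by integrating this closed form along the coordinate loops and exponentiating, which gives \eqref{eq:monodromy-character}. Canonicity needs two checks. First, a change of snc coordinates $z_i\mapsto u_iz_i$ with $u_i$ a unit only translates $\theta_i$ by a function on the base, so it does not change $d\theta_i$ on the fibre. Second, the basis $\{e_i\}$ of $\pi_1$ is canonical, being given by the orientations of the meridians. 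Compatibility with $\rho$ then follows from \Cref{cor:residue-monodromy}.

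For (ii), integrability of $\mathcal D$ is \Cref{prop:real-codim2}, which only uses $d\pi^*\eta=0$. The local normal form comes from \Cref{prop:affine-leaf,prop:explicit-real-leaf-equations}. The boundary restriction is the linear kernel $V$ of the covectors $a$ and $b$, as computed before \Cref{prop:T2-classification}. The topology and closures of the leaves are given by \Cref{thm:topology-angular-leaf} and \Cref{prop:closure-subtorus}, which express them through $V\cap\Lambda_I$ and the rational hull $V_{\Q}$, that is, through the rational relations among the real and imaginary parts of the residues.

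One subtle point here, which I expect to be the main obstacle, is that $\mathcal D$ is defined a priori only on $\pi^{-1}(U)$, while the boundary foliation lives on $\pi^{-1}(D)$. To connect them I would argue as follows. The forms $dr_i/r_i$ have poles, but the angular parts $\Re(i\alpha_i)\,d\theta_i$ and $\Im(i\alpha_i)\,d\theta_i$ extend smoothly to the manifold-with-corners model \eqref{eq:KN-local-model}. So I would define the boundary foliation as the kernel of the extended angular covectors. Restricted to each fibre torus, this is exactly what \Cref{prop:boundary-fiber-restriction} computes, and the statement claims nothing more than this.

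For (iii), I invoke \Cref{thm:Flog}. The local representatives $z_W^{\log}$ of \Cref{lem:zlog-local} satisfy $dz/z=\pi^*\eta$, and their ratios on overlaps are constants lying in $\Gamma$ by \Cref{prop:period-residue}, since these ratios are residue and period exponentials. They therefore glue as $\Gamma$-descent data to a morphism to $[\C^*/\Gamma]$ that restricts to $F$ on the interior by \Cref{prop:canonicalF}. Canonicity, meaning independence of choices up to unique $2$-isomorphism, then follows from \Cref{prop:strictification}: on connected charts, the only freedom left is a global constant in $\Gamma$.
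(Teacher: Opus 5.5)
Your decomposition is the paper's own. Its proof of this theorem consists exactly of the citations you use: \Cref{prop:boundary-fiber-restriction,cor:residue-monodromy} for (i), \Cref{prop:real-codim2} through \Cref{thm:topology-angular-leaf} for (ii), and \Cref{thm:Flog} for (iii). Your added checks in (i) and (ii) are sound.

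\textbf{The gap in (iii).} The gap is inherited from \Cref{thm:Flog}: your gluing only takes place over $\pi^{-1}(U)$. The representatives $z_W^{\log}$ of \eqref{eq:zlog-local} are defined only on $W\cap\pi^{-1}(U)$. Already at a point of $\pi^{-1}(D_i^\circ)$ with $\alpha_i\neq 0$ they have no limit in $\C^*$. Near such a point, $|z_W^{\log}|$ is $r_i^{\Re(\alpha_i)}$ times a continuous positive function, so it tends to $0$ or $\infty$ when $\Re(\alpha_i)\neq 0$ (this is \Cref{cor:landingP1}). When $\Re(\alpha_i)=0\neq\Im(\alpha_i)$, the argument of $z_W^{\log}$ contains the term $\Im(\alpha_i)\log r_i$, so $z_W^{\log}$ winds around $\Sone$ infinitely often as $r_i\to 0$. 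Such a point has a simply connected neighbourhood $N$ with $N\cap\pi^{-1}(U)$ connected. Over $N$ the torsor is trivial, so an extension of $F$ over $N$ with values in $[\C^*/\Gamma]$ would be a continuous extension of $c\,z_W^{\log}$ for a single constant $c$. No such extension exists.

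Hence nothing in your argument, or in the paper's proof of \Cref{thm:Flog}, defines $F^{\log}$ on $\partial X^{\log}$. Over any component with $\alpha_i\neq 0$, it cannot be defined with target $[\C^*/\Gamma]$ at all. What does extend is the $\Gamma$-torsor, because $\pi^{-1}(U)\hookrightarrow X^{\log}$ is a homotopy equivalence. So $b_\rho$ extends to $X^{\log}\to B\Gamma$, and its monodromy on the boundary fibres is exactly the character of (i). Statement (iii) must be weakened before it can be proved. Two possible weakenings are an interior lift together with the extended torsor, or a $[\PP^1/\Gamma]$-valued version as in \Cref{cor:landingP1} on faces where a radial limit is forced.

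\textbf{Two smaller problems in the same step.} First, the ratios $z_W^{\log}/z_{W'}^{\log}$ are not automatically in $\Gamma$. Each primitive $h$ is fixed only up to an additive constant, which multiplies $z_W^{\log}$ by an arbitrary element of $\C^*$. The ratios are residue and period exponentials only if every $z_W^{\log}$ is a local branch of one function $e^{\Phi}$, with $\Phi$ a primitive of $p^*\eta$ on the $\ker(\rho)$-covering. This coherent choice has to be built into the construction.

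Second, \Cref{prop:strictification} concerns endomorphisms of $[\C^*/\Gamma]$, not maps into it. The residual freedom is the constant $c$ in $\Phi$, which replaces $F$ by its composite with $z\mapsto e^{c}z$. This composite is $2$-isomorphic to $F$ only when $e^{c}\in\Gamma$ (compare \Cref{thm:AutB}). So canonicity holds only after normalising $\Phi$ at a base point.
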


\begin{proof}
Assertion \textup{(i)} is given by
\Cref{prop:boundary-fiber-restriction,cor:residue-monodromy}. Assertion
\textup{(ii)} is the content of
\Cref{prop:real-codim2,prop:affine-leaf,prop:explicit-real-leaf-equations,prop:T2-classification,prop:closure-subtorus,thm:topology-angular-leaf}.
Assertion \textup{(iii)} is exactly \Cref{thm:Flog}.
\end{proof}

Combined with \Cref{cor:kummer-trivialization,thm:kummer-vs-xlog-boundary},
the theorem shows that the same residue data admit two complementary boundary
realisations: a Kummer one, which trivialises the torsion in the meridional
monodromy, and a Kato--Nakayama one, which retains the angular boundary data.

\end{document}